\newcounter{forwardtheorem}
\newcounter{citedtheorems}
\newtheorem{defn}{Definition}[section]
\newtheorem{theorem}[defn]{Theorem}
\newtheorem*{theorem-x}{Theorem}
\newtheorem*{theorem-m}{Main Theorem}
\newtheorem*{theorem-n}{Main Theorem}
\newtheorem*{cor-x}{Corollary}
\newtheorem*{lemma-x}{Lemma}
\newtheorem*{concl-x}{Conclusion}
\newtheorem*{claim-x}{Claim}
\newtheorem*{thm-r}{Theorem \ref{concl:sop2-max}}
\newtheorem*{thm-q}{Theorem \ref{theorem:p-t}}
\newtheorem*{claim-s}{Claim \ref{m1-sat}}
\newtheorem{prob}[defn]{Problem}
\newtheorem{thm-lit}[citedtheorems]{Theorem}
\newtheorem{defn-lit}[citedtheorems]{Definition}
\newtheorem*{ftheorem}{Theorem}
\newtheorem{fact}[defn]{Fact}
\newtheorem{cor}[defn]{Corollary}
\newtheorem{concl}[defn]{Conclusion}
\newtheorem{conv}[defn]{Convention}
\newtheorem{conv-r}[defn]{Conventions and Remarks}
\newtheorem{claim}[defn]{Claim}
\newtheorem{lemma}[defn]{Lemma}
\newtheorem{obs}[defn]{Observation}
\newtheorem{rmk}[defn]{Remark}
\newtheorem{disc}[defn]{Discussion}
\newtheorem{qst}[defn]{Question}
\newtheorem{hyp}[defn]{Hypothesis}
\newcommand{\spec}{\operatorname{ESS}}
\newcommand{\truespec}{\operatorname{spec}}
\newcommand{\xt}{\mathfrak{t}}
\newcommand{\xr}{\operatorname{val}}
\newcommand{\xp}{\mathfrak{p}}
\newcommand{\vp}{\varphi}
\newcommand{\uu}{\mathcal{U}}
\newcommand{\uf}{\operatorname{uf}}
\newcommand{\trg}{{T_{rg}}}
\newcommand{\tr}{\operatorname{Tr}}
\newcommand{\tp}{\operatorname{tp}}
\newcommand{\tlf}{\triangleleft}
\newcommand{\step}{\vspace{3mm}\noindent\emph}
\newcommand{\st}{\mathbf{S}}
\newcommand{\SOP}{\operatorname{SOP}}
\newcommand{\rstr}{\upharpoonright}
\newcommand{\rn}{\operatorname{range}}
\newcommand{\pr}{\operatorname{Pr}}
\newcommand{\os}{\mathcal{S}}
\newcommand{\ord}{\operatorname{Or}}
\newcommand{\psord}{\operatorname{PsOr}}
\newcommand{\ob}{\operatorname{ob}}
\newcommand{\NSOP}{\operatorname{NSOP}}
\newcommand{\ml}{\mathcal{L}}
\newcommand{\mct}{\mathcal{T}}
\newcommand{\mcp}{\mathcal{P}}
\newcommand{\mch}{\mathcal{H}}
\newcommand{\mc}{\mathcal{C}}
\newcommand{\mb}{\mathbf{b}}
\newcommand{\ma}{\mathbf{a}}
\newcommand{\lost}{\L os' }
\newcommand{\lgn}{\operatorname{lg}}
\newcommand{\lgb}{\operatorname{lgd}}
\newcommand{\rep}{\operatorname{rep}_B}
\newcommand{\lcf}{\operatorname{lcf}}
\newcommand{\ii}{\mathbf{i}}
\newcommand{\HF}{\mathbf{HF}}
\newcommand{\gC}{{\mathfrak C}}
\newcommand{\frt}{\operatorname{frt}}
\newcommand{\eff}{\mathcal{F}}
\newcommand{\dom}{\operatorname{dom}}
\newcommand{\Ded}{\operatorname{Ded}}
\newcommand{\de}{\mathcal{D}}
\newcommand{\cZ}{{\mathcal Z}}
\newcommand{\cY}{{\mathcal Y}}
\newcommand{\cx}{\mathcal{X}}
\newcommand{\cX}{{\mathcal X}}
\newcommand{\cW}{{\mathcal W}}
\newcommand{\cU}{{\mathcal U}}
\newcommand{\cts}{\mc^{\mathrm{ct}}(\cs)}
\newcommand{\cT}{{\mathcal T}}
\newcommand{\cs}{\mathbf{s}}
\newcommand{\cl}{\operatorname{cl}}
\newcommand{\cis}{\operatorname{cis}}
\newcommand{\cf}{\operatorname{cf}}
\newcommand{\bx}{\mathbf{x}}
\newcommand{\br}{\vspace{2mm}}
\newcommand{\bc}{\mathbf{c}}
\newcommand{\Av}{\operatorname{Av}}
\newcommand{\AP}{\operatorname{AP}}
\newcommand{\sub}{\operatorname{Sub}}
\title{Model-theoretic applications of cofinality spectrum problems}
\author{M. Malliaris and S. Shelah}
\thanks{\emph{Thanks:}
Malliaris was partially supported by NSF grants 1300634 and by a Sloan fellowship.
%, and by a research membership at MSRI
%funded by NSF grant 0932078 000 (Spring 2014).
Shelah was partially supported by the Israel Science Foundation grants 710/07 and 1053/11 
and European Research Council grant 338821. 
This is manuscript 1051 in Shelah's list of publications.}
\address{Department of Mathematics, University of Chicago, 5734 S. University Avenue, Chicago, IL 60637, USA and 
Mathematical Sciences Research Institute, 17 Gauss Way, Berkeley, CA 94720, USA}
\email{mem@math.uchicago.edu}
\address{Einstein Institute of Mathematics, Edmond J. Safra Campus, Givat Ram, The Hebrew
University of Jerusalem, Jerusalem, 91904, Israel, and Department of Mathematics,
Hill Center - Busch Campus, Rutgers, The State University of New Jersey, 110
Frelinghuysen Road, Piscataway, NJ 08854-8019 USA}
\email{shelah@math.huji.ac.il}
\urladdr{http://shelah.logic.at}
\begin{document}

%\AddEverypageHook{\testhook}

%\AddEverypageHook{\newdatehook}

\begin{abstract}
We apply the recently developed technology of cofinality spectrum problems to prove a range of theorems in model theory.
First, we prove that any model of Peano arithmetic is $\lambda$-saturated iff it has cofinality $\geq \lambda$ and the underlying order has no $(\kappa, \kappa)$-gaps 
for regular $\kappa < \lambda$. 
%We also answer a question about balanced pairs of models of PA. 
Second, assuming instances of GCH, we prove that $SOP_2$ characterizes maximality in the 
interpretability order $\tlf^*$, settling a prior conjecture and proving that $SOP_2$ is a real dividing line. 
Third, %in light of this result, 
we establish the beginnings of a structure theory for $NSOP_2$, proving for instance 
that $NSOP_2$ can be characterized in terms of few inconsistent higher formulas. 
In the course of the paper, we show %we improve our previous theorems on cofinality spectrum problems by showing 
that $\xp_\cs = \xt_\cs$ in \emph{any} weak cofinality spectrum problem closed under exponentiation (naturally defined).  
We also prove that the local versions of these cardinals need not coincide, 
even in cofinality spectrum problems arising from Peano arithmetic.  
\end{abstract}

\subjclass{03C20, 03C45, 03E17}

\keywords{Unstable model theory, cardinal invariants of the continuum, $\xp$ and $\xt$, cofinalities of cuts, 
interpretability order $\tlf^*$, Peano arithmetic, $SOP_2$}

\maketitle

In a recent paper \cite{MiSh:998}  
we connected and solved two a priori unrelated open questions: 
the question from model theory of whether $SOP_2$ is maximal in Keisler's order, and the question 
from set theory/general topology of whether $\xp = \xt$. This work was described in the research announcement \cite{MiSh:E74} and the commentary \cite{Moore}. 
In order to prove these theorems, we introduced a general framework called {cofinality spectrum problems}, 
reviewed in \S \ref{s:preparation} below. 

In the present paper, we develop and apply cofinality spectrum problems to a range of problems in model theory, primarily on Peano arithmetic and 
around the strong tree property $SOP_2$, also called the $2$-strong order property. 
We prove the following theorems:

\begin{ftheorem}[Theorem \ref{t:pa}]
Let $N$ be a model of Peano arithmetic, or just bounded PA, and $\lambda$ an uncountable cardinal. 
If the reduct of $N$ to the language of order has cofinality $>\kappa$ and no $(\kappa, \kappa)$-cuts for all $\kappa < \lambda$, then $N$ is $\lambda$-saturated. 
\end{ftheorem}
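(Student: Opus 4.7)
The strategy is to package the PA-model $N$ as a weak cofinality spectrum problem $\cs$ and then apply the equality $\xp_\cs = \xt_\cs$ announced in the abstract. I would take the orders of $\cs$ to be the definable bounded intervals $[0,a]^N$, together with G\"odel-coded sequences and the exponential operation $(a,b) \mapsto a^b$; since bounded PA already suffices for coding finite sequences and for computing exponentiation on appropriate initial segments, this assembles into a weak CSP that is closed under exponentiation in the sense required.

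The hypothesis of the theorem pushes down into this framework: any $(\kappa,\kappa)$-cut in some $[0,a]^N$ of $\cs$ can be transported, via definable bijections, to a $(\kappa,\kappa)$-cut in the reduct $(N,<)$, which is excluded for $\kappa<\lambda$. Combined with cofinality $>\kappa$ of $(N,<)$, this gives $\xp_\cs \geq \lambda$, and hence $\xt_\cs \geq \lambda$. Now I would reduce $\lambda$-saturation to realizing $1$-types $p(x)$ over a set $A$ of size $\mu<\lambda$ (the reduction from $n$-types is routine via pairing in PA). Enumerate $A$ as $\langle a_\alpha : \alpha<\mu\rangle$ and inductively build a definable tree $T$ in $\cs$, of height $\mu$, whose node at level $\alpha$ codes a consistent conjunction of finitely many formulas from $p\rest\{a_\beta:\beta\leq\alpha\}$ together with a witness lying in some definable interval $[0,b_\alpha]^N$. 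Because $T$ has height $\mu<\lambda \leq \xt_\cs$, it has a treetop; unpacking the coding, this treetop yields an element of $N$ realizing $p$.

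The main obstacle is in setting up the CSP and verifying the transfer from cuts in $(N,<)$ to cuts in $\cs$: one must check that the PA-definable orders close up under the operations required by the CSP axioms, and that cuts lift and descend between $(N,<)$ and the bounded intervals $[0,a]^N$ without changing the pair of cofinalities. A secondary subtlety is in the tree construction, where the nodes must be indexed internally by elements of a definable interval rather than externally by ordinals below $\mu$; here one exploits G\"odel coding so that finite prefixes of the would-be branch are coded by single elements of $N$ and the successor step is uniformly definable. Once these points are arranged, the conclusion is immediate from $\xp_\cs=\xt_\cs$ together with the reduction of types to treetops.
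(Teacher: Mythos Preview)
Your overall strategy matches the paper's: set up $\cs = \cs_3(N)$ as a cofinality spectrum problem with exponentiation, invoke $\xp_\cs = \xt_\cs$, and deduce saturation from $\xt_\cs \geq \lambda$. However, there is a gap in your passage from the hypothesis to ``$\xp_\cs \geq \lambda$.'' The hypothesis rules out only \emph{symmetric} $(\kappa,\kappa)$-cuts for $\kappa < \lambda$, whereas $\xp_\cs$ is by definition the minimum of $\kappa_1 + \kappa_2$ over \emph{all} cuts $(\kappa_1,\kappa_2) \in \cts$; nothing you have said excludes a small asymmetric cut, so you have not established $\xp_\cs \geq \lambda$. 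The paper closes this by working the other way: it uses the consequence $\xt_\cs = \min\{\kappa : (\kappa,\kappa) \in \cts\}$ of Theorem~\ref{ps-is-ts}(3), i.e., the first cut is necessarily symmetric, so the absence of small symmetric cuts gives $\xt_\cs \geq \lambda$ directly. Note that this direction requires $\cs$ to be a full CSP (closed under Cartesian products), not merely a weak one, since Theorem~\ref{ps-is-ts}(3) rests on the inequality $\xt_\cs \leq \xp_\cs$ from Theorem~\ref{998-mt}, which needs products; the paper verifies this closure for $\cs_3(N)$ separately (Claim~\ref{c43}).

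For the step from $\xt_\cs \geq \lambda$ to $\lambda$-saturation, the paper does not carry out your tree construction but instead quotes the general fact (Claim~\ref{x16}, from \cite{MiSh:998}) that any CSP is $\kappa^+$-$\ord$-saturated for $\kappa < \min\{\xp_\cs, \xt_\cs\}$, together with the reduction of full saturation to bounded saturation plus large cofinality (Observation~\ref{c5}). Your sketch is essentially a re-derivation of Claim~\ref{x16} and is correct in outline, though you should be explicit that the tree is a fixed definable object $\mct_\ma \in \tr(\cs)$ and what you build is a \emph{path} of external length $\mu$ through it, not a tree of height $\mu$.
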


On the earlier history, which involves \cite{pabion} and \cite{Sh:a} VI.2-3, see \S \ref{s:peano-arithmetic}.  
We also address a question about balanced pairs of models of PA. %. in Peano arithmetic of Kossak and Schmerl. 
The proof of Theorem \ref{t:pa} relies on the answer to a question arising from \cite{MiSh:998}, of intrinsic interest: 

\begin{ftheorem}[Theorem {\ref{ps-is-ts}}]
Let $\cs$ be a cofinality spectrum problem which is closed under exponentiation. Then 
$\xp_\cs = \xt_\cs$. 
\end{ftheorem}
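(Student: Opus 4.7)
The plan is to follow the outline of the proof of $\mathfrak{p}=\mathfrak{t}$ in \cite{MiSh:998}, but use exponentiation as a substitute for the structural coding provided there by a full cofinality spectrum problem. The easy inequality $\xp_\cs \leq \xt_\cs$ is immediate: a strictly $\trianglelefteq$-decreasing tower of length $\xt_\cs$ with no pseudo-intersection is in particular a family with the (internal) strong finite intersection property and no pseudo-intersection, so it also witnesses $\xp_\cs \leq \xt_\cs$.

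For the reverse inequality, I would argue by contradiction: suppose $\xp_\cs < \xt_\cs$, and fix a family $\langle \bar a_\alpha : \alpha < \xp_\cs \rangle$ in $\cs$ of minimum length having the strong finite intersection property but no pseudo-intersection. Building on the construction in \cite{MiSh:998}, I would define by induction on $\alpha<\xp_\cs$ an internal approximating sequence: at each stage, given the data produced so far, use exponentiation to code the finitely-supported portion of the construction as an element of $\cs$, and then extract a next approximation refining all prior commitments. The role of exponentiation is precisely to ensure that the internal powerset-like coding, which in a full cofinality spectrum problem comes for free, is available here on a cofinal portion of the indexing data.

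There are then two cases. If the induction runs through all $\alpha<\xp_\cs$, the coded sequence produces a pseudo-intersection of $\langle \bar a_\alpha : \alpha<\xp_\cs\rangle$, contradicting the choice of the family. If the induction halts at some $\alpha^* \leq \xp_\cs$, the obstruction is localized: two cofinal sequences, one of upper bounds coming from the approximations constructed and one of lower bounds coming from the remainders of the $\bar a_\alpha$, interleave to define a pre-cut in some $(X,<_X)\in\cs$ whose cofinality is bounded by $\alpha^* \leq \xp_\cs$. Since $\xp_\cs < \xt_\cs$, this contradicts the definition of $\xt_\cs$ as the minimum cofinality of such a cut.

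The main obstacle, and the step that requires care, is the limit stage of the induction and the verification that the pre-cut produced in the failure case actually lies in the spectrum controlled by $\xt_\cs$. Concretely, one needs to show that exponentiation, together with the weak cofinality spectrum axioms, suffices to turn the sequence of ``finite traces'' of the approximations into a single internal object witnessing either a genuine pseudo-intersection or a genuine cut; this is where the hypothesis of closure under exponentiation will be used decisively, replacing the finite-sequence coding that was available in the original framework of \cite{MiSh:998}. Once this is done, the remainder of the argument is essentially the translation of the $\mathfrak{p}=\mathfrak{t}$ proof into the present language.
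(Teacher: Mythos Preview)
Your proposal rests on a misidentification of the cardinals involved. In this paper (see Definition~\ref{cst:card}), $\xp_\cs$ and $\xt_\cs$ are \emph{not} internal versions of the pseudo-intersection number and the tower number. Rather, $\xp_\cs$ is the least $\kappa_1+\kappa_2$ such that some $(X_\ma,\le_\ma)$ with $\ma\in\ord(\cs)$ has a $(\kappa_1,\kappa_2)$-cut, while $\xt_\cs$ is the least $\kappa$ such that some tree $\mct_\ma\in\tr(\cs)$ contains a strictly $\tlf_\ma$-increasing sequence of cofinality $\kappa$ with no upper bound. There are no towers or families with the strong finite intersection property in sight, so your ``easy'' argument for $\xp_\cs\le\xt_\cs$ and your proposed inductive construction for the reverse inequality simply do not apply to the objects defined here.

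Once the correct definitions are in place, the logic of the two inequalities is the opposite of what you describe. The inequality $\xt_\cs\le\xp_\cs$ is exactly Theorem~\ref{998-mt} from \cite{MiSh:998} (that $\mc(\cs,\xt_\cs)=\emptyset$), which holds for any CSP and does not use exponentiation; this is quoted, not reproved. The genuinely new content, and the only place exponentiation enters, is the inequality $\xp_\cs\le\xt_\cs$. The paper's argument is short and structural: if $\mct_\ma$ witnesses treetops at $\kappa=\xt_\cs$, one \emph{flattens} the tree into the linear order $(\os_\ma,<_{\os_\ma})$ of Definition~\ref{d:coll}; by Fact~\ref{cy} this order carries a $(\kappa,\kappa)$-cut. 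Closure under exponentiation is precisely the hypothesis that $(\os_\ma,<_{\os_\ma})$ is internally isomorphic to $(X_\mb,<_\mb)$ for some $\mb\in\ord(\cs)$, so the cut lives in $\cts$ and $\xp_\cs\le\kappa$. Your proposal does not touch this mechanism at all.
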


As explained in \S \ref{s:preparation}, Theorem \ref{ps-is-ts} complements a main theorem of \cite{MiSh:998}, 
which showed that $\xt_\cs \leq \xp_\cs$ for any cofinality spectrum problem $\cs$. As a consequence, 
% of Theorem $\ref{ps-is-ts}$, 
we are able to characterize $\xt_\cs$ in terms of the first symmetric cut. %, Theorem \ref{ps-is-ts}.  
However, as we show in \S \ref{s:local}, 
the local versions of these cardinals, $\xp_{\cs, \ma}$, $\xt_{\cs, \ma}$ need not coincide unless the underlying model 
is saturated. 

We then turn to the strong tree property $SOP_2$. A major result of \cite{MiSh:998} was that $SOP_2$ suffices 
for being maximal in Keisler's order $\tlf$. It was not proved to be a necessary condition, but we conjectured there that $SOP_2$ 
characterizes maximality in Keisler's order. The difficulty in addressing this question may be in building ultrafilters. However, 
in the present paper, for a related open problem, 
we give a complete answer: 

\begin{ftheorem}[Theorem \ref{tstar-sop2-max}, under relevant instances of GCH]
$T$ is $\tlf^*$-maximal if and only if it has $SOP_2$.
\end{ftheorem}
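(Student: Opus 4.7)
The argument splits into two directions. For the forward implication, $SOP_2 \Rightarrow \tlf^*$-maximality, the plan is to adapt and extend the maximality construction of \cite{MiSh:998}. Given $T$ with $SOP_2$ witnessed by some $\varphi(\bar x; \bar y)$, and an arbitrary theory $T'$, we want $T' \tlf^* T$. The key point is that the $SOP_2$-tree coded by $\varphi$ is rich enough to encode an arbitrary type-omission obstruction of $T'$ as a failure to realize a path. The $\tlf$-style saturation-transfer arguments of \cite{MiSh:998} should then pass through for the refined order $\tlf^*$ essentially verbatim, since the encoding used is compatible with the interpretability/ultrafilter setup defining $\tlf^*$; the GCH instances enter here only for the standard existence of suitable regular/good ultrafilters.

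For the reverse, and harder, direction, $\tlf^*$-maximality $\Rightarrow SOP_2$, we proceed contrapositively: assuming $T$ is $NSOP_2$ we construct, under the relevant instances of GCH, a regular ultrafilter $\cD$ on a suitable $\lambda$ witnessing $\ts \not\tlf^* T$. Concretely, $\cD$ will $\lambda^+$-saturate every ultrapower of $T$ while failing to $\lambda^+$-saturate some ultrapower of a fixed $SOP_2$-theory such as $\ts$. The construction runs by induction of length $\lambda^+$: at each stage $\alpha$ we extend a filter $\cD_\alpha$ to realize one more $T$-type, while deliberately keeping a distinguished path-type through the $SOP_2$-tree of $\ts$ omitted. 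The GCH instances handle the $\lambda^+$-enumeration of $T$-types and close off the approximating chain.

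The technical heart of this construction is Theorem \ref{ps-is-ts}. At each stage the ultrapower associated to $\cD_\alpha$ gives rise to a cofinality spectrum problem $\cs_\alpha$. Because $T$ is $NSOP_2$, the relevant definable product/iterated-composition structure is available on all relevant pairs, so $\cs_\alpha$ is closed under exponentiation and hence $\xp_{\cs_\alpha} = \xt_{\cs_\alpha}$. Equality at the level of the cofinality spectrum is exactly what we need to fill the $(\kappa, \kappa)$-pre-cuts obstructing saturation of $T$'s ultrapower. On the other side, the $SOP_2$-tree of $\ts$ produces, along any branch, an essential $(\kappa, \kappa)$-cut whose cofinality is $\xt_{\cs_\alpha}$; the induction is designed so that this cut persists in $\ts^\lambda / \cD$.

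The main obstacle is the simultaneous bookkeeping across the $\lambda^+$-induction: realizing every $T$-type (using $\xp_{\cs_\alpha} = \xt_{\cs_\alpha}$ and the pre-tree apparatus internal to $\cs_\alpha$) while refusing to realize any initial segment of the targeted $SOP_2$-path type. Two verifications are delicate. First, one must check that closure under exponentiation of $\cs_\alpha$ is preserved along the induction, so Theorem \ref{ps-is-ts} is applicable uniformly. Second, one must show that the distinguished $SOP_2$-path type can be separated from the fillable $NSOP_2$-cuts at every stage, i.e.\ that the branches in $\ts^\lambda/\cD_\alpha$ witnessing non-saturation are not inadvertently completed when one fills a $T$-cut of the same cofinality. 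Handling this tension between the $\xp = \xt$ side (for $T$) and the unfilled-cut side (for $\ts$) is the core of the argument.
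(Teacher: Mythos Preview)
Your proposal misidentifies which direction is the paper's contribution, misplaces Theorem \ref{ps-is-ts}, and rests the ``harder'' half on a construction that does not fit the definition of $\tlf^*$. The direction $NSOP_2 \Rightarrow$ non-$\tlf^*$-maximal is not proved here at all: it is quoted as Fact \ref{x1} from \cite{ShUs:844} (building on \cite{DzSh:692}), and \emph{that} is where the GCH instances enter. More seriously, your proposed approach to that direction is off target: $\tlf^*$ (Definition \ref{tstar-defn}) is defined via \emph{interpretations} in an auxiliary theory $T_*$, not via regular ultrafilters on $\lambda$. Building a regular ultrafilter that saturates $T$ but not $\ts$ addresses Keisler's order $\tlf$, not $\tlf^*$; since $\tlf^*$ refines $\tlf$, such an ultrafilter would not establish non-$\tlf^*$-maximality. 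Nor is there any link between $NSOP_2$ and closure under exponentiation of a CSP: exponentiation is a property of the ambient arithmetic/set-theoretic structure in $M^+_1$, not of the theory whose saturation is at stake.

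The direction the paper actually proves is $SOP_2 \Rightarrow \tlf^*$-maximal (Lemma \ref{x5}), in ZFC, and \emph{this} is where Theorem \ref{ps-is-ts} is used. The argument is not a saturation transfer lifted from \cite{MiSh:998} but a concrete construction of an interpreting theory $T_*$: one takes $M_1 \models T_1$ carrying an $SOP_2$-tree $\langle a_\eta : \eta \in {}^{\omega>}\omega\rangle$, a model $M_2$ of $(\mathcal{H}(\aleph_1),\in)$, and a function $F$ sending the internal $\eta$ to $a_\eta$; $T_*$ is the theory of an ultraproduct of expansions by a constant $c$, so any $M_* \models T_*$ has $c$ nonstandard. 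For such $M_*$ with $\lambda$-saturated $T_1$-reduct, one forms a CSP $\cs$ on initial segments of the internal $\omega$. The $SOP_2$ formula is used to prove $\xt_\cs \geq \lambda$: a $\kappa$-path in some $\mct_\ma$ maps via $F$ to a consistent partial $\varphi$-type, realized by saturation; the realizing element, by $SOP_2$-inconsistency, cuts out a definable branch of $\mct_\ma$ and hence an upper bound. Since $\cs$ has exponentiation, Theorem \ref{ps-is-ts} gives $\xp_\cs = \xt_\cs \geq \lambda$, so the linear-order reduct $N^{M_*}\rstr c$ has no small cuts and is $\lambda$-saturated. Since linear order is $\tlf^*$-maximal, $T_1$ is as well.
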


The ordering $\tlf^*$ %of D\{v}zamonja and Shelah \cite{DzSh:692} 
refines Keisler's order, but is defined not in terms of ultrapowers but 
rather in terms of interpretability. 
Theorem \ref{tstar-sop2-max} answers a very interesting question going back to D\v{z}amonja and Shelah \cite{DzSh:692} and Shelah and Usvyatsov \cite{ShUs:844}.  
We inherit the assumption of a case of GCH from \cite{ShUs:844}, where one direction of the theorem was proved, building on work of 
\cite{DzSh:692}. The direction proved here is in ZFC. 
%A proof eliminating the case of GCH is in preparation. 

Theorem \ref{tstar-sop2-max} gives decisive evidence for $SOP_2$ being a dividing line, by giving the equivalence of a natural inside/syntactic property 
and an outside property. However, this was done without developing a structure theory. So in \S \ref{s:1198}, we develop the beginnings of a structure theory 
for $NSOP_2$. We define a notion of `higher formulas' using ultrafilters  %in terms of consistency along certain average sequences arising from ultrafilters 
and prove, for example, that: 
\begin{ftheorem}[Theorem \ref{a13}]
$T$ is $NSOP_2$ iff for all infinite $A$, the number of %$($so called$)$ 
pairwise $1$-contradictory higher $\vp$-formulas over $A$ is $\leq |A|$. 
\end{ftheorem}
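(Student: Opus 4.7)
The plan is to prove the equivalence by passing between the tree witnessing $SOP_2$ and a family of pairwise $1$-contradictory higher $\vp$-formulas, using the definitions set up in Section \ref{s:1198}.

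For the direction from $SOP_2$ to many higher formulas, I would start with a witness $\langle b_\eta : \eta \in {}^{<\omega}2 \rangle$ to $SOP_2$ for $\vp$ and take $A$ to be the (countable) parameter set. Each of the $2^{\aleph_0}$ branches through the tree determines a consistent $\vp$-type, and any two distinct branches meet incomparable nodes, producing $\vp$-instances that are first-order inconsistent. Feeding each branch into the ultrafilter/limit construction defining higher $\vp$-formulas over $A$ should produce a higher $\vp$-formula, and the resulting family of size $2^{\aleph_0} > \aleph_0 = |A|$ should be pairwise $1$-contradictory. The only care needed here is to verify that distinct branches yield distinct higher formulas and that $1$-contradictoriness lifts from the first-order level to the higher level; both should be essentially immediate from the definitions.

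For the harder direction, I would argue contrapositively: suppose $\{\psi_i : i < |A|^+\}$ is a family of pairwise $1$-contradictory higher $\vp$-formulas over some infinite $A$, and build a tree of parameters $\langle b_\eta : \eta \in {}^{<\omega}2 \rangle$, each $b_\eta$ a tuple from $A$, witnessing $SOP_2$ for $\vp$. The construction is by induction on levels: at each node $\eta$, maintain a set $X_\eta \subseteq |A|^+$ of size $|A|^+$ consisting of indices of higher formulas still compatible with the path leading to $\eta$. To split $X_\eta$, pick two indices $i, j \in X_\eta$; unpacking the $1$-contradictoriness of $\psi_i, \psi_j$ via the ultrafilter definition produces tuples $b_{\eta^\frown 0}, b_{\eta^\frown 1}$ in $A$ such that $\{\vp(x; b_{\eta^\frown 0}), \vp(x; b_{\eta^\frown 1})\}$ is inconsistent. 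A Fubini/pigeonhole step on the remaining $|A|^+$ indices $k$ sorts them according to which of the two new extensions $\psi_k$ is still compatible with, yielding $X_{\eta^\frown 0}, X_{\eta^\frown 1}$ each of size $|A|^+$. Consistency of $\{\vp(x; b_{\eta\rest n}) : n < \omega\}$ along any branch is then inherited from the internal consistency of any single $\psi_i$ surviving through the branch.

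The main obstacle is this extraction in the second direction. Higher formulas live at the ultrafilter level, so one must translate the higher-level $1$-contradictoriness into actual first-order incompatibility between parameters in $A$, and do so coherently at every level of the tree while maintaining an $|A|^+$-sized reserve of higher formulas above each new node. The precise definition of higher $\vp$-formula over $A$ in Section \ref{s:1198} — intended to be of the form ``$\vp(x; \bar b)$ holds for $\cD$-many $\bar b$ from $A$'' for a suitable ultrafilter $\cD$ on tuples from $A$ — should be exactly what licenses both moves: it converts $1$-contradictoriness at the higher level into an explicit inconsistent pair of first-order instances, and it allows the pigeonhole at each splitting step to be carried out on the $|A|^+$-sized family, giving a genuine complete binary tree of $\vp$-instances that is branch-consistent and incomparable-inconsistent, i.e.\ an $SOP_2$ witness.
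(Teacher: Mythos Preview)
Your first direction is fine and matches the paper's Claim \ref{c:converse}: distinct branches give ultrafilters concentrating on their tails, and pairwise $1$-contradictoriness follows immediately from incomparable-node inconsistency.

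The second direction has a genuine gap in the splitting step. You pick two specific indices $i,j \in X_\eta$, extract a single inconsistent pair $b_{\eta^\frown 0}, b_{\eta^\frown 1}$ from them, and then hope that every remaining $\psi_k$ is compatible with exactly one side, so that pigeonhole gives two large halves. Nothing guarantees this: a priori $\psi_k$ could be compatible with both new parameters or with neither, and even if each $\psi_k$ chose a side, one side could be bounded. The $1$-contradictoriness of $\psi_k$ with $\psi_i$ and $\psi_j$ is a statement about $D_k$-generic parameters against $D_i$- and $D_j$-generic parameters, not about $D_k$-generic parameters against the two specific tuples you already fixed.

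The paper's construction is substantially more careful here. It works with the finitary approximation $NSOP_{2,n}$ (so a height-$n$ tree suffices for the contradiction), adds \emph{one} node at a time, and maintains at each frontier node $\eta$ a set $\cU_\eta$ of indices together with the invariant (1.2)(d) that the path-formulas plus $n-\lgn(\eta)$ many $D_\alpha$-instances are jointly consistent for every $\alpha \in \cU_\eta$. The new parameter $\bar b_*$ is produced by a \emph{double} pigeonhole: for each choice of representatives $\bar\alpha = \langle \alpha_\eta : \eta \in \frt(\Lambda)\rangle$, one first finds, for each $\beta \in \cU_\varrho \setminus \{\alpha_\varrho\}$, a tuple in $A$ lying in a $D_{\alpha_\varrho}$-large set that simultaneously (i) extends the path consistently, (ii) is inconsistent with every other frontier representative, and (iii) is inconsistent with $\bar b_{\beta,s_0}$; pigeonhole over $\beta$ gives one tuple working for $\lambda$ many $\beta$'s (these $\beta$'s become the reserve for the sibling node), and a second pigeonhole over the unboundedly many choices $\bar\alpha_\gamma$ of representatives gives a single $\bar b_*$ that refines all the other frontier reserves at once. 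Your ``pick two, then sort the rest'' shortcut skips exactly this mechanism, and without it neither the branch-consistency invariant nor the $|A|^+$-sized reserves survive the step.
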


%Here $\HF$ stands for ``higher formulas.'' 
Section \ref{s:1198} contains several other results, notably the ``symmetry lemma'' 
states that $NSOP_3$ can be characterized in terms of symmetry of inconsistency for these higher formulas. 
We also prove that $SOP_2$ is sufficient for a certain exact-saturation spectrum to be empty, 
connecting to work of Shelah \cite{Sh:900} and Kaplan and Shelah \cite{KpSh:F1473}.  
The results seem persuasive and we believe they begin a structure theory for $NSOP_2$ theories. 
The introductions to each section contain further motivation for these results and discussions of prior work. 
%Accounts of \cite{MiSh:998} also appear in Malliaris and Shelah \cite{MiSh:E74}, which gives some model-theoretic motivation,    
%and in Moore \cite{Moore}, which gives context for the general topology result that $\xp=\xt$.

Sections 5 and 8 may be read essentially independently of prior sections. Section 7 uses one earlier theorem; the reader 
interested in the $\tlf_*$ result may wish to begin there and refer back as needed. 

\setcounter{tocdepth}{2}

%\dominitoc
\tableofcontents

\section{Background on cofinality spectrum problems} \label{s:preparation}

Roughly speaking, a cofinality spectrum problem $\cs = (M, M_1, M^+, M^+_1, \Delta)$ is given by:

\begin{itemize}
\item an elementary pair of models $(M, M_1)$ which can be expanded to the elementary pair $(M^+, M^+_1)$, and
\item a set $\Delta$ of formulas in $\tau(M)$ defining discrete linear orders, closed under finite Cartesian products, 
\end{itemize}
such that in the expanded model $M^+_1$,
\begin{itemize}
\item  each instance of a formula in $\Delta$ defines a ``pseudofinite'' linear order, meaning that each of its $M^+_1$-definable subsets has 
a first and last element, 
\item on at least one of the Cartesian products, the ordering is well behaved, e.g. like the G\"odel pairing function,\footnote{G\"odel's pairing function orders pairs of ordinals first by maximum, then by first coordinate, then by second coordinate. If e.g. there is a total linear order on the model, it makes sense to compare (``maximum'') elements in any two such 
ordered sets; if not, this requirement may be satisfied by some $\ma \times \ma$, that is the Cartesian product of a given ordered set with itself.}

and
\item $(M^+, M^+_1, \Delta)$ has ``enough set theory for trees,'' meaning essentially that for each $\vp \in \Delta$ there is 
$\psi \in \tau(M^+_1)$ so that for each linear order defined by an instance of $\vp$, there is a tree defined by an instance of $\psi$:
\begin{itemize}
\item whose 
nodes are functions from that order to itself, of length bounded by a distinguished element $d$ of the order, and 
\item the basic operations on this tree (the partial order 
on nodes given by initial segment, the length of a node i.e. size of its domain, the value of a node at an element of its domain, and concatenation 
of an additional value) are likewise uniformly definable.
\end{itemize} 
\end{itemize}

Natural and motivating examples of cofinality spectrum problems 
may be constructed beginning with pairs of models of Peano arithmetic or pairs of models of set theory for $(M^+, M^+_1)$ 
with $M$ being a reduct to a language containing linear order;  
or beginning with a pair $(M, M_1)$ where $M_1$ is an ultrapower of $M$, using the theorem that ultrapowers commute with reducts. 
See \cite{MiSh:998} \S 2.3 for more details on these examples.  
The fruitful idea was simply that, in some sufficiently rich model, 
one may study the amount of saturation in an underlying linear order in relation to the fullness of the derived trees.   

Here are the formal definitions, originally given in \cite{MiSh:998}.

\begin{defn}[Enough set theory for trees, \cite{MiSh:998} Definition 2.3]  \label{d:estt} 
Let $M_1$ be a model and $\Delta$ a nonempty set of formulas in the language of $M_1$.
We say that $(M_1, \Delta)$ has \emph{enough set theory for trees} when the following conditions are true.

\begin{enumerate}
\item $\Delta$ consists of first-order formulas $\vp(\bar{x},\bar{y};\bar{z})$, with 
$\ell(\bar{x}) = \ell(\bar{y})$. %, not necessarily equal to $1$. 

\item For each $\vp \in \Delta$ and each parameter $\overline{c} \in {^{\ell(\overline{z})}M_1}$, 
$\vp(\bar{x},\bar{y}, \bar{c})$ defines a discrete linear order on $\{ \bar{a} : M_1 \models \vp(\bar{a}, \bar{a}, \bar{c}) \}$ with a first and last element. 

\item The family of all linear orders defined in this way will be denoted $\ord(\Delta, M_1)$. Specifically, each $\ma \in \ord(\Delta, M_1)$ 
is a tuple $(X_\ma, \leq_\ma, \vp_\ma, \overline{c}_\ma, d_\ma)$, where:
\begin{enumerate}
\item $X_\ma$ denotes the underlying set $\{ \bar{a} : M_1 \models \vp_\ma(\bar{a}, \bar{a}, \bar{c}_\ma) \}$
\item $\bar{x} \leq_\ma \bar{y}$ abbreviates the formula $\vp_\ma(\bar{x},\bar{y},\bar{c}_\ma)$ 
\item $d_\ma \in X_\ma$ is a bound for the length of elements in the associated tree; it is often, but not always, $\max X_\ma$. 
If $d_\ma$ is finite, we call $\ma$ trivial. 
\end{enumerate}

\item For each $\ma \in \ord(\ma)$, $(X_\ma, \leq_\ma)$ is pseudofinite, meaning that any bounded, nonempty, $M_1$-definable subset has a $\leq_\ma$-greatest 
and $\leq_\ma$-least element. 

\item For each pair $\ma$ and $\mb$ in $\ord(\Delta, M_1)$, there is $\bc \in \ord(\Delta, M_1)$ such that: 
\begin{enumerate} 
\item there exists an $M_1$-definable bijection $\pr: X_\ma \times X_\mb \rightarrow X_\bc$ such that the coordinate projections are $M_1$-definable. 
\item if $d_\ma$ is not finite in $X_\ma$ and $d_\mb$ is not finite in $X_\mb$, then also $d_\bc$ is not finite in $X_\bc$. 
\end{enumerate} 

\item For some nontrivial $\ma \in \ord(\Delta, M_1)$, there is $\bc \in \ord(\Delta, M_1)$ such that
$X_\bc = \pr(X_\ma \times X_\ma)$ and the ordering $\leq_\bc$ satisfies: 
\[ M_1 \models (\forall x \in X_\ma)(\exists y \in X_\bc)(\forall x_1, x_2 \in X_\ma)(\max \{ x_1, x_2 \}  \leq_\ma x \iff \pr(x_1, x_2) \leq_\bc y) \]

\item To the family of distinguished orders, we associate a family of trees, as follows. 
For each formula $\vp(\bar{x},\bar{y}; \bar{z})$ in $\Delta$ there are formulas $\psi_0, \psi_1, \psi_2$ of the language of $M_1$ such that 
for any $\ma \in \ord(\cs)$ with $\vp_\ma = \vp$:

\begin{enumerate}
\item $\psi_0(\bar{x}, \overline{c}_\ma)$ defines a set, denoted $\mct_\ma$, of functions from $X_\ma$ to $X_\ma$. 
%\item $\psi_1(\bar{x}, \bar{y}, \overline{c}_\ma)$ defines the partial order on $\mct_\ma$ given by initial segment. 
\item $\psi_1(\bar{x}, \bar{y}, \overline{c})$ defines a function $\lgn_\ma : \mct_\ma \rightarrow X_\ma$ satisfying:
	\begin{enumerate}
	\item for all $b \in \mct_\ma$, $\lgn_\ma(b) \leq_\ma d_\ma$. 
	\item for all $b \in \mct_\ma$, $\lgn_\ma(b) = \max(\dom(b))$. %$($Note: for $c \in \mct_\ma$, $\maxdom(c) = \lgn(c)-1$.$)$ 
%	\item for all $b, c \in \mct_\ma$, $b \tlf c \implies \lgn_\ma(b) \leq_\ma \lgn_\ma(c)$. 
\end{enumerate}
\item $\psi_2(\bar{x},\bar{y},\overline{c})$ defines a function from 
$\{ (b,a) : b \in \mct_\ma, a \in X_\ma, a <_\ma \lgn_\ma(b) \}$ 
into $X_\ma$ whose value is called $\xr_\ma(b,c)$, and abbreviated $b(a)$. 
\begin{enumerate}
	\item %$($existence of concatenation below $d_\ma$$)$: 
	if $c \in \mct_\ma$ and $\lgn_\ma(c) < d_\ma$ and $a \in X_\ma$, then $c^\smallfrown \langle a \rangle$ exists, 
	i.e.  there is $c^\prime \in \mct_\ma$ such that $\lgn_\ma(c^\prime) = \lgn_\ma(c)+1$, $c^\prime(\lgn_\ma(c)) = a$, and   
	\[ (\forall a <_\ma \lgn_\ma(c) ) (c(a) = c^\prime(a)) \]
\item $\psi_0(\bar{x}, \overline{c})$ implies that if $b_1 \neq b_2 \in \mct_\ma$, $\lgn_\ma(b_1) = \lgn_\ma(b_2)$ then for some 
$n <_\ma \lgn_\ma(b_1)$, $b_1(n) \neq b_2(n)$.
\end{enumerate}

\item $\psi_3(\bar{x},\bar{y},\overline{c})$ defines the partial order $\tlf_{\ma}$ on $\mct_{\ma}$ given by initial segment, that is, 
such that that $b_1 \tlf_{\ma} b_2$ implies:
\begin{enumerate}
	\item for all $b, c \in \mct_\ma$, $b \tlf c$ implies $\lgn_\ma(b) \leq_\ma \lgn_\ma(c)$. 
	\item $\lgn_\ma(b_1) \leq_\ma \lgn_\ma(b_2)$
	\item $(\forall a <_\ma \lgn_\ma(b_1) ) \left( b_2(a) = b_1(a)\right)$
\end{enumerate}
\end{enumerate}
\end{enumerate}
The family of all $\mct_\ma$ defined this way will be denoted $\tr(\Delta, M_1)$.  We refer to elements of this family as trees. 
\end{defn}

Notice the pairing requirement in condition (6). While we need Cartesian products to exist, it is largely 
unimportant what exactly the order on these products is (as long as it satisfies the other requirements). 
It's sufficient for the results of \cite{MiSh:998} that one such order behave well, like the usual G\"odel pairing function. 
In the course of the present paper, we will consider weakenings of this requirement.

\begin{defn}[Cofinality spectrum problems, \cite{MiSh:998} Definition 2.4] 
\label{cst}  \label{d:csp} 
We call the six-tuple 
\[ \cs = (M, M_1, M^+, M^+_1, T, \Delta)\] a \emph{cofinality spectrum problem} when:
\begin{enumerate}
\item  $M \preceq M_1$.
\item $T \supseteq Th(M)$ is a theory in a possibly larger vocabulary.
\item $\Delta$ is a set of formulas in the language of $M$, i.e., we are interested in studying the orders of $M, M_1$ in the
presence of the additional structure of the expansion. 
\item $M^+$, $M^+_1$ expand $M, M_1$ respectively so that $M^+ \preceq M^+_1 \models T$ 
and $(M^+_1, \Delta)$ has enough set theory for trees. 
\item We may refer to the components of $\cs$ as $M^\cs$, $\Delta^\cs$, etc. for definiteness. When $T = Th(M)$, $M=M^+$,
$M_1 = M^+_1$, or $\Delta$ is the set of all formulas $\vp(x,y,\overline{z})$ in the language of $T$ which satisfy $\ref{d:estt}(2)-(4)$, 
these may be omitted. 
\end{enumerate}
\end{defn}

\begin{rmk}
The identities of $M^+$ and $M_1$ are not essential to many arguments. 
\end{rmk}

\begin{defn}[The cardinals $\xp_\cs$, $\xt_\cs$ and the cut spectrum, \cite{MiSh:998} Definition 2.8] \label{cst:card}
For a cofinality spectrum problem $\cs$ we define the following: %\footnote{``$\ord$'' stands for orders, ``$\tr$'' for trees, ``ct'' for cut, ``ttp'' for treetops.}
\begin{enumerate}
\item $\ord(\cs) = \ord(\Delta^\cs, M^\cs_1)$ % in the sense of $\ref{d:estt}$ %, i.e. 

\br
\item $\cts = $
%\begin{align*}
$\{ (\kappa_1, \kappa_2)   : ~$for some $\ma \in \ord(\cs)$, $(X_\ma, \leq_\ma)$ 
 has a $(\kappa_1, \kappa_2)$-cut$\}$. \\ Note that the $\kappa_\ell$ are infinite. 
%\end{align*}

%In particular, $\ref{mc}$ requires that $\kappa_1, \kappa_2$ are regular. 
\br
\item \label{d:tr-cs} $\tr(\cs) = \{ \mct_\ma : \ma \in \ord(\cs) \} = \tr(\Delta^\cs, M^\cs_{1})$ %in the sense of $\ref{d:estt}$

\br

\item $\mc^{\mathrm{ttp}}(\cs) =$ $\{ \kappa ~:~ \kappa \geq \aleph_0, ~ \ma \in \ord(\cs)$, and there is in the tree $\mct_\ma$ 
 a strictly increasing sequence of cofinality $\kappa$ with no upper bound $\}$

\br
\item \label{card:t}  Let $\xt_\cs$ be $\min \mc^{\mathrm{ttp}}(\cs)$ and let $\xp_\cs$ be
$\min \{ \kappa : (\kappa_1, \kappa_2) \in \cts ~\mbox{and}~ \kappa = \kappa_1 + \kappa_2 \}$.
\end{enumerate}

\br
\emph{A key role will be played by $\mc(\cs, \xt_\cs)$, where this means:}

\br
\begin{enumerate}[resume]
\item For $\lambda$ an infinite cardinal, write
\[ \mc(\cs, \lambda) = \{ (\kappa_1, \kappa_2) ~:~ \kappa_1 + \kappa_2 < \lambda, ~ (\kappa_1, \kappa_2) \in \cts \}. \] 
\end{enumerate}
\end{defn}

By definition, both $\xt_\cs$ and $\xp_\cs$ are regular. The main engine of the paper \cite{MiSh:998} was the following theorem, proved by model-theoretic means. 
Note that it entails $\xt_\cs \leq \xp_\cs$. 

\begin{thm-lit}[\cite{MiSh:998} Theorem 9.1] \label{998-mt}
For any cofinality spectrum problem $\cs$, $\mc(\cs, \xt_\cs) = \emptyset$.
\end{thm-lit}

\begin{disc} \label{on-pairing} \emph{We now resume the discussion following Definition \ref{d:estt}. 
In \cite{MiSh:998}, Cartesian products were used in two ways. 
First, we needed the simple existence of Cartesian products of pairs of elements of $\ord(\cs)$, 
with no restrictions on the ordering of the pairs other than: (i) pseudofiniteness and 
(ii) the property that if $d_\ma$, $d_\mb$ are nonstandard then so is $d_{\ma \times \mb}$ (call such 
a product \emph{nontrivial}). This was needed for the basic arguments connecting behavior across all orders: notably, 
establishing existence of the lower cofinality function.  
To rule out symmetric cuts, we needed only to be able to take the Cartesian product of an order with itself.  
For the main lemma ruling out antisymmetric cuts, we needed (a) the existence of $\ma$ and $\ma^\prime$ such that $\ma$ is coverable 
as a pair by $\ma^\prime$ (see below), and then (b) the existence of the nontrivial product $\ma \times \ma \times \ma \times 
\ma^\prime \times \ma^\prime \times \ma^\prime$. One may take as a definition of ``coverable as a pair'' its key property, 
Corollary 5.7(a) of \cite{MiSh:998} [quoted here with the assumptions of nontriviality made there explicitly added in]:} 
\end{disc}
\begin{itemize}
\item[(**)] there is $a \in X_\ma$, with $a < d_\ma$ and $a$ not a finite successor of $0_\ma$, such that the G\"odel codes for functions from $[0,a]_\ma$ to $[0,a]_\ma \times [0,a]_\ma$
may be definably identified with a definable subset of $X_{{\ma}^\prime}$ whose greatest element is $< d_{\ma^\prime}$. 
\end{itemize}
For property (**), 
it suffices to have $\ma \in \ord(\cs)$ such that the order on $\ma \times \ma$ is given by the G\"odel pairing function and $d_{\ma \times \ma}$ is nonstandard, 
by the arguments of \cite{MiSh:998}. We will give some alternate sufficient conditions in Section \ref{s:tools}. 

%Otherwise: 
%
%\begin{concl}
%To carry out the proof of \cite{MiSh:998} that $\mc(\cs, \xt_\cs) = \emptyset$, it suffices to assume that $\cs$ is a cofinality spectrum problem satisfying \ref{d:estt}
%except that the conditions on closure of $\ord(\cs)$ under Cartesian products are replaced by the   
%For the existence of (**) as just described, it suffices to have [nontrivial] $\ma \in \ord(\cs)$ such that $d_{\ma \times \ma} = \max X_{\ma \times \ma}$. 
%\end{concl}
%
%\begin{proof}
%
%
%\end{proof}
%
%\begin{cor}
%For the main theorems of \cite{MiSh:998} on cofinality spectrum problems, it suffices to assume that:
%%\begin{enumerate}
%%\item 
%for any pair $\ma, \mb$ of elements of $\ord(\cs)$, not necessarily distinct, there is $\mc \in \ord(\cs)$ such that $X_\mc = X_\ma \times X_\mb$ and 
%$d_\mc = \max(X_\mc)$. 
%%\item for some $ $
%%\end{enumerate}
%\end{cor}
%
%\begin{proof}
%
%
%\end{proof}

The following summarizes our conventions. 

\begin{conv}[Key conventions on CSPs] \label{conv:cs}  \emph{ }

\begin{enumerate}[label=\emph{(\alph*)}]
\item Recall that definable means in the sense of $M^+_1 = M^+_1[\cs]$, unless otherwise stated.\footnote{the main exception being 
that the elements of $\ord(\cs)$ are a priori required to be $\Delta$-definable; this is relaxed if $\cs$ is hereditarily closed, 
defined below.}  
\item Recall that for $\mct_\ma \in \tr(\cs)$, $\eta \in \mct_\ma$ implies 
that $\lgn(\eta) \leq d_\ma$, and we have closure under concatenation, i.e. if $\lgn(\eta) < d_\ma$ and $a \in X_\ma$ then $\eta^\smallfrown\langle a \rangle \in \mct_\ma$. 
\item When $\mct$ is a definable subtree of $\mct_\ma$, we will write 
\[ (\mct, \tlf_\ma) \mbox{ or just } (\mct, \tlf) \mbox{ to mean } (\mct, \tlf_\ma\rstr_\mct) \]
\item When $\eta$ is a sequence of $n$-tuples, $t < \lgn(\eta)$, and $k<n$, write $\eta(t,k)$ for the $k$th element of $\eta(t)$. 
%\item In the cases where $\cs$ is \emph{not} weak and so has Cartesian products, write $\mct_{\ma \times \mb}$ for the tree associated to this order. 
\item Generalizing \cite{MiSh:998}, we use ``internal map'' in the present paper for \emph{any} map definable in $M^+_1$, not necessarily an element of some $\mct \in \tr(\cs)$. 
\item We call a definable, discrete order \emph{pseudofinite} when every nonempty, bounded, definable subset has a first and last element. 
\item We call $\ma \in \ord(\cs)$ \emph{nontrivial} when $d_\ma$ is not a finite successor of $0_\ma$, and we call $\cs$ nontrivial when at least one $\ma \in \ord(\cs)$ 
is nontrivial.%\footnote{and thus also that the squares, or products of nontrivial $\ma$, are nontrivial: see \ref{d:estt}$(\ref{x:here})$.}  Note that $d_\ma$ need not be $\max X_\ma$. We assume all CSPs and all $\ma \in \ord(\cs)$ which we work with are nontrivial unless otherwise stated. 
\end{enumerate}
\end{conv}

\subsection{New definitions: weak and hereditary CSPs.} 
We now give some new definitions weakening and extending CSPs looking towards applications.
%, possibly 
%models of set theory, Peano arithmetic 
%and other applications.  

First, as regards applying CSPs it will be useful to keep track of whether or not the assumption on closure under 
Cartesian products is needed. For example, in dealing with a CSP arising from a model $M = M^+ = M_1^+$ whose domain is linearly ordered, 
it may be that we care primarily about cuts occurring in the linear order $\dom(M)$. 
Thus, we introduce `weak' CSPs in Definition \ref{a2x} and in the present paper, use them where possible.  

%For the present paper, we will keep track of when the condition on closure under Cartesian products is needed; we now 
%give this a name and investigate some consequences.   

\begin{defn}[Weak CSP] \label{a2x} \emph{ }
\begin{enumerate}
\item Call $\cs$ a \emph{weak CSP} if it satisfies $\ref{d:csp}$ omitting the requirement that $\ord(\cs)$ be closed under 
Cartesian product, $\ref{d:estt}(5)+(6)$. 

\item For $\cs$ a weak CSP, $\ma \in \ord(\cs)$,  let $Y_\ma = Y_{\ma, \cs} = \{ a \in X_\ma : a <_\ma d_\ma \}$. 
\end{enumerate}
\end{defn}

%Many of the results of the paper are true in the generality of weak CSPs so we use them where possible (recall Discussion \ref{on-pairing}). 

Second, in \cite{MiSh:998} Convention 5.1 we had observed that CSPs have available an internal notion of cardinality.\footnote{which a priori need not satisfy all properties 
of cardinality. There
it was required that the function be an element of the tree $\mct_{\ma \times \mb}$.  Depending on the amount of set theory available in $M^+_1$, 
the present weakening is an a priori loss in definability: when functions are required to belong to a definable tree, we are always able to 
quantify over them, whereas in the definition below the quantification over all such functions may be strictly informal (not first order).} 
In the present paper, we use the more general definition: 

\begin{defn}
Let $\cs$ be a cofinality spectrum problem. %, $\ma, \mb \in \ord(\cs)$. 
Whenever $A, B$ are definable subsets of $M_1$ $($with parameters$)$, we write
\[ \text{``} |A| \leq |B| \text{''} \]
to mean ``there exists an $M^+_1$-definable partial 1-to-1 function $h$ with 
$A \subseteq \dom(h)$ and $\rn(h) \subseteq B$''. Likewise, we write
\[ \text{``} |A| < |B| \text{''} \]
to mean ``$(|A| \leq |B|) \land \neg ( |B| \leq |A| )$,'' i.e. $|A| \leq |B|$ and there does not exist an $M^+_1$-definable 
injection from $B$ into $A$.
\end{defn}

Third, while the definition of CSP carefully lays out the 'canonical' orders and trees, it is natural to add some 
internal closure conditions, stating essentially that objects which are 
internally isomorphic to canonical ones also count. 

\begin{defn} \label{m10}
Let $\cs$ be a CSP or weak CSP. %, Definition $\ref{d:csp}$, or just a weak CSP, Definition $\ref{a2x}$. 

\begin{enumerate}
\item We call $\mb$ a \emph{pseudo-order of $\cs$} and write $\mb \in \psord(\cs)$ when it satisfies all of the requirements on elements of $\ord(\cs)$ from Definition \ref{d:estt} 
in the case where $\Delta$ is the set of all formulas 
in the language. \footnote{In particular, it must have an associated, definable tree, with definable length functions 
and concatenation, and so forth.} $($The point is that possibly the formulas defining it are not from $\Delta_\cs$.$)$ 
\item Given $\mb_2 \in \psord(\cs)$, we say $\mb_1$ is a \emph{suborder} of $\mb_2$ and write $\mb_1 \in \sub(\mb_2)$ when: $\mb_1 \in \psord(\cs)$, $X_{\mb_1} \subseteq X_{\mb_2}$ 
as linear orders, and there is $d \leq_{\mb_2} d_{\mb_2}$ and $\eta \in \mct_{\mb_2}$ with $\max \dom(\eta) = d = d_{\mb_1}$ 
which lists $\langle c \in X_{\mb_1} : c <_{\mb_1} d_{\mb_1} \rangle$ in increasing order.\footnote{So e.g. $\mb_1 \notin \sub(\mb_2)$ when $X_{\mb_1}$ is an 
initial segment of $X_{\mb_2}$ and $d_{\mb_2} < \max(X_{\mb_1})$, since there is not enough room for the list.}  
\end{enumerate}
\end{defn}

\begin{defn}[Hereditary closure]  Let $\cs$ be a CSP or weak CSP. 
\begin{enumerate}
\item Let $\mb_1, \mb_2 \in \psord(\cs)$. We say $(f,g,h)$ is an accurate isomorphism from $\mb_1$ onto $\mb_2$ when it 
respects both the order structure below $d_{\mb_i}$ and the tree structure, that is:
\begin{enumerate}
\item $f$ is a one-to-one mapping, definable in $M^+_1$, from $X_{\mb_1}$ to $X_{\mb_2}$ 
\item $g$ is a one-to-one mapping, definable in $M^+_1$, from $\{ a : a <_{\mb_1} d_{\mb_1} \}$ 
onto $\{ a : a <_{\mb_2} d_{\mb_2} \}$, such that $a<_{\mb_1} c <_{\mb_1} d_{\mb_1} \iff
g(a) <_{\mb_2} g(c) <_{\mb_2} d_{\mb_2}$. 
\item $h$ is a one-to-one function definable in $M^+_1$ from $\mct_{\mb_1}$ onto $\mct_{\mb_2}$ such that 
\[ \eta \tlf_{\mct_{\mb_1}} \nu \iff h(\eta) <_{\mct_{\mb_2}} h(\nu) \] 
and $\eta = \nu^\smallfrown\langle a \rangle$ in $\mb_1$ 
iff $h(\eta) = h(\nu)^\smallfrown\langle f(a) \rangle$ in $\mb_2$.
\end{enumerate}

%\item We say \emph{accurately isomorphic} to indicate existence of a weak triple isomorphism. 

\item Call $\cs$ \emph{hereditary} if $\ord(\cs)$ is closed under $\sub$ and accurate isomorphism. 

\item Say ``$\cs_2$ is the hereditary closure of $\cs_1$'' when $\cs_1, \cs_2$ are the same except that $\ord(\cs_2)$ is the closure of 
$\ord(\cs_1)$ under members of $\sub$ and under accurate isomorphism.  
%In particular, $\cs_2$ does \emph{not} necessarily 
%satisfy ``closure under product,'' $\ref{d:estt}(6)$. 
\end{enumerate}
\end{defn}

\begin{obs}
If $\cs$ is a weak CSP then so is its hereditary closure. 
\end{obs}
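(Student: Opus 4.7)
The plan is essentially to unwind the definitions. A weak CSP requires conditions (1)--(4) and (7) of Definition \ref{d:estt} to hold of each $\mb \in \ord(\cs)$, and \emph{not} the pair-closure conditions (5)--(6) on Cartesian products. Definition \ref{m10}(1) defines $\psord(\cs)$ precisely as the class of objects $\mb$ satisfying exactly those per-element clauses (reading $\Delta$ as ``all formulas in the language''). So the heart of the matter is to verify that the hereditary closure is contained in $\psord(\cs)$, since any $\mb \in \psord(\cs)$ automatically witnesses everything a weak CSP demands of an element of $\ord(\cs)$.

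First I would observe that $\ord(\cs_1) \subseteq \psord(\cs)$: each member of $\ord(\cs_1)$ already satisfies all the per-element requirements because $\cs_1$ is itself a weak CSP. Then I would check that both closure operations preserve $\psord(\cs)$-membership. For $\sub$, this is literally built into Definition \ref{m10}(2): $\mb_1 \in \sub(\mb_2)$ by definition presupposes $\mb_1 \in \psord(\cs)$, and the witnessing $\eta \in \mct_{\mb_2}$ transports the tree structure from $\mb_2$ down to $\mb_1$ in a definable way. For accurate isomorphism, Definition \ref{m10}(1) is only stated when both $\mb_1$ and $\mb_2$ already lie in $\psord(\cs)$; the only sensible reading of ``closed under accurate isomorphism'' is that whenever $\mb_2 \in \psord(\cs)$ is accurately isomorphic to a previously-included $\mb_1$, it is included, and this again keeps the closure inside $\psord(\cs)$.

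Finally, Convention \ref{conv:cs}(a) licenses the one apparent loss: new members of $\ord(\cs_2)$ need not be defined by formulas from the original $\Delta^{\cs_1}$, but this is explicitly permitted for hereditarily closed $\cs$. I do not anticipate a serious obstacle. The closest thing to a subtle point is that the defining triple $(f,g,h)$ of an accurate isomorphism makes $g$ a bijection only on the elements below $d_{\mb_i}$, while $f$ is defined on all of $X_{\mb_1}$; however, pseudofiniteness of bounded definable subsets of $X_{\mb_2}$ --- including those that approach or reach $d_{\mb_2}$ --- is already part of $\mb_2 \in \psord(\cs)$, so no additional verification is required. Similarly, one should note in passing that iterating $\sub$ and accurate isomorphism stabilizes, since $\psord(\cs)$ is itself closed under both operations, so the hereditary closure is well-defined as the smallest subset of $\psord(\cs)$ containing $\ord(\cs_1)$ and closed under both.
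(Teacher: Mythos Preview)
Your proposal is correct and appropriate. The paper states this as an observation with no proof given, treating it as immediate from the definitions; your careful unpacking of why $\psord(\cs)$ is closed under $\sub$ and accurate isomorphism (both essentially by fiat in Definition~\ref{m10}) and your handling of the $\Delta$-definability issue via Convention~\ref{conv:cs}(a) supply exactly the verification the paper leaves implicit.
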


\begin{obs} \label{m11a} 
As a consequence of Definition \ref{m10}(3), if $\cs$ is a weak CSP which is hereditarily closed, then for any $\ma \in \ord(\cs)$, 
any $X \subseteq Y_\ma$ and any $d \in X \cap [0_\ma, d_\ma]$, there is $\mb \in \ord(\cs)$ with $X_\mb = X$, $<_\mb = <_\ma \rstr X$, and $d = d_\mb$. 
\end{obs}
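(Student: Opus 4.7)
\textbf{Proof plan for Observation \ref{m11a}.}

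The plan is to realize the desired $\mb$ as (or accurately isomorphic to) a suborder of $\ma$ and invoke the hereditary closure of $\cs$. So the game is to build a witness $\eta \in \mct_\ma$ meeting the requirements of Definition \ref{m10}(2), and then compensate, via accurate isomorphism, for any indexing mismatch between the internal cardinality of $X$ below $d$ and the element $d$ itself.

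First I build the base suborder $\mb_0$. Since $X \subseteq Y_\ma$ is $M^+_1$-definable and bounded by $d_\ma$, the subset $X^{-} := \{c \in X : c <_\ma d\}$ is a bounded definable subset of the pseudofinite order $X_\ma$, so it has some internal cardinality $n \in X_\ma$ with $n \leq_\ma d$. Using the ``enough set theory for trees'' clause of Definition \ref{d:estt} applied to $\ma$ — in particular the definability of characteristic functions of bounded definable subsets, coupled with concatenation — one produces a definable $\eta \in \mct_\ma$ with $\max \dom(\eta) = n$ whose values enumerate $X^{-}$ in $<_\ma$-increasing order. Letting $X_{\mb_0} = X^{-} \cup \{n\}$, $<_{\mb_0} = <_\ma \rstr X_{\mb_0}$, and $d_{\mb_0} = n$, Definition \ref{m10}(2) is satisfied with witness $\eta$, so $\mb_0 \in \sub(\ma)$; since $\cs$ is hereditary, $\mb_0 \in \ord(\cs)$.

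Second I adjust via accurate isomorphism to move the distinguished element from $n$ to the prescribed $d$, and to extend the underlying set from $X^{-} \cup \{n\}$ to $X$. Let $\mb^\prime$ be the pseudo-order with $X_{\mb^\prime} = X^{-} \cup \{d\}$, $<_{\mb^\prime} = <_\ma \rstr X_{\mb^\prime}$, $d_{\mb^\prime} = d$, and tree obtained by transporting $\mct_{\mb_0}$. Define $f : X_{\mb^\prime} \to X_{\mb_0}$ by $f \rstr X^{-} = \id$ and $f(d) = n$; $g$ is the identity on $X^{-}$ (an order-isomorphism of both initial segments, which have the same underlying set); and $h$ is the induced bijection between the two trees, sending a sequence in $\mct_{\mb^\prime}$ to the corresponding sequence in $\mct_{\mb_0}$ by composing with $f$ on values. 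All three maps are $M^+_1$-definable, and they visibly respect length, initial segment, and concatenation, so $(f,g,h)$ is an accurate isomorphism from $\mb^\prime$ onto $\mb_0$; hereditary closure then gives $\mb^\prime \in \ord(\cs)$. Finally, $X \setminus (X^{-} \cup \{d\})$ consists of elements strictly above $d_{\mb^\prime}$, hence outside $Y_{\mb^\prime}$; adjoining them enlarges only the underlying set, not the tree structure bounded by $d$. Hereditary closure (applied once more via the identity accurate isomorphism below $d$) delivers the desired $\mb \in \ord(\cs)$ with $X_\mb = X$, $<_\mb = <_\ma \rstr X$, $d_\mb = d$.

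The main obstacle is precisely the reindexing step: the enumeration $\eta$ naturally has length $n = |X^{-}|$, whereas the statement prescribes $d_\mb = d$, and in general $n <_\ma d$. Without the flexibility of accurate isomorphism, a direct application of $\sub(\ma)$ would not allow the distinguished element of $\mb$ to be an arbitrary element of $X \cap [0_\ma, d_\ma]$. Once one grants accurate isomorphism, the remaining verifications (definability of $f, g, h$ in $M^+_1$ and preservation of all tree operations) are routine checks of clauses already encoded in Definition \ref{d:estt}.
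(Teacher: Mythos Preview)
The paper offers no proof; the observation is stated as an immediate consequence of hereditary closure. Your strategy (build a suborder, then adjust by accurate isomorphism) is the intended one, and you correctly isolate the real obstacle: the suborder clause forces $d_{\mb_0}$ to equal the length of the enumerating sequence, which need not be the prescribed $d$. However, two steps do not go through as written.

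First, your $\mb_0$ does not satisfy Definition~\ref{m10}(2). With $X_{\mb_0} = X^{-}\cup\{n\}$, $<_{\mb_0} = {<_\ma}\rstr X_{\mb_0}$ and $d_{\mb_0} = n = |X^{-}|$, the clause requires $\eta$ to list $\{c\in X_{\mb_0}: c <_{\mb_0} n\} = \{c\in X^{-}: c <_\ma n\}$, not $X^{-}$ itself. Since $X^{-}$ may sit far above $n$ in $X_\ma$ (take $X^{-}$ a two-element set of large nonstandard elements, so $n=2$), this set can be strictly smaller than $X^{-}$, and then no sequence of length $n$ lists it. Second, the final ``adjoin the elements above $d$'' step fails: by Definition~\ref{d:estt}(7)(a) the values of tree elements range over all of $X_\mb$, not just $Y_\mb$, so enlarging $X_{\mb'}$ to $X$ genuinely enlarges the tree, and there is no bijection $h:\mct_{\mb'}\to\mct_\mb$, hence no accurate isomorphism.

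Both problems disappear if you enumerate all of $X$ at once. Let $e:[0_\ma,n)\to X$ be the increasing definable bijection (where $n=|X|$) and put $m=e^{-1}(d)$. Take $\mb_0$ with $X_{\mb_0}=[0_\ma,n)$ and $d_{\mb_0}=m$; here the set below $d_{\mb_0}$ is exactly $[0_\ma,m)$, of internal size $m$, so the identity sequence witnesses $\mb_0\in\sub(\ma)$. Then $(e,\, e\rstr[0_\ma,m),\, h)$ with $h$ induced by post-composition with $e$ is an accurate isomorphism from $\mb_0$ onto $\mb$, and hereditary closure gives $\mb\in\ord(\cs)$ in a single pass.
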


%The hypothesis ``weak CSP'' will be used where applicable in the rest of the paper, for greater generality.  
%However, for many theorems we specialize to models of arithmetic. More can be said about weak CSPs; 
%some basic results are collected in the Appendix. 
%

\section{CSPs with exponentiation}

A feature of CSPs is that one can always develop a certain amount of Peano arithmetic internally, as in \cite{MiSh:998} \S 5. 
That observation motivates the main definition of this section, `closure under exponentiation.' In this section we work out that 
CSPs of this kind have the very nice property that $\xp_\cs = \xt_\cs$.  For context, recall that the main theorem of \cite{MiSh:998} that 
$\mc(\cs, \xt_\cs) = \emptyset$ only implies that $\xt_\cs \leq \xp_\cs$. % \emph{is} a $(\xt_\cs, \xt_\cs)$-cut. 
For the other direction, it's natural to try to show that a $\kappa$-indexed path through some $\mct$ with no upper bound translates to a $(\kappa, \kappa)$-cut 
in some $\ma \in \ord(\cs)$. The problem is that the natural translation produces a definable, discrete linear order 
which contains a $(\kappa, \kappa)$-cut but is \emph{not} necessarily a member of $\ord(\cs)$. After explaining the problem, we will propose the solution. 

%Fir
%
%In \cite{MiSh:998} it was shown that in any 
%In this section we introduce a definitions
%
%This section defines and develops the notion of ``closure under exponentation'' for a CSP or weak CSP. Using this we resolve the following. 
%In \cite{MiSh:998}, our proof that $\mc(\cs, \xt_\cs) = \emptyset$ (Theorem \ref{998-mt}, quoted in \S \ref{s:preparation} above) showed that 
%for any cofinality spectrum problem $\cs$, $\xt_\cs \leq \xp_\cs$.  As discussed in \cite{MiSh:998} \S 6, this leaves open the question of whether $\xp_\cs \leq \xt_\cs$. 
%In Theorem \ref{ps-is-ts} below, we will prove the second inequality holds when $\cs$ is closed under exponentiation. 
%This allows us to characterize the first (smallest) cut in $\mc(\cs, \xt_\cs)$ as symmetric, which will be useful in \S \ref{s:peano-arithmetic}.
%

\begin{hyp} \label{b0}
In this section, unless otherwise stated, $\cs$ is a \emph{weak} CSP. 
For transparency, we assume $\cs$ is hereditary, i.e. closed under $\sub$ and full internal isomorphism. 
We retain the notation $Y_\ma$ from $\ref{a2x}$. 
\end{hyp}
%
%The issue in showing that $\xp_\cs \leq \xt_\cs$ arises as follows. 

\begin{defn} \emph{(Flattening the tree, c.f. \cite{MiSh:998} Lemma 6.2)} \label{d:coll}
Let $\ma \in \ord(\cs)$ be given. We define the following linear order.
Fix in advance two distinct elements of $X_\ma$; without loss of generality\footnote{recall that $X_\ma$ is a discrete linear order with first element, so $0_\ma, 1_\ma$ have 
the natural meanings.} 
we use $0_\ma, 1_\ma$, called $0,1$, so $0 <_\ma 1$.
Let $\os_\ma$ be the set $\mct_\ma \times \{ 0, 1 \}$.
Let $<_{\os_\ma}$ be the linear order on $\os_\ma$ defined as follows:
\begin{itemize}
\item If $c = d$, then $(c, i) <_{\os_\ma} (d,j)$ iff $i<_\ma j$
\item If $c \tlf_\ma d$ and $c \neq d$, then $(c, 0) <_{\os_\ma} (d, 0) <_{\os_\ma} (d, 1) <_{\os_\ma} (c, 1)$
\item If $c, d$ are $\tlf$-incomparable, then let $e \in \mct_\ma$, $n_c, n_d \in {\os_\ma}$ be such that
$e = \cis(c,d)$ is the common initial segment and $e^\smallfrown n_c \tlf_\ma c$ and $e^\smallfrown n_d \tlf_\ma d$. Necessarily $n_c \neq n_d$ by definition
of $e$, so for $s, t \in \{ 0, 1 \}$ we define
\[ (c, s) <_{\os_\ma} (d,t) \iff n_c <_\ma n_d \]
\end{itemize}
For each given $\ma$, we refer to this ordered set $({\os_\ma}, <_{\os_\ma})$ just constructed 
as ``the order given by flattening the tree $\mct_\ma$''.
\end{defn}

\begin{conv} \label{s:conv}
For the remainder of this article, given $\cs$ and $\ma \in \ord(\cs)$, let $({\os_\ma}, <_{\os_\ma})$ denote the order constructed in \ref{d:coll}. 
\end{conv}

\begin{fact} \label{cy} 
Let $\cs$ be a weak CSP. 
If $\ma \in \ord(\cs)$, $\mct_\ma$ witnesses treetops and $({\os_\ma}, <_{\os_\ma})$ is the order given by flattening the tree 
$\mct_\ma$ then $({\os_\ma}, <_{\os_\ma})$ has a $(\xt_\cs, \xt_\cs)$-cut.
\end{fact}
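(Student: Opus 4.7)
The plan is to lift a witness of $\xt_\cs$ from the tree to a symmetric cut in $(\os_\ma, <_{\os_\ma})$ by reading off the two ``sides'' of each node $\eta$ (namely $(\eta,0)$ and $(\eta,1)$) as the lower and upper approximants, respectively. The hypothesis ``$\mct_\ma$ witnesses treetops'' gives us a $\tlf_\ma$-increasing sequence $\langle \eta_\alpha : \alpha < \xt_\cs \rangle$ in $\mct_\ma$ with no $\tlf_\ma$-upper bound in $\mct_\ma$; by regularity of $\xt_\cs$ we may assume the sequence is strictly $\tlf_\ma$-increasing.

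First I would set $a_\alpha := (\eta_\alpha, 0)$ and $b_\alpha := (\eta_\alpha, 1)$ for $\alpha < \xt_\cs$ and check by direct inspection of Definition \ref{d:coll} that for $\alpha < \beta < \xt_\cs$ (so $\eta_\alpha \tnlf \eta_\beta$ properly) one has
\[ a_\alpha \; <_{\os_\ma}\; a_\beta \; <_{\os_\ma}\; b_\beta \; <_{\os_\ma}\; b_\alpha, \]
using the second clause of the definition of $<_{\os_\ma}$. Thus $\langle a_\alpha \rangle$ is strictly increasing of cofinality $\xt_\cs$, $\langle b_\alpha \rangle$ is strictly decreasing of coinitiality $\xt_\cs$, and every $a_\alpha$ lies below every $b_\beta$.

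The remaining (and main) task is to verify that no element of $\os_\ma$ lies strictly between the two sequences; this is where the hypothesis that the sequence has no upper bound in $\mct_\ma$ is used. Suppose toward contradiction that $(c,i) \in \os_\ma$ satisfies $a_\alpha <_{\os_\ma} (c,i) <_{\os_\ma} b_\alpha$ for every $\alpha < \xt_\cs$. I would case-split on the $\tlf_\ma$-relation between $c$ and a fixed $\eta_\alpha$:
\begin{itemize}
\item[(i)] If $c = \eta_\alpha$, then $(c,0) = a_\alpha$ and $(c,1) = b_\alpha$, so neither choice of $i$ can place $(c,i)$ strictly between; contradiction.
\item[(ii)] If $c \tlf_\ma \eta_\alpha$ properly, the second clause of Definition \ref{d:coll} gives $(c,0) <_{\os_\ma} a_\alpha$ and $b_\alpha <_{\os_\ma} (c,1)$; so $(c,i)$ is not strictly between; contradiction.
\item[(iii)] If $c, \eta_\alpha$ are $\tlf_\ma$-incomparable with $\cis(c,\eta_\alpha)^\smallfrown\langle n_c \rangle \tlf c$ and $\cis(c,\eta_\alpha)^\smallfrown\langle n_\alpha \rangle \tlf \eta_\alpha$, then the third clause of Definition \ref{d:coll} requires simultaneously $n_\alpha <_\ma n_c$ (to get $a_\alpha <_{\os_\ma} (c,i)$) and $n_c <_\ma n_\alpha$ (to get $(c,i) <_{\os_\ma} b_\alpha$); contradiction.
\end{itemize}
The only remaining case is $\eta_\alpha \tlf_\ma c$ properly, which must then hold for \emph{every} $\alpha$. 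But this makes $c$ a $\tlf_\ma$-upper bound in $\mct_\ma$ for $\langle \eta_\alpha : \alpha < \xt_\cs \rangle$, contradicting the choice of the sequence. Hence $\langle a_\alpha : \alpha < \xt_\cs \rangle$ and $\langle b_\alpha : \alpha < \xt_\cs \rangle$ realize a $(\xt_\cs, \xt_\cs)$-cut in $(\os_\ma, <_{\os_\ma})$, as required. The only potentially subtle point is the incomparable case (iii), where one must read the definition of $<_{\os_\ma}$ carefully to see that the symmetry in $s,t$ there is precisely what forces the contradiction.
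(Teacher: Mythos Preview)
Your argument is correct. The paper itself does not give a direct proof: it simply cites \cite{MiSh:998} Lemma 6.2 and observes that nothing in that argument uses closure under Cartesian products, so it transfers verbatim to weak CSPs. What you have written is essentially the content of that cited lemma --- taking the unbounded increasing path $\langle \eta_\alpha \rangle$ and reading off the nested intervals $[(\eta_\alpha,0),(\eta_\alpha,1)]$ in $\os_\ma$ is exactly the intended construction, and your case analysis for why no $(c,i)$ can realize the pre-cut is the standard verification. So your proof is not a different route; it is a spelled-out version of the proof the paper points to.
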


\begin{proof}
The proof when $\cs$ is a CSP is \cite{MiSh:998} Lemma 6.2. 
No assumptions are made about closure under Cartesian products in that proof, so the identical 
result holds for weak CSPs as well. 
\end{proof}

We cannot a priori conclude from Fact \ref{cy} that $(\xt_\cs, \xt_\cs) \in \mc(\cs, \xt_\cs)$, because we have not shown that $\os_\ma$  
belongs to $\ord(\cs)$. However, note that $({\os_\ma}, <_{\os_\ma})$ is a definable discrete linear order with a first and last element, since:

\begin{obs} \label{f16a}
For any weak cofinality spectrum problem $\cs$ and $\ma \in \ord(\cs)$, 
there is an internal order-isomorphism between an initial segment of $(\os_\ma, <_{\os_\ma})$ and $Y_\ma$. %, i.e. $(X_\ma \rstr _{< d_\ma}, <_\ma)$. 
\end{obs}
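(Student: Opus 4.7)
The plan is to use the ``leftmost'' branch of $\mct_\ma$ --- the element of $\mct_\ma$ constantly equal to $0_\ma$ --- to exhibit an explicit initial segment of $\os_\ma$ order-isomorphic to $Y_\ma$. The guiding observation is that in the incomparable clause of Definition \ref{d:coll}, a predecessor $(d,s)$ of a pair $(\zeta, 0)$ requires some value of $d$ to sit strictly below a corresponding value of $\zeta$; if $\zeta$ takes only the value $0_\ma$, no such predecessor can arise, and the $<_{\os_\ma}$-predecessors of $(\zeta, 0)$ reduce to pairs $(\zeta', 0)$ with $\zeta' \tlf_\ma \zeta$, $\zeta' \neq \zeta$.

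First I would verify that there exists $\eta^* \in \mct_\ma$ with $\lgn(\eta^*) = d_\ma$ and $\eta^*(i) = 0_\ma$ for all $i <_\ma d_\ma$. Let $\Lambda$ be the set of $a \leq_\ma d_\ma$ for which some $\zeta \in \mct_\ma$ has $\lgn(\zeta) = a$ and $\zeta(b) = 0_\ma$ for all $b <_\ma a$. This is an $M_1^+$-definable, bounded subset of $X_\ma$; it contains $0_\ma$ (via the empty sequence), and is closed under successor below $d_\ma$ by the concatenation axiom \ref{d:estt}(7)(c)(a). If $[0_\ma, d_\ma] \setminus \Lambda$ were nonempty, pseudofiniteness (\ref{d:estt}(4)) would supply a least element $a^*$ of the complement, with $a^* >_\ma 0_\ma$; since $<_\ma$ is discrete, $a^*$ has an immediate predecessor $a^{**} \in \Lambda$, and concatenating $0_\ma$ to a witness for $a^{**}$ produces a witness for $a^*$, a contradiction. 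An analogous pseudofiniteness argument yields, for each $a \leq_\ma d_\ma$, the unique initial segment $\zeta_a \tlf_\ma \eta^*$ of length $a$ in $\mct_\ma$, with uniqueness forced by \ref{d:estt}(7)(c)(b).

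Next I would set $f(a) = (\zeta_a, 0)$ for $a \in Y_\ma$. Since $\eta^*$ is fixed and $\zeta_a$ is definable from $\eta^*$ and $a$, the map $f$ is internal (i.e.\ definable in $M_1^+$), and it is strictly order-preserving because $a <_\ma a'$ forces $\zeta_a \tlf_\ma \zeta_{a'}$ with $\zeta_a \neq \zeta_{a'}$, hence $(\zeta_a, 0) <_{\os_\ma} (\zeta_{a'}, 0)$ by the second clause of Definition \ref{d:coll}. To see that the image of $f$ is downward closed in $\os_\ma$, fix $a <_\ma d_\ma$ and enumerate the cases of $(d,s) <_{\os_\ma} (\zeta_a, 0)$: if $d = \zeta_a$, the first clause requires $s <_\ma 0$, impossible; if $\zeta_a \tlf_\ma d$ with $\zeta_a \neq d$, the second clause places $(d,s) >_{\os_\ma} (\zeta_a, 0)$; if $d \tlf_\ma \zeta_a$ with $d \neq \zeta_a$, the second clause yields strict inequality precisely when $s = 0$, and then $d = \zeta_b$ for some $b <_\ma a$; and if $d$ is $\tlf$-incomparable with $\zeta_a$, the third clause demands $d(\lgn(e)) <_\ma \zeta_a(\lgn(e)) = 0_\ma$ at the common initial segment $e$, which has no solution. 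Hence every $<_{\os_\ma}$-predecessor of $f(a)$ is already in the image of $f$.

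The main obstacle is really the existence claim for $\eta^*$: it rests on the least-element principle for definable bounded subsets of $X_\ma$ (pseudofiniteness) combined with the concatenation axiom to drive the induction step. Everything after that is direct case analysis forced by the definition of $<_{\os_\ma}$ together with the fact that $0_\ma$ is the minimum of $X_\ma$; in particular, hereditary closure of $\cs$ plays no role, so the argument goes through for any weak CSP.
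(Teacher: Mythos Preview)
Your proof is correct and follows essentially the same approach as the paper: both use the constantly-$0_\ma$ elements of $\mct_\ma$ paired with $0$ as the desired initial segment of $\os_\ma$, with the length function giving the isomorphism to $Y_\ma$. The paper's argument is a three-line sketch that simply asserts this set is an initial segment ``by the definition of $\os_\ma$,'' whereas you supply the full case analysis and the existence argument for the maximal constantly-$0$ branch via pseudofiniteness; so your version is strictly more detailed, but not a different route.
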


\begin{proof}
Let $\langle c_\alpha : \alpha < d_\ma \rangle$ be the sequence of elements of $\mct_\ma$ corresponding to functions which are constantly $0_\ma$, 
listed in increasing order: this sequence is linearly ordered by $\tlf_\ma$. 
By the definition of $\os_\ma$, we have that $(\{ (c_\alpha, 0): \alpha < d_\ma \}, <_\ma)$ is an initial segment of $(\os_\ma, <_{\os_\ma})$.  
Moreover, it is isomorphic to $(Y_\ma, <_\ma)$ via the internal map $\lgn_\ma$ from Definition \ref{d:estt}(7). 
\end{proof}

%We now give a key definition of the paper. 

%\begin{proof} 
%Clear.
%\end{proof}

%Recalling \ref{s:conv} and \ref{d:trees-ok}, we now give a key definition for the paper.
%Note that ``closed under exponentiation'' means informally that $|X_\ma|^{d_\ma}$ is well defined inside $M^+_{1,\cs}$, and not $2^{|X_\ma|}$.

\begin{defn}[Closed under exponentiation] \label{d:nx}
Let $\cs$ be a weak CSP. We say $\cs$ is closed under $($simple$)$ exponentiation %\emph{simply closed under exponentiation}, or has simple exponentiation, 
when for every nontrivial $\ma \in \ord(\cs)$ there is a nontrivial 
$\mb \in \ord(\cs)$ such that $(X_\mb, <_\mb)$ and $(\os_\ma, <_{\os_\ma})$ are internally isomorphic.  
\end{defn}

\begin{defn} \label{d:nx2} For $\cs$ a weak CSP, it will also be useful to define:
\begin{enumerate}
\item[(a)] $\cs$ is \emph{strongly closed under exponentiation}, or has strong exponentiation, when: for every $\ma \in \ord(\cs)$ there is 
$\mb \in \ord(\cs)$ such that $(\os_\ma, <_{\os_\ma})$ is accurately isomorphic to $( X_\mb \rstr {d_\mb}, <_\mb )$, following 
Definition $\ref{m10}$. \footnote{In Definition \ref{m10}, accurate isomorphism of two linear orders $\mb_0, \mb_1$ involves
an isomorphism of sets plus an order-isomorphism below the bounds $d_{\mb_i}$. For these purposes, consider the $d$ for $S_\ma$ to be $\max S_\ma$.} 

\item[(b)] $\cs$ is \emph{weakly closed under exponentiation}, or has weak exponentiation, when: for every $\ma \in \ord(\cs)$ there is 
$\mb \in \ord(\cs)$ such that $(X_\mb, <_\mb)$ and $(\os_\ma, <_{\os_\ma})$ are internally isomorphic and 
 $(Y_\mb, <_\mb )$, $(Y_\ma, <_\ma )$ are internally isomorphic. %, Definition \ref{d:342}. 

%\item ``$\cs$ has exponentiation'' will mean: satisfying one of the above. 
%\item ``Closed under exponentiation'' means: satisfying one of the above. 
\end{enumerate}
\end{defn}

\begin{disc}
The phrase ``$\cs$ has exponentiation'' clearly covers $\ref{d:nx}$, $\ref{d:nx2}(a)$ and $\ref{d:nx2}(b)$. One of the issues raised by $\ref{d:nx2}$ is 
whether $(\os_\ma, <_{\os_\ma})$ may map onto $X_\mb$ in such a way that $d_\mb$ is not above the range of the map. 
To distinguish $\ref{d:nx}$ from $\ref{d:nx2}$, we will say ``simple exponentiation.''
%If so, one has a priori less power. 
\end{disc}

%In Definition \ref{d:nx}, a key distinction is between having a

\begin{cor} \label{z45} \label{sym6}
Let $\cs$ be a weak cofinality spectrum problem which has exponentiation, $\ma \in \ord(\cs)$. 
Then: 
\begin{enumerate}
\item $(\os_\ma, <_{\os_\ma})$ is a discrete linear order in which every nonempty definable subset has a first and last element.
\item There is $\mb \in \ord(\cs)$ so that $(X_\mb, \leq_\mb)$ has a $(\xt_\cs, \xt_\cs)$-cut. 
\end{enumerate}
\end{cor}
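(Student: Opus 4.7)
The plan is to import $(\os_\ma, <_{\os_\ma})$ into $\ord(\cs)$ via exponentiation, then transfer properties along the resulting internal isomorphism.

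For part (1), fix the given $\ma \in \ord(\cs)$. By closure under exponentiation in any of its three senses (Definitions \ref{d:nx}, \ref{d:nx2}(a), or \ref{d:nx2}(b)), there is $\mb \in \ord(\cs)$ and an $M^+_1$-definable order-isomorphism $h$ from $(\os_\ma, <_{\os_\ma})$ onto $(X_\mb, <_\mb)$, or in the strong case onto the definable bounded subset $(X_\mb \rstr d_\mb, <_\mb)$. Since $\mb \in \ord(\cs)$, Definition \ref{d:estt}(2) and (4) give that $(X_\mb, <_\mb)$ is a discrete linear order with first and last elements, and is pseudofinite. These properties pull back along the definable map $h$: $(\os_\ma, <_{\os_\ma})$ is discrete, has first and last elements, and every bounded definable subset has first and last. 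Since $\os_\ma$ has first and last elements, every nonempty definable subset is bounded, hence has first and last elements by pseudofiniteness.

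For part (2), by the definition of $\xt_\cs$ in Definition \ref{cst:card}, there exists $\ma' \in \ord(\cs)$ such that $\mct_{\ma'}$ contains a strictly increasing sequence of cofinality $\xt_\cs$ with no upper bound, i.e., $\mct_{\ma'}$ witnesses treetops. Fact \ref{cy} applied to $\ma'$ then produces externally indexed sequences $\langle p_\alpha : \alpha < \xt_\cs \rangle$ and $\langle q_\beta : \beta < \xt_\cs \rangle$ realizing a $(\xt_\cs, \xt_\cs)$-cut in $(\os_{\ma'}, <_{\os_{\ma'}})$. Apply exponentiation to $\ma'$ to obtain $\mb \in \ord(\cs)$ with the order-isomorphism $h$ as in part (1). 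The transported sequences $\langle h(p_\alpha) : \alpha < \xt_\cs \rangle$ and $\langle h(q_\beta) : \beta < \xt_\cs \rangle$ are monotone of cofinality $\xt_\cs$ in $X_\mb$; if some $c \in X_\mb$ fills this cut, then $c <_\mb h(q_0)$, forcing $c$ to lie in the image of $h$, and then $h^{-1}(c)$ would fill the original cut in $\os_{\ma'}$, contradiction. Thus $(X_\mb, <_\mb)$ has a $(\xt_\cs, \xt_\cs)$-cut.

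The only real bookkeeping is for the strong exponentiation case, where the image of $h$ is the bounded subset $X_\mb \rstr d_\mb$ rather than all of $X_\mb$; but boundedness forces any hypothetical cut-filler to lie in that image, so the argument is unchanged. Everything else reduces to direct transfer of properties along a definable order-isomorphism, so no further calculation is required beyond what Fact \ref{cy} already provides.
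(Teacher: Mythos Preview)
Your proof is correct and follows essentially the same approach as the paper's own proof: for (1), transfer pseudofiniteness back along the internal isomorphism provided by exponentiation; for (2), pick an element whose tree witnesses treetops, apply Fact \ref{cy}, and push the resulting $(\xt_\cs,\xt_\cs)$-cut forward along the isomorphism. Your version is in fact slightly more careful than the paper's, which overloads the symbol $\ma$ in part (2) (writing ``Let $\mct_\ma$ witness treetops'' despite $\ma$ having been fixed in the hypothesis), whereas you correctly introduce a separate $\ma'$; your explicit handling of the strong-exponentiation case, where the image is only $X_\mb \rstr d_\mb$, is also a welcome addition that the paper glosses over.
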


\begin{proof}
(1) This is inherited from the order-isomorphism to an element of $\ord(\cs)$. 

(2) Let $\mct_\ma$ witness treetops. By Fact \ref{cy}, the order $(\os_\ma, <_{\os_\ma})$ has a $(\xt_\cs, \xt_\cs)$-cut. 
Given $\mb \in \ord(\cs)$ such that $(\os_\ma, <_{\os_\ma})$ is order-isomorphic to $X_\mb$, 
clearly $(X_\mb, \leq_\mb)$ has a $(\xt_\cs, \xt_\cs)$-cut.
\end{proof}

%\newpage

We arrive at a fact which will be useful throughout the paper: if $\cs$ has exponentiation then $\xp_\cs = \xt_\cs$ 
and `the first cut is symmetric'. 

\begin{theorem} \label{ps-is-ts} \emph{ }
\begin{enumerate}
\item Let $\cs$ be a weak CSP with exponentiation. Then $\xp_\cs \leq \xt_\cs$.
\item Let $\cs$ be a CSP with exponentiation. Then $\xp_\cs = \xt_\cs$.
\item Let $\cs$ be a CSP with exponentiation. Then
\[ \xt_\cs = \min \{ \kappa :  (\kappa, \kappa) \in \cts \} \] 
and the first cut in $\cts$ is necessarily symmetric, that is, if 
\[  \mu = \min \{ \kappa + \lambda : (\kappa, \lambda) \in \cts \} \]
then $(\mu, \mu) \in \cts$.
\end{enumerate}
\end{theorem}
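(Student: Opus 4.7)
The plan is to observe that almost all of the heavy lifting for this theorem has already been packaged into Corollary \ref{z45}(2), so the proof amounts to unpacking that corollary and combining it with the main theorem of \cite{MiSh:998}.

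For part (1), I would start by fixing $\ma \in \ord(\cs)$ and $\mct_\ma \in \tr(\cs)$ that witness treetops, in the sense that $\mct_\ma$ carries a strictly increasing $\tlf_\ma$-sequence of cofinality $\xt_\cs$ with no upper bound (such an $\ma$ exists by the definition of $\xt_\cs$ and $\mc^{\mathrm{ttp}}(\cs)$). Applying Corollary \ref{z45}(2) to this $\ma$, we obtain $\mb \in \ord(\cs)$ such that $(X_\mb, \leq_\mb)$ has a $(\xt_\cs, \xt_\cs)$-cut. In other words, $(\xt_\cs, \xt_\cs) \in \cts$, and hence
\[ \xp_\cs \;\leq\; \xt_\cs + \xt_\cs \;=\; \xt_\cs, \]
which is the desired inequality.

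For part (2), I would simply combine part (1) with Theorem \ref{998-mt}: the latter states that $\mc(\cs, \xt_\cs) = \emptyset$, which by the definition of $\xp_\cs$ means $\xt_\cs \leq \xp_\cs$; together with (1), this gives $\xp_\cs = \xt_\cs$. Note that (1) was sufficient for weak CSPs, but (2) requires that $\cs$ be a full CSP because Theorem \ref{998-mt} is proved using the Cartesian product closure.

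For part (3), with $\xp_\cs = \xt_\cs$ in hand, I would argue as follows. From part (1)'s proof we already have $(\xt_\cs, \xt_\cs) \in \cts$, so $\xt_\cs$ lies in the set $\{ \kappa : (\kappa, \kappa) \in \cts \}$, giving the $\geq$ direction of the min. Conversely, if $(\kappa, \kappa) \in \cts$ then $\xp_\cs \leq \kappa$, so by (2), $\xt_\cs \leq \kappa$, which gives the $\leq$ direction. For symmetry of the first cut, let $\mu$ be defined as in the statement; then $\mu = \xp_\cs = \xt_\cs$, and since $(\xt_\cs, \xt_\cs) \in \cts$, we conclude $(\mu, \mu) \in \cts$ as required.

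The only step that is not essentially a one-liner is verifying that the $\ma$ witnessing treetops at $\xt_\cs$ can be fed into Corollary \ref{z45}(2); this requires recalling that $\xt_\cs$ is realized as $\cf$ of some $\tlf_\ma$-increasing sequence in some $\mct_\ma \in \tr(\cs)$ with no upper bound, which is precisely the hypothesis used inside the proof of Fact \ref{cy} (and hence inside \ref{z45}(2)). There is no genuine obstacle here: the exponentiation hypothesis converts the treetops witness into a genuine $(\xt_\cs, \xt_\cs)$-cut inside an element of $\ord(\cs)$, which is exactly the bridge between $\xt_\cs$ and $\xp_\cs$ that was missing in \cite{MiSh:998}.
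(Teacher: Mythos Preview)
Your proposal is correct and follows essentially the same approach as the paper's proof: invoke Corollary \ref{z45}(2) (which is the same as \ref{sym6}(2)) to get $(\xt_\cs, \xt_\cs) \in \cts$ for part (1), combine with Theorem \ref{998-mt} for part (2), and unwind definitions for part (3). The only minor remark is that Corollary \ref{z45}(2) is already stated as an existential conclusion (``there is $\mb \in \ord(\cs)$ \ldots''), so you need not explicitly supply the $\ma$ witnessing treetops---that choice is internal to the corollary's proof---but your more explicit presentation is harmless and arguably clearer.
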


\begin{proof}
First we prove (1). Corollary \ref{sym6}(2) shows that {if} $\xt_\cs = \kappa$, then $(\kappa, \kappa) \in \cts$, so $\xp_\cs \leq \kappa + \kappa = \kappa$. 
Thus $\xp_\cs \leq \xt_\cs$. 

When in addition $\cs$ is a cofinality spectrum problem  
the analysis of \cite{MiSh:998} applies. By 
Theorem \ref{998-mt}, \S \ref{s:preparation} above 
we have that $\mc(\cs, \xt_\cs) = \emptyset$, thus $\xt_\cs \leq \xp_\cs$, proving (2).  

For (3), the proof of \ref{ps-is-ts} shows that $\xp_\cs = \min \{ \kappa : (\kappa, \kappa) \in \cts \}$. Since $\xp_\cs = \xt_\cs$, 
this is sufficient. 
\end{proof}

%\begin{concl} \label{c5x} 
%\end{concl}

%\begin{proof}
%\end{proof}

However, as we will see in Theorem \ref{t6}, 
the situation for the local versions of these cardinals, Definition \ref{d:local}, is more subtle. 
It would be interesting to explore this further.

\section{On bounded arithmetic} 

In this section, working towards our first main application in \S \ref{s:peano-arithmetic}, we set up CSPs and weak CSPs arising from models of PA or BPA and check when they are 
closed under exponentiation in the sense just described. 
%Working towards the first application of CSPs, to Peano arithmetic in the next section, the present section establishes relevant canonical ways of 
%finding CSPs from such models. 

\begin{defn} 
A formula is called bounded if all of its quantifiers are bounded. 
By BPA we mean bounded Peano arithmetic, that is, the restriction of the Peano axioms containing induction only for bounded formulas. 
\end{defn}

When working with models of PA or BPA, we will use the notation $x^y$ in accordance with: 

\begin{fact} \label{fact:exp} \emph{(Gaifman and Dimitracopoulos \cite{GD}, see \cite{paris-wilkie} \S 1.1)}
Let $I\Delta_0$ denote basic arithmetic with bounded induction. %, e.g. as in \cite{p-w} p. $261$.
There exists a $\Delta_0$ formula $\vp(x,y,z)$, which we denote by $x^y=z$, which can be shown 
in $I\Delta_0$ to have all the usual properties of the graph of exponentiation except for the sentence 
\[ \forall x \forall y \exists z  (x^y = z). \]  
\end{fact}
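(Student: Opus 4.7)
The plan is to build a $\Delta_0$ formula $\vp(x,y,z)$ expressing $x^y = z$, then verify inside $I\Delta_0$ the standard laws: the base case $x^0 = 1$, the recurrence $x^{y+1} = x \cdot x^y$, uniqueness of the third argument, multiplicativity $x^{y+y'} = x^y \cdot x^{y'}$, and monotonicity in each argument. The key observation is that whenever $x^y = z$, the full sequence of partial products $1, x, x^2, \ldots, x^y$ is bounded termwise by $z$ and has length only $y+1$; hence this sequence can be encoded by a single number polynomially bounded in $z$ and $y$, which is small enough to be quantified over via a bounded quantifier.

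The first step is to set up $\Delta_0$-definable sequence coding. The standard device here is the Bennett--Nepomnjascij coding via the Chinese Remainder Theorem: one constructs a $\Delta_0$ formula $\beta(a,b,i,w)$ meaning ``the $i$-th value coded by $(a,b)$ equals $w$'', such that $I\Delta_0$ proves that for any sequence $s_0,\ldots,s_y$ with each $s_i \leq M$, there exist $a, b \leq P(M,y)$ coding the sequence, for some fixed polynomial $P$. The delicate ingredient is a $\Delta_0$-provable lower bound on the density of primes, which is the technical heart of Bennett's argument and the reason this works inside $I\Delta_0$ rather than requiring stronger induction.

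With $\beta$ in hand, define
\[ \vp(x,y,z) \;:=\; \exists a,b \leq P(z,y)\,\bigl[\, \beta(a,b,0,1) \wedge \beta(a,b,y,z) \wedge \forall i < y\, \forall u, v\, \bigl(\beta(a,b,i,u) \wedge \beta(a,b,i+1,v) \to v = x \cdot u\bigr)\,\bigr]. \]
All quantifiers are bounded in terms of $z$ and $y$, so $\vp$ is $\Delta_0$. Each standard exponent law is then proved in $I\Delta_0$ by bounded induction on the exponent, using extension or splicing of witness codes: uniqueness of $z$ is a componentwise induction comparing two witnessing sequences; $x^{y+1} = x \cdot x^y$ follows by appending one more entry; multiplicativity follows by concatenating a sequence for $x^y$ with a shifted sequence for $x^{y'}$.

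The main obstacle is the coding step: one must carry out the CRT argument inside $I\Delta_0$, which means supplying $\Delta_0$-provable versions of all the number-theoretic ingredients (enough pairwise coprime moduli, control of remainders, polynomial bounds on $a,b$). Once this is in place, failure of totality is immediate and unavoidable: there exist models of $I\Delta_0$ closed under polynomial growth but not under exponentiation, so $\forall x\forall y\exists z\,(x^y = z)$ cannot be provable. Thus the formula $\vp$ captures the graph of exponentiation without committing to its totality, which is exactly the content of the fact.
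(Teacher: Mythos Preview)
The paper does not prove this statement at all: it is recorded as a \emph{Fact} with a citation to Gaifman--Dimitracopoulos and Wilkie--Paris, and no argument is given. So there is nothing to compare your proposal against in the paper itself.

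That said, your outline is a reasonable sketch of the standard approach from the cited literature: build $\Delta_0$ sequence coding with polynomial bounds, use it to express ``there is a computation sequence $1, x, x^2, \ldots, x^y = z$'', and verify the recursion laws by bounded induction. You correctly flag that the real work lies in getting the coding machinery inside $I\Delta_0$ --- in particular, producing enough pairwise coprime moduli with $\Delta_0$-provable bounds --- and that totality fails because $I\Delta_0$ has models not closed under exponential growth. As a pointer for anyone who wants to fill in the details, the Gaifman--Dimitracopoulos construction does not literally proceed via the Chinese Remainder Theorem in the form you describe; their argument and the subsequent treatment in Wilkie--Paris use a somewhat different encoding tailored to $\Delta_0$, so if you were to write this out in full you would want to follow those sources rather than the generic $\beta$-function route.
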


%The next definition lists some canonical weak CSPs obtainable from such models. 
%We will keep track of a set $D$ of elements for which powers exist. 

Regarding Definition \ref{m23}, we will focus on $\cs_3[N]$ for $N$ a model of PA or BPA. 
%, but we include natural variations. 

\begin{defn}[Some canonical weak CSPs from models]  \label{m23}
Let $N \models BPA$ or $N \models PA$, and $\ell = 2, 3$. 
We define $\cs = \cs_\ell[N, D]$. Omitting $D$ means $D = N$.  
\begin{enumerate}
\item[(0)] 
$D$ is an initial segment of $N$ closed under addition. $D$ is \emph{nontrivial} if it has a nonstandard member. 
We require that: 
\begin{enumerate}
%\item If $\ell = 1$, then $D = N$. 
\item If $\ell = 2$, then $d \in D$ implies that $N \models (\forall x)(x^d \mbox{ exists })$. 
\item The case $\ell = 3$ is covered in (2)(c). 
%\item The case $\ell = 4$ is covered in (2)(d). 
\end{enumerate}
\item $\Delta = \Delta_\cs$ is either: 
\begin{enumerate}
\item in the case of BPA, the set of all bounded formulas, or 
\item in the case of PA, the set of all formulas %$($all formulas in the full case$)$ 
$\vp(x,y,z)$ with $\ell(x) = \ell(y) = 1$,
\end{enumerate} 
which satisfy $\ref{d:estt}(1), (2), (4), (7)$. 
\item $\ma \in \ord(\cs)$ when the data of  
$(X_\ma, <_\ma)$ is internally isomorphic\footnote{recall that as the set is listed with its order, this means: internally order-isomorphic.} 
to some $(X_\ma, \leq_\ma, d_\ma, \mct_\ma)$ which satisfies: 
\begin{enumerate}
\item the set of elements of $X_\ma$ is a $\Delta$-definable bounded subset of $N$. 
\item $<_\ma$ is a $\Delta$-definable linear order on $X_\ma$ 
\item $d_\ma \in X_\ma$, and $( \{ d : d < d_\ma \}, <_\ma )$ is an initial segment of $N$ with the usual order. 
\begin{itemize}
%\item If $\ell = 1$, $N = D$ so $d_\ma \in D$ naturally and there are no further requirements.  
\item If $\ell = 2$, we require that $d_\ma \in D$. %\footnote{i.e. we add all such possible}
\item If $\ell = 3$, we require that $d_\ma \in D$ and $|X_\ma|^{d_\ma}$ exists. % \footnote{any nontriviality?}
%\item If $\ell = 4$, we require that $d_\ma \in D$ and $|X_\ma|^{d_\ma}$ exists, and that for every  
%$n \in N$ there is $\ma \in \ord(\cs)$ with $[0,n] \subseteq Y_\ma$. 
\end{itemize}
\item $\mct_\ma$ is defined and satisfies the conditions from $\ref{d:estt}$, and its defining formulas 
depend uniformly on the the formula defining $X_\ma$.
\end{enumerate}

%\br
%\item Unless otherwise stated, if $N \models PA$, $\Delta$ is from $(1)(b)$ and $D = N$. If $N \models BPA$, $\Delta$ is from $(1)(a)$.  
%
%%$\cs[N,D]$ is understood to be full $($and $D = N$$)$, and 
%%if just $N \models BPA$ then $\cs[N,D]$ is understood to be derived, or 
%%$D = \{ b \in N : N \models (\forall x)(x^b \mbox{ exists } ) \}$. 
%
%\item Writing $\cs[N]$ means $\cs[N,D]$ and $D = N$. 
%
\item Write $\cs^+[N, D]$ to indicate that we close the set of orders of $\cs[N,D]$ under taking Cartesian products, where $d_{\ma \times \mb}$ is 
understood to be nontrivial if $d_\ma, d_\mb$ are, 
and that the order on at least one of the pairs is given by the G\"odel pairing function. 
\end{enumerate}
\end{defn}

%\begin{rmk}
%In the present paper we concentrate on $\cs_3[N]$. 
%\end{rmk}

For use in later papers, we record here: 
\begin{defn} In the context of $\ref{m23}$, 
suppose $f$ is a nondecreasing function with $\dom(f) = N$ 
and $\rn(f) \subseteq \{ I : I $ an initial segment of $N$ $\}$, such that: $d \in f(a) $ implies that $a^d$ exists in $N$ 
and whenever $a$ is nonstandard, $f(a)$ contains some nonstandard $d$. 
Define $\cs[N, f]$ by requiring that for each $\ma$,
\begin{enumerate}
\item[(a)] $d_\ma \leq f(\max(X_\ma) + 1)$, and 
\item[(b)] $f(\max(X_\ma)+1) = I$ implies $d_\ma \in I$. 
\end{enumerate}
\end{defn}

Let us also give a name to a recurrent assumption. 
\begin{defn}[Reasonable models]
We say $N \models BPA$ is reasonable when $a \in N$ implies that $\{ a^n : n $ a standard integer $ \}$ is a bounded subset of $N$. 
\end{defn}
%\item $D$ is \emph{weak} in $N$ if $D$ is an initial segment of $N$ and $D$ is \emph{strong} in $N$ if it is closed under products. 

%\item We say $D$ is a reasonable initial segment for $N \models BPA$ when $d \in N \land x \in N \implies (x^d \mbox{ exists })$. 

%\item $D$ is nontrivial when it has a nonstandard member. 
%\end{enumerate} 
%\end{defn}

\begin{claim} \label{2.15A} Let $N \models BPA$.
\begin{enumerate}
\item If $N$ is $\aleph_1$-saturated or just recursively saturated, then $N$ is reasonable. 

\item $N$ is reasonable iff $N$ is a candidate in the sense of \ref{d:r19} below. 

\item If $D_1$ is a reasonable initial segment of $N$ then 
$D_2 = \cl(D_1, N) := \{ a \in N : $ for some $d \in D_1$ and standard $n$, $N \models$ ``$a < d^n$'' $\}$ is also reasonable. 

\item If $D$ is an initial segment of $N$ then $\cl(D, N)$ is closed under products. % in $N$. 
\end{enumerate}
\end{claim}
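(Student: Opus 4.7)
The plan is to handle the four parts separately. Part (2) is essentially a definitional check against Definition \ref{d:r19} (not stated in the excerpt), which I will treat briefly, while the substance lies in (1), (3), (4). For (1), I would invoke recursive saturation directly. Fix $a \in N$. In BPA, $a^n$ is $\Delta_0$-definable from $a$ for each standard $n$ (Fact \ref{fact:exp}), and exists in $N$ because it reduces to a standard number of multiplications, all of which are total. The recursive type $p(x) = \{\, x > a^n : n < \omega\,\}$ over the parameter $a$ is then finitely satisfiable in $N$: any finite subset has a maximum standard $n_0$ and is realized by $a^{n_0}+1$. By recursive saturation (or $\aleph_1$-saturation), $p$ is realized by some $b \in N$, which bounds $\{\,a^n : n < \omega\,\}$; hence $N$ is reasonable.

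For (3), take $a \in D_2 = \cl(D_1,N)$, so $a < d^n$ for some $d \in D_1$ and standard $n$, with $d^n$ existing in $N$. For each standard $m$, external iteration gives $a^m < d^{nm}$ in $N$ (each step is a fixed-standard-exponent multiplication, hence provable in BPA). Since $D_1$ is reasonable, $\{\, d^k : k < \omega\,\}$ is bounded in $N$, say by some $b$; as $nm$ is standard, $d^{nm} \leq b$, so $a^m < b$ for every standard $m$, and therefore $\{\,a^m : m < \omega\,\}$ is bounded, showing $D_2$ is reasonable.

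For (4), let $a,b \in \cl(D,N)$ with $a < d_1^{n_1}$ and $b < d_2^{n_2}$, where both exponentials exist in $N$. Set $d = \max(d_1,d_2) \in D$, so $a < d^{n_1}$ and $b < d^{n_2}$. Since both powers exist, their product $d^{n_1} \cdot d^{n_2} = d^{n_1+n_2}$ exists in $N$ and exceeds $ab$, witnessing $ab \in \cl(D,N)$. For (2), once Definition \ref{d:r19} is stated, being a ``candidate'' should encode precisely those closure conditions on powers of $d \in D$ that make the CSP constructions of \ref{m23} well-defined (in particular, ensuring that $a^{d_\ma}$ or similar quantities needed for $\cs_2$, $\cs_3$ are present), so the equivalence with reasonableness will follow by unpacking that definition against the requirements in \ref{m23}(2)(c).

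The principal obstacle is bookkeeping around the partiality of exponentiation in BPA: each invocation of $d^k$, $a^m$, or $d^{nm}$ must be certified to exist in $N$, and in (3) the argument tacitly relies on reasonableness of $D_1$ to supply these existence statements uniformly across all standard exponents. Once this is tracked consistently, the arguments reduce to short elementary manipulations, with (1) doing all the genuine work via the saturation hypothesis.
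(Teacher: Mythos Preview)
Your treatments of (1), (2), and (4) are correct and agree with the paper, which simply marks them immediate; your explicit saturation argument for (1) is the natural one.

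For (3), however, you and the paper are proving different statements, because you interpret ``reasonable initial segment'' differently. You take it to mean: for every $a \in D_1$, the set $\{a^n : n \text{ standard}\}$ is bounded in $N$. This matches the stated definition of ``reasonable'' for models, and under this reading your proof is correct. But the paper's proof (once one corrects several evident typos: $a_2$ and $d_2$ denote the same element, $d_1$ should lie in $D_1$, and the recursion is $x_{\ell+1} = (x_\ell)^{d_1}$) is establishing the stronger property from Definition~\ref{m23}(0)(a): $d \in D$ implies $N \models (\forall x)(x^d \text{ exists})$. The argument there is: given $d_2 \in D_2$ with $d_2 < (d_1)^m$ for some $d_1 \in D_1$ and standard $m$, and given arbitrary $x \in N$, iterate $x \mapsto x^{d_1}$ a total of $m$ times---each step defined by the hypothesis on $D_1$---to obtain $x^{(d_1)^m} \geq x^{d_2}$, whence $x^{d_2}$ exists.

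Your proof does not yield this conclusion: knowing that $\{d^k : k \text{ standard}\}$ is bounded for each $d \in D_1$ gives no information about whether $x^d$ exists for an arbitrary $x \in N$ possibly far larger than every element of $D_1$. So under the paper's intended reading of ``reasonable initial segment''---which is the one actually needed for the CSP constructions in \ref{m23}---your argument for (3) has a genuine gap.
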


\begin{proof}
(1),(2),(4) are immediate. 

(3): Let $a _2 \in D_2$ and $n \in N$ be standard. Let $d_1 \in D_2$ and $m$ standard be such that $N \models d_2 < (d_1)^m$. 
We define $x_\ell$ by induction on $\ell \leq n\cdot m$ as follows:  $x_0 = x$, $x_{\ell} = (x_\ell)^{d_1}$. 
Note that $(x_\ell)^{d_\ell}$ exists by the assumption on $D_1$. So $x_{nm}$ is well defined. Since 
\[ x^{d_2} \leq x^{{d_1}^m} = (x^{d_1})^{d_1}\cdots (x^{d_1})^{d_1} \leq x_{nm} \]
where the dots $\cdots$ indicate the multiplication has $m$ terms, $x^{d_2}$ exists. 
\end{proof}

\begin{obs} \label{cond-str}
If $N \models BPA$ and $\cs = \cs_\ell[N, D]$ for $\ell \in \{ 2, 3 \}$ then 
$\cs$ is a hereditary weak CSP.  
\end{obs}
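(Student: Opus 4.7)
The plan is to verify the observation by a direct check against the three relevant definitions: Definition~\ref{d:csp} (with the Cartesian-product clauses omitted, per Definition~\ref{a2x}), Definition~\ref{d:estt} for enough set theory for trees, and the hereditary-closure requirement using Definition~\ref{m10}. Since $M = M^+ = M_1 = M_1^+ = N$ in the construction \ref{m23}, clauses \ref{d:csp}(1)--(5) are immediate with $T = \Th(N)$. It remains to verify clauses (1)--(4) and (7) of \ref{d:estt}, plus hereditary closure under $\sub$ and accurate isomorphism.

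Clauses \ref{d:estt}(1), (2), (3) of enough set theory for trees are built into \ref{m23}: the formulas in $\Delta_\cs$ are required by \ref{m23}(1) to satisfy \ref{d:estt}(1), (2), (4), (7), and each $\ma \in \ord(\cs)$ is internally isomorphic to a tuple $(X_\ma, \leq_\ma, d_\ma, \mct_\ma)$ of the prescribed form. For \ref{d:estt}(4), pseudofiniteness, the point is that in $N \models \mathrm{BPA}$ bounded induction applies to every $\Delta$-definable set: since $X_\ma$ is by \ref{m23}(2)(a) a $\Delta$-definable bounded subset of $N$ and $<_\ma$ is $\Delta$-definable, any nonempty bounded definable subset $A \subseteq X_\ma$ has a first and last element under $<_\ma$, by applying bounded induction in $N$ to the predicates defining first/last. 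Clause \ref{d:estt}(7) is given directly by \ref{m23}(2)(d), which requires that the tree $\mct_\ma$ and its associated operations be defined via formulas depending uniformly on the defining formula for $X_\ma$ and satisfying all the conditions listed in \ref{d:estt}(7). Clauses \ref{d:estt}(5), (6) are not required for a weak CSP.

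For hereditary closure, we must show $\ord(\cs)$ is closed under accurate isomorphism and under $\sub$ as in Definition~\ref{m10}. Closure under accurate isomorphism is essentially built in: \ref{m23}(2) defines $\ord(\cs)$ as the set of $\ma$ \emph{internally isomorphic} to a tuple of the specified form, so passing through an accurate isomorphism in $M^+_1 = N$ preserves membership. For closure under $\sub$, suppose $\mb_2 \in \ord(\cs)$ and $\mb_1 \in \sub(\mb_2)$. Then $X_{\mb_1} \subseteq X_{\mb_2}$ is $\Delta$-definable (as the range of the enumerating branch $\eta \in \mct_{\mb_2}$ together with a top element), is bounded in $N$, and inherits the discrete linear order from $<_{\mb_2}$. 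The bound $d_{\mb_1} \leq_{\mb_2} d_{\mb_2}$ lies in $D$ because $D$ is an initial segment of $N$ closed under addition. When $\ell = 2$ this is all we need; when $\ell = 3$ we also need $|X_{\mb_1}|^{d_{\mb_1}}$ to exist in $N$, which follows from monotonicity of exponentiation in $I\Delta_0$ (Fact~\ref{fact:exp}), since $|X_{\mb_1}| \leq |X_{\mb_2}|$ and $d_{\mb_1} \leq d_{\mb_2}$ and $|X_{\mb_2}|^{d_{\mb_2}}$ exists. The tree $\mct_{\mb_1}$ of functions from $X_{\mb_1}$ to $X_{\mb_1}$ of length $\leq d_{\mb_1}$ is definable from the analogous tree on $\mb_2$ together with the parameter $\eta$, again uniformly, so \ref{m23}(2)(d) holds for $\mb_1$.

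The main point requiring care is the $\ell = 3$ case of closure under $\sub$: one must confirm that the internal exponentiation bound transfers from the larger suborder $\mb_2$ to $\mb_1$. This reduces, via the $\Delta_0$ graph of exponentiation in Fact~\ref{fact:exp}, to monotonicity $a \leq b, c \leq d \Rightarrow a^c \leq b^d$, which is provable in $I\Delta_0$ and hence in $\mathrm{BPA}$. Apart from this, the verification is a routine traversal of the axioms, so $\cs_\ell[N, D]$ is indeed a hereditary weak CSP.
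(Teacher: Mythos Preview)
The paper states this observation without proof, treating it as immediate from Definition~\ref{m23}. Your axiom-by-axiom verification is correct and makes explicit what the paper leaves implicit: that the clauses of \ref{d:estt}(1),(2),(4),(7) are built into the choice of $\Delta_\cs$ in \ref{m23}(1), that pseudofiniteness follows from bounded induction in $N$, that closure under accurate isomorphism is already encoded in the ``internally isomorphic'' clause of \ref{m23}(2), and that the only nontrivial check for $\sub$-closure in the $\ell=3$ case is monotonicity of the $\Delta_0$-definable exponentiation graph from Fact~\ref{fact:exp}. One small imprecision: the enumerating branch $\eta$ only lists $Y_{\mb_1} = \{c : c <_{\mb_1} d_{\mb_1}\}$, not all of $X_{\mb_1}$; but since membership in $\ord(\cs)$ is defined up to internal isomorphism and $\mb_1 \in \psord(\cs)$ is already assumed, this does not affect the argument.
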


%\begin{obs} The hypotheses of Conclusion \ref{c2b} are satisfied 
%when $M_{1,\cs} = N \models BPA$ and $\cs = \cs_\ell[N,D]$ for $\ell \in \{ 2, 3, 4 \}$ and $D$ is cofinal in $N$. 
%\end{obs}

\begin{claim} \label{c:exp-a}
Let $N \models BPA$ and suppose that for $n$, $d \in N$ we have that $n^d$ exists. 
Then the tree of functions from $d$ to $n$ is definable in $N$, by bounded formulas. 
Specifically, the operations from $\ref{d:estt}(7)$ for the tree of sequences of length $d$ into  
$([0, n], <^N)$ are all definable by bounded formulas. 
\end{claim}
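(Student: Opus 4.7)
The plan is to use a standard base-$(n+1)$ representation to code sequences as single numbers, and to verify that all the operations required by Definition \ref{d:estt}(7) are expressible by bounded formulas once the relevant powers exist.

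First, I would observe that by Fact \ref{fact:exp}, the existence of $n^d$ in $N$ entails the existence of $n^\ell$ for every $\ell \leq d$, and also yields the existence of $(n+1)^d$: one bounds $(n+1)^d$ above by $2^d \cdot n^d$ (trivially for $n \leq 1$, and for $n \geq 2$ using $2^d \leq n^d$, which exists because $2^d \leq n^d$ and $n^d$ exists). All these quantities then become available as bounds for quantifiers in $N$, together with $d \cdot (n+1)^d$, which we use to bound codes for pairs via the G\"odel pairing function.

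Second, I would code a function $\sigma \colon [0, \ell) \to [0, n]$ with $\ell \leq d$ by the pair $(\ell, m)$, where
\[ m = \sum_{i < \ell} \sigma(i) \cdot (n+1)^i < (n+1)^\ell, \]
and then encode the pair as a single element of $N$ via G\"odel's pairing function, which is $\Delta_0$-definable. The intended tree $\mct$ is then the bounded-definable set of codes of pairs $(\ell, m)$ with $\ell \leq d$ and $m < (n+1)^\ell$.

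Third, I would check each of the operations in turn. For $\psi_0$, one just writes membership in $\mct$ as above. For $\lgn$ ($\psi_1$), extract the first coordinate, bounded by $d$. For the value map and concatenation ($\psi_2$), define $\sigma(i)$ to be $\lfloor m / (n+1)^i \rfloor \bmod (n+1)$, and define concatenation of $(\ell, m)$ with $a \in [0, n]$ to be $(\ell+1, m + a \cdot (n+1)^\ell)$, both bounded-definable since all witnessing quantities lie below $(n+1)^{d+1}$. For $\tlf$ ($\psi_3$), say $(\ell, m) \tlf (\ell', m')$ iff $\ell \leq \ell'$ and $m' \equiv m \pmod{(n+1)^\ell}$. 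The uniqueness clauses in Definition \ref{d:estt}(7) (distinct sequences of equal length differ in some coordinate, concatenation behaves correctly, and so on) follow from uniqueness of the base-$(n+1)$ representation.

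The main obstacle is packaging the arithmetic so that no unbounded quantifier is introduced. I expect this to go through cleanly because all the relevant primitives---integer division, remainder, iterated summation in base $n+1$, and G\"odel pairing---are already known to be bounded-definable in $I\Delta_0$ provided the witnessing magnitudes exist. Those magnitudes are here at most polynomially larger than $(n+1)^d$, whose existence I arrange at the outset; so the standard $\Delta_0$ development, as in Gaifman--Dimitracopoulos and surveyed in \cite{paris-wilkie} \S 1.1, applies directly to yield bounded defining formulas for all the operations.
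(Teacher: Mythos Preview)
Your proposal is correct and follows essentially the same plan as the paper: code sequences via a positional number system and verify that length, evaluation, concatenation, and initial-segment are all $\Delta_0$ once the relevant powers exist. The only difference is cosmetic: the paper works in base $2$ with fixed-width blocks (rounding $|X_\ma|$ and $d_\ma$ to nearby powers of $2$ and following the Wilkie--Paris dyadic setup), whereas you code directly in base $n+1$ and pair with the length; your version is if anything a bit cleaner for this particular claim.

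One small point: your ``trivially for $n \le 1$'' does not actually cover $n=1$, since the hypothesis $1^d$ exists is vacuous and does not give you $2^d$. In the paper's applications the orders $\ma$ are nontrivial, so $n = \max X_\ma$ is nonstandard and in particular $\ge 2$; you should either record this standing assumption or treat the degenerate cases $n \le 1$ separately.
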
 

\begin{proof}
For compatibility with \ref{d:estt}, we denote $[0, n]$ by $X_\ma$ and $d$ by $d_\ma$. 
Let $\mct$ denote the sequences of elements of $X_\ma$ of length $\leq d_\ma$. 
First we show that for each $\eta \in \mct$ there is $c \in N$ which is a code for $\eta$ 
(and that this is uniformly and boundedly definable). 
This applies the long known fact that G\"odel coding may be carried out in BPA; we sketch 
a proof for completeness, following the method of Wilkie-Paris \cite{paris-wilkie}.

We use $B$-adic coding (for $B = 2$), representing each element of $N$ as a word in the finite alphabet $\{ 0, 1 \}$.  
(This ignores the empty word, which could be accommodated by using a different finite alphabet or $B = 3$.)  
Since for each $\mct_\ma$ or $\os_\ma$ we will be coding sequences of elements of uniformly bounded length, 
we don't need a separate symbol to indicate a transition between codes for distinct elements. 
Let $\lgb(x)$ denote $\max \{ \ell : 2^\ell \leq x \}$, the dyadic length. 

By \ref{m23}(2), we may assume every element of $\ord(\cs)$ is isomorphic to a canonical one, i.e. to 
$\ma \in \ord(\cs)$ where: 
\begin{itemize}
\item $X_\ma = [0, \max(X_\ma) ]$
\item $\leq_\ma$ agrees with the order of $N$ restricted to $X_\ma$
\item $d_\ma$ is such that $(\max X_\ma)^{d_\ma}$ exists, 
\\ which is equivalent to: 
$(\exists n)(|X_\ma| \leq 2^n \leq 2|X_\ma|) \land$ `` $2^{n\cdot d_\ma}$ exists''.
\item $(\exists n, m)(d_\ma \leq 2^m \leq 2d_\ma \land |X_\ma| \leq 2^n \leq 2|X_\ma|$ and $2^{n\cdot 2^m}$ exists$)$
\item so w.l.o.g. $d_\ma = 2^m$, $|X_\ma| = 2^n$.  
\end{itemize}

%Assume first that $a = 2^n$, $d = 2^m$. 
For an element $a$ of $X_\ma$, let $\rep(a)$ denote the $B$-adic representation of $a$ of length exactly $n+1$, 
padded with zeros if necessary (this is possible by the choice of $n$). 
In the expression below let ``$2^m + \rep(a)$'' mean in base 2, so this will 
effectively move $\rep(a)$ (which is a sequence of length $\leq n$) over $m$ spaces.  

Let $\vp_{code}(x,i, b)$ mean: 
%\footnote{Alternative without the $(m+1)$: if $x < 2^{n \cdot 2^m}$ let $x = \sum_{i < n} b_i 2^{m_i}$. Let $\lg(x) = \max \{ i : b_i \neq 0 \}$, 
%and require that $b_{\lg(x)} = 1$ and $x$ represents $\langle b_i : i < \lg(x) \rangle$.} 

\begin{itemize}
\item $x < 2^{n \cdot {2^{m+1}}}$ 
\item $i < d_\ma$
\item $b < \max X_\ma$ 
\item $(\exists x_1 x_2)({x_1} ^\smallfrown x_2 \tlf x \land \lgb(x_1) = (n+1)i \land \lgb(x_2) = n+1 \land x_2 = 2^{n+1} + \rep(b) )$
\end{itemize}

%Recall that any element of $X_\ma$ has a dyadic representation of length $\leq n$ (or exactly $n$ if we pad with final zeros). 
Informally, $\vp_{code}$ asserts that $x$ is the code for a sequence, thought of as consisting of no more than $d_\ma$ consecutive blocks of length $n+1$ 
(leaving one extra space for the coding of $\os_\ma$ in the proof of Claim \ref{c:exp}), 
the $(i+1)$st of which is $\rep(b)$.  

As written several values of $x$ may code the same sequence; we may avoid this by restricting to 
$x$ such that no $y < x$ codes the same sequence. 

Then for any given $\ma \in \ord(\cs)$, for our fixed values of $n, m$,  we may naturally represent $\mct_\ma$ by 
\begin{align*}
\mct_\ma & =  \{ x : x < 2^{n \cdot 2^{m+1}}, (\exists i < n)\left( \lgb(x) = (m+1)\cdot i \right) \\
 & \mbox{ and }(\forall j \leq i) (\exists x_0, x_1 < x)({x_0} ^\smallfrown x_1 \tlf x \mbox{ and } 2^m \leq x_0 < 2^m + 2^n) \} 
\end{align*}
Since we have fixed the values of $X_\ma$ (thus, of $\max (X_\ma)$) and $d_\ma$, we can easily 
build on $\psi_{code}$ to find bounded formulas defining: the partial order on elements of 
$\mct$ by initial segment, length, concatenation, and value of the function at a given element of its domain. 
\end{proof}

\begin{claim} \label{c:exp}
Let $N \models BPA$ be reasonable and suppose that $\cs = \cs_3[N, D]$. 
Then $\cs$ has simple exponentiation. %IF ... weak exponentiation. 
\end{claim}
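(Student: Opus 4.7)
Given a nontrivial $\ma \in \ord(\cs)$, set $K := |X_\ma|^{d_\ma}$, which exists in $N$ by the $\ell = 3$ condition on $\cs_3$. Using the $B$-adic coding from the proof of Claim \ref{c:exp-a}, one checks that $|\mct_\ma|$ is bounded above by a fixed polynomial in $K$, so $|\os_\ma| \leq L$ for some $L \in N$ computed as such a polynomial (any $L$ with $L \geq 2K^2$ suffices). The plan is to produce a nontrivial $\mb \in \ord(\cs)$ together with an internally definable order-isomorphism $\sigma : (\os_\ma, <_{\os_\ma}) \to (X_\mb, <_\mb)$.

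The heart of the argument is finding a nonstandard $d^* \leq d_\ma$ such that $L^{d^*}$ exists in $N$, and this is where the hypothesis that $N$ is reasonable is crucial. By reasonableness of $N$, the set $\{L^n : n \text{ standard}\}$ is bounded above in $N$ by some $M$; hence the bounded formula
\[ \vp(x) \;:=\; x \leq d_\ma \;\wedge\; (\exists y \leq M)(L^x = y) \]
— which is bounded since the graph of exponentiation is $\Delta_0$ by Fact \ref{fact:exp} — holds at every standard integer. Since BPA has $\Delta_0$-induction and hence $\Delta_0$-overspill, $\vp(d^*)$ holds at some nonstandard $d^*$. As $D$ is an initial segment of $N$ containing $d_\ma$, we automatically have $d^* \in D$.

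Now I define $\mb$. By Observation \ref{f16a}, $I^* := \{(c_\alpha, 0) : \alpha < d^*\}$ is a $<_{\os_\ma}$-initial segment of $\os_\ma$ whose length map gives an internal order-isomorphism onto $[0, d^*) \subseteq N$; a short case analysis of $<_{\os_\ma}$ shows that the immediate $<_{\os_\ma}$-successor of $I^*$ is $(c_{d^*}, 0)$. Writing $c$ for the $B$-adic code from Claim \ref{c:exp-a}, define a definable injection $\sigma : \os_\ma \to N$ by
\[ \sigma((c_\alpha, 0)) = \alpha \text{ for } \alpha < d^*, \quad \sigma((c_{d^*}, 0)) = d^*, \quad \sigma(r) = d^* + 1 + c(r) \text{ otherwise.} \]
Set $X_\mb := \sigma(\os_\ma)$ with $<_\mb$ the pullback of $<_{\os_\ma}$, $d_\mb := d^*$, and let $\mct_\mb$ be the tree of sequences of length $\leq d_\mb$ into $X_\mb$ furnished by Claim \ref{c:exp-a}, which is $\Delta_0$-definable because $|X_\mb|^{d_\mb} \leq L^{d^*}$ exists. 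By construction $\{x \in X_\mb : x <_\mb d_\mb\} = \sigma(I^*) = [0, d^*)$, a genuine initial segment of $N$ with its usual order, so all conditions of Definition \ref{m23}(2) are satisfied; $\mb$ is nontrivial because $d^*$ is nonstandard; and $\sigma$ is the required internal order-isomorphism. The main obstacle is this bridge from reasonableness — a boundedness statement about standard powers — to the existence of a nonstandard exponent satisfying the same bound, which is exactly what bounded overspill accomplishes; everything else is bookkeeping with the $B$-adic coding.
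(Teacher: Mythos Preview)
Your proof is correct and takes essentially the same approach as the paper: encode $\os_\ma$ into a bounded subset of $N$ via the $B$-adic coding of Claim~\ref{c:exp-a}, then use reasonableness to find a nonstandard $d_\mb$ for which the relevant power exists. You are in fact somewhat more careful than the paper on two points: you make the $\Delta_0$-overspill explicit rather than just invoking reasonableness, and by arranging $d^* \leq d_\ma$ you ensure $d_\mb \in D$, which the paper's proof does not address; your use of the pullback order for $<_\mb$ also sidesteps the paper's (unjustified) claim that the $B$-adic code $f$ is itself $<_{\os_\ma}$-preserving. Two small repairs: require $d^* < d_\ma$ strictly in $\vp$ so that $(c_{d^*},0)$ lies inside the initial segment of Observation~\ref{f16a} and is transparently the immediate successor of $I^*$, and choose your bound $L$ large enough to dominate $\max X_\mb = d^* + 1 + \max_r c(r)$ (still polynomial in $K$) rather than just $|\os_\ma|$, so that Claim~\ref{c:exp-a} applied to $[0,L]$ yields the tree from which $\mct_\mb$ is cut out as a definable subtree.
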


\begin{proof}
Recall that to say $\cs$ has simple exponentiation means that for every $\ma \in \ord(\cs)$ there is 
$\mb \in \ord(\cs)$ such that $(X_\mb, <_\mb)$ and $(\os_\ma, <_{\os_\ma})$ are internally isomorphic. 
Weak exponentiation adds that also $(Y_\mb, <_\mb )$, $(Y_\ma, <_\ma )$ are internally isomorphic. 

Let $\ma \in \ord(\cs)$ be nontrivial. By construction of $\cs_3$, we may identify $\ma$ with its 
``canonical'' isomorphic image, so assume $X_\ma$ is an initial segment of $N$ with the usual order. 
We will show that the hypothesis that $|X_\ma|^{d_\ma}$ exists, i.e. that $(\max(X_a))^{d_\ma}$ exists, 
already ensures weak exponentiation. 

As $\cs = \cs_3(N)$, there is $\mb \in \ord(\cs)$ so that $(X_\mb, \leq_\mb)$ is internally isomorphic to 
$([0, n_*], <^N)$ and this isomorphism takes $d_\mb$ to $d_*$ and $\mct_\mb$ to $\mct_*$. 
Then $\mct_*$ is boundedly definable by Claim \ref{c:exp-a}. Continuing in the notation of that proof, 
to code the tree consisting of $\os_\ma = \mct_\ma \times \{ 0, 1 \}$, define $f$ as follows:  
\begin{quotation}
if $x = \left( \sum_{i < \lg(x)} (b_i + 2^m)\cdot 2^{(m+1)i}, \ii \right)$, $\ii \in \{ 0, 1 \}$, 
\\ then $f(x) = \sum_{i < \lg(x)} (b_i + 2^m) \cdot 2^{(m+1)i} +  \ii$. 
\end{quotation}
recalling that we have left one unit of space by arranging our coding into blocks of size $n+1$. 
Let $\psi^+_{code} = \rn(f)$, which is definable by a bounded formula. 
If $x_1, x_2 \in \os_\ma$ then $x_1 <_{\os_\ma} x_2$ iff $f(x_1) < f(x_2)$. 
This proves that $(\os_\ma, <_{\os_\ma})$ is order-isomorphic to an initial segment of $N$. 

Let $n_* = \max \rn(f)$. 
Since $N$ is reasonable, there is some nonstandard $d_* \in N$ such that $(n_*)^{d_*}$ exists. 
Applying Claim \ref{c:exp-a} once more, we have a tree $\mct_*$ of functions from $d_*$ to $[0, n_*]$ which 
is definable by bounded formulas. By the definition of $\cs_3(N)$, there is $\mb \in \ord(\cs)$ 
with $X_\mb$ internally order-isomorphic to $[0, n_*]$ under the usual order so that the 
image of $d_\mb$ is $d_*$.  

This proves $\cs$ has simple exponentiation. 

If in addition $(n_*)^{d_\ma}$ exists, let $d_\mb = d_\ma$. Then the same internal order-isomorphism takes 
$Y_\mb$ to $Y_\ma$, and $\cs$ has weak exponentiation. This completes the proof.  
\end{proof}

\begin{claim} \label{c43}
Suppose $N \models BPA$, $N$ is reasonable and $\cs = \cs_3[N,D]$. Then $\cs$ is a CSP. 
\end{claim}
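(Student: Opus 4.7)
Since Observation~\ref{cond-str} already gives that $\cs = \cs_3[N,D]$ is a hereditary weak CSP, upgrading it to a full CSP amounts to verifying the Cartesian-product clauses $\ref{d:estt}(5)$ and the G\"odel-pairing clause $\ref{d:estt}(6)$. The plan is to construct the required products explicitly via internal arithmetic in $N$, and then check the $\cs_3$-specific requirement that $|X_\bc|^{d_\bc}$ exists for the resulting $\bc$.

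For (5), given $\ma, \mb \in \ord(\cs)$, the internal-isomorphism clause of Definition~\ref{m23}(2) lets me replace them by canonical representatives, so that $X_\ma = [0, n_\ma]$ and $X_\mb = [0, n_\mb]$ are initial segments of $N$ in the usual order. Define the pairing $\pr(a,b) := (n_\mb + 1)\cdot a + b$, which bijects $X_\ma \times X_\mb$ onto the initial segment $X_\bc = [0, (n_\ma+1)(n_\mb+1) - 1]$ of $N$ with bounded-definable coordinate projections, and let $d_\bc = \lfloor \min(d_\ma, d_\mb)/2 \rfloor$. Then $d_\bc \in D$ (as $D$ is an initial segment), and is nonstandard whenever both $d_\ma, d_\mb$ are, giving $\ref{d:estt}(5)(b)$. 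Writing $n = \max(n_\ma, n_\mb)$, one has $|X_\bc|^{d_\bc} \leq (n+1)^{2 d_\bc} \leq (n+1)^{\min(d_\ma, d_\mb)}$, which is bounded by whichever of $|X_\ma|^{d_\ma}$ or $|X_\mb|^{d_\mb}$ corresponds to $n$, and those exist by hypothesis; the associated tree $\mct_\bc$ is then supplied by Claim~\ref{c:exp-a}. For (6), pick any nontrivial $\ma \in \ord(\cs)$ in canonical form (the clause is vacuous otherwise) and let $X_\bc = [0, (n_\ma+1)^2 - 1]$ in the usual order, with $\pr : X_\ma \times X_\ma \to X_\bc$ the classical G\"odel pairing sorting pairs first by maximum, then by first coordinate, then by second. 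By construction, for each $x \in X_\ma$ the element $y := \pr(x, x)$ witnesses precisely the biconditional in $\ref{d:estt}(6)$. Choosing $d_\bc \leq \lfloor d_\ma/2 \rfloor$ in $D$ secures $|X_\bc|^{d_\bc} \leq (n_\ma+1)^{d_\ma} = |X_\ma|^{d_\ma}$, which exists, and nontriviality is preserved.

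The essential point is not a genuine obstacle but careful bookkeeping: membership in $\ord(\cs_3)$ already carries the iterated-power witness $|X_\ma|^{d_\ma}$ for each constituent, so halving the exponent absorbs the enlargement of the base produced by pairing. Reasonableness of $N$ enters only as a background assumption, ensuring that $\ord(\cs_3)$ contains nontrivial orders to which these constructions apply non-vacuously; the product construction itself is driven by the built-in exponential data attached to members of $\ord(\cs_3)$.
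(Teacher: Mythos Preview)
Your argument is correct and takes a somewhat different route from the paper's. The paper uses the Cantor-style pairing $\pr(x,y) = (x+y+1)^2 + x$ for both (5) and (6) and, crucially, obtains $d_\bc$ by invoking reasonableness directly: given $n_* = \max(X_\bc)$, reasonableness supplies a nonstandard $d_*$ with $(n_*)^{d_*}$ existing, and one sets $d_\bc = d_*$. You instead use the lexicographic pairing $(n_\mb+1)\cdot a + b$ for (5), a separate explicit G\"odel pairing for (6), and manufacture $d_\bc$ by halving $\min(d_\ma, d_\mb)$ so that the required power $|X_\bc|^{d_\bc}$ is dominated by the already-existing $|X_\ma|^{d_\ma}$ (or $|X_\mb|^{d_\mb}$). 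This makes your product construction independent of reasonableness, which then enters only to guarantee $\ord(\cs_3)$ is nontrivially populated, as you note. Your treatment of (6) is also more explicit: the paper asserts in passing that its Cantor pairing satisfies \ref{d:estt}(6), whereas your G\"odel-by-maximum pairing makes the witness $y = \pr(x,x)$ transparently correct. Both approaches are short; yours trades a one-line appeal to reasonableness for a small bookkeeping computation with exponents, and in return isolates exactly where the ``reasonable'' hypothesis is needed.
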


\begin{proof}
As $\cs$ is a hereditary weak c.s.p. there are two potentially missing conditions, $\ref{d:estt}(5)$-$(6)$. 

Let nontrivial $\ma, \mb$ be given. By definition of $\cs_3$, we may assume that $X_\ma$, $X_\mb$ are internally order-
isomorphic to initial segments $[0, n_\ma]$, $[0, n_\mb]$ of $N$ respectively (so in what follows we identify 
$X_\ma, X_\mb$ with these images). Without loss of generality, $n_\ma \leq n_\mb$.  
Let $\pr: N \times N \rightarrow N$ denote the pairing function 
$(x,y) \mapsto (x + y + 1)^2 + x$. 
Consider the set $X_{\ma \times \mb} = \{ \pr(x,y) : x \in X_\ma, y \in X_\mb \} \subseteq [0, (n_\ma + n_\mb + 1)^2]$.  
Then $\pr$ is an isomorphism from $X_\ma \times X_\mb$ onto an initial segment of $N$ which we call $X_\bc$. 
Let $\leq_\bc = \leq^N$ be the usual order.  
Let the order $\leq_{\ma \times \mb}$ be such that $\pr$ is an order-isomorphism from $(X_{\ma \times \mb}, \leq_{\ma \times \mb})$ 
onto $(X_\bc, \leq_\bc)$.  
Note that\footnote{An independent proof that \ref{d:estt}(6) will be satisfied on some pair, assuming only that the c.s.p. is weak 
and that cardinality grows [which will be true in any such $N$] is given in the next section.}
this pairing function satisfies \ref{d:estt}(6). % for any choice of $d_{\ma \times \mb}$ whose image in $X_\bc$ is nonstandard. 

Let $n_* = \max (X_\bc)$. As we assumed $N$ is reasonable, there is some nonstandard $d_*$ such that $(n_*)^{d_*}$ exists. 
Let $d_\bc = d_*$. Now existence of the tree $\mct_\bc$ is by Claim \ref{c:exp-a}.  
Recalling the closure under isomorphism from \ref{m23}(2), the product $\ma \times \mb$ is indeed a nontrivial element of $\cs$.  
Thus \ref{d:estt}(5) holds, which completes the proof. 
\end{proof}

\begin{concl} \label{concl15}
Assume $N \models BPA$, $N$ is reasonable and $\cs = \cs_3(N, D)$. 
Then $\cs$ is a cofinality spectrum problem with exponentiation, and so Theorem $\ref{ps-is-ts}$ applies: $\xp_\cs = \xt_\cs$. 
\end{concl}

%\newpage

\br 
\br

\section{Saturated models of Peano arithmetic} \label{s:peano-arithmetic}

We now apply the above analysis to cuts in models of Peano arithmetic. 

\begin{thm-lit} \emph{(Pabion, 1982 \cite{pabion})} \label{t:pabion}
Let $\lambda > \aleph_0$ and $M$ be a model of PA. Then $M$ is $\lambda$-saturated iff $(M, <)$ is $\lambda$-saturated. 
\end{thm-lit}

\begin{thm-lit} \emph{(Shelah, 1978 \cite{Sh:a} Cor. 2.7 pps. 337-341)} \label{s-order}
\begin{enumerate}
\item If $\de$ is a regular ultrafilter on $\lambda$ such that $M^\lambda/\de$ is $\lambda^+$-saturated 
for $M$ some model of linear order, then $N^\lambda/\de$ is 1-atomically-$\lambda^+$-saturated for any $N$ in a countable language, i.e. $\lambda^+$-saturated 
for types consisting of atomic formulas e.g. $x<a$, $b<x$. 
\item Hence if $T$ has the strict order property it is maximal in Keisler's order. 
\end{enumerate}
\end{thm-lit}

Pabion's theorem may be derived from the proof of Shelah's theorem just quoted. 
It was known that Keisler's order had a maximum class, so to show maximality of linear order, it sufficed to show that realizing types in the language 
of linear order (in some ultrapower) ensured realization of types in some given maximal theory, such as PA. 
The proof of Theorem \ref{s-order} shows that in some language expanding linear order, 
the types of the given theory may be coded in such a way that omission of a type corresponds to a cut in a linear order; 
the point as regards Pabion's theorem is that this coding argument does not rely on ultrapowers except insofar as 
ultrapowers commute with reducts, and so may be carried out in any model of Peano arithmetic.

\begin{obs} \label{obs15}
For $M$ a model of linear order $(~<~)$ and uncountable $\lambda$, $M$ is $1$-atomically $\lambda$-saturated 
iff $\theta_1 + \theta_2 \geq \lambda$ whenever $M$ has a cut of cofinality $(\theta_1, \theta_2)$ and $\theta_1 + \theta_2 \geq \aleph_0$ 
$($equivalently, $>2$, since we may have a $(1, 1)$-cut$)$. Usually, we can omit the ``$1$-atomically,'' as e.g. in the order reduct of a model of PA or in $Th(\mathbb{Q}, <)$. 
\end{obs}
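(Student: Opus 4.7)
The plan is to argue both directions by the direct dictionary between $1$-atomic $1$-types over $A \subseteq M$ and pairs $(L, U)$ with $L, U \subseteq A$ and $L < U$ pointwise. Modulo the trivial case where $p(x)$ contains a literal ``$x = a$'' (realized by $a$), a consistent $1$-atomic $1$-type over $A$ amounts to the request for an element of $M$ lying strictly between some $L \subseteq A$ and some $U \subseteq A$, and such an element exists in $M$ exactly when the corresponding cut $(L, U)$ is not a genuine unfilled gap.

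For the ``only if'' direction I will proceed contrapositively. Assuming $M$ has a $(\theta_1, \theta_2)$-cut with $\aleph_0 \leq \theta_1 + \theta_2 < \lambda$, pick witnessing $L, U \subseteq M$ that are cofinal, coinitial in the two halves, of sizes $\theta_1, \theta_2$. The $1$-atomic type
\[ p(x) = \{ a < x : a \in L \} \cup \{ x < b : b \in U \} \]
is over a set of size $< \lambda$. It is consistent: given any finite $L_0 \subseteq L$, $U_0 \subseteq U$, the side of infinite cofinality supplies an element of $L$ past $L_0$ (or of $U$ before $U_0$), and this element realizes the finite subtype. Since the cut is a genuine gap, $p$ is unrealized in $M$, contradicting $1$-atomic $\lambda$-saturation.

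For the ``if'' direction, let $p(x)$ be a consistent $1$-atomic $1$-type over $A$ with $|A| < \lambda$; without loss of generality $p$ is complete as a $1$-atomic type, and the case $(x = a) \in p$ for some $a \in A$ is trivially realized by $a$. Otherwise $p$ is determined by $L = \{ a : (a < x) \in p \}$ and $U = \{ b : (x < b) \in p \}$, and consistency gives $L < U$ pointwise in $M$. If $p$ is unrealized, $(L, U)$ determines a genuine cut with cofinalities $\theta_1, \theta_2 \leq |A| < \lambda$; by hypothesis this forces $\theta_1 + \theta_2 < \aleph_0$, so both $\theta_i$ are finite and $L$ has a maximum $a_0$, $U$ a minimum $b_0$. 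Then consistency of $p$ implies the sub-type $\{ a_0 < x, x < b_0 \}$ is realized in $M$, yielding a realization of all of $p$: contradiction.

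The one subtlety to keep in mind is the $(1,1)$-cut phenomenon in discrete orders (a gap between consecutive elements), where $\theta_1 + \theta_2 = 2$ but the corresponding formal type is inconsistent with $\Th(M)$ and so does not threaten saturation; this is precisely what motivates the restriction $\theta_1 + \theta_2 \geq \aleph_0$ in the hypothesis. For the closing remark, an arbitrary $1$-type in $\Th(\mathbb{Q}, <)$ reduces to a $1$-atomic $1$-type by quantifier elimination for dense linear order, and in the order reduct of a model of PA one either invokes Pabion's Theorem \ref{t:pabion} or uses that successor is $<$-definable to make the same reduction.
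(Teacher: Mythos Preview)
Your proof is correct; the paper states this as an observation and supplies no proof at all, treating the equivalence as routine. Your argument is exactly the standard one, and the care you take with the $(1,1)$-cut edge case and with the finite-cofinality endgame in the ``if'' direction is appropriate. One minor remark on the closing sentence: for the order reduct of a model of PA, invoking Pabion's theorem is slightly circular in the context of this section (the paper is about to \emph{reprove and strengthen} Pabion's theorem via Theorem~\ref{t:pa}); the honest justification is that the theory of $(\omega,<)$ admits elimination of quantifiers down to Boolean combinations of atomic formulas together with the definable successor and constant $0$, so that any $1$-type over $A$ is either algebraic or determined by a cut---but the paper does not spell this out either, and your instinct to cite QE or Pabion is fine for an observation.
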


In Observation \ref{obs15}, note that if we use $x \leq y$, then $\min \{ \theta_1, \theta_2 \} \geq \aleph_0$ but then an $(\theta_1, 1)$-cut does not give incompactness. 

The present results are an improvement in two respects. First, we can restrict to the case $\theta_1 = \theta_2$, i.e. symmetric cuts. 
Second, our results are for bounded Peano arithmetic, not just PA. 

\begin{defn} \label{d:r19}
Call $N$ a \emph{candidate model} when $N$ is a model of bounded Peano arithmetic $($with no last element$)$ which is reasonable, i.e. 
such that for any $a \in N$ there is a nonstandard $d$ such that $a^d$ exists. 
We call $\cs$ a \emph{candidate c.s.p.} when $\cs = \cs_3[N]$ for a reasonable $N$. 
\end{defn}

To connect to saturation, we bring in a definition from \cite{MiSh:998}. 

\begin{defn} \label{x15} \emph{(\cite{MiSh:998} Definition 3.41)}
Let $\cs$ be a cofinality spectrum problem and $\lambda$ a regular cardinal.
Let $p = p(x_0, \dots x_{n-1})$ be a consistent partial type with parameters in $M^+_1$.
We say that $p$  is a \emph{$\ord$-type over $M^+_1$} if: $p$ is a consistent partial type in $M^+_1$ and
%$p \subseteq \{ \vp_\ell(x,\overline{a}) : \ell < \ell_* < \omega,
%\overline{a} \subseteq M^+_1 \}$, i.e. $p$ is local, and 
for some $\ma_0, \dots \ma_{n-1} \in \ord(\cs)$, we have that
\[ p \vdash \bigwedge_{i < n}  \text{``}x_i \in X_{\ma_i}\text{''} \]
and $p$ is finitely satisfiable in $X_{\ma_0} \times \cdots \times X_{\ma_{n-1}}$.
We say simply that $M^+_1$ is \emph{$\lambda$-$\ord$-saturated} if every $\ord$-type over $M^+_1$
over a set of size $<\lambda$ is realized in $M^+_1$. Finally, we say that $\cs$ is {$\lambda$-$\ord$-saturated} if $M^+_1$ is.
\end{defn}

\begin{claim} \label{x16} \emph{(\cite{MiSh:998} Claim 3.43)}
Let $\cs$ be a cofinality spectrum problem. If $\kappa < \min \{ \xp_\cs, \xt_\cs \}$ then $\cs$ is 
$\kappa^+$-$\ord$-saturated.
\end{claim}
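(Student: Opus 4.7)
The plan is to reduce to a single variable using the Cartesian product closure of CSPs, and then build a realization by transfinite recursion of length $\kappa$ through a tree $\mct_\ma$, with $\xt_\cs > \kappa$ supplying upper bounds at limit stages and $\xp_\cs > \kappa$ supplying enough cofinal length in the ambient order.

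\smallskip

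\emph{Reduction to $n=1$.} Since $\cs$ is a genuine CSP (not merely weak), Definition~\ref{d:estt}(5) iterates to produce $\mb \in \ord(\cs)$ with a definable bijection $\pr : X_{\ma_0} \times \cdots \times X_{\ma_{n-1}} \to X_\mb$ whose coordinate projections are also definable. Pulling $p(\bar x)$ back along $\pr^{-1}$ yields a one-variable $\ord$-type $q(y) \vdash y \in X_\mb$ over the same parameters, finitely satisfiable in $X_\mb$; a realization of $q$ is pushed forward to one of $p$. Rename $\mb$ as $\ma$ and enumerate $q = \{\vp_\alpha(y) : \alpha < \kappa\}$, absorbing parameters.

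\smallskip

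\emph{Construction.} For each finite $u \subseteq \kappa$, the definable set $Z_u = \{a \in X_\ma : \bigwedge_{\alpha \in u} \vp_\alpha(a)\}$ is nonempty, and by pseudofiniteness has a $<_\ma$-minimum and maximum. Fix an enumeration $\langle u_\alpha : \alpha < \kappa \rangle$ of $[\kappa]^{<\omega}$ in which every finite $v \subseteq \kappa$ appears as some $u_\alpha$ cofinally. I would build by recursion on $\alpha < \kappa$ a strictly $<_\ma$-increasing sequence $\langle \ell_\alpha : \alpha < \kappa \rangle$ of nonstandard elements of $Y_\ma$ together with a $\tlf_\ma$-increasing sequence $\langle \eta_\alpha : \alpha < \kappa \rangle$ in $\mct_\ma$ with $\lgn_\ma(\eta_\alpha) = \ell_\alpha$ and $\eta_\alpha(\ell_\alpha) \in Z_{u_\alpha}$. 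Cofinal choice of the $\ell_\alpha$ uses $\xp_\cs > \kappa$: otherwise $\langle \ell_\beta : \beta < \alpha \rangle$ together with the bound $d_\ma$ would produce a cut of total cofinality $\le \kappa$ in $X_\ma$, contradicting $\kappa < \xp_\cs$. Successor stages are handled by the concatenation clause of Definition~\ref{d:estt}(7). At a limit $\alpha < \kappa$, the $\tlf_\ma$-increasing sequence $\langle \eta_\beta : \beta < \alpha \rangle$ has length $|\alpha| < \xt_\cs$, so $|\alpha| \notin \mc^{\mathrm{ttp}}(\cs)$ by the definition of $\xt_\cs$ in \ref{cst:card}(\ref{card:t}), hence admits an upper bound $\eta_\alpha^- \in \mct_\ma$; extend it by one concatenation to record a witness for $u_\alpha$.

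\smallskip

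\emph{Extraction and main obstacle.} A final application of $\xt_\cs > \kappa$ at stage $\kappa$ produces $\eta_* \in \mct_\ma$ above every $\eta_\alpha$, and the internal function $\eta_*$ records witnesses $\eta_*(\ell_\alpha) \in Z_{u_\alpha}$ for a cofinal family of finite conditions. The main challenge is converting these $\kappa$-many partial witnesses into a single realization of $q$: one considers the internal approximations to $\min_{<_\ma}$ and $\max_{<_\ma}$ of the recorded $\eta_*(\ell_\alpha)$ and argues that a failure to stabilize would produce a $(\theta_1,\theta_2)$-cut with $\theta_1 + \theta_2 \le \kappa$ in some order of $\cs$, contradicting $\xp_\cs > \kappa$. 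The interaction between the two hypotheses, with $\xt_\cs$ providing a coherent internal record through the tree and $\xp_\cs$ collapsing that record to a single element of $X_\ma$, is the crux of the argument.
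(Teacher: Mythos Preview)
The present paper does not prove this claim; it is quoted from \cite{MiSh:998} Claim 3.43 with no argument reproduced here, so I comment only on whether your sketch stands on its own.

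The reduction to one variable and the use of $\xt_\cs > \kappa$ for upper bounds at limit stages are correct. The genuine gap is the extraction step. You arrange $\eta_*(\ell_\alpha) \in Z_{u_\alpha}$, but these are witnesses to \emph{unrelated} finite conjunctions with no coherence across $\alpha$: if $p = \{y \neq a_\beta : \beta < \kappa\}$ with the $a_\beta$ distinct, one may legally take $\eta_*(\ell_\alpha) = a_{\gamma(\alpha)}$ for any $\gamma(\alpha) \notin u_\alpha$, and then every running $\min$ or $\max$ of the recorded values is some $a_\gamma$, failing $\vp_\gamma$. Even when such a statistic stabilizes, its value need not lie in any $Z_u$; and when it does not, you obtain only a one-sided monotone sequence in $X_\ma$, which does not by itself give a $(\theta_1,\theta_2)$-cut with $\theta_1+\theta_2\le\kappa$, since the other side of the induced cut may have arbitrarily large cofinality. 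So no contradiction with $\xp_\cs>\kappa$ results.

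What is missing is an invariant tying the value at each level to \emph{all} earlier formulas, not just a finite fragment. One repair: argue by induction on the cardinal $\kappa$, and at stage $\gamma+1$ concatenate a realization $c_\gamma$ of the full initial segment $\{\vp_\beta:\beta\le\gamma\}$, which exists by the inductive hypothesis since $|\gamma|<\kappa$. Maintain the invariant that $\vp_\beta(\eta_\alpha(t))$ holds for every $\beta<\alpha$ and every $t\in[\lgn(\eta_\beta),\lgn(\eta_\alpha))$. At each limit stage (including the final stage $\kappa$) trim the upper bound $\eta^-$ to a level $t_*$ realizing the pre-cut whose lower side is $\{\lgn(\eta_\gamma)\}_\gamma$ and whose upper side consists, for each $\beta$, of the maximal level at which the invariant for $\vp_\beta$ persists in $\eta^-$; both sides have cofinality $<\kappa<\xp_\cs$, so this pre-cut is filled, and $\eta_*(t_*)$ realizes $p$. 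Your architecture (tree path, $\xt_\cs$ for bounds, $\xp_\cs$ for a terminal cut argument) is right, but with witnesses to only finite fragments the path does not encode enough to finish.
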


\begin{rmk}
Since by our definition any $\ma \in \ord(\cs)$ has a maximum element, $\ord$-saturation does not a priori guarantee that the cofinality of the model is large. 
\end{rmk}

\begin{obs} \label{c5}
For a model $N$ of $PA$, the following are equivalent: 
\begin{enumerate}
\item $N$ is $\lambda$-saturated. 
\item $\cf(N) \geq \lambda$ and $N$ is boundedly $\lambda$-saturated, that is, $N \rstr \leq a$ is $\lambda$-saturated for every $a \in N$.
\end{enumerate}
\end{obs}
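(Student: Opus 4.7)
The plan is to reduce both directions to the order reduct $(N,<)$ via Pabion's theorem (Theorem \ref{t:pabion}), which identifies $\lambda$-saturation of $N$ with $\lambda$-saturation of $(N,<)$.

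For $(1)\Rightarrow(2)$ I would just unpack definitions. Bounded $\lambda$-saturation is automatic: a $1$-type over $B\subseteq N\rstr\leq a$ with $|B|<\lambda$ consistent with $x\leq a$ is a type over $N$ of size $<\lambda$, so by $\lambda$-saturation of $N$ it is realized, and any realization is forced below $a$. The bound $\cf(N)\geq\lambda$ follows because if $\cf(N)=\kappa<\lambda$ were witnessed by a cofinal increasing sequence $\langle b_\alpha:\alpha<\kappa\rangle$, then $\{b_\alpha<x:\alpha<\kappa\}$ would be a small, finitely satisfiable $1$-type over $N$ with no realization, contradicting $\lambda$-saturation.

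For $(2)\Rightarrow(1)$ I would apply Pabion to reduce to showing that $(N,<)$ is $\lambda$-saturated, then invoke Observation \ref{obs15} to reduce this further to excluding $(\theta_1,\theta_2)$-cuts in $(N,<)$ of total size $<\lambda$ (the ``$1$-atomically'' qualifier is harmless here, as noted in \ref{obs15}, because $(N,<)$ is the order reduct of a model of $PA$). Such a hypothetical cut falls into two cases: either $\theta_2=0$, i.e.\ the cut is cofinal, which directly contradicts $\cf(N)\geq\lambda$; or $\theta_2\geq\aleph_0$, in which case the cut is genuinely bounded above by some $a\in N$ and hence already appears in the order reduct of $N\rstr\leq a$, contradicting the $\lambda$-saturation of $N\rstr\leq a$ (which passes to its $<$-reduct).

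The only point requiring care is bookkeeping around ``$N\rstr\leq a$ is $\lambda$-saturated'': namely, the observation that saturation in the full $PA$-vocabulary implies saturation of the order reduct, which is immediate because every $<$-type with parameters in $N\rstr\leq a$ is in particular a $PA$-type over $N\rstr\leq a$. Beyond this the argument is just accounting, and I do not foresee a substantive obstacle.
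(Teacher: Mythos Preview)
Your argument is correct, but it takes a different route from the paper's. The paper proves $(2)\Rightarrow(1)$ directly, without invoking Pabion's theorem: given a type $p(x)$ of size $<\lambda$, it uses $\cf(N)\geq\lambda$ to find $a_*$ with $p(x)\cup\{x<a_*\}$ finitely satisfiable, then sets $b_*=2^{a_*}$ and uses PA coding to replace each $\varphi_i(x,\bar a_i)$ by an equivalent $b_*$-bounded formula $\varphi'_i(x,c_i,b_*)$ with $c_i<b_*$. The resulting type lives entirely in $N\rstr_{\leq b_*}$, so bounded $\lambda$-saturation finishes.

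Your approach---reduce to $(N,<)$ via Pabion, then to cuts via Observation~\ref{obs15}, then dispose of cuts by cofinality or bounded saturation---is perfectly valid and arguably cleaner to state. The paper's approach, however, is more self-contained (Pabion's theorem is nontrivial) and better aligned with the surrounding material: the coding step that rewrites arbitrary formulas as bounded ones is exactly the move that generalizes to the BPA setting treated in Theorem~\ref{c3}, whereas Pabion's theorem is stated only for full PA. Since Observation~\ref{c5} is used as a lemma toward Theorem~\ref{t:pa}, which itself refines Pabion, the paper presumably prefers not to lean on Pabion here even though doing so would not be circular.
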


\begin{proof}
It suffices to prove (2) implies (1). Given a type $p(x)$ of cardinality $<\lambda$, write $p(x) = \{ \vp_i(x,\bar{a}_i) : i < \lambda \}$. 
Since $\cf(N) \geq \lambda$ there is some $a_* \in N$ such that $p(x) \cup \{ x < a_* \}$ is finitely satisfiable.  
%Let $p^\prime(x) = \{ \vp_i(x,\bar{a}_i) \land x<a_* : i < \lambda \}$. Then $p^\prime$ is consistent. 
Let $b_* = 2^{a_*}$. Now for each $i$ there are $c_i < b_*$ and $\vp^\prime_i(x,y, b_*)$ which is $b_*$-bounded [meaning 
that all quantifiers are of the form $(\exists z < b_*)$ or $(\forall z < b_*)$] and 
$N \models (\forall x)(\vp_i(x,\bar{a}_i) \equiv \vp^\prime_i(x,c_i, b_*))$. Let 
$p^{\prime\prime}(x) = \{ \vp^\prime_i(x,c_i, b_*) \land x < a_* : i < \lambda \}$. This is a finitely satisfiable type in $N \rstr_{\leq b_*}$. 
Its realization implies realization of $p$, and it is realized by hypothesis (2). 
\end{proof}

\begin{theorem} \label{t:pa}
Let $N$ be a model of Peano arithmetic and $\lambda$ an uncountable cardinal. 
If the reduct of $N$ to the language of order has cofinality $\geq \lambda$ and no $(\kappa, \kappa)$-cuts 
for $\kappa < \lambda$, then $N$ is $\lambda$-saturated. 
\end{theorem}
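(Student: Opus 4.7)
The plan is to reduce the conclusion to bounded saturation, set up a suitable cofinality spectrum problem, and then use the results of Section~2 to upgrade the hypothesis on symmetric cuts into $\ord$-saturation.

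\emph{Step 1: Reduction to bounded saturation.} By Observation~\ref{c5}, $N$ is $\lambda$-saturated iff $\cf(N)\geq \lambda$ and $N\rstr_{\leq a}$ is $\lambda$-saturated for every $a\in N$. The first condition is assumed. So it suffices to fix $a_*\in N$ and a partial type $p(x)$ of size $<\lambda$ with parameters below $a_*$, and realize it. By the standard coding argument in the proof of Observation~\ref{c5}, choosing $b_* = 2^{a_*}$ (which exists since $N\models PA$) allows one to rewrite each $\vp_i(x,\bar a_i)\in p$ as a $b_*$-bounded formula $\vp^\prime_i(x,c_i,b_*)$ with $c_i<b_*$. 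So after this reduction, $p$ is a finitely satisfiable partial type of size $<\lambda$ consisting of bounded formulas whose variable and parameters all lie in the $N$-definable set $[0,b_*]$.

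\emph{Step 2: Setting up the CSP.} Since $N\models PA$, exponentiation is total in $N$, so $N$ is a candidate model in the sense of Definition~\ref{d:r19}; in particular, $N$ is reasonable. Form $\cs = \cs_3[N]$. By Conclusion~\ref{concl15}, $\cs$ is a cofinality spectrum problem with exponentiation. By Theorem~\ref{ps-is-ts}(3), $\xp_\cs=\xt_\cs$ is realized by a symmetric cut: if $(\kappa_1,\kappa_2)\in\mathbf{C}(\cs)$ minimizes $\kappa_1+\kappa_2$, then this minimum equals some $\mu$ with $(\mu,\mu)\in\mathbf{C}(\cs)$. Now every $\ma\in\ord(\cs_3[N])$ has $X_\ma$ internally order-isomorphic to an initial segment of $N$ under the usual order (by the canonical form in Definition~\ref{m23}(2)), so a $(\mu,\mu)$-cut in $\ma$ pulls back to a $(\mu,\mu)$-cut in $N$ itself below some element of $N$. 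The hypothesis that $N$ (qua linear order) has no $(\kappa,\kappa)$-cut for $\kappa<\lambda$ therefore forces $\xp_\cs=\xt_\cs\geq\lambda$.

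\emph{Step 3: From $\ord$-saturation to realization of $p$.} By Claim~\ref{x16}, $\cs$ is $\lambda$-$\ord$-saturated. Return to the bounded type $p(x) = \{\vp^\prime_i(x,c_i,b_*) : i<|p|\}$ from Step~1, all of whose variables and parameters lie in $[0,b_*]$. Since $N$ is reasonable, choose nonstandard $d_*\in N$ with $(b_*)^{d_*}\in N$; then there is $\ma\in\ord(\cs)$ with $X_\ma = [0,b_*]$, $<_\ma = <^N\rstr X_\ma$, and $d_\ma = d_*$. The formulas of $p$ are bounded by $b_*$ and hence belong to $\Delta_\cs$, so $p$ is an $\ord$-type over $N$ in the sense of Definition~\ref{x15}; its size is $<\lambda\leq\xp_\cs$, so by $\ord$-saturation it is realized in $N$. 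This gives realization of the original $p$, completing the proof once combined with Step~1.

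\emph{Main obstacle.} The conceptual work is in Step~2 (noting that symmetric cuts in $\ord(\cs)$ push forward to symmetric cuts in the reduct of $N$ to linear order, so that the asymmetric-cut-free hypothesis is not actually needed) and in Step~3 (recognizing a general bounded type over parameters below $a_*$ as an $\ord$-type, which is what makes the CSP machinery applicable and is the direct analogue of the coding step in Shelah's proof of Theorem~\ref{s-order}). The rest is routine, using that PA models are automatically reasonable, so Conclusion~\ref{concl15} applies without any extra hypothesis.
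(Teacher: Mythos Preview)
Your proposal is correct and follows essentially the same route as the paper: form $\cs = \cs_3[N]$, invoke Conclusion~\ref{concl15} and Theorem~\ref{ps-is-ts} to get $\xp_\cs = \xt_\cs$ with the minimum witnessed by a symmetric cut, transfer any such cut back to $(N,<)$ via the internal order-isomorphism of each $X_\ma$ with an initial segment of $N$ to force $\xt_\cs \geq \lambda$, and then combine Claim~\ref{x16} with Observation~\ref{c5} to obtain $\lambda$-saturation. The only difference is organizational: you front-load the reduction to bounded types and spell out explicitly why such a type is an $\ord$-type, whereas the paper packages this as a single appeal to Claim~\ref{x16} and Observation~\ref{c5} and runs the cut-transfer step as a contrapositive.
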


\begin{proof} 
We may assume $N$ is a nonstandard model. Hence it follows that $N$ is reasonable in the sense of \ref{m23}, i.e. 
$a \in N$ implies that $\{ a^n : n $ a standard integer $ \}$ is a bounded subset of $N$. (If not, 
$\cf(N) < \lambda$.) 

Assume $\cf(N) \geq \lambda$.  
Let $\cs = \cs_3(N)$ be from \ref{m23}. By Conclusion \ref{concl15}, $\cs$ is a csp with exponentiation.  
Thus Theorem \ref{ps-is-ts} applies and $\xp_\cs = \xt_\cs$.  
By Claim \ref{x16} and Observation \ref{c5}, $\cs$ is $\min \{ \xp_\cs, \xt_\cs, \cf(N, <) \}$-saturated. 
If $\xt_\cs \geq \lambda$, 
we finish, so assume that $\xt_\cs < \lambda$.  
By Theorem \ref{ps-is-ts}, there is $\ma \in \ord(\cs)$ whose $X_\ma$ contains a $(\xt_\cs, \xt_\cs)$-cut. 
Recall from Definition \ref{m23} that since $\cs = \cs_3(N)$, for each $\ma \in \ord(\cs)$, 
$(X_\ma, \leq_\ma)$ is internally order-isomorphic to an initial segment of $N$ with the usual order. 
Then $N$ has a $(\xt_\cs, \xt_\cs)$-cut, which completes the proof. 
\end{proof}

%Note that in fact the proof of Theorem \ref{t:pa} shows:
%
%\begin{claim} \label{c:nt}
%Let $\cs$ be a cofinality spectrum problem [or a strong csp] and suppose $\ma \in \ord(\cs)$ is nontrivial. 
%Then $X_\ma$ has a 
%$(\xt_\cs, \xt_\cs)$-cut. 
%\end{claim}

\begin{theorem} \label{c3} \label{t:bpa}
Let $N$ be a model of BPA which is reasonable, i.e. for every $a \in N$ the set 
$\{ a^n : n \in N $ finite $\}$ is bounded. Then the following are equivalent:
\begin{enumerate}
\item for every $n_* \in N$, the model $N_{<n_*} = N \rstr \{ a: N \models a < n_* \}$ is $\lambda$-saturated.
\item for every $n_* \in N$, the model $N_{\leq n_*}$ considered as a linear order has no $(\kappa, \kappa)$-cuts for $\kappa = \cf(\kappa) < \lambda$. 
\end{enumerate}
\end{theorem}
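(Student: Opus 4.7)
The plan is to mirror the proof of Theorem~\ref{t:pa}, but localized to each bounded initial segment of $N$: form the CSP $\cs = \cs_3[N,N]$, invoke Conclusion~\ref{concl15} and Theorem~\ref{ps-is-ts} to obtain $\xp_\cs = \xt_\cs$, and then translate $\ord$-saturation of $\cs$ into saturation of each $N_{<n_*}$. The role played by the cofinality hypothesis $\cf(N,<)\geq\lambda$ in Theorem~\ref{t:pa} is automatic here, because each $N_{<n_*}$ is bounded.

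For (1)$\Rightarrow$(2), any $(\kappa,\kappa)$-cut in $N_{\leq n_*}$ with $\kappa=\cf(\kappa)<\lambda$ must lie inside $N_{<n_*}$, since $\kappa\geq\aleph_0$ forces infinitely many elements on the upper side (so $n_*$ is on the upper side but not alone). Witnessing sequences $\langle a_i:i<\kappa\rangle$, $\langle b_j:j<\kappa\rangle$ in $N_{<n_*}$ then give a partial type $\{a_i < x\}_i \cup \{x < b_j\}_j$ of size $<\lambda$ over $<\lambda$ parameters which is finitely satisfiable in $N_{<n_*}$ but unrealized, contradicting~(1).

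For (2)$\Rightarrow$(1), set $\cs = \cs_3[N,N]$. By reasonableness and Conclusion~\ref{concl15}, $\cs$ is a CSP closed under exponentiation, so Theorem~\ref{ps-is-ts} gives $\xp_\cs = \xt_\cs$ and yields a symmetric first cut. First I claim $\xt_\cs \geq \lambda$: otherwise Theorem~\ref{ps-is-ts}(3) supplies some $\ma\in\ord(\cs)$ with a $(\xt_\cs,\xt_\cs)$-cut in $(X_\ma,\leq_\ma)$, and by the construction of $\cs_3$ in Definition~\ref{m23}(2) we may assume $(X_\ma,\leq_\ma)$ is internally order-isomorphic to an initial segment $[0,n'_*]$ of $N$ under $<^N$; the cut then descends to a $(\xt_\cs,\xt_\cs)$-cut in $N_{\leq n'_*}$ with $\xt_\cs<\lambda$, contradicting (2). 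Hence $\xt_\cs\geq\lambda$, and by Claim~\ref{x16} the CSP $\cs$ is $\lambda$-$\ord$-saturated. Now, fixing a nonstandard $n_*$ (the standard case being trivial) and a consistent partial type $p(x)$ over $A\subseteq N_{<n_*}$ with $|A|,|p|<\lambda$, translate $p$ to $p'(x)$ by relativizing every quantifier occurring in each formula of $p$ to the bound $n_*$; the resulting formulas are bounded and so lie in $\Delta_\cs$ by Definition~\ref{m23}(1)(a). Reasonableness together with bounded overspill applied to ``$d\leq n_* \wedge (n_*+1)^d$ exists'' yields a nonstandard $d_*\leq n_*$ with $(n_*+1)^{d_*}$ existing, so $\ma\in\ord(\cs)$ with $X_\ma=[0,n_*]$ (under $<^N$) and $d_\ma=d_*$ is a legitimate element of $\ord(\cs_3[N,N])$. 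Then $p'(x)\cup\{x<n_*\}$ is an $\ord$-type concentrated on $X_\ma$, and its realization furnished by $\lambda$-$\ord$-saturation is a realization of the original $p$ inside $N_{<n_*}$.

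The main delicate point is the translation step: one must check that bounding every quantifier by $n_*$ produces $\Delta_\cs$-formulas whose satisfaction in $N$ by an element $<n_*$ is equivalent to satisfaction of the original formula in the structure $N_{<n_*}$, and that the translated type remains finitely satisfiable. Once one fixes the convention that $N_{<n_*}$ evaluates its quantifiers inside its own domain, this is routine syntactic manipulation, and the remainder of the argument is a direct transcription of the proof of Theorem~\ref{t:pa}. For $n$-types one replaces $\ma$ throughout by the Cartesian product supplied by $\cs$ being a full CSP (Claim~\ref{c43}).
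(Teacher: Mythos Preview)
Your proof is correct and follows essentially the same approach as the paper: build $\cs_3(N)$, invoke Conclusion~\ref{concl15} and Theorem~\ref{ps-is-ts} to get $\xp_\cs = \xt_\cs$, deduce $\xt_\cs \geq \lambda$ from hypothesis~(2) via the symmetric cut, and apply Claim~\ref{x16} for $\ord$-saturation. You supply more detail than the paper on translating types over $N_{<n_*}$ into $\ord$-types (the paper leaves this implicit), and your care in finding a nonstandard $d_\ma$ via reasonableness and overspill, while correct, is not strictly required for Definition~\ref{x15}.
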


\begin{proof}
(1) implies (2) is obvious, so assume (2) holds. 
Let $\cs = \cs_3(N)$ be from \ref{m23}. Then by \ref{concl15} $\cs$ is a cofinality spectrum problem with exponentiation, 
and $\xp_\cs = \xt_\cs$.

Let $\kappa = \xp_\cs = \xt_\cs$, so $\kappa$ is regular. By \ref{sym6}, some  $\ma \in \ord(\cs)$ has a $(\kappa, \kappa)$-cut. 
By the definition of $\cs_3(N)$, any $X_\ma$ is internally order-isomorphic to a bounded initial segment of $N$ with the usual order. 
Thus, some bounded initial segment of $N$ has a $(\kappa, \kappa)$-cut. As we've assumed (2), 
it must be that $\lambda \leq \kappa = \xp_\cs = \xt_\cs$. 

Let $n_* \in N$ be given. 
As we assumed $N$ is reasonable, there is a nonstandard $d_*$ such that $N \models$ ``${n_*}^{d_*}$ exists''. 
Recalling \ref{m23}(2), there is a nontrivial $\ma \in \ord(\cs)$ 
such that $(X_\ma, \leq_\ma)$ isomorphic to an initial segment of $N$, containing $[0, n_*]$, 
with the usual order. Thus, to prove (1), it will suffice to show that every $\ma \in \ord(\cs)$ is $\gamma^+$-$\ord$-saturated 
for every $\gamma < \lambda$. By \ref{x16}, if $\gamma < \min \{ \xp_\cs, \xt_\cs \}$ then $\cs$ is 
$\gamma^+$-$\ord$-saturated.
Since $\lambda \leq \xp_\cs = \xt_\cs$, this completes the proof. 
\end{proof}

Note that Theorems \ref{t:pa} and \ref{t:bpa} show that the situation in models of Peano arithmetic is very different from that in real closed fields, 
as shown by the next quoted theorem. 
By ``asymmetric cut'' we mean a cut in which the infinite cofinalities of each side are not equal.   

\begin{thm-lit}[Theorem 1.1 of Shelah \cite{Sh:757}]
Let $K$ be an arbitrary ordered field. Then there is a symmetrically complete\footnote{This means that any decreasing sequences of 
closed bounded intervals, of any ordinal length, has nonempty intersection.} real closed field $K^+$ containing $K$ such that 
any asymmetric cut of $K$ is not filled. So if $K$ is not embeddable into $\mathbb{R}$, then $K^+$ and $K$ necessarily have an asymmetric cut. 
\end{thm-lit}

\section{On the local cardinals $\xp_{\cs, \ma}$ and $\xt_{\cs, \ma}$} \label{s:local}

Returning to CSPs generally, in this section we prove Theorem \ref{t6}, a complementary result to Theorem \ref{ps-is-ts}. The theorem shows that the local cardinals 
$\xp_{\cs, \ma}$ and $\xt_{\cs, \ma}$ from Definition \ref{d:local} need not normally agree, even in CSPs arising from models of Peano arithmetic, if the 
underlying $M^+_1$ is not uniformly saturated. 

\begin{defn} \label{d:local}
Let $\cs$ be a CSP or weak CSP, and $\ma \in \ord(\cs)$.  
\begin{enumerate} 
\item Let $\xp_{\cs, \ma}$ be $\min \{ \kappa :$ there are regular $\kappa_1, \kappa_2$ such that 
$\kappa_1+\kappa_2 = \kappa$ and $X_\ma$ has a $(\kappa_1, \kappa_2)$-cut $\}$. 
\item Let $\xt_{\cs, \ma}$ be 
$\min \{ \kappa ~:~ \kappa \geq \aleph_0$ and there is in the tree $\mct_\ma$ 
 a strictly increasing sequence of cofinality $\kappa$ with no upper bound $\}$. 
\end{enumerate}
\end{defn}

We need a preliminary lemma. 

\begin{lemma} \label{c:tr-cut}
Let $\cs$ be a cofinality spectrum problem, $\ma \in \ord(\cs)$. 
Suppose $(\os_\cs, <_{\os_\cs})$ has a $(\kappa, \kappa)$-cut, for $\kappa \leq \xt_\cs$.  %$($thus, $= \xt_\cs$$)$.  
Then either $X_\ma$ has an $(\kappa, \kappa)$-cut or else $\mct_\ma$ has a branch of cofinality $\kappa$ with no upper bound.
\end{lemma}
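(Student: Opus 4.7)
The plan is to extract a tree-coherent sequence in $\mct_\ma$ from the given cut and conclude by locating a gap element. Let $A = \langle (c_\alpha, i_\alpha) : \alpha < \kappa \rangle$ realize the lower side (strictly $<_{\os_\ma}$-increasing) and $B = \langle (d_\beta, j_\beta) : \beta < \kappa \rangle$ the upper side (strictly decreasing) of the given $(\kappa, \kappa)$-cut. For each $\alpha$ I set $e_\alpha := \cis(c_\alpha, d_\alpha) \in \mct_\ma$. I first claim $\langle e_\alpha : \alpha < \kappa \rangle$ is $\kleq$-weakly-increasing: for $\beta > \alpha$ the positions $(c_\beta, i_\beta)$ and $(d_\beta, j_\beta)$ lie strictly in the open $\os_\ma$-interval $((c_\alpha, i_\alpha), (d_\alpha, j_\alpha))$, and by Definition \ref{d:coll} any node $f \in \mct_\ma$ with some copy $(f, k)$ inside this interval must strictly $\kleq$-extend $e_\alpha$; if $f$ were a proper prefix of $e_\alpha$ or $\kleq$-incomparable with it, both copies $(f, 0), (f, 1)$ would fall outside. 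Hence $e_\alpha \kleq c_\beta$ and $e_\alpha \kleq d_\beta$, so $e_\alpha \kleq \cis(c_\beta, d_\beta) = e_\beta$.

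Next I split on the behavior of $\langle e_\alpha \rangle$. If it is cofinally $\kleq$-strictly increasing, I thin to a strictly $\kleq$-increasing subsequence of length $\kappa$. If this has no upper bound in $\mct_\ma$, it is the required branch of cofinality $\kappa$ with no upper bound, yielding the second alternative. If it has an upper bound $\eta^*$, I set $n^c_\alpha := c_\alpha(\lgn(e_\alpha))$, $n^d_\alpha := d_\alpha(\lgn(e_\alpha))$, and the middle direction $k_\alpha := \eta^*(\lgn(e_\alpha))$; the cut structure forces $n^c_\alpha \leq k_\alpha \leq n^d_\alpha$, and the same gap-element argument as below, applied relative to $\eta^*$ and the varying levels $\lgn(e_\alpha)$, produces a $(\kappa, \kappa)$-cut in $X_\ma$.

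The principal case is that $\langle e_\alpha \rangle$ is eventually constant at some $e^* \in \mct_\ma$. On a tail, $c_\alpha$ and $d_\alpha$ are incomparable strict extensions of $e^*$, with branching directions $n^c_\alpha = c_\alpha(\lgn(e^*))$ and $n^d_\alpha = d_\alpha(\lgn(e^*))$ satisfying $n^c_\alpha < n^d_\alpha$ in $X_\ma$. The $<_{\os_\ma}$-order forces $\langle n^c_\alpha \rangle$ weakly increasing and $\langle n^d_\alpha \rangle$ weakly decreasing. Neither can be eventually constant: if $n^c_\alpha \equiv n^c_*$ on a tail, then by discreteness of $X_\ma$ and closure of $\mct_\ma$ under concatenation (Definition \ref{d:estt}), the node $(e^*)^\smallfrown\langle n^c_* + 1 \rangle$ exists in $\mct_\ma$; using the monotonicity $n^c_\alpha \leq n^c_*$ on the tail, and for initial indices the relation $c_\alpha(\lgn(e^*)) \leq n^c_*$ forced by $(c_\alpha, i_\alpha) < (c_\beta, i_\beta)$ for tail $\beta$, one checks that $((e^*)^\smallfrown\langle n^c_* + 1 \rangle, 0)$ sits strictly above every $(c_\alpha, i_\alpha)$ and strictly below every $(d_\beta, j_\beta)$, contradicting the cut property. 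By regularity of $\kappa$, $\langle n^c_\alpha \rangle$ and $\langle n^d_\alpha \rangle$ are then strictly monotonic and cofinal of cofinality $\kappa$, and with $n^c_\alpha < n^d_\beta$ throughout they realize a $(\kappa, \kappa)$-cut in $X_\ma$.

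The main obstacle is the bounded sub-case of the first case (strictly increasing $\langle e_\alpha \rangle$ with upper bound $\eta^*$): the branching data is spread along the sequence rather than localized at one node, and the gap element must be found by simultaneously tracking $(n^c_\alpha, k_\alpha, n^d_\alpha)$ as $\alpha$ varies. A subsidiary nuisance is the degenerate configuration where $c_\alpha \kleq d_\alpha$ or $d_\alpha \kleq c_\alpha$, so that $e_\alpha \in \{c_\alpha, d_\alpha\}$ and one of $n^c_\alpha, n^d_\alpha$ is undefined; such configurations pin the relevant position in the strictly monotonic sequence $A$ or $B$ to a single point, so occur at most once and are absorbed by passing to a tail.
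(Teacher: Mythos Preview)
Your approach via the meets $e_\alpha=\cis(c_\alpha,d_\alpha)$ is a genuinely different and attractive idea: the paper instead treats the two sides of the cut separately, reducing $\langle a_i\rangle$ and $\langle b_i\rangle$ each to concentric or disjoint interval families via Ramsey's theorem (for $\kappa=\aleph_0$) and a Fodor argument (for $\kappa>\aleph_0$), and then runs through a case analysis on the four combinations. Your route would avoid that combinatorics entirely. The monotonicity claim for $\langle e_\alpha\rangle$ and the eventually-constant case are essentially correct.

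However, the bounded strictly-increasing sub-case --- which you yourself flag as the main obstacle --- is not actually handled. Saying ``the same gap-element argument as below, applied relative to $\eta^*$ and the varying levels $\lgn(e_\alpha)$'' is a description of the difficulty, not a solution. In the eventually-constant case the directions $n^c_\alpha,n^d_\alpha$ all live at the \emph{single} level $\lgn(e^*)$, so they form two monotone sequences in $X_\ma$ and the gap element $e^{*\smallfrown}\langle n^c_*+1\rangle$ makes sense. When the levels $\lgn(e_\alpha)$ vary, the triples $(n^c_\alpha,k_\alpha,n^d_\alpha)$ sit at different coordinates and there is no evident way to assemble them into a pre-cut in $X_\ma$; the only monotone $\kappa$-sequence you have in $X_\ma$ is $\langle\lgn(e_\alpha)\rangle$, and nothing in your setup supplies a decreasing $\kappa$-sequence above it. Compare the paper's Step~2(b): there both an increasing and a decreasing sequence lie along one branch, and it is their \emph{lengths} that form the $(\kappa,\kappa)$-cut in $X_\ma$. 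Your $c_\alpha,d_\alpha$ need not be $\trlf$-comparable to one another or to $\eta^*$, so this mechanism is unavailable. You must either show some restriction of $\eta^*$ fills the original cut (contradiction), or exhibit an explicit $(\kappa,\kappa)$-cut in $X_\ma$; the sketch does neither.

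A second, related problem: the degenerate configuration $c_\alpha\trlf d_\alpha$ does \emph{not} occur at most once. It can hold for every $\alpha$ --- take $c_\alpha$ $\trlf$-increasing and $d_\alpha$ $\trlf$-decreasing along a common branch with $c_\alpha\tlf d_\alpha$ throughout. This is precisely the paper's Step~2(b). In your framework it lands in the bounded strictly-increasing sub-case (with $e_\alpha=c_\alpha$), so it is absorbed by the same gap rather than being a negligible edge case.
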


\begin{proof} 
Let $(\overline{a}, \overline{b}) = (\langle (a_i, t_i) : i < \kappa \rangle, \langle (b_i, s_i) : i < \kappa \rangle)$ witness the cut in $\os_\ma$, 
with $a_i, b_i \in \mct_\ma$ and $t_i, s_i \in \{ 0_\ma, 1_\ma \}$ for each $i < \kappa$. 
By definition of cut, we may assume $\kappa$ is regular. 

\step{Step 1: Simplifying the presentation of intervals.}
By the pigeonhole principle, we may assume the sequences $\langle t_i : i < \kappa \rangle$ 
and $\langle s_i : i < \kappa \rangle$ are constant. For each $i< \kappa$, write $A_i$ for the closed interval in the linear order $S_{\ma}$ whose 
endpoints are given by $(a_i, t_i)$ and $(a_i, |t_i - 1|)$, and likewise for $B_i$. 
By the construction of $S_{\ma}$, any two intervals of this form are either concentric or disjoint. 

Let $\overline{A} = \langle A_i : i < \kappa \rangle$ and $\overline{B} = \langle B_i : i < \kappa \rangle$. 

The task of this step will be to prove that without loss of generality, $\bar{A}$ consists of either pairwise concentric or pairwise disjoint intervals,  
and likewise for $\bar{B}$. Here ``concentric'' means either either concentric decreasing: $j < i < \kappa \implies C_j \supsetneq C_i$ [for $C = A$ or $B$]
or concentric increasing: $j < i < \kappa \implies C_i \supsetneq C_j$. 

If $\kappa = \aleph_0$, then by Ramsey's theorem\footnote{This also holds if $\kappa$ is weakly compact.}, 
we may assume that $\overline{A}$ consists either of concentric intervals  
or disjoint intervals moving right, meaning $j < i < \kappa$ implies $A_j \cap A_i = \emptyset$ and $(\forall x \in A_j)(\forall y \in A_i)(x < y)$. Likewise, 
we may assume that $\overline{B}$ consists either of concentric intervals or of disjoint intervals moving left, meaning
 $j < i < \kappa$ implies $B_j \cap B_i = \emptyset$ and $(\forall x \in B_j)(\forall y \in B_i)(y < x)$.  

If $\kappa > \aleph_0$, let $a \land b$ denote the maximal common initial segment of $a, b \in \mct_\ma$. 
For this argument, we use $c$ to denote either $a$ or $b$. 
For each $i$, 
the sequence $\langle \lgn_\ma(c_i \land c_j) : j \in [i, \kappa) \rangle$ is a sequence of elements of $Y_\ma \subseteq X_\ma$ 
[recall Definition \ref{a2x}] bounded by $\lgn_\ma(c_i)$. 
Thus, for some club $E_i$ of $\kappa$ with $\min E_i > i$, we have that $\bar{\ell}_i := \langle \lgn_\ma(c_i \land c_j) : j \in E_i \rangle$
is either constant or $<_\ma$-decreasing. Let $S \subseteq \kappa$ be a stationary set of $i$ on which we get the same outcome (either always constant 
or always decreasing). 

Let $E = \bigcap \{ \epsilon < \kappa : \epsilon$ a limit ordinal and $\epsilon \in \bigcap_{i < \epsilon} E_i \}$, so $E$ is a club of $\kappa$. 
There are several cases:

\begin{enumerate}
\item First case: for all $i \in S$, $\bar{\ell}_i$ is constant. 
\begin{enumerate}
\item If $i \in S$, $j \in E_i$ implies $c_i \land c_j = c_i$, then  
$\langle c_i : i \in S \cap E \rangle$ is a $\tlf_{\mct_\ma}$-increasing sequence so we are in the concentric decreasing case. 
\item 
If $i \in S$, $j \in E_i$ implies $\lgn_\ma(c_i \land c_j) < \lgn(c_i)$ then by Fodor's lemma there is $\gamma \in \kappa$ and a stationary subset $X$ of $\kappa$ such that 
$i \in X$ and $j \in E_i$ implies $\lgn_\ma(c_i \land c_j) = \gamma$, so we are in the pairwise disjoint case.  
\end{enumerate}

\item Second case: for all $i \in S$, $\bar{\ell}_i$ is $<_\ma$-decreasing. (Remember that the branches of $\mct_\ma$ are internally pseudofinite, but not necessarily well-ordered from an external point of view.) Let $X = S \cap E$. Then $\langle c_i : i \in X \rangle$ is a $\tlf_{\mct_\ma}$-decreasing sequence, so we are in the concentric increasing case. 
\end{enumerate}

%CHECK that it is really legitimate to do a and b together. I think so.

\step{Step 2: The concentric cases.} Suppose both $\bar{A}$ and $\bar{B}$ are concentric. Then $\langle a_i : i < \kappa \rangle$ 
and $\langle b_i : i < \kappa \rangle$ are both $\tlf_\ma$-linearly ordered sequences in $\mct_\ma$. There are four cases depending on whether 
each of these sequences is $\tlf_\ma$-increasing or decreasing. 

(a) Suppose both $\langle a_i : i < \kappa \rangle$ and $\langle b_i : i < \kappa \rangle$ are $\tlf_\ma$-increasing. 
If they lie along eventually different branches, the original sequence $(\overline{a}, \overline{b})$ could describe only a pre-cut 
and not a cut according to the definition of $\os_\ma$, so we get a contradiction. If they lie along the same branch, then it must be that $\mct_\ma$ has a branch of 
cofinality $\kappa$, as desired. 

(b) If $\langle a_i : i < \kappa \rangle$ is $\tlf_\ma$-increasing while $\langle b_i : i < \kappa \rangle$ is $\tlf_\ma$-decreasing, these 
and form a cut $(*)$ in the linearly ordered set
\[ (\{ c \in \mct_\ma : c \tlf b_0 \}, \tlf) \]
Then the projections $(\langle \lgn(a_i) : i < \kappa \rangle, \langle \lgn(b_j) : j < \kappa \rangle$ form a pre-cut $(**)$ in $X_\ma$. 
If this pre-cut $(**)$ were realized, say by $t$, then $b_0 \rstr t$ realizes the cut $(*)$, contradiction. 
This shows that $X_\ma$ has a $(\kappa, \kappa)$-cut. 

(c) If $\langle a_i : i < \kappa \rangle$ and $\langle b_i : i < \kappa \rangle$ are both $\tlf_{\mct_\ma}$-decreasing, the original sequence
$(\overline{a}, \overline{b})$ will not describe a cut, so we ignore this case. 

(d) If $\langle b_i : i < \kappa \rangle$ is $\tlf_{\mct_\ma}$-increasing while $\langle a_i : i < \kappa \rangle$ is $\tlf_{\mct_\ma}$-decreasing, 
the argument is parallel to case (b). 

\br

\step{Step 3: Not both concentric.} Again, there are several possibilities. 

Suppose first that neither $\overline{A}$ nor $\overline{B}$ is concentric, so $\overline{A}$ is a sequence of disjoint intervals moving right and 
$\overline{B}$ is a sequence of disjoint intervals moving left.

Consider the sequence $\langle c_i : i < \kappa \rangle$ where $c_i := \operatorname{lub} \{ a_i, b_i \}$ in the tree $\mct_\ma$. By definition of 
$\overline{a}$ and $\overline{b}$, this sequence will be either eventually constant or a path through the tree $\mct_\ma$. 

If the sequence is a path through the tree, then $\langle (c_i, 0) : i < \kappa \rangle$ is cofinal in $\overline{a}$ and 
$\langle (c_i, 1) : i < \kappa \rangle$ is cofinal in $\overline{b}$. So the path $\langle c_i : i < \kappa \rangle$ cannot have an upper bound, 
as given any such upper bound $d_*$, by definition of $(\os_\ma, <_{\os_\ma})$, we would have that $(d_*, 0)$ and $(d_*, 1)$ both realize 
the original cut, contradiction. 

So in this case, there is a path through $\mct_\ma$ of length $\kappa$ with no upper bound.

If the sequence is eventually constant, then there is $i_* < \kappa$ such that all $\{ a_i, b_i : i_* < i < \kappa \}$ are immediate successors of the same node, say $a_*$, in $\mct_\ma$. 
So $\lgn(a_*) < d_\ma$. 
By definition of the order $(\os_\ma, <_{\os_\ma})$ and the case we are in, this means 
\[ (\langle a_i(\lgn(a_i)-1) : i_* < i < \kappa \rangle, \langle b_i(\lgn(b_i)-1) : i_* < i < \kappa \rangle) \] 
is a pre-cut in $X_\ma$. 
Suppose for a contradiction it were realized by $x$; then $c_* := {a_*}^\smallfrown\langle x \rangle$ would exist since $\lgn(a_*) < d_\ma$. 
Then in $(\os_\ma, <_{\os_\ma})$, $(c_*, 0)$ and $(c_*, 1)$ would both realize the original cut $(\overline{a}, \overline{b})$, contradiction. 

So in this case, $X_\ma$ has a $(\kappa, \kappa)$-cut.

\br
Otherwise, precisely one of $\overline{A}$, $\overline{B}$ is not concentric. The cases are parallel, so assume 
the non-concentric side is $\overline{B}$. Define $d_i$ for $i<\kappa$ by $d_i := \operatorname{lub} \{ b_i, b_0 \}$ in the tree $\mct_\ma$.  
Writing $C_i$ for the interval $((c_i, 0), (c_i, 1))$ and $D_i$ for the interval $((d_i, 0), (d_i, 1))$ in $\os_\ma$, we have that 
$\langle C_i : i < \kappa \rangle$ and both $\langle D_i : i < \kappa \rangle$ are concentric sequences of intervals with 
$D_i \subseteq C_j$ for all $i >> j$. 

This reduces the problem to Step 2. 

\step{Step 4: Finish.} We have shown that in each case either $X_\ma$ has a $(\kappa, \kappa)$-cut or else $\mct_\ma$ has a strictly increasing path of length $\kappa$ 
with no upper bound, so this completes the proof. 
\end{proof}

\br

\newpage

\begin{theorem} \label{t6} 
Let $\kappa$ be a regular uncountable cardinal. 

\br
\begin{enumerate}

\item Suppose we are given $M$ a model of PA which is $\kappa$-saturated, and $a_* \in M$ nonstandard. 
Then we can find a countable set $X \subseteq M$ such that letting $N$ be the Skolem hull of $\{ a \in M : M \models a \leq a_* \} \cup X$, 
we have that the reduct $(N, <)$ to the language of order has an $(\aleph_0, \aleph_0)$-cut.

\br
\item
There is a cofinality spectrum problem $\cs$ with $M^\cs_1 = N$ and $\ma \in \ord(\cs)$ such that 
\[ \xt_{\cs, \ma} < \xp_{\cs, \ma} \]
in fact, $\xt_{\cs, \ma} = \aleph_0$ while $\xp_{\cs, \ma} \geq \kappa$. 
\br
\item If in the conditions above, we have just that $M \models BPA$ and for some $r_* < a_*$ nonstandard  
$M \models$ ``${a_*}^{r_*}$ exists'', this is enough.  
\end{enumerate}
\end{theorem}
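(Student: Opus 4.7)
The plan splits into three parts: constructing the countable set $X$ inside $M$ by a genericity argument (the main step), extracting an $(\aleph_0, \aleph_0)$-cut in $(N, <)$ from the resulting tree gap, and verifying the local cardinal inequality inside $\cs_3[N]$.

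For part (1), I fix $\ma \in \ord(\cs_3[M])$ whose underlying order is $[0, a_*]$ and whose height bound $d_\ma$ is some nonstandard element with $d_\ma \leq r_*$, so that $(a_*)^{d_\ma}$ exists; in part (3) this is precisely where the hypothesis on $r_*$ enters. Since $M$ is $\kappa$-saturated and hence $\aleph_1$-saturated, every countable $\tlf$-increasing sequence in $\mct_\ma^M$ has an upper bound there. I choose $\eta_* \in \mct_\ma^M$ with $\lgn(\eta_*) = d_\ma$ and set $\eta_n := \eta_* \rstr n$, $X := \{ \eta_n : n < \omega \}$. The core requirement is that $\eta_*$ is chosen so that no element of the Skolem hull $N$ of $M_{\leq a_*} \cup X$ is an upper bound of $\langle \eta_n : n < \omega \rangle$ in $\mct_\ma^N$. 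This is arranged by a staged diagonalization inside $M$: any candidate upper bound has the form $\nu = \tau(\bar c, \eta_{n_1}, \ldots, \eta_{n_k})$ for a PA-Skolem term $\tau$, parameters $\bar c \in M_{\leq a_*}$, and standard $n_1 < \cdots < n_k$, and such a $\nu$ is ruled out as soon as, for some standard $i > n_k$, $\nu(i) \neq \eta_*(i)$. I inductively specify the value of $\eta_*$ at each standard position $i$ to differ from $\tau(\bar c, \eta_{n_1}, \ldots, \eta_{n_k})(i)$ for each of the countably many triples $(\tau, \bar n, \bar c)$ enumerated at earlier stages, using an advance-fixed countable dense parameter base and $\aleph_1$-saturation of $M$ to realize the consistent partial type defining $\eta_*$.

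Once $X$ is fixed, extracting the cut is routine. By construction $\langle \eta_n : n < \omega \rangle$ is $\tlf$-increasing in $\mct_\ma^N$ with no upper bound, so by Fact \ref{cy} applied to the weak CSP $\cs_3[N]$ (with $\xt_{\cs_3[N]} = \aleph_0$), the flattened order $(\os_\ma, <_{\os_\ma})$ has an $(\aleph_0, \aleph_0)$-cut. Since the required exponentiation witnesses are present in $N$ via $a_*$ and $r_*$, Claim \ref{c:exp} gives an internal order-isomorphism of $\os_\ma$ onto an initial segment of $N$ in its native order, transferring the cut to $(N, <)$; this proves (1). For part (2), set $\cs := \cs_3[N]$ (with $D$ chosen so that $\ma$ remains in $\ord(\cs)$). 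The constructed sequence $\langle \eta_n \rangle$ gives $\xt_{\cs, \ma} = \aleph_0$. On the other hand, $X_\ma = [0, a_*]^N = M_{\leq a_*}$ because $M_{\leq a_*} \subseteq N$ and $N \subseteq M$ force any element $\leq a_*$ in $N$ to lie in $M_{\leq a_*}$; and $M_{\leq a_*}$, being a $\Delta$-definable bounded linear suborder of the $\kappa$-saturated $M$, is itself $\kappa$-saturated as a linear order, so it has no $(\kappa_1, \kappa_2)$-cut with $\kappa_1 + \kappa_2 < \kappa$. Hence $\xp_{\cs, \ma} \geq \kappa$. Part (3) goes through with PA replaced by BPA plus existence of $(a_*)^{r_*}$, since that is exactly what Claim \ref{c:exp-a} and the definition of $\cs_3$ need to build $\mct_\ma$.

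The main obstacle is the diagonalization in Step 1: the Skolem hull $N$ may involve parameters from $M_{\leq a_*}$ whose cardinality can be $\geq \kappa$, so there are too many potential upper-bound candidates to enumerate naively. The key observation making the construction go through is that once the standard indices $n_1, \ldots, n_k$ in a Skolem term's inputs are fixed, disagreement with $\eta_*$ at a single standard coordinate $i > n_k$ already rules out the term's value as an upper bound; organizing the diagonalization coordinate by coordinate and using $\aleph_1$-saturation of $M$ to realize the type defining $\eta_*$ keeps only countably many avoidance conditions active per coordinate.
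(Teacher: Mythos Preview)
Your diagonalization in Step~1 has a genuine gap. The parameter set $M_{\leq a_*}$ is uncountable (indeed $|M_{\leq a_*}| \geq 2^{\aleph_0}$ once $M$ is $\aleph_1$-saturated), so for each fixed Skolem term $\tau$ and index tuple $\bar n$ there are uncountably many candidate upper bounds $\tau(\bar c,\eta_{n_1},\dots,\eta_{n_k})$ as $\bar c$ ranges over $M_{\leq a_*}$. Your proposed fix, an ``advance-fixed countable dense parameter base,'' is not justified: there is no topology or elementary-submodel argument in sight under which a countable $C \subseteq M_{\leq a_*}$ suffices to control all such terms. Concretely, for a fixed term $\tau$ and fixed $\eta_{n_1},\dots,\eta_{n_k}$, the value $\tau(\bar c,\dots)(i)$ at a standard coordinate $i$ can hit any element of $[0,a_*]$ as $\bar c$ varies, so a single coordinate choice cannot avoid them all. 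Note also that your $X$ is redundant: each $\eta_n = \eta_*\rstr n$ is definable from $\eta_*(0),\dots,\eta_*(n-1) \in M_{\leq a_*}$ by a standard coding term, so $N$ is just the Skolem hull of $M_{\leq a_*}$, fixed \emph{before} $\eta_*$ is chosen. You are therefore really asking whether some $\omega$-branch realized in $\mct_\ma^M$ has no upper bound in $\mct_\ma^{N_0}$ for this fixed $N_0$, and your argument does not establish this.

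The paper's approach is essentially different and bypasses the obstacle by internal counting rather than external diagonalization. It constructs the $(\aleph_0,\aleph_0)$-cut directly in the linear order (not in a tree): one realizes a type producing nested intervals $(a_n,b_n)$ inside $(a_*, a_*^r)$ such that $(a_n,b_n)$ is disjoint from the \emph{entire} range $B_n = \{F_n(\bar e): \bar e \in (M_{\leq a_*} \cup \{a_i,b_i:i<n\})^{k_n}\}$. The point is that $B_n$ is a definable set of internal size $\leq a_*^{k_n+1}$, which is tiny compared to the interval length $\approx a_*^r$; pigeonhole inside $M$ then gives room for the next interval, for all $\bar c$ at once. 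Part~(2) then runs in the \emph{opposite} direction from yours: having the cut in $(N,<)$ above $a_*$, the paper maps $\os_\ma$ order-isomorphically onto an initial segment of $N$ large enough to contain the cut, and invokes Lemma~\ref{c:tr-cut} to conclude that either $X_\ma$ has an $(\aleph_0,\aleph_0)$-cut or $\mct_\ma$ has an unbounded $\omega$-branch; since $X_\ma = M_{\leq a_*}$ is $\kappa$-saturated, it must be the latter.
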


%\begin{lemma} \label{t6a}
%\end{lemma}

\begin{proof}
First we prove (1). 
Let $A = M \rstr a_*$. 

Let $\langle F_n : n < \omega \rangle$ list the Skolem functions of $M$, each appearing infinitely often (for transparency). 
Let $k_n$ be the arity of $F_n$, and without loss of generality $k_n \leq n$. 

Let $p$ be the type in the variables $z$, $x_i$ ($i<\omega$), $y_i$ ($i<\omega$) and parameter $a_*$ expressing: 

\begin{enumerate}[label=\emph{(\alph*)}]
\item $z < a_*$ is nonstandard
\item $x_0 = a_*$
\item $y_0 = (a_*)^z$
\item $m < n < \omega$ implies  $a_* = x_0 \leq x_m < x_n < y_n < y_m \leq y_0$
%\item for all $1 \leq \ell < \omega$, $(a_*)^\ell < c_n$
%\item for all $\ell, n < \omega$, $(x_n)^\ell < y_n$  
\item letting $B_n = \{ F_n(e_0, \dots, e_{k_n-1}) : e_\ell \in A \cup \{ x_\ell, y_\ell : \ell < n \} \}$, we have that 
$B_n \cap (x_n, y_n) = \emptyset$. 
\end{enumerate}

Let us check that $p$ is consistent. By the $\kappa$-saturation of $M$, this will suffice to show it is realized. 

Fix $n<\omega$ and consider a finite fragment of $p \rstr z, x_0, \dots, x_n, y_0, \dots, y_n$. 
Let $\ell$ be the maximal exponent appearing in conditions of the form $(e)$. 
We are looking for $r, a_0, \dots, a_n, b_0, \dots, b_n$ such that 
\begin{itemize}
\item $r$ is nonstandard, or simply above some given natural number 
\item $a_* = a_0 < \dots < a_n < b_n < \dots < b_0 = (a_*)^r$ 
\item $m\leq n$ implies $B_m \cap (a_m, b_m) = \emptyset$
\end{itemize}

For each $n$, the set $B_n$ is definable in $M$ (since it only involves one function $F_n$, of arity $k_n$) 
and of power $c_n$, where $c_n \approx (|A| + 2n)^{k_n} < {a_*}^{k_n+1}$.    
Let $B_n(w, v_0, \dots, v_n, v^\prime_0, \dots, v^\prime_n )$ denote the set 
\[ \{ F_n(e_0, \dots, e_{k_n-1}) : e_\ell \leq w \lor e_\ell \in \{ v_0, \dots, v_n, v^\prime_0, \dots, v^\prime_n \} \} \] 
which will likewise have size $\leq w^{k_n}\cdot (2n+2)^{k_n} < w^{k_n+1}$. 
Let $\vp_n(w,v_0, \dots, v_n, v^\prime_0, \dots, v^\prime_n)$ assert that 
\[(\forall t)(\exists s \leq x^{k_n+1} \cdot t)(\exists a \exists b)(a < b \leq s ~\land ~b-a > t~ \land ~(a,b) \cap B_n(x) = \emptyset)\]
Then clearly for all $m \leq n$, $M \models \forall w \forall\bar{v}\forall\bar{v}^\prime \vp_m(w, \bar{v}, \bar{v}^\prime)$, 
recalling that $k_n$ is fixed so the exponential notation abbreviates multiplication. 

Choosing $r$ such that ${(k_0+2)(k_1+2)\cdots(k_{n}+2)} \leq r < a_*$ and $b_* = b_0 = (a_*)^r$ we may then choose $a_1, b_1, \dots, a_n, b_n$
by induction on $\ell \leq n$ such that:

\begin{itemize}
\item $k < \ell \implies a_k < a_\ell < b_\ell < b_k$
\item $(a_0, b_0) = (a_*, (a_*)^r)$
\item $b_\ell - a_\ell > (a_*)^{n \cdot(n+1-\ell)}$
\end{itemize} 

For $\ell+1$, we have $b_\ell - a_\ell > (a_*)^{(n+2)(n+1-\ell)}$ and $B(a_*, a_0, \dots, a_\ell, b_0, \dots, b_\ell)$ is internally a set with 
$\leq (a_*)^{k_\ell+1} \leq (a_*)^{n+1} < (a_*)^{n+2} $ elements, so there is room. 

This completes the verification that $p$ is consistent, therefore (by saturation) realized. For the remainder of the proof, fix 
realizations $r$, $a_i (i<\omega)$, $b_i (i<\omega)$ of the type $p$. 

[With a little more care, using $(a_*)^{d_* \cdot(d_*+1 -\ell)}$ for $\ell < \omega$, 
we could alternately have chosen the entire countable sequence by induction, avoiding the appeal to the type and 
$\aleph_1$-saturation.]
\br

Let $N$ be the Skolem hull of $M \rstr a_* \cup \{ a_n, b_n : n < \omega \}$, so $N \models PA$. 
Let $\cs = \cs^+_1[N]$ be the canonical CSP from Definition \ref{m23}.   
Then:

\begin{enumerate}[label=\emph{(\alph*)}]
\item $N \preceq M$. 
\item $M_{\leq a_*} = N_{\leq a_*}$ so $(N, <^N)$ is saturated below $a_*$ by definition of $M$.
\item $(\langle a_n : n < \omega \rangle, \langle b_n : n < \omega \rangle)$ is an $(\aleph_0, \aleph_0)$-cut, because by 
construction the Skolem functions do not fill it. 
\item Thus, if $\ma$ is such that $X_\ma = M_{\leq a_*} =  N_{\leq a_*}$, and, say, $d_\ma = \max X_\ma$, 
then $\xp_{\cs, \ma} \geq \kappa$ whereas if $\mb$ is such that 
$X_\mb = N_{\leq b_*}$, and, say, $d_\mb = \max X_\mb$, then $\xp_{\cs, \mb} = \aleph_0$. 
\end{enumerate}

This completes the proof of (1). We continue the argument to prove (2). (3) will follow from the proof. 

Let $\ma$ be such that $X_\ma = N_{\leq a_*}$. Recalling the small nonstandard exponent $r$, 
let $d_* = r+1$ and consider the definable subtree $\mct \subseteq \mct_\ma$ consisting of 
sequences of length $< d_*$ of numbers $< a_*$. $\mct$ has cardinality $a_*^r$
[the tree $\mct_\ma$ will be at least as large]. 

Recalling that $N \models PA$, $\cs$ is closed under strong exponentiation, 
so there is an injection from $\os_\ma$ into some $X_\ma$. Composing with the G\"odel pairing function if needed, 
we may assume there is a definable injection of $\os_\ma$ (as a set) into some $X_\mb \subseteq N$. Applying Conclusion \ref{c2b}, we obtain a definable 
order-isomorphism between $(\os_\ma, <_{\os_\ma})$ onto an initial segment of $M^+_1 = N$.  
Since $\mct$ is the set of all sequences from $X_\ma$ to itself of length $\leq d_{\ma} = a_* >> r$, the cardinality of $\mct$ is $\geq {a_*}^r = b_*$, 
so its image must contain the $(\aleph_0, \aleph_0)$-cut. 
Because $X_\mb$ has an $(\aleph_0, \aleph_0)$-cut, necessarily $(\os, <_{\os})$ has such a cut. 

By Lemma \ref{c:tr-cut}, either $X_\ma$ has an $(\aleph_0, \aleph_0)$-cut 
or else $\mct_\ma$ has a branch of cofinality $\aleph_0$ with no upper bound. 
Since $X_\ma$ is $\kappa$-saturated by assumption, we must be in the second case. 
This shows that $\xt_{\cs, \ma} = \aleph_0$. On the other hand, $\xp_{\cs, \ma} \geq \kappa$, by the hypothesis of saturation.  
\end{proof}

%\begin{theorem} \label{t6}
%For any regular uncountable $\kappa$ there is a cofinality spectrum problem $\cs$ and $\ma \in \ord(\cs)$ such that 
%\[ \xt_{\cs, \ma} < \xp_{\cs, \ma} \]
%in fact, $\xt_{\cs, \ma} = \aleph_0$ while $\xp_{\cs, \ma} \geq \kappa$. 
%\end{theorem}
%
%\begin{proof}
%Apply Lemma \ref{t6a} using the given $\kappa$. That is, 
Thus, the local cardinals $\xp_{\cs, \ma}$ and $\xt_{\cs, \ma}$ need not be equal, 
even in well behaved cofinality spectrum problems, if the model $M^+_1$ is not uniformly 
saturated. 
%\end{proof}

\section{Characterizing the $\tlf^*$-maximal class} \label{s:tlf}

In this section we give the first real evidence that $SOP_2$ is a dividing line by proving that, under instances of GCH, $SOP_2$ characterizes maximality in the 
interpretability order $\tlf^*$ which will be defined below. 
%xxxxxxxxx check comment
The proof uses cofinality spectrum problems. This answers an open question and also gives evidence for a recent conjecture, 
as we now explain. The use of GCH comes only from Fact \ref{x1} below. 
Recall:

\begin{defn} \emph{($SOP_2$, c.f. \cite{ShUs:844} 1.5)} \label{d:sop2}
$T$ has $SOP_2$ if there is a formula $\vp(\overline{x}, \overline{y})$ which does, meaning that in $\mathfrak{C}_T$ there are parameters
$\{ \overline{a}_\eta : \eta \in {^{\omega > } 2} \rangle$, $\ell(\overline{a}_\eta) = \ell(y)$, such that: 
\begin{enumerate}
\item For each $\eta \in {^{\omega}2}$, the set $\{ \vp(\overline{x}, \overline{a}_{\eta \rstr \ell}) : \ell < \omega \}$ is consistent.
\item For any two incomparable $\eta, \nu \in {^{\omega > }2}$, the set $\{ \vp(\overline{x}, \overline{a}_{\eta}), \vp(\overline{x}, \overline{a}_{\nu}) \}$ is inconsistent. 
\end{enumerate}
\end{defn}

Shelah in \cite{Sh:500} had defined an order on theories,  a natural weakening of Keisler's order:  
$T_1 \tlf_* T_2$, which holds, roughly speaking, if there is a third theory $T_*$ which interprets both 
$T_1$ and $T_2$ and whose models $M_*$ have the property that if the reduct to $\tau(T_2)$ is saturated, so is the reduct to $\tau(T_1)$. 
[See Definition \ref{tstar-defn} below.] This was studied and developed more extensively 
by D\v{z}amonja-Shelah \cite{DzSh:692} and Shelah-Usvyatsov \cite{ShUs:844}. 
In the Shelah-Usvyatsov paper, building on work of D\v{z}amonja and Shelah, it was shown that: 

\begin{fact} \emph{(Shelah and Usvyatsov \cite{ShUs:844} 3.15(2), under GCH\footnote{This hypothesis is missing from the statement in \cite{ShUs:844}, but that proof 
quotes \cite{DzSh:692} 3.2, which assumes relevant instances of GCH.})}  \label{x1} 
If $T$ is $NSOP_2$ then $T$ is necessarily non-maximal in $\tlf^*$.
\end{fact}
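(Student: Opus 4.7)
The plan is to prove the contrapositive by exhibiting a strict $\tlf^*$-upper bound for $T$. Fix an $SOP_2$ theory $T^\prime$, for concreteness $T^\prime = T_{SOP_2}$ (the model companion of the theory of 2-branching trees). Under GCH, pick $\lambda$ large enough with $2^\lambda = \lambda^+$, and construct a regular ultrafilter $\mathcal{D}$ on $\lambda$ engineered to achieve two competing ends: (a) $M^\lambda/\mathcal{D}$ is $\lambda^+$-saturated whenever $M \models T$, and (b) $(M^\prime)^\lambda/\mathcal{D}$ is \emph{not} $\lambda^+$-saturated for some $M^\prime \models T^\prime$. Property (b) is arranged by coding, along a $\lambda$-branch through the ultrapower-grown $SOP_2$ tree in $(M^\prime)^\lambda/\mathcal{D}$, a consistent partial type of size $\lambda$ whose pairwise-incompatible instances at successive levels propagate through $\mathcal{D}$ to produce an unrealizable cut. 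Property (a) relies on the $NSOP_2$ hypothesis on $T$: the absence of the tree obstruction means partial $T$-types over sets of size $\leq\lambda$ can be amalgamated through $\mathcal{D}$, so a sufficiently ``good'' ultrafilter realizes them.

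With such $\mathcal{D}$ in hand, the $\tlf^*$-witness is assembled as follows. Let $T_\ast$ be a common expansion interpreting both $T$ and $T^\prime$ (most naturally the disjoint-language union) and take a $\mathcal{D}$-ultrapower of an obvious model of $T_\ast$. Its $\tau(T)$-reduct is $\lambda^+$-saturated while its $\tau(T^\prime)$-reduct is not. Any candidate interpretation scheme witnessing $T^\prime \tlf^* T$ would, applied to this expansion, transfer saturation from the $\tau(T)$-reduct to the $\tau(T^\prime)$-reduct, contradicting (b). Thus $T^\prime \not\tlf^* T$, so $T$ is not $\tlf^*$-maximal.

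The main obstacle, and the part carrying genuine content, is the simultaneous construction of $\mathcal{D}$ achieving (a) and (b). This is a $\lambda^+$-long inductive construction over a bookkeeping that interleaves saturation tasks for $T$-types against omission tasks for the $SOP_2$-branch type, each step requiring careful verification that neither family of tasks sabotages the other. GCH enters twice: to enumerate the $\leq \lambda^+$ tasks in play, and to support the combinatorial machinery of good-ultrafilter constructions at the relevant cardinal. The delicate point is verifying that the realization arrangements tailored to $NSOP_2$-types for $T$ do not accidentally fill in the $SOP_2$-cut for $T^\prime$; this is exactly where the structural asymmetry between $SOP_2$ (forcing omission) and $NSOP_2$ (leaving room to realize) is exploited, as developed in \cite{DzSh:692} and refined in \cite{ShUs:844}.
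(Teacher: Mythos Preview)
Your approach has a genuine gap that essentially assumes an open problem. You propose constructing a regular ultrafilter $\mathcal{D}$ on $\lambda$ that is good for $T$ (saturates all ultrapowers of $T$-models) but not good for the $SOP_2$ theory $T'$. Since $SOP_2$ theories are Keisler-maximal (this is Theorem \ref{998-sop2-max} from \cite{MiSh:998}), an ultrafilter failing to saturate $T'$ is not $\lambda^+$-good; so you are asking for a non-good ultrafilter that nevertheless saturates the given $NSOP_2$ theory $T$. The existence of such an ultrafilter is precisely the statement that $T$ is not Keisler-maximal, and whether $NSOP_2$ suffices for non-maximality in Keisler's order is explicitly flagged in this paper as an open conjecture (see the discussion preceding Theorem \ref{998-sop2-max} and the remark that ``the difficulty in addressing this question may be in building ultrafilters''). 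Your proposal therefore reduces Fact \ref{x1} to a harder open problem.

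Moreover, the attribution is incorrect: \cite{DzSh:692} and \cite{ShUs:844} do not construct ultrafilters. Their argument is a direct model-theoretic construction exploiting the flexibility of the $\tlf^*$ definition that is absent in Keisler's order: given an arbitrary $T_*$ interpreting both $T$ and $T'$, they build a model $M_*$ of $T_*$ by a long iteration (of length $\lambda^+$, using GCH for bookkeeping) in which at each stage they realize types over the $T$-reduct while preserving an omitted type over the $T'$-reduct. The $NSOP_2$ hypothesis on $T$ is used to show these tasks are compatible. Crucially, working inside a single rich $T_*$ with full access to its definable structure is what makes this possible, whereas the uniform ultrafilter version you propose must work simultaneously across all models and all interpreting theories, which is a strictly stronger demand. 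The paper itself does not reprove this fact; it simply cites it and combines it with Lemma \ref{x5} to obtain Theorem \ref{tstar-sop2-max}.
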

 
In Shelah and Usvyatsov \cite{ShUs:844} and 
D\v{z}amonja and Shelah \cite{DzSh:692} it was asked:

\begin{qst}[Question 1.8 of \cite{ShUs:844}]
Does $\tlf^*$-maximality characterize either $SOP_3$ or $SOP_2$, maybe both?
\end{qst}

\begin{qst} \emph{(\cite{DzSh:692} Question 3.1)}
Does $SOP_2$ imply $\tlf^*$-maximality?
\end{qst}

Here we settle the question, giving a positive answer: it characterizes $SOP_2$. 
Now let us explain the connection to Keisler's order which motivates this work and our solution. 
$SOP_2$ is a property which is not yet well understood and was not known, prior to the present paper, to be a dividing line. Recently, however, 
we proved the following theorem:

\begin{thm-lit} \emph{(Malliaris and Shelah \cite{MiSh:998} Theorem 11.11)} \label{998-sop2-max}
Any theory with $SOP_2$ is maximal in Keisler's order. 
\end{thm-lit}

We conjecture there that $SOP_2$ characterizes the maximum Keisler class. In the current section, we give strong evidence for this 
conjecture by proving the result for the order $\tlf^*$, which refines Keisler's order. 

We now state the main result of this section: 
we prove that $T$ is $\tlf^*$-maximal if and only if it has $SOP_2$ (Theorem \ref{tstar-sop2-max} below). 
In light of Fact \ref{x1}, it suffices to prove that any theory with $SOP_2$ is $\tlf^*$-maximal.
Since $\tlf^*$ refines Keisler's order, even though it is not known whether it is a strict refinement, it is not sufficient to quote Theorem \ref{998-sop2-max}; rather, we 
use the technology of cofinality spectrum problems developed for the proof of Theorem \ref{998-sop2-max}. 

Given this result, it is natural to try to say more about the dividing line at $SOP_2$, which we do in \S \ref{s:1198}. 

\begin{conv}
Throughout this section $T$ denotes a complete countable first-order theory. 
\end{conv}

We now review the ``interpretability order'' $\tlf^*$, introduced in Shelah \cite{Sh:500} Definition 2.10
as a natural weakening of Keisler's order.
We first need a definition of ``interpretation.''

\begin{defn} \emph{(Interpretations, c.f. \cite{DzSh:692} 1.1)} \label{star1} 
Let $T_0$ and $T_*$ be complete first-order theories. Suppose that
\[ \overline{\vp} = \langle \vp_R(\overline{x}_R) : ~\mbox{$R$ a predicate or function symbol of $\tau(T_0)$, or $=$} \rangle \]
is such that each $\vp_R(\overline{x}_r) \in \tau(T_*)$. 

\begin{enumerate}
\item For any model $M_* \models T_*$, we define the model 
$N = {M_*}^{[\overline{\vp}]}$ as follows:

\begin{itemize}
\item $N$ is a $\tau(T_0)$-structure
\item $\dom(N) = \{ a : M_* \models \vp_{=}(a,a) \} \subseteq M_*$
\item for each predicate symbol $R$ of $\tau(T_0)$, $R^N = \{ \overline{a} : M_* \models \vp_R[\overline{a}] \}$
\item for each function symbol $f$ of $\tau(T_0)$ and each $b \in N$, 
$N \models$ ``$f(\overline{a}) = b$'' iff $M_* \models \vp_f(\overline{a}, b)$, and $M_* \models$ ``$\vp_f(\overline{a}, b) \land 
\vp_f(\overline{a}, c) \implies b = c$''. 
\end{itemize}
Note that by the last clause, we may restrict to vocabularies with only predicate symbols.

\item Say that $\overline{\vp}$ is an \emph{interpretation} of $T_0$ in $T_*$ if:
\begin{itemize}
\item each $\vp_R(\overline{x}_r) \in \tau(T_*)$ 
\item for any model $M_* \models T_*$, we have that ${M_*}^{[\overline{\vp}]} \models T_0$
\end{itemize}

\item If there exists $\overline{\vp}$ which is an interpretation of $T_0$ in $T_*$, say simply that 
``$T_*$ interprets $T_0$.'' 
\end{enumerate}
\end{defn}

\begin{defn} \emph{(The interpretability order $\tlf^*$, c.f \cite{DzSh:692} 1.2)} \label{tstar-defn}
\begin{enumerate}
\item Let $T_0, T_1$ be complete first-order theories and $\lambda$ an infinite regular cardinal. 
We say that $T_0 \tlf^*_\lambda T_1$ if there exists a theory $T_*$ such that: 

\begin{enumerate}
\item $T_*$ interprets $T_0$, witnessed by $\overline{\vp_0}$, and $T_1$, witnessed by $\overline{\vp_1}$, where
``interprets'' is in the sense of $\ref{star1}$
\item For every model $M_* \models T_*$, \emph{if} ${M_*}^{[\overline{\vp_0}]}$ is $\lambda$-saturated, then 
${M_*}^{[\overline{\vp_1}]}$ is $\lambda$-saturated
\end{enumerate} 

\item We say that $T_0 \tlf^* T_1$ if $T_0 \tlf^*_\lambda T_1$ for all large enough regular $\lambda$. 
\end{enumerate}
\end{defn}

\begin{disc} \label{rmk-sing1}
Definition $\ref{tstar-defn}$ is stated for regular cardinals, but it also makes sense for singular cardinals. 
%The main proof of this section, Lemma $\ref{x5}$, holds also for $\lambda$ singular. 
\end{disc}

%\begin{disc}
As ultrapowers commute with reducts and the choice of index models is irrelevant\footnote{Keisler \cite{keisler} proved that if 
$\de$ is a regular ultrafilter on $\lambda$ and $M \equiv N$ in a countable language then $M^\lambda/\de$ is $\lambda^+$-saturated 
iff $N^\lambda/\de$ is $\lambda^+$-saturated.} clearly $\tlf^*$-equivalence
refines $\tlf$-equivalence [equivalence in Keisler's order]. 
A priori, one would expect that $\tlf^*$ is much weaker but this is not known to be the case. In fact, the known Keisler classes
$($i.e. of stable theories$)$ coincide with those for $\tlf^*$ by \cite{Sh:500} $2.11$. 
%\end{disc}

We will use a result parallel to that familiar from Keisler's order:

\begin{fact} \emph{(Shelah \cite{Sh:500} 2.11 p. 23)}  
Any theory with the strict order property is $\tlf^*$-maximal $[$i.e. -maximum$]$. 
\end{fact}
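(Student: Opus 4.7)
The plan is to mimic the classical proof that the theory of linear order is $\tlf$-maximal (Theorem \ref{s-order}), but to carry it out in the interpretation setting rather than via ultrapowers. Fix $T$ with the strict order property and an arbitrary target theory $T_0$; the aim is to produce a witnessing theory $T_* = T_*(T, T_0)$ which interprets both $T$ and $T_0$ and has the property that $\lambda$-saturation of the $T$-reduct of every model of $T_*$ forces $\lambda$-saturation of the $T_0$-reduct.

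First, extract from SOP a $\tau(T)$-formula $\psi(\bar y_1, \bar y_2)$ defining, after quotienting by the natural equivalence $\bar y_1 \equiv \bar y_2 \iff \vp(\mathfrak{C}, \bar y_1) = \vp(\mathfrak{C}, \bar y_2)$, a discrete linear order $(L, <_L)$ on a definable set $U$ with first element and no last element, in which arbitrarily long ascending chains exist; a harmless definitional expansion of $T$ does not move it in $\tlf^*$. Next, build $T_*$ in a language containing disjoint copies of $\tau(T)$ and $\tau(T_0)$, together with auxiliary sorts for a bounded fragment of arithmetic (in the spirit of \S \ref{s:peano-arithmetic}) and new symbols coding $T_0$-types as cuts in $L$: for each $\chi(x; \bar z)$ of $\tau(T_0)$ and each finite parameter sequence $\bar a$ from the $T_0$-sort, a definable function of $T_*$ produces a pair of elements of $L$ delimiting an interval whose nonemptiness tracks the consistency of $\chi(x;\bar a)$, together with axioms ensuring that the existence of $t \in L$ in the appropriate cut forces the existence of a realization of the corresponding partial $T_0$-type. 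Consistency of $T_*$ would follow by compactness, combining $T$, $T_0$ and a copy of bounded PA, since every finite sub-theory can be realized in an ad hoc three-sorted expansion.

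With $T_*$ in hand, the verification is brief. If $M_* \models T_*$ has its $T$-reduct $\lambda$-saturated, then $(L, <_L)$, being definable in the $T$-reduct, has no cut of cofinality pair $(<\lambda, <\lambda)$ by Observation \ref{obs15}. Given any $T_0$-type $p(x)$ over a set of size $<\lambda$ in $M_*^{[\overline{\vp_0}]}$, the $T_*$-coding sends $p$ to such a cut, which is therefore filled, and the filling element decodes via the coding axioms to a realization of $p$. The main obstacle is producing $T_*$ so that its axioms are consistent \emph{and} the coding is uniformly first-order across sorts; this is essentially Shelah's argument in \cite{Sh:a} VI.2, adapted from ``ultrapowers commute with reducts'' to a direct model-theoretic construction, and is where most of the genuine work would sit.
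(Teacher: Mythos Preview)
The paper does not prove this Fact; it is cited from \cite{Sh:500} and used as a black box in the proof of Lemma \ref{x5}, which reduces $\tlf^*$-maximality of any $SOP_2$ theory $T_1$ to the Fact by exhibiting a theory of discrete linear order $\tlf^*$-below $T_1$.

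Your sketch has the right shape and you correctly flag where the work lies. One correction to the mechanism: coding each formula $\chi(x;\bar a)$ separately as an interval in $L$ does not obviously aggregate into a single cut for a type $p$ of size $<\lambda$, which is what you invoke in the final paragraph. The argument in \cite{Sh:a} VI.2--3 instead codes a type as a branch through a tree living in the arithmetic sort, and the tree-to-cut translation (the ancestor of this paper's $\xp_\cs = \xt_\cs$ analysis) is what lets saturation of the linear order force an upper bound for the branch and hence a realization of $p$. Second, rather than axiomatizing $T_*$ and verifying consistency by compactness, it is cleaner and closer to the source to define $T_*$ as the theory of a concrete many-sorted structure (an ultraproduct of finite expansions, exactly as in the proof of Lemma \ref{x5} here), so that the coding and decoding maps are visibly first-order definable. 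With those two adjustments your outline matches the intended argument.
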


\begin{claim} \label{x4}
Let $\cs$ be a csp %or strong csp, 
so $\xp_\cs = \xt_\cs$. 
%\begin{enumerate}
%\item 
Suppose $\ma \in \ord(\cs)$ has a $(\xt_\cs, \xt_\cs)$-cut. Then $\mct_\ma$ witnesses treetops. 
%Let $N$ be a model whose domain is a pseudofinite linear order. 
%Let $\cs$ be a strong G\"odel CSP such that $\ord(\cs)$ consists of the set of initial segments of 
%$\dom(N)$, closed under Cartesian product and initial segment. Call $\ma \in \ord(\cs)$ ``one-dimensional'' if 
%it is an initial segment of $\dom(N)$. 
%
%\item There is $\ma \in \ord(\cs)$ such that $X_\ma \subseteq M^+_1$ and $\mct_\ma$ witnesses treetops. 
%\end{enumerate}
\end{claim}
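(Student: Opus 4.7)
My plan is to build directly from the cut a $\tlf_\ma$-increasing sequence $\langle \eta_i : i < \xt_\cs\rangle$ in $\mct_\ma$ with no upper bound, which is exactly what it means for $\mct_\ma$ to witness treetops. Write the cut as $(\langle a_i : i < \xt_\cs\rangle, \langle b_i : i < \xt_\cs\rangle)$, with $\langle a_i\rangle$ strictly $<_\ma$-increasing, $\langle b_i\rangle$ strictly $<_\ma$-decreasing, and $a_i <_\ma b_j$ for all $i,j$. By passing to a tail of the cut I may assume $a_i <_\ma d_\ma$ for every $i$, so each $a_i$ is an admissible node-length in $\mct_\ma$.

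I construct $\eta_i \in \mct_\ma$ of internal length $a_i$ by recursion on $i < \xt_\cs$, maintaining $\eta_j \tlf_\ma \eta_i$ whenever $j < i$ together with the key invariant that at every successor index $j+1 < \xt_\cs$ one has $\eta_{j+1}(a_{j+1} - 1) = b_{j+1}$. At a successor stage $j+1$, I extend $\eta_j$ by internally iterating the one-step concatenation axiom, padding with the constant value $b_{j+1}$ across the positions $[a_j, a_{j+1})$; this stays inside $\mct_\ma$ because $a_{j+1} \leq_\ma d_\ma$. At a limit stage $i < \xt_\cs$, the definition of $\xt_\cs$ yields an upper bound $\eta^*_i \in \mct_\ma$ of $\langle \eta_j : j < i\rangle$, from which I internally truncate or further pad to reach length exactly $a_i$; the choice of padding value is immaterial.

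Suppose for contradiction that some $\nu \in \mct_\ma$ extends every $\eta_i$. Then $\lgn(\nu) \geq_\ma a_i$ for every $i$, and $\nu(a_{j+1} - 1) = b_{j+1}$ for every successor $j+1 < \xt_\cs$. The set $K = \{k <_\ma \lgn(\nu) : \nu(k) >_\ma k\}$ is internally definable and pseudofinite as a bounded definable subset of the pseudofinite order $X_\ma$, so $c^* := \min\{\nu(k) : k \in K\}$ is attained internally at some $k_0 \in K$. Each $a_{j+1} - 1$ lies in $K$ with $\nu(a_{j+1} - 1) = b_{j+1}$, giving $c^* \leq b_{j+1}$ for every successor $j+1 < \xt_\cs$; since the successor-indexed $b$'s form a cofinal decreasing subsequence of $\langle b_j\rangle$, this forces $c^* \leq b_j$ for every $j < \xt_\cs$. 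Split on $k_0$: if $k_0 <_\ma a_j$ for some $j$, then by construction $\nu(k_0) = b_{j'}$ for some $j' \leq j$, so $c^* = b_{j'}$ contradicts $c^* \leq b_{j'+1} <_\ma b_{j'}$; otherwise $k_0 \geq_\ma a_j$ for every $j$, and the absence of a filler of the cut forces $k_0 \geq_\ma b_{j''}$ for some $j''$, whence $c^* = \nu(k_0) >_\ma k_0 \geq_\ma b_{j''}$ contradicts $c^* \leq b_{j''}$.

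The delicate step is the limit stage of the recursion: one must carve from the generic upper bound $\eta^*_i$ a node of length exactly $a_i$ without spoiling the values already pinned at positions $a_{j+1} - 1$ for $j+1 < i$. Because those values live inside the $\eta_{j+1}$'s and $\eta^*_i$ extends each of them, any internal truncation or arbitrary internal padding suffices, and one only needs to verify that working lengths remain $\leq_\ma d_\ma$ throughout, which is ensured by the initial tail-reduction placing the $a_i$ below $d_\ma$.
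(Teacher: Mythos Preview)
Your argument has a genuine gap in Case~A of the contradiction step, and it stems directly from your claim that ``the choice of padding value is immaterial'' at limit stages. Suppose $i < \xt_\cs$ is a limit ordinal and at stage $i$ you pad with some arbitrary value $p$. Then for $k_0$ in that limit-padding region (or in $[0,a_0)$, where you never specify $\eta_0$), you have $\nu(k_0) = p$, not any $b_{j'}$. Concretely: if at limit $i$ you pad with $p = a_{i+5}$, then positions $k$ in that region satisfy $\nu(k) = p > a_i > k$, so they lie in $K$ and contribute the value $a_{i+5}$ to the minimum; since $a_{i+5} < b_{j+1}$ for every $j$, this can easily force $c^* = a_{i+5}$. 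Your claim ``by construction $\nu(k_0) = b_{j'}$ for some $j' \leq j$'' then fails outright, and no contradiction follows. The fix is easy---pad everywhere, including $\eta_0$, with a value from the right side of the cut such as $b_0$---but that is precisely what you asserted was unnecessary.

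There is a second structural problem: your reduction ``by passing to a tail I may assume $a_i <_\ma d_\ma$'' only works when $d_\ma$ lies above the cut. Nothing in the hypotheses forces this; if $d_\ma$ sits below the cut (recall $d_\ma$ need not equal $\max X_\ma$), then eventually $a_i >_\ma d_\ma$ and your construction, which uses $a_i$ as an internal tree-length, cannot even start. The paper's proof avoids both issues by storing the pair $\langle a_\alpha, b_\alpha \rangle$ as the \emph{value} of $c_\alpha$ at its last coordinate and working inside the definable subtree of $\mct_\ma$ consisting of sequences of strictly nested coded intervals. Lengths then grow essentially by one per step (independently of where the $a_\alpha$ sit relative to $d_\ma$), the subtree is definable so Fact~\ref{c9} lets one assume any putative upper bound $c_\star$ lies in the subtree as well, and the pair coded by $c_\star(\max\dom c_\star)$ immediately realizes the cut.
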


\begin{proof}
(Included for completeness, this argument simply adapts the proof of \cite{MiSh:998} Lemma 6.1, which proved that in this case 
$\mct_{\ma \times \ma}$ witnesses treetops, to show that $\mct_\ma$ witnesses treetops.)
Choose a sequence $(\langle a_\alpha : \alpha < \xt_\cs \rangle, \langle b_\alpha : \alpha < \xt_\cs \rangle)$ witnessing the cut. 
By induction on $\alpha < \xt_\cs = \xp_\cs$ let us choose a path through $\mct_\ma$ satisfying the following. 
\begin{itemize}
\item for each $\alpha$, $c_\alpha$ belongs\footnote{Alternately, rather than asking that $c_\alpha$ code a pair, ask that if $t_1 < t_2 < t_3$ are 
successive elements of $\lgn(c_\alpha)$ then either $c_\alpha(t_1) < c_\alpha(t_3) < c_\alpha(t_2)$ or else $c_\alpha(t_2) < c_\alpha(t_3) < c_\alpha(t_1)$. 
Then at limit steps, the condition is that the interval defined by $(c_\alpha(n), c_\alpha(n+1))$ includes $a_\alpha$.}
 to the definable subtree of $\mct_\ma$ consisting of elements $x$ such that: for each $n < \max\dom(x)$,
$x(n)$ codes a pair [i.e. is of the form $(a+b)^2+a$ for elements $a <_\ma  b \in X_\ma$], 
and if $m < n < \max\dom(x)$ and $x(m) = \langle a_1, b_1 \rangle$ and $x(n) = \langle a_2, b_2 \rangle$ then $a_1 <_\ma a_2 <_\ma < b_2 <_\ma < b_1$. 
\item $\beta < \alpha \implies c_\beta \tlf c_\alpha$
\item for each $\alpha$, $n_\alpha := \max \dom(c_\alpha)$
\item for each $\alpha$, $c_\alpha(n_\alpha) = \langle a_\alpha, b_\alpha \rangle$. 
\end{itemize}
The construction of this tree follows the template of \cite{MiSh:998}. At successor steps, we concatenate. At limit steps $\alpha < \xt_\cs$, we first choose an upper bound 
$c_*$ for the sequence built so far, by definition of $\xt_\cs$. Let $n_* = \max \dom(c_*)$. Then since $X_\ma$ is pseudofinite, the nonempty, bounded set 
$\{ n < n_* : c_*(n) = \langle a, b \rangle$ and $a <_\ma a_\alpha$ and $b_\alpha <_\ma b \}$ has a maximal element $n_{**}$. 
Then let $c_\alpha = {c_* \rstr_{n_{**}}}^\smallfrown\langle a_\alpha, b_\alpha \rangle$. 

Having completed the construction of the sequence, notice that it is unbounded in $\mct_\ma$, because if it were to have an upper bound $c_\star$ 
then either of the elements in the pair coded by $c_\star ( \max\dom(c_*) )$ would realize our original cut, contradiction. This completes the proof. 
%If $\mct_\ma$ witnesses treetops, then by Fact \ref{cy} the order $(\os_\ma, <_{\os_\ma})$ has a $(\xt_\cs, \xt_\cs)$-cut. 
%By closure under exponentiation, there is $\mb \in \ord(\cs)$ which is order-isomorphic to $\os_\ma$. Then 
%$\mb$ has a $(\xt_\cs, \xt_\cs)$-cut. If $\mb$ is one-dimensional, we finish. Otherwise, it is a finite Cartesian product of the form 
%$\mb = \ma_1 \times \cdots \times \ma_n$ for some $n < \omega$ and each $\ma_i \in \ord(\cs)$ ($i \leq n$) is one-dimensional. 
%Let $(\bar{a}, \bar{b})$ witness the cut. 
%Since the order $<_\mb$ is given by the G\"odel pairing function (applied $n-1$ times), the projection of this sequence 
%onto each coordinate will be a pre-cut, and in at least one case it must be a cut, otherwise we could construct coordinatewise  
%a realization to the original cut. Let $\ma_i$ be such a one-dimensional order with a $(\xt_\cs, \xt_\cs)$-cut. 
%FILL [6.1?] Then $\mct_\ma$ witnesses treetops. 
\end{proof}

We will use one further fact about trees in CSPs, which  % $\ord(\cs)$ is closed under sub-orders; what of sub-trees of $\tr(\cs)$? 
explains that $\tr(\cs)$ is quite robust. 

\begin{fact}[\cite{MiSh:998} Claim 2.13] \label{c9}
If $(\mct, \tlf_\ma)$ is a definable subtree of $(\mct_\ma, \tlf_\ma)$ %with an infinite path
and $\langle c_\alpha : \alpha < \kappa \rangle$ is a $\tlf_\ma$-increasing sequence of elements of $\mct$, 
then $\langle c_\alpha : \alpha < \kappa \rangle$ has an upper bound in $\mct$ if and only if it has 
an upper bound in $\mct_\ma$. 
\end{fact}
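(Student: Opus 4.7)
The direction from $\mct$ to $\mct_\ma$ is immediate: any upper bound in $\mct$ is in particular in $\mct_\ma$, since $\mct \subseteq \mct_\ma$ by hypothesis. So the content is the converse: suppose that $c_* \in \mct_\ma$ is an upper bound for $\langle c_\alpha : \alpha < \kappa \rangle$, and we want to produce $c^{**} \in \mct$ with $c_\alpha \tlf_\ma c^{**}$ for every $\alpha < \kappa$.

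The plan is to extract $c^{**}$ as a suitable initial segment of $c_*$. Since each $c_\alpha \tlf_\ma c_*$ and $\mct_\ma$-initial segments are definably determined by their length, there is an internally definable restriction operation $t \mapsto c_* \rstr t$ on $\{ t \in X_\ma : t \leq_\ma \lgn_\ma(c_*) \}$, built from the tree operations in Definition \ref{d:estt}(7) (length, value at an index, concatenation). I would then consider the set
\[ A \;=\; \{\, t \in X_\ma : t \leq_\ma \lgn_\ma(c_*) \;\land\; c_* \rstr t \in \mct\, \}. \]
Because $\mct$ is a definable subtree, $A$ is a bounded definable subset of the pseudofinite order $X_\ma$.

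The key observations are then: (i) $A$ is nonempty, since for every $\alpha < \kappa$ we have $c_\alpha = c_* \rstr \lgn_\ma(c_\alpha) \in \mct$, so $\lgn_\ma(c_\alpha) \in A$; (ii) by pseudofiniteness of $(X_\ma, \leq_\ma)$ (Convention \ref{conv:cs}(f)), the nonempty bounded definable set $A$ has a $\leq_\ma$-greatest element $t^*$; and (iii) by construction $c^{**} := c_* \rstr t^*$ lies in $\mct$, and $\lgn_\ma(c_\alpha) \leq_\ma t^*$ for every $\alpha$, so $c_\alpha \tlf_\ma c^{**}$ for every $\alpha < \kappa$. This gives the desired upper bound inside $\mct$.

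The only real point requiring care — and where I would expect to have to spell things out — is the definability of the restriction map and hence of $A$. The tree operations supplied by Definition \ref{d:estt}(7) (partial order by initial segment $\tlf_\ma$, length $\lgn_\ma$, value $\xr_\ma$, and concatenation) let one express ``$y$ is the $\tlf_\ma$-initial segment of $c_*$ of length $t$'' by a first-order formula in $M^+_1$, so the set $A$ is definable in $M^+_1$ and pseudofiniteness applies. Once that is in place, steps (i)--(iii) are immediate from the hypotheses, completing the proof.
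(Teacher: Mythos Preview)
Your proof is correct and is exactly the standard argument for this fact. Note that the present paper does not give its own proof of Fact~\ref{c9}: it is stated as a fact quoted from \cite{MiSh:998} Claim 2.13, so there is no in-paper proof to compare against. The argument in \cite{MiSh:998} proceeds just as you outline: given an upper bound $c_* \in \mct_\ma$, one takes the $\leq_\ma$-maximal $t$ (which exists by pseudofiniteness) such that $c_* \rstr t \in \mct$, and $c_* \rstr t$ is then the desired upper bound in $\mct$. Your identification of the one delicate point---that the restriction map $t \mapsto c_* \rstr t$ is definable from the tree operations of Definition~\ref{d:estt}(7), so that the set $A$ is definable and pseudofiniteness applies---is exactly right.
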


\begin{defn} \label{cx}
Say that a tree $\mct_\ma \in \tr(\cs)$ 
\emph{witnesses treetops} if there is in $\mct_\ma$ a strictly increasing
sequence of cofinality $\xt_\cs$ with no upper bound, i.e. the infinite minimum is attained in $\mct_\ma$. 
\end{defn}

%\begin{rmk}
Fact \ref{c9} shows that if $\mct^\prime$ is a definable subtree of $\mct_\ma$, $\mct_\ma$ witnesses treetops \emph{and} there is an infinite 
increasing sequence in $\mct^\prime$ with no upper bound, then $\mct^\prime$ witnesses treetops. But there may not 
be such a sequence, as e.g. in the trivial case when $\mct^\prime$ consists just of the root. 
%\end{rmk}

\begin{lemma}  \label{x5}
Let $T_1$ be any theory with $SOP_2$. Then $T_1$ is maximal in $\tlf^*$.
\end{lemma}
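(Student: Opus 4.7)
The plan is to establish $T_0 \tlf^* T_1$ for every theory $T_0$. Since $\tlf^*$ is transitive and theories with the strict order property are $\tlf^*$-maximum, it suffices to fix a single convenient base theory $T_0$ with the strict order property — for concreteness, (bounded) Peano arithmetic in a form for which the CSP tools of Sections \ref{s:preparation}--\ref{s:peano-arithmetic} apply directly — and verify $T_0 \tlf^* T_1$.

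To construct the witness $T_*$, I would work in a signature $\tau_* \supseteq \tau(T_0) \cup \tau(T_1)$ containing in addition a function symbol $F$ from the domain of the $T_0$-interpretation into $\tau(T_1)$-tuples. Fix an $SOP_2$-witnessing formula $\varphi(\bar x,\bar y)\in\tau(T_1)$. The axioms of $T_*$ assert:
\begin{enumerate}
\item the $\tau(T_0)$-reduct is a model of $T_0$, so that the canonical CSP $\cs_3[T_0]$ is defined and is closed under exponentiation (Conclusion \ref{concl15});
\item the $\tau(T_1)$-reduct is a model of $T_1$;
\item for a chosen nontrivial $\ma\in\ord(\cs_3[T_0])$, the restriction of $F$ to $\mct_\ma$ has the property that for all $\eta,\nu\in\mct_\ma$, the pair $\{\varphi(\bar x,F(\eta)),\varphi(\bar x,F(\nu))\}$ is consistent iff $\eta,\nu$ are $\tlf_\ma$-comparable.
\end{enumerate}
Because $T_1$ has $SOP_2$, $T_*$ is consistent; and $T_*$ plainly interprets both $T_0$ and $T_1$ via the canonical $\overline{\varphi_0},\overline{\varphi_1}$.

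To verify $T_0 \tlf^*_\lambda T_1$ for all sufficiently large regular $\lambda$, take $M_*\models T_*$ with $M_*^{[\overline{\varphi_0}]}$ being $\lambda$-saturated. By Theorem \ref{t:pa}, the $T_0$-reduct has cofinality $\geq\lambda$ and no $(\kappa,\kappa)$-cuts for $\kappa<\lambda$. Setting $\cs:=\cs_3[M_*^{[\overline{\varphi_0}]}]$, Conclusion \ref{concl15} ensures $\cs$ is a CSP with exponentiation, so Theorem \ref{ps-is-ts} gives $\xp_\cs=\xt_\cs$, and combining with the absence of small cuts yields $\xt_\cs\geq\lambda$. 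Given a partial $\tau(T_1)$-type $p(\bar x)$ over $M_*^{[\overline{\varphi_1}]}$ of size $<\lambda$ which is finitely satisfiable, I would use the $F$-coding to reduce realization of $p$ to finding an upper bound for a $\tlf_\ma$-increasing branch of length $<\lambda$ through a suitable definable subtree of $\mct_\ma$. Such an upper bound exists by $\xt_\cs\geq\lambda$ together with Fact \ref{c9} (to pass between the subtree and $\mct_\ma$); its $F$-image yields parameters in the $T_1$-reduct witnessing a realization of $p$.

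The main obstacle is the last step: the axioms of $T_*$ must be enriched enough that realization of an \emph{arbitrary} $\tau(T_1)$-type of size $<\lambda$ truly reduces to attainment of treetops along a branch in $\mct_\ma$ via $F$. This is the $\tlf^*$-analog of the argument in \cite{MiSh:998} Theorem 11.11 that $SOP_2$ implies Keisler-maximality, which there exploited the fact that ultrapowers commute with reducts in order to pass from the realization of $SOP_2$-patterns in the $T_0$-reduct to realization of all types in the $T_1$-reduct. In the interpretation framework of $\tlf^*$, the same reduction must be engineered purely inside $M_*$, and the content of the proof lies in choosing the auxiliary structure of $T_*$ and the coding $F$ so that this internal reduction goes through for every type, not only for the $SOP_2$-types directly visible from $\varphi$.
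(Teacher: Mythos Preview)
Your argument runs in the wrong direction. To show $T_1$ is $\tlf^*$-maximal (i.e., at least as hard to saturate as the known maximum $T_0$), one must build $T_*$ so that in any $M_*\models T_*$, $\lambda$-saturation of the $T_1$-interpretation forces $\lambda$-saturation of the $T_0$-interpretation. This is exactly what the paper does: it assumes $M_*\rstr\tau(T_1)$ is $\lambda$-saturated, uses $SOP_2$ for $\varphi$ to show that every $\kappa$-branch ($\kappa<\lambda$) in the relevant tree $\mct_\ma$ has an upper bound (the realization of the associated $\varphi$-type singles out a branch, and pseudofiniteness locates a bound), concludes $\xt_\cs\ge\lambda$, and then invokes $\xp_\cs=\xt_\cs$ to get that the linear-order interpretation has no small cuts. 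The role of $F$ is to transport branches to consistent $\varphi$-types, so that saturation on the $T_1$ side yields treetops.

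You instead assume the $T_0$-interpretation (PA) is $\lambda$-saturated and try to derive $\lambda$-saturation of the $T_1$-interpretation. This cannot succeed as stated: your map $F$ and the $SOP_2$ witness $\varphi$ give you information about a single formula only, whereas you must realize \emph{arbitrary} $\tau(T_1)$-types. Your final paragraph identifies precisely this gap as ``the main obstacle,'' but it is not an obstacle to be overcome by enriching $T_*$---it is a symptom of the reversed implication. (The analogy you draw with \cite{MiSh:998}~Theorem~11.11 is also reversed: there one shows that an ultrafilter saturating an $SOP_2$ theory must be good, i.e., $T_1$-saturation implies everything-saturation.) Note that Definition~\ref{tstar-defn}(1)(b) as written appears to have the subscripts $0,1$ inverted relative to both the paper's own proof and the source \cite{DzSh:692}; the correct reading, consistent with Keisler's order and with the proof of the lemma, is that $T_0\tlf^*_\lambda T_1$ requires ``$T_1$-part saturated $\Rightarrow$ $T_0$-part saturated.'' Once you reverse direction, your ingredients---the coupling via $F$, the CSP $\cs$ with exponentiation, Theorem~\ref{ps-is-ts}, and Fact~\ref{c9}---assemble into essentially the paper's argument.
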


\begin{proof}
Let $T_0$ be the theory $Th(\omega + \omega^*)$ of an infinite discrete linear order with a first and last element. 
As just explained, it will suffice to show that there is a theory $T_*$ with the
following properties. 

\begin{quotation}
For any $M_* \models T_*$, 
\begin{itemize}
\item there exists an interpretation of $T_1$ in $M_*$, denoted ${M_*}^{[\bar{\vp_1}]}$
\item there exists an interpretation of $T_0$ in $M_*$, denoted ${M_*}^{[\bar{\vp_0}]}$
\item for any infinite regular cardinal $\lambda$, if ${M_*}^{[\vp_1]}$ is $\lambda$-saturated then 
${M_*}^{[\vp_0]}$ is $\lambda$-saturated.
\end{itemize}
\end{quotation}

%\step{Step 0: Construction of $T_*$.}
We build the theory $T_*$ in several steps. 
Let $\vp(x,y)$ be a formula of $T_1$ which has $SOP_2$. Let $n_\vp = \ell(y)$; here and elsewhere, we may omit overlines. 
Without loss of generality, $\tau(T_1)$ has only predicates. To begin, fix: 
\begin{itemize} 
\item a model $M_1 \models T_1$ and a set of parameters $\langle a_\eta : \eta \in {^{\omega >} \omega} \rangle$ witnessing $SOP_2$ 
for $\vp$. [Recall that by compactness, we may assume our tree witnessing $SOP_2$ is infinitely branching, with paths consistent and incomparable nodes 
inconsistent.] 
\item a model $M_2 \models (\mch(\aleph_1), \in)$. Without loss of generality, the domains of $M_1$ and $M_2$ are disjoint, and the 
relation $\epsilon$ does not appear in $\tau(M_1)$. 
\end{itemize}
Let $M$ be the model which is the disjoint union of $M_1$ and $M_2$, expanded by the additional relations and functions $\{ N, P, F \}$ as follows 
and no additional structure. (Without loss of generality, these symbols do not occur in $\tau(M_1)$ or $\tau(M_2)$.)
\begin{itemize}
\item $N^M$ names $\omega$ in $M_2$.  
\item $P^M$ names $\{ a_\eta : \eta \in {^{n>} n} \} \subseteq \dom(M_1)$. %, the $SOP_2$-tree parameters fixed above. 
\item $F^M$ 
is a unary function with domain ${^{\omega>} \omega} \subseteq M_{2}$ and range $P^{M} \subseteq M_1$,
such that $F^{M}(\eta) = a_\eta$. 
\end{itemize}
For each $n <\omega$, let $M_n$ be the model $M$ expanded further by the constant $c$ where $c^{M_n} = n \in \omega = N^{M} = N^{M_n}$. 
Let $\de$ be a nonprincipal ultrafilter on $\omega$. Let $N_* = \prod_n M_{n}/\de$ and let $T_* = Th(N_*)$. 
This completes the construction of $T_*$, the theory of the ultraproduct as just defined.  
Note that $\tau(T_*) = \tau(N_*) = \tau(M_1) \cup \tau(M_2) \cup \{ N, P, F, c \}$.   
In any model $M$ of $T_*$, the constant $c^M$ is a nonstandard element of $N^M$, i.e. of the (nonstandard) copy of $\omega$.  
 
%Let $T_*$ be the theory of the ultraproduct, as just defined. 

%We now prove that saturation behaves as desired. 
In addition to $T_*$ and $N_*$ as just defined, let 
$M_*$ be an arbitrary but fixed model of $T_*$ which satisfies: $M_* \rstr_{\tau(T_1)}$ is $\lambda$-saturated. 
Note that $T_*$ interprets $T_0$ and $T_1$, that is, there exist $\overline{\vp}_0$, $\overline{\vp}_1$ witnessing $\ref{star1}$.
This is because $M_* \rstr_{\tau(T_1)} \models T_1$ by construction, while 
$N_0 := N^{M_*} \rstr c^{M_*} = (\{ a \in M_* : M_* \models a \in c \}, \in^{M_*}) \models T_0$. 
\br

%\step{Step 2: The cofinality spectrum problem $\cs$.}
To analyze saturation, we define a cofinality spectrum problem as follows.  %EXPLAIN WHAT THE DOMAIN OF THE MODEL IS.    
%Clearly $\mathbb{N}$ is definable in each $M_{1,n}$. Then $\mathbb{N}^{M_*}$ is the definable set of nonstandard integers, 
%and $c^{M_*}$ belongs to this set and is nonstandard.  
First, let $\ord^{0}(\cs)$ be the smallest set containing the set of all finite nonempty initial segments of ${N}^{M_*}$, in each case 
letting $d_\ma = \max X_\ma$.  [We may use any non-empty order in $\mch(\aleph_0)$.]
For each $\ma, \mb \in \ord^{0}(\cs)$, let $\ma \times \mb$ be the definable, pseudofinite linear order on $X_\ma \times X_\mb$ 
given by the G\"odel pairing function, and let $d_{\ma \times \mb} = (d_\ma, d_\mb)$. Let $\ord(\cs)$ be the closure as just described 
of $\ord^{0}(\cs)$ under Cartesian product, as well as initial segment. [If $\mb$ is an initial segment of $\ma$, 
define $d_\mb = \min \{ \max X_\mb, d_\ma \}$.] 

Suppose for a moment that $M_* = N_*$, i.e. suppose we are really in the case of the ultrapower. 
Then for each $\ma \in \ord(\cs)$, %[recall these are formulas over the empty set] 
$\vp_\ma$ is a formula $\vp(x,y,z)$ over the empty set. Because we are in $N_*$, for each suitable parameter $\bar{a}$, 
the set $\vp(x,x,\bar{a})$ is contained in the ultraproduct of finite sets. 
Let $\psi(w,z)$ be such that for each suitable $\bar{a} \in {^{\ell(\bar{z})}M_{n}}$ 
[though by construction, we can restrict to the case where $\bar{a} \subseteq \mathbb{N}^{M_{n}}$]  
we have that $\mct_{\vp, \bar{a}} = \{ \eta : M \models \psi(\eta, \bar{a}) \}$ is the set of finite sequences of members of the finite set 
$X_{\vp, \bar{a}} = \{ b : M \models \vp(b,b,\bar{a}) \}$ 
of length $< \max X_{\vp, \bar{a}}$. Let $\tlf = \{ (\eta, \nu) : \eta, \nu \in \mct_{\vp, \bar{a}}$ and $\eta$ is an initial segment of $\nu$ $\}$. 
These are definable in $M_{n}$ by the choice of its theory. 
The length and evaluation functions $\lgn$ and $\xr$ can be defined likewise. By \lost theorem, these formulas will define the appropriate trees in the ultrapower.  
Thus, we get a cofinality spectrum problem. Moreover, for each given $\vp_\ma$, each $\bar{a}$ of length $\ell(z)$, and each $t <\omega$, 
we have that in each index model, the tree $\mct = \mct_{\vp, \bar{a}[t]}$ is finite, so its flattening $(\os, <_\os)$ [in the sense of \ref{d:coll} above] 
can be injectively mapped into $N^{M_{n}}$. That is, $M_{n} \models$ ``there exists 
an injective order-preserving map of $(\os, <_\os)$ into $N$''. As the flattening is uniformly definable from $\psi$, and we are in a model with 
sufficient set theory, such an injection exists also in the ultrapower and so $\cs$ has (strong) exponentiation. 

Returning to the case of arbitrary $M_* \equiv N_*$, as we are in a model with sufficient set theory, note that the existence of such associated trees 
and injections are elementary properties of each given $\vp$ and $\psi$. Thus, also in this more general case, 
$\cs$ is a cofinality spectrum problem with exponentiation.

%
%
%\begin{quotation}
%[\emph{disc}. Recall that ultraproducts (or powers) commute with reducts. We could, indeed, 
%first expand each model $M_{1,n}$ to the a model of the disjoint union $\{ M_{1,n} : n <\omega \}$, each named by a distinguished predicate, and \emph{thus} 
%treat the ultraproduct as an ultrapower. So for the purposes of producing a cofinality spectrum problem, 
%one could, if desired, take $T_*$ to be the theory of 
%the ultraproduct in some large expanded vocabulary in which the closure of the set of orders under 
%Cartesian products and all other closure conditions requested in \cite{MiSh:998} Definition 2.1 are explicitly given.  
%Alternately, one could simply expand each index model to model sufficient set theory or Peano arithmetic. 
%
%In either case, because of the ultraproduct, the condition that every nonempty definable linearly ordered 
%subset of $X_\ma$ and of $\mct_\ma$ have a first and last element is guaranteed by their being pseudofinite in $T_*$. 
%Note also that because of the ultraproduct, necessarily $d_\ma$ %(i.e. the length of the longest sequence of $0$s in the tree $P^{M_*}$) 
%is nonstandard, so $\ma$ is nontrivial.]
%\end{quotation}
\br

%Let $N_* \models T_*$ be such that $N \rstr_{\tau(T_1)}$ is $\lambda$-saturated. 

%\step{Step 3. $\xt_\cs \geq \lambda$.}

Having defined $\cs$, we now prove that $\xt_\cs \geq \lambda$. 
First we verify that it is sufficient to look at one-dimensional trees, i.e. that treetops are witnessed by some $\mct_\ma$ where $X_\ma \subseteq N^{M_*}$. 
By Theorem \ref{ps-is-ts}, there is a $(\xt_\cs, \xt_\cs)$-cut in some $X_\ma$, $\ma \in \ord(\cs)$.  If $X_\ma$ is one-dimensional, then by 
Claim \ref{x4}, we finish. If not, $X_\ma$ is contained in some finite Cartesian power of $N^{M_*}$. Let $f: X_\ma \rightarrow N^{M_*}$ be a definable injection, 
given by applying the G\"odel pairing function finitely many times. By the definition of $\cs$, this is an order preserving map. (This appeal 
to the definition is not necessary: we could simply use that such a map is an injection of sets, so by Conclusion \ref{c2b} of the next section, 
there is an injective order-preserving map into some one-dimensional $X_\mb$.) Necessarily $X_\mb$ will have a $(\xt_\cs, \xt_\cs)$-cut, 
so again by Claim \ref{x4}, we finish.  

So to show $\xt_\cs \geq \lambda$, it will suffice to show that for any one-dimensional $\ma$, and any $\kappa < \lambda$, 
any $\kappa$-indexed strictly increasing sequence $\langle \eta_\alpha : \alpha < \kappa \rangle$ in $\mct_\ma$ has an upper bound. 
Recall the constant $c$ from the signature of $N_*$ and $M_*$. 
Since $M_* \equiv N_*$, any one-dimensional $X_\ma$ is contained in a nonstandard initial segment, i.e. for some $m$, 
$\mathbb{N}^{M_*} \models$ ``$c \leq m$ and $X_\ma \subseteq M_* \rstr m$''. By Fact \ref{c9}, without loss of generality $X_\ma$ is the full initial segment below $m$
and $\mct_\ma$ is the full tree ${^{m>}m}$. In $(\mch(\aleph_0), \epsilon)^{M_*}$
we define $\mct_\ma = ( {^{m>}m}, \tlf)$ and let 
\[ \psi(y, m) := (\exists \eta \in \mct_{\ma} )(F(\eta) = y)) \]
define the subset of $P^{M_*}$ corresponding to the image $F(\mct_\ma)$. 
Now if $\overline{\eta} = \langle \eta_\alpha : \alpha < \kappa \rangle$ is increasing in $\mct_\ma$, then recalling the $SOP_2$-formula $\vp$ from the 
beginning of the proof, 
\[ p_{\overline{\eta}} = \{ \vp(x, F(\eta_\alpha) ) : \alpha < \kappa \} \] 
is a consistent partial type\footnote{The fact that comparability of elements in the domain of $F$ is reflected in the consistency of their images 
will carry over from the models $M_n$ to $N_*$ by \lost theorem and from there to $M_*$ by elementary equivalence.}
in $M_* \rstr \tau(T_1)$. By the assumption of $\lambda$-saturation in that signature, some $d \in M_*$ realizes $p$, hence 
\[ \{ \eta \in \mct_\ma : \vp[d,F(\eta)] \} \] 
is a subset of a branch of $\mct_\ma$ by definition of $SOP_2$ and of $F$, and is definable in $M_*$. Then the set 
\[ \{ a \in X_\ma : a < d_\ma \land (\exists \eta)(\eta \in \mct_\ma \land \vp[d,F(\eta)] \land a \in \dom(\eta) ) \} \]
of lengths of such elements 
is a definable, nonempty, bounded subset of $X_\ma$, so contains a greatest element $a_*$. 
Any $\eta$ along the distinguished branch whose domain contains $a_*$ will be an upper bound for $\overline{\eta}$. This completes the proof that 
$\xt_\cs \geq \lambda$. 

\br
Recall that we had set $N_0 := N^{M_*} \rstr c^{M_*}$ % = (\{ a \in M_* : M_* \models a \in c\}, \in^{M_*}) \models T_0$ 
as the domain for our interpretation of $T_0$ in $M_*$. 
%Towards saturation $N_0$ has no $(\kappa_1, \kappa_2)$-cuts for regular $\kappa_1, \kappa_2 < \xt_\cs$.
Since $\cs$ is a CSP with exponentiation, $\xp_\cs = \xt_\cs = \lambda$ by Theorem \ref{ps-is-ts}. 
%Recall $N_0$ from Step 1.
%Since we are in the case of a csp with exponentiation, $\xp_\cs = \xt_\cs$, by Theorem \ref{ps-is-ts}. 
By definition of $\xp_\cs$, there are no $(\kappa_1, \kappa_2)$-cuts 
in any $X_\ma$, for $\ma \in \ord(\cs)$ and $\kappa_1, \kappa_2 < \xt_\cs$. 
This in particular is true for $\ma$ with $X_\ma = \{ a : M_* \models a < c \}$ and $d_\ma = \max X_\ma$. 
In fact, for arbitrarily large $a \in N^{M_*}$, $\ord(\cs)$ contains $\ma$ such that $X_\ma \supseteq N \rstr_{\leq a}$,  
in which, therefore, we have no $(\kappa_1, \kappa_2)$-cuts.  
%so this means there are no $(\kappa, \kappa)$-cuts for $\kappa < \xt_\cs$ in $N_0$, either. 

Finally, let us prove that ${M_*}^{[\vp_0]}$, i.e. the $\tau_0$-submodel whose domain is $N_0$, is $\lambda$-saturated. 
We know that for models of $T_0$, every formula is a Boolean combination of the formulas $x = y$, $x < y$, and $\vp_k(x,z) = (\exists^{!k} y)(x<y<z)$. 
Thus $N_0 \models T_0$ is $\lambda$-saturated iff every cut $(C_1, C_2)$ of $N_0 \subseteq N^{M_*}$ of cofinality $(\kappa_1, \kappa_2)$ is filled, where 
$\kappa_1, \kappa_2$ are regular and $\kappa_1 + \kappa_2 \leq \lambda$. This was proved in the previous paragraph.  

This completes the proof. 
\end{proof}

%Note that in the construction just given, we may remove the assumption that $\lambda$ is regular. 

%Recalling Remark \ref{rmk-sing1}, notice: 
%
%\begin{cor} 
%Let $T_1$ be any theory with $SOP_2$, let $T_0$ be the theory of a discrete linear order with endpoints, and 
%let $T_*$ be the theory built in Lemma \ref{x5}.  Then: 
%
%\begin{quotation}
%For any $M_* \models T_*$, 
%\begin{itemize}
%\item there exists an interpretation of $T_1$ in $M_*$, denoted ${M_*}^{[\bar{\vp_1}]}$
%\item there exists an interpretation of $T_0$ in $M_*$, denoted ${M_*}^{[\bar{\vp_0}]}$
%\item for \emph{any} infinite cardinal $\kappa$, if ${M_*}^{[\vp_1]}$ is $\kappa^+$-saturated then 
%${M_*}^{[\vp_0]}$ is $\kappa^+$-saturated.
%\end{itemize}
%\end{cor}
%
%\begin{proof}
%It suffices to prove this 
%{rmk-sing1}
%\end{proof}

\begin{theorem}[GCH] \label{tstar-sop2-max}
$T$ is $\tlf^*$-maximal if and only if it has $SOP_2$.
\end{theorem}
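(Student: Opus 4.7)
The proof is a direct combination of two results already established in the paper, so my plan is simply to assemble them; the real work lies entirely in Lemma~\ref{x5} and in the quoted Fact~\ref{x1}.

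For the direction $\tlf^*$-maximal $\Rightarrow SOP_2$, I would argue by contrapositive. Suppose $T$ is $NSOP_2$. Then Fact~\ref{x1} of \cite{ShUs:844} (relying on relevant instances of GCH inherited from \cite{DzSh:692}) guarantees that $T$ is not $\tlf^*$-maximal. Thus, assuming $\tlf^*$-maximality forces $SOP_2$. This is the only place where the GCH hypothesis intervenes; the converse direction is in ZFC.

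For the direction $SOP_2 \Rightarrow \tlf^*$-maximal, I would simply appeal to Lemma~\ref{x5}, which was proved just above the statement. That lemma constructs, for a given $T_1 \supseteq SOP_2$, a common theory $T_*$ interpreting both $T_1$ and $T_0 = \mathrm{Th}(\omega+\omega^*)$, together with a cofinality spectrum problem $\cs$ (with exponentiation, so that Theorem~\ref{ps-is-ts} gives $\xp_\cs = \xt_\cs$) arranged so that $\lambda$-saturation of the $\tau(T_1)$-reduct of a model $M_*\models T_*$ pushes $\xt_\cs$ up to $\lambda$ via the $SOP_2$-tree, and hence rules out cuts of small cofinality in any initial segment of $N^{M_*}$, yielding $\lambda$-saturation of the $\tau(T_0)$-reduct. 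Since $T_0$ has the strict order property it is $\tlf^*$-maximal by \cite{Sh:500}~2.11, and the existence of such a $T_*$ witnesses $T_1 \tlf^*_\lambda T_0$, which then transfers maximality to $T_1$.

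Combining the two directions yields the theorem. There is no genuine obstacle: both implications are already packaged, and the only thing to check is that the notion of $\tlf^*$-maximality used in Fact~\ref{x1} agrees with that in Lemma~\ref{x5}; this is immediate from Definition~\ref{tstar-defn}. So the proof reduces to a one-line deduction quoting these two results, with the GCH clause attached because Fact~\ref{x1} requires it.
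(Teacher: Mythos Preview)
Your proposal is correct and matches the paper's own proof exactly: the paper's argument is the single line ``By Fact \ref{x1} (which assumes relevant instances of GCH) and Lemma \ref{x5},'' and you have identified precisely these two ingredients, together with the correct attribution of the GCH hypothesis to the Fact~\ref{x1} direction. One small notational point: depending on how one reads the index convention in Definition~\ref{tstar-defn}, the relation witnessed by $T_*$ in Lemma~\ref{x5} is more naturally written $T_0 \tlf^*_\lambda T_1$ (so that $T_1$ lies above the known-maximal $T_0$), but your substantive reasoning about the transfer of maximality is correct regardless.
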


\begin{proof}
By Fact \ref{x1} (which assumes relevant instances of GCH) and Lemma \ref{x5}. 
\end{proof}

%Theorem \ref{tstar-sop2-max} gives the first real evidence for $SOP_2$ as a dividing line.

\br

\section{Useful tools and additional definitions} \label{s:tools}

Before turning to a structure theory for $NSOP_2$, we prove several additional facts about CSPs: 
Claim \ref{c2} and Conclusion \ref{c2b}, which show that from a suitable bijection of sets, 
we can recover an order-isomorphism.  It follows that the assumption that the order on all pairs 
was given by the G\"odel pairing function in the CSP constructed in Lemma \ref{x5} could be weakened, 
as mentioned in that proof.  Tying up loose ends, we show that one of the main consequences of \ref{d:estt}(6) can be 
recovered in weak hereditary CSPs, Claim \ref{c:weak-double-star}, and discuss the barrier to fully 
recovering \cite{MiSh:998} for such CSPs in \ref{uniformity}.  
Finally, we include a definition of ``strong'' CSPs, natural when the underlying model $M^+_1$ 
is totally ordered.

\begin{defn} \label{d:smaller} Let $\mb \in \ord(\cs)$. We say that $Z \subseteq X_\mb$ is \emph{small} 
in $X_\mb$ if: 
\begin{enumerate}
\item[(a)] there is some definable $V$, $Z \subseteq V \subset X_\mb$ such that 
$M^+_1 \models$ ``there does not exist $x \in \mct_\mb$ such that $x$ is a bijection from $X_\mb$ into $V$.''
\item[(b)] if $Z \neq \emptyset$, then $\max (Z) < d_\mb$
\end{enumerate}
When $Z \subseteq X_\mb$ is small in $X_\mb$ and is an initial segment, we call it a \emph{small initial segment}. 
\end{defn}

\begin{rmk}
Condition \ref{d:smaller}(b) allows for concatenation. 
\end{rmk}

\begin{claim} \label{c2}
Suppose we are given a weak cofinality spectrum problem $\cs$, $\mb \in \ord(\cs)$, $h \in M^+_1$, $(W, <_W)$ a definable and pseudofinite linear order, and $Z$ 
a small initial segment of $Y_\mb$, such that: 
\[ M^+_1 \models \mbox{`` $h$ is a partial bijection with $W \subseteq \dom(h)$, $\rn(h)\subseteq Z$''} \] 
Then $({W}, <_{W})$ is internally order-isomorphic to an initial segment of $Y_\mb$. 
\end{claim}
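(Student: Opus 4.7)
The plan is to construct the desired order-isomorphism by internally enumerating $(W,<_W)$ as a sequence $\eta \in \mct_\mb$ and then defining $f(w)$ to be the index in $\eta$ at which $h(w)$ appears. The hypothesis that $Z$ is small with $\max Z < d_\mb$ is what ensures that the length of this enumeration fits within $d_\mb$, so that $\eta$ really lives in $\mct_\mb$.

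First I would build $\eta$ by an internal, pseudofinite recursion on its length: set $\eta(0) = h(w_0)$ where $w_0 = \min_{<_W} W$ exists by pseudofiniteness, and given $\eta$ of length $i+1$ with $\eta(i) = h(w_i)$, if $w_i$ is not the $<_W$-maximum of $W$ let $w_{i+1}$ be the $<_W$-immediate successor of $w_i$ (which exists by pseudofiniteness and discreteness of $<_W$) and extend to $\eta^\smallfrown \langle h(w_{i+1}) \rangle$. By the closure of $\mct_\mb$ under concatenation in Definition \ref{d:estt}(7)(c)(i), this remains in $\mct_\mb$ as long as $\lgn(\eta) < d_\mb$; the recursion is to terminate when $w_i = \max_{<_W} W$.

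Next I would bound $\lgn(\eta)$ using the smallness hypothesis. By construction $\rn(\eta) = h(W) = \rn(h) \subseteq Z \subseteq Y_\mb$, and $\eta$ is internally injective because $h$ is a bijection, so $\lgn(\eta)$ is the internal cardinality of $\rn(h)$, bounded above by that of $Z$. Since $Z$ is an initial segment of $Y_\mb$ with $\max Z <_\mb d_\mb$, its internal cardinality is $\max Z + 1 \leq d_\mb$. So the recursion can run to completion without $\lgn(\eta)$ reaching $d_\mb$ prematurely, producing $\eta \in \mct_\mb$ that enumerates the $h$-images of all of $W$ in $<_W$-increasing order.

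Finally, define $f \colon W \to Y_\mb$ by letting $f(w)$ be the unique $i$ with $\eta(i) = h(w)$. This is internally definable from $\eta$, $h$, and the evaluation function $\xr_\mb$ of \ref{d:estt}(7)(c), and by construction it is a bijection from $W$ onto the initial segment $\{ i \in X_\mb : i <_\mb \lgn(\eta) \}$ of $Y_\mb$ satisfying $w <_W w' \iff f(w) <_\mb f(w')$. The main obstacle I anticipate is making the internal recursion rigorous within the weak CSP framework, i.e.\ showing that the definable set of ``valid'' initial enumerations of $W$ by elements of $\mct_\mb$ (a subtree linearly ordered by $\tlf_\mb$) has a $\tlf_\mb$-maximum that exhausts $W$. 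One formalization is to take $\eta_* \in \mct_\mb$ of maximal length among such valid enumerations, possible because the bound from smallness keeps these lengths in a bounded definable subset of $Y_\mb$, which has a max by pseudofiniteness, and then argue that if $\eta_*$ failed to enumerate all of $W$ we could concatenate one more $h$-image, contradicting maximality.
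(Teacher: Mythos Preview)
Your approach is essentially the same as the paper's: transport $W$ into $X_\mb$ via $h$, consider the definable (linearly ordered) subtree of $\mct_\mb$ consisting of partial order-preserving enumerations, take the maximal element by pseudofiniteness, and argue it must exhaust $W$ by a concatenation-versus-maximality argument. The paper phrases this slightly differently --- defining $W^* = h(W)$ with the pulled-back order $<_{W^*}$ and building order-isomorphisms from initial segments of $(X_\mb,\leq_\mb)$ onto initial segments of $(W^*,<_{W^*})$ --- but the content is identical, and your $\eta$ is precisely the paper's $x_*$.

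One point to tighten: your bounding step (``$\lgn(\eta)$ is the internal cardinality of $\rn(h)$, bounded above by that of $Z$'') relies on an internal pigeonhole principle, which is not among the weak CSP axioms. The paper handles this differently, and this is the reason clause (a) of Definition~\ref{d:smaller} exists: in the paper's Case~2, if the maximal $x_*$ had domain all of $X_\mb$, then $x_*$ itself would be an element of $\mct_\mb$ injecting $X_\mb$ into $Z \subseteq V$, directly contradicting clause (a). So when you argue that $\eta_*$ of maximal length must permit one more concatenation, you should invoke clause (a) of ``small'' rather than the cardinality bound from clause (b) alone; clause (b) is what guarantees the concatenated value lands below $d_\mb$, but clause (a) is what rules out the enumeration being ``full'' without having exhausted $W$.
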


\begin{proof} 
Let $({W^*}, <_{W^*})$ denote the definable set $h(W) \subseteq X_\mb$ with the definable linear order given by 
$ x <_{W^*} y ~\iff~ h^{-1}(x) <_{W} h^{-1}(y) $.  
This allows us to identify $({W^*}, <_{W^*})$ definably with $({W}, <_{W})$, and now we 
prove that  $({W^*}, <_{W^*})$ 
is internally order-isomorphic to an initial segment of $(X_\mb, \leq_\mb)$. 

Let $\mct$ be the definable subtree of $\mct_\mb$ given by $\vp(x)$, which says: ``$x \in \mct_{\mb}$ is a one to one 
order preserving function from an initial segment of $(X_{\mb}, \leq_{\mb})$ onto an initial segment 
of $({W^*}, <_{W^*})$''. Thus if $a <_\mb b$ are in $\dom(x)$ then $x(a) <_{W^*} x(b)$.

First, note that the set $\eff = \{ x \in \mct : \vp(x) \}$ is linearly ordered. This is because
any two elements $x, x^\prime \in \mct$ must agree on the first element of their domain; and if 
neither of $x, x^\prime$ extends the other, then the set in their common domain on which they agree is nonempty and definable. 
But if the last element of this set is not $\max \dom(x)$, we get a contradiction since the order is pseudofinite. 

Consider the subset of $X_{\mb}$ given by:
\[ \{ a \in X_{\mb} : \mbox{there is $x$ such that $\vp(x)$ and $a \in \dom(x)$} \} \]
As this subset is nonempty and definable, by assumption it has a last element $a_* \in X_{\mb}$.   
Let $x_* \in \mct$ be a function witnessing this, i.e. such that $a_* \in \dom(x_*)$. 
Necessarily $x_* = \max \eff$. 

There are three cases.

\step{Case 1.} The desired case: $\rn(x_*) = W^*$.

\step{Case 2.} Not case 1, but $\dom(x_*) = X_{\mb}$. Then 
\[ M^+_1 \models \mbox{ ``$x_* \in \mct_\mb$ is an injection of $X_{\mb}$ into $Z$''} \] 
contradicting Definition \ref{d:smaller}.

\step{Case 3.} Not case 1 or 2, so $\dom(x_*) \subsetneq X_{\mb}$ and $\rn(x_*) \subsetneq W^*$. 
By Definition \ref{d:smaller} and the hypotheses of the Claim, ``not case 2'' implies $\lgn(x_*) < d_{\mb}$. 
Writing $S$ for successor, note that as the orders are pseudofinite, the function ${x_*}^\smallfrown \langle S(f(a_*)) \rangle$ is well defined 
(meaning the function extending $x_*$ by the additional condition $S(a_*) \mapsto S(f(a_*))$ is well defined).
We may concatenate, so this new function belongs to $\mct$, contradicting the choice of $a_*$. 

As Cases 2 and 3 are contradictory, we are necessarily in Case 1, which completes the proof. 
\end{proof}

\begin{concl} \label{c2b} 
Suppose that $\cs$ is a weak cofinality spectrum problem, and:
\begin{enumerate}
\item $N \subseteq M_{1,\cs}$ is definable and linearly ordered. 
\item Arbitrarily large initial segments of $N$ are orders for $\cs$, more precisely, 
there is $\psi \in \Delta$ so that for any $a \in N$ 
there is $\ma \in \ord(\cs)$ with: \\ $\{ b \in N ~:~  \models b \leq a \} \subseteq \{ b \in M_{1,s} : \models \psi(b,b,a) \}=  X_\ma \subseteq N$ and $a < d_\ma$. 
\item Cardinality of initial segments of $N$ grows internally, more precisely, letting $\theta = \theta(w,y)$ define the tree associated to $\psi(x_1, x_2, y)$,  
we have that $M_{1,\cs} \models$ ``$(\forall y_0 \in N)(\exists y_1 \in N)($there does not exist $z \in \theta(w,y_1)$ such that $z$ is a bijection from $\psi(x,x, y_1)$ into $\psi(x,x,y_0))$''.
\end{enumerate}
Let $(W, <_W)$ be a definable, pseudofinite linear order in the model. If there is an internal injection $f$ of sets 
from $W$ into some initial segment of $N$, then for some $\mb \in \ord(\cs)$ with $X_\mb \subseteq N$ there exists an internal order-isomorphism $g$ from  
$(W, <_W)$ onto an initial segment of $Y_\mb$. 
\end{concl}

\begin{proof}
By the hypothesis, %$D = N$ in the notation of Definition \ref{m23}, 
we can find $\mb_0$ such that $X_{\mb_0} \supseteq f(W)$, 
and $d_{\mb_0} > \max(f(W))$. 
Let $\psi(z)$ be the formula in $\Delta$ defining the pseudofinite linear order corresponding to 
the initial segment below $z$, and let $\theta(z)$ define its associated tree. 
Then $M_{1,\cs} \models$ ``$(\exists z)($there does not exist $x \in \theta(z)$ such that $x$ is a bijection from $\psi(z)$ into $X_{\mb_0})$''.
Let $c$ be any such $z$. Again by hypothesis, there is $\mb \in \ord(\cs)$ such that $X_\mb \supseteq \{ x : x \leq c \}$ and $d_\mb > c$. 
Now apply Claim \ref{c2}. 
\end{proof}

\br

\begin{claim} \label{c:weak-double-star}
If $\cs$ is a nontrivial weak c.s.p. and is hereditarily closed, then there exist nontrivial $\ma, \ma^\prime \in \ord(\cs)$ 
which together satisfy $(**)$ of $\ref{on-pairing}$.
\end{claim}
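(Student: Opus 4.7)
The plan is to exhibit $\ma, \ma^\prime$ by starting from any nontrivial $\ma_0 \in \ord(\cs)$ (which exists by hypothesis, so $d_{\ma_0}$ is nonstandard) and leveraging the tree $\mct_{\ma_0}$ together with hereditary closure. My first attempt is $\ma = \ma^\prime := \ma_0$ (refined to suborders via Observation~\ref{m11a} if needed); we then seek a nonstandard $a < d_\ma$ so that the G\"odel codes for functions $[0,a]_\ma \to [0,a]_\ma \times [0,a]_\ma$ are definably identified with a definable subset of $X_{\ma^\prime}$ below $d_{\ma^\prime}$.

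The coding I would use is interleaving in $\mct_{\ma_0}$: a function $f=(f_1,f_2):[0,a]\to [0,a]\times[0,a]$ is represented by $\eta_f \in \mct_{\ma_0}$ of length $2(a+1)$ with $\eta_f(2i) = f_1(i)$ and $\eta_f(2i+1) = f_2(i)$. To produce a nonstandard $a$ with $2(a+1) \le d_{\ma_0}$ without appealing to arithmetic on $X_{\ma_0}$, I would phrase the condition internally via the tree: call $a$ \emph{good} if there exists $\nu \in \mct_{\ma_0}$ of length $a+1$ such that the internally recursive concatenation $\nu^\smallfrown \nu$ lies in $\mct_{\ma_0}$. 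Since $(M^+_1, \Delta)$ has enough set theory for trees, this iterated concatenation is definable; by pseudofiniteness of $X_{\ma_0}$ the set of good $a$ is definable with a largest element, and by nontriviality of $d_{\ma_0}$ that largest element is nonstandard.

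To place the codes into some $X_{\ma^\prime}$, I would take the pseudofinite set $F := \{\eta_f\} \subseteq \mct_{\ma_0}$ equipped with its inherited lexicographic order and apply Conclusion~\ref{c2b}. The hereditarily closed weak CSP $\cs$ has a rich supply of suborders (Observation~\ref{m11a}), so the pseudofinite order on $F$ is internally order-isomorphic to an initial segment of $Y_{\ma^\prime}$ for some $\ma^\prime \in \ord(\cs)$; nontriviality of $d_{\ma_0}$ ensures the target $X_{\ma^\prime}$ is large enough to host $F$, with the top element below $d_{\ma^\prime}$.

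The main obstacle I expect is navigating the absence of Cartesian products and explicit arithmetic: in the standard CSP setting one simply takes $\ma^\prime = \ma \times \ma$, which is unavailable here. Hereditary closure is the substitute, giving enough flexibility in choosing $X_\mb$ and $d_\mb$ for $\mb \in \ord(\cs)$ to simulate the effect of a product. The subtlest point will be verifying that the pair-coding set and the ``good $a$'' condition really are internally definable from just the tree operations guaranteed by Definition~\ref{d:estt}(7), and that Conclusion~\ref{c2b} applies to them; I expect this to go through using the standard internal-coding tricks available in any model with enough set theory for trees.
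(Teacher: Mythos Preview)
Your interleaving idea cleanly codes a function $[0,a]\to[0,a]\times[0,a]$ as an element of $\mct_{\ma_0}$, but the step where you move those codes into some $X_{\ma'}$ with $\ma'\in\ord(\cs)$ is a genuine gap. You appeal to Conclusion~\ref{c2b}, yet its hypotheses require a fixed linearly ordered $N$ satisfying conditions (1)--(3) there \emph{together with} an internal injection of your set $F$ into an initial segment of that $N$. You have supplied neither. Hereditary closure and Observation~\ref{m11a} only let you carve out suborders of an existing $Y_\ma$ or transport along an accurate isomorphism you already possess; they do \emph{not} say that an arbitrary definable pseudofinite set---in particular a subset of $\mct_{\ma_0}$---is accurately isomorphic to something already in $\ord(\cs)$. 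Tree elements are tuples in $M^+_1$, not a priori members of any $X_\mb$, and in a weak CSP there are no Cartesian products available to absorb tuples into a single coordinate. So the sentence ``the pseudofinite order on $F$ is internally order-isomorphic to an initial segment of $Y_{\ma'}$ for some $\ma'\in\ord(\cs)$'' is precisely the content of the claim, not a consequence of hereditary closure.

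The paper handles exactly this obstacle by a different route: starting from a nontrivial $\mb$, it develops addition, multiplication, exponentiation, and primality \emph{internally on $X_\mb$} (using $\mct_\mb$ only as a scratchpad for the recursive definitions, following \cite{MiSh:998} \S5), and then G\"odel-codes finite sequences via prime-power factorization. The resulting codes are by construction literal elements of $X_\mb$, so no separate injection step is needed; one then shrinks to a nonstandard $m_*$ below which all the relevant codes exist inside $X_\mb$, sets $X_\ma=[0,m_*]_\mb$ with $d_\ma=m_*$, and takes $\ma'=\mb$. Your interleaving shortcut tries to bypass this internal arithmetic, but the arithmetic is precisely the mechanism that lands the codes back inside a one-dimensional $X_\mb$ rather than leaving them stranded in $\mct_\mb$.
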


\begin{proof} 
Let $\mb$ (so also $\mct_\mb$) be nontrivial.  
In section 5 of \cite{MiSh:998}, we showed that for any $\mb \in \ord(\cs)$, 
it is possible to define addition, multiplication, and exponentiation on any element of $\ord(\cs)$, %e.g. on $X_\mb$ 
(that is, to define relations on $X_\mb$ which have all the same properties as the graphs 
of these functions, except that they are possibly not total). This does not require any assumptions on pairing. 
%In \cite{MiSh:998} Lemma 5.3, we exhibit a formula $\psi(y)$ asserting that G\"odel codes exist for all functions from $X_\mb \rstr y$ to itself, 
%which holds on all standard $n \in X_\mb$.\footnote{
This is done in 
the proof of \cite{MiSh:998} Lemma 5.3, essentially as follows. Addition is given by: 
$\vp_+(x,y,z) = (\exists \eta \in \mct_\mb)(\lgn(\eta) = y \land \eta(0) = x \land \eta(y-1) = z \land (\forall i)(i <\lgn(\eta) \implies \eta(S(i)) = S(\eta(i)) )$. 
[We omit the parameter $\bar{c}_\mb$ for readability.]
To obtain multiplication, $\vp_\times(x,y,z)$, substitute
``$(i < \lgn(\eta) \implies \eta(S(i)) = \eta(i)+x )$'' as necessary, and define exponentation 
$\vp_{exp}(x,y,z)$ by substituting in the appropriate place 
``$(i < \lgn(\eta) \implies \eta(S(i)) = \eta(i) \times x )$,'' i.e. requiring that the sequence increment by a factor of $x$.  
These are graphs of partial functions, which need not be total. We can therefore define ``$x$ is a prime.'' 
Let $\vp_4(x,y)$ assert that $x$ is the $y$th prime by saying: 
$y>0 \land (\exists \eta \in \mct_\mb)(\lgn(\eta) = y \land \eta(0) = 2 \land \eta(y-1) = x \land (\forall i)(i < \lgn(\eta) \implies 
\mbox{``}\eta(S(i)) \mbox{ is the $\leq_\mb$-least prime number strictly greater than $\eta(i)$} \mbox{''}))$.
Let $\vp_5(x,n,m)$ assert that $x$ is divisible by the $n$th prime precisely $m$ times, by asserting the existence of $\eta \in \mct_\mb$ of length $m$ whose first element is 
$x$, whose subsequent elements decrease by a factor of the $n$th prime and whose last element has no more such factors. 
Let the formula $\vp_6(x,\eta)$ assert that $x \in X_\mb$ is a G\"odel code for $\eta$ by stating: 
``$\eta \neq \emptyset$, $\eta \in \mct_\mb$, $x > 2$ and for all $i < \lgn(\eta)$,  
writing $m = \eta(i)$, we have that $x$ is divisible by the $i$th prime precisely $m+1$ times''. 
Let $\theta(x)$ assert that $(\forall y < x)(\exists z) \vp_{+}(x,y,z) \land (\forall y < x)(\exists z) \vp_{\times}(x,y,z) \land (\forall y < x)(\exists z) \vp_{exp}(x,y,z)$.
Let $\psi(y)$ be the formula: 
\[ (\forall x < y)\theta(x) \land (\forall \eta \in \mct_\mb)\left((\lgn(\eta) < y \land (\forall i < \lgn(\eta))(\eta(i) < y)) \implies (\exists x)(\vp_6(x,\eta))\right) \] 
which asserts that G\"odel codes exist for all functions from $X_\mb \rstr_y$ to itself. 

For our present case, apply this as follows. Recall that $X_\mb$ is pseudofinite. 

First step: Find nonstandard $n_* \leq d_{\mb}$ so that the definable set $Z$ of codes for pairs of elements of $[0, n_*]_{\mb}$ is contained in $Y_\mb$. 
%Second step: Using hereditary closure, let $\ma_0 \in \ord(\cs)$ be such that $X_{\ma_0}$ is internally isomorphic to $Z$. 
Second step: Find $n_{**} \in X_\mb$, $n_{**} \leq n_*$ still nonstandard, so that $X_\mb$ contains all codes for functions from $[0, n_{**}]_\mb$ to itself. 
Third step: Let $m_*$ be maximal $\leq_\mb n_{**}$ such that $\langle m_*, m_* \rangle \leq n_{**}$, so necessarily $m_*$ is nonstandard too. 
Let $\ma_1$ be such that $X_{\ma_1} = [0, n_{**}]_\mb$ and $d_{\ma_1} = m_*$. 

Now define $\ma$ so that $X_\ma = [0, m_*]_\mb$ and $d_{\ma} = m_*$. 
What about the desired tree $\mct$ of functions from $X_\ma$ to $X_\ma \times X_\ma$? Recalling that $d_{\ma_1} = m_*$, 
this tree is naturally isomorphic to the definable sub-tree $\mct \subseteq \mct_{\ma_1}$ whose elements are functions whose range 
consists only of codes for pairs of elements each of which are $\leq m_*$. 
By construction, the codes for elements of $\mct_{\ma_1}$ and therefore for elements of $\mct$ form a definable subset of $X_\mb$.  
Finally, let $\ma^\prime = \mb$. 

%So, in our present case, modify $\psi(y)$ in a straightforward way to obtain $\psi_*(y)$ which asserts that G\"odel codes below $d_\mb$ 
%exist for all functions from $X_\mb \rstr_y$ to the subset $Z$ of $X_\mb \rstr_y$ consisting of G\"odel codes below $d_\mb$ of pairs 
%of elements $(a_1, a_2)$ of $X_\mb$ with $a_1 \leq_\mb y$ and $a_2 \leq_\mb y$.  
%
%The formula $\psi_*(n)$ holds on all standard $n \in X_\mb$, so it also holds on some nonstandard $n_* < d_\mb$.  Recalling 
%\ref{m11a} and the assumption of hereditary closure, let $\ma \in \ord(\cs)$ be given by $X_\ma = X_\mb \rstr n_*$, $\leq_\ma = \leq_\mb \rstr n_*$, 
%and $d_\ma = n_*$. 
%Let $\ma^\prime = \mb$. 
This completes the proof. 
\end{proof}

\begin{disc} \label{uniformity}
By a similar argument, in any weak CSP which is hereditarily closed, for some nontrivial $\ma \in \ord(\cs)$ we have available nontrivial 
elements of $\ord(\cs)$ which can be thought of as canonically representing any given one of the finite Cartesian powers $~\ma \times \cdots \times \ma~$ with the desired 
ordering, e.g. one derived from repeated applications of the G\"odel pairing function. However, as noted in Discussion \ref{on-pairing}, we don't a priori 
have Cartesian products of distinct $\ma, \mb \in \ord(\cs)$. This prevents us from obtaining the uniformity of functions such as 
$\lcf$ across \emph{all} $X_\ma$ which was necessary for the main theorems of \cite{MiSh:998} to go through. 
\end{disc}

\begin{defn}[Strong CSPs] \label{d:strong-csp}
Call $\cs$ a \emph{strong CSP, or lexicographic CSP} if the demands on Cartesian products from $\ref{d:estt}(5)$ are replaced by:\footnote{So here $\Delta$ is retained.} 
\begin{enumerate}
\item if $\ma, \mb \in \ord(\cs)$ then internally either $|Y_\ma| \leq |Y_\mb|$ or $|Y_\mb| \leq |Y_\ma|$. %, see \ref{a4} below.

\item if $\ma \in \ord(\cs)$ and $d\leq_\ma d_\ma$ then there is $\mb \in \ord(\cs)$ such that:
\begin{enumerate}
\item $X_\mb = X_\ma$, $<_\mb = <_\ma$, $d_\mb = d$
\item $\mct_\mb = \mct_\ma \rstr \{ \eta : (\lgn_\ma(\eta) \leq d) \land (\forall n < \max \dom(\eta))(\eta(n) \leq d) \}$
\end{enumerate}

\item if $\ma, \mb \in \ord(\cs)$ and $h$ is a definable\footnote{Not necessarily via $\Delta$.} 
isomorphism from $(Y_\ma, <_\ma)$ onto $(Y_\mb, <_\mb)$ then there is $\bc = \ma \times \mb$ 
such that $X_\bc = X_\ma \times X_\mb$, $<_\bc$ is the lexicographic order, $d_\bc = (d_\ma, 0_\mb)$, and $\mct_\bc$ is naturally defined.  
\end{enumerate}
\end{defn}

\begin{obs} \label{s-w-gen}
Any strong CSP is a weak CSP, and the hereditary closure of a strong CSP is a strong CSP. 
\end{obs}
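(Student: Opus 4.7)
For the first assertion I would argue directly from the definitions. A strong CSP, by Definition $\ref{d:strong-csp}$, satisfies all of Definition $\ref{d:csp}$ with $\ref{d:estt}(5)$ replaced by three alternative Cartesian-product-like conditions and with $\ref{d:estt}(1)$--$(4), (7)$ intact. Since a weak CSP by $\ref{a2x}$ is exactly Definition $\ref{d:csp}$ with $\ref{d:estt}(5)$ and $(6)$ omitted, every strong CSP is automatically a weak CSP; this direction is pure bookkeeping.

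For the second assertion, let $\cs_1$ be a strong CSP and $\cs_2$ its hereditary closure. The observation immediately preceding $\ref{m11a}$ already gives that $\cs_2$ is a weak CSP, so the task is to verify the three conditions of Definition $\ref{d:strong-csp}$ for $\cs_2$. The organizing principle I would use is that every $\ma \in \ord(\cs_2)$ arises from some $\ma' \in \ord(\cs_1)$ by a finite chain of $\sub$ and accurate-isomorphism steps: $\sub$ supplies an internal injection $Y_\ma \hookrightarrow Y_{\ma'}$ via the enumerating sequence in $\mct_{\ma'}$, while accurate isomorphism preserves $Y$ and $\mct$ up to definable bijection. Each strong-CSP condition for $\cs_2$ is then reduced to the corresponding condition in $\cs_1$, after some setup.

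For Definition $\ref{d:strong-csp}(1)$, given $\ma, \mb \in \ord(\cs_2)$ with ancestors $\ma', \mb' \in \ord(\cs_1)$, condition $(1)$ in $\cs_1$ yields, without loss of generality, an internal injection $Y_{\ma'} \hookrightarrow Y_{\mb'}$; composing, both $Y_\ma$ and $Y_\mb$ internally inject into the pseudofinite set $Y_{\mb'}$, and internal pseudofinite cardinality comparison within a common ambient pseudofinite set (available because $\cs_1$ has enough set theory for trees) gives comparability of $|Y_\ma|, |Y_\mb|$. For condition $(2)$, I would induct on the construction chain: when $\ma \in \sub(\ma^*)$ and $d \leq_\ma d_\ma$, restrict the enumerating $\eta \in \mct_{\ma^*}$ of $Y_\ma$ to its initial segment up to $d$ to produce the desired $\mb \in \sub(\ma^*)$; when $\ma$ is accurately isomorphic to some $\ma^*$, apply the inductive hypothesis on $\ma^*$ and transport the resulting order via the accurate isomorphism.

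Condition $(3)$ will be the main obstacle. Given $\ma, \mb \in \ord(\cs_2)$ together with an internal isomorphism $h : (Y_\ma, <_\ma) \to (Y_\mb, <_\mb)$, I plan to trace both back to $\ma', \mb' \in \ord(\cs_1)$, then use strong-CSP conditions $(1)$ and $(2)$ in $\cs_1$ to freely adjust distinguished elements and locate $\ma'', \mb'' \in \ord(\cs_1)$ with internally isomorphic $Y$'s---the new isomorphism being built from $h$ together with the ancestor injections in a coordinate-matching way. Then condition $(3)$ in $\cs_1$ produces $\ma'' \times \mb'' \in \ord(\cs_1)$, and I would realize $\ma \times \mb$ as a suborder in the sense of Definition $\ref{m10}$ of $\ma'' \times \mb''$ by using the closure of $\mct_{\ma'' \times \mb''}$ under concatenation to build the required enumerating sequence for $Y_{\ma \times \mb}$. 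The delicate step is aligning the enumerations of $Y_\ma, Y_\mb$ with those produced in $\cs_1$ so that $X_\ma \times X_\mb$ with its lexicographic order is definably identified with a subset of $X_{\ma'' \times \mb''}$ whose initial segment up to $d_{\ma \times \mb}$ is enumerable by an element of $\mct_{\ma'' \times \mb''}$; this is where the coordinatewise definability of the lex order and the closure properties of $\mct$ need to be used carefully.
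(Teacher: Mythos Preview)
The paper gives no proof whatsoever for this observation; it is stated bare. Your analysis therefore goes well beyond what the paper supplies. Your handling of the first assertion is correct and is exactly the bookkeeping check any reader would make.

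For the second assertion, your overall strategy---trace each order in $\cs_2$ back to an ancestor in $\cs_1$ via a chain of $\sub$ and accurate-isomorphism steps, then reduce each of the three strong-CSP conditions to the corresponding one in $\cs_1$---is the natural approach and is sound in outline. You are also right that condition~(3) is where the real content lies; conditions~(1) and~(2) go through roughly as you indicate, with condition~(2) being nearly automatic since the restricted order is itself a suborder and $\cs_2$ is closed under $\sub$.

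There is, however, a genuine gap in your sketch for condition~(3) at precisely the point you flag as delicate. When $\mb \in \sub(\mb')$, the minimum $0_\mb$ of $X_\mb$ need not equal $0_{\mb'}$, so after you form a product $\ma'' \times \mb''$ in $\cs_1$, the element $d_{\ma \times \mb} = (d_\ma, 0_\mb)$ may sit \emph{above} $d_{\ma'' \times \mb''} = (d_\ma, 0_{\mb''})$ in the ambient lex order, blocking a direct appeal to $\sub$. Relatedly, your adjustment via conditions~(1) and~(2) arranges $|Y_{\ma''}| = |Y_{\mb''}|$ but does not obviously guarantee that $Y_{\ma \times \mb} = Y_\ma \times X_\mb$ is enumerable by an element of $\mct_{\ma'' \times \mb''}$ of the required length. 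These obstructions are repairable---one expects a further pass through accurate isomorphism to realign base points, together with a size-matching argument in the spirit of Claim~\ref{c2}---but your sketch does not yet supply those steps. In short, you have done more than the paper, correctly located the difficulty, and outlined a plausible route, but the argument for condition~(3) is not complete as written.
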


%In this case we have enough for the main results of \cite{MiSh:998}. 

\begin{claim} \label{c42}
If $\cs$ is a strong CSP which is hereditarily closed, then: 
\begin{enumerate}
\item[(a)] $\cs$ satisfies Property $(**)$ of Discussion $\ref{on-pairing}$. 
\item[(b)] Moreover, for any nontrivial $\ma, \mb \in \ord(\cs)$, there is a nontrivial $\bc \in \ord(\cs)$ with either  
$X_\bc = X_\ma \times Y_\mb$ or $X_\bc = X_\mb \times Y_\ma$. 
\item[(c)] Suppose that for every $\ma \in \ord(\cs)$ there is $\ma^\prime \in \ord(\cs)$ such that 
$X_\ma$ is internally isomorphic to a subset of $X_{\ma^\prime}$ and $d_{\ma^\prime} = \max(X_{\ma^\prime})$. 
Then $\cs$ is closed under Cartesian products.  
\end{enumerate}
\end{claim}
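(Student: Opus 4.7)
The plan for part (a) is immediate: by Observation \ref{s-w-gen} any strong CSP is a weak CSP, and, assuming $\cs$ is nontrivial (otherwise $(**)$ is vacuously satisfied), Claim \ref{c:weak-double-star} applies directly to produce the required nontrivial $\ma, \ma^\prime \in \ord(\cs)$.

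For part (b), given nontrivial $\ma, \mb \in \ord(\cs)$, I would first use \ref{d:strong-csp}(1) to arrange without loss of generality that $|Y_\ma| \leq |Y_\mb|$ internally. The main case is strict inequality $|Y_\ma| < |Y_\mb|$, in which I aim for $X_\bc = X_\ma \times Y_\mb$. The strategy is to construct $\mb^\dag \in \ord(\cs)$ with $X_{\mb^\dag} = Y_\mb$ and $(Y_{\mb^\dag}, <_{\mb^\dag})$ internally order-isomorphic to $(Y_\ma, <_\ma)$; then \ref{d:strong-csp}(3) applied to $\ma, \mb^\dag$ yields $\bc = \ma \times \mb^\dag$ with $X_\bc = X_\ma \times X_{\mb^\dag} = X_\ma \times Y_\mb$. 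To build $\mb^\dag$, first apply Claim \ref{c2} with $W = Y_\ma$, the internal injection $h: Y_\ma \to Y_\mb$ provided by $|Y_\ma| \leq |Y_\mb|$, and $Z$ the initial segment of $Y_\mb$ generated by $h(Y_\ma)$; smallness of $Z$ follows from pseudofinite pigeonhole, since $Y_\mb \subsetneq X_\mb$ admits no internal bijection onto $X_\mb$ in $\mct_\mb$. This yields an internal order-isomorphism from $(Y_\ma, <_\ma)$ onto an initial segment $J$ of $(Y_\mb, <_\mb)$; strict cardinality inequality forces $\max J <_\mb d_\mb$ strictly, so its immediate successor is still in $Y_\mb$. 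Then Observation \ref{m11a} applied to $\mb$ with $X = Y_\mb$ and $d$ the successor of $\max J$ produces the desired $\mb^\dag$. The equality case $|Y_\ma| = |Y_\mb|$ reduces to the strict case by first using \ref{d:strong-csp}(2) to truncate $\mb$ to $\mb^{\prime\prime}$ with $d_{\mb^{\prime\prime}}$ strictly below $d_\mb$ but still nonstandard, giving $|Y_{\mb^{\prime\prime}}| < |Y_\ma|$; swapping roles then yields $X_\bc = X_\mb \times Y_\ma$, using $X_{\mb^{\prime\prime}} = X_\mb$. Nontriviality of $\bc$ is inherited from the factors via the lexicographic product.

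For part (c), I would leverage part (b) together with the additional hypothesis. Given $\ma, \mb \in \ord(\cs)$, iterate the hypothesis to find $\ma^\prime, \mb^\prime \in \ord(\cs)$ with internal injections of $X_\ma$ into $Y_{\ma^\prime}$ and of $X_\mb$ into $Y_{\mb^\prime}$, and with $d_{\ma^\prime} = \max X_{\ma^\prime}$, $d_{\mb^\prime} = \max X_{\mb^\prime}$; iteration is used to guarantee the strict cardinality gap so that the injections land in the $Y$'s (which differ from the $X$'s only at the top). Apply part (b) to $\ma^\prime, \mb^\prime$ to obtain $\bc^\prime \in \ord(\cs)$ with, say, $X_{\bc^\prime} = X_{\ma^\prime} \times Y_{\mb^\prime}$. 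Then $X_\ma \times X_\mb$ injects internally into $X_{\bc^\prime}$, so by hereditary closure (Observation \ref{m11a}) there is $\bc \in \ord(\cs)$ with $X_\bc$ internally isomorphic to $X_\ma \times X_\mb$ as a suborder of $\bc^\prime$. This gives closure of $\cs$ under Cartesian products.

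The main obstacle throughout is verifying the smallness hypothesis of Claim \ref{c2} in the construction for part (b). This reduces to showing that no element of $\mct_\mb$ codes a bijection $X_\mb \to Y_\mb$, a pseudofinite pigeonhole statement; it should hold because tree elements have length bounded by $d_\mb$, and the internal arithmetic developed in \cite{MiSh:998} Section 5 (and reviewed in the proof of Claim \ref{c:weak-double-star}) suffices to formalize the required counting, but the details need care in the weak CSP setting. A secondary point is matching the output of hereditary closure to the exact set-theoretic form $X_\ma \times Y_\mb$ rather than merely an order-isomorphic image, which drove the specific construction of $\mb^\dag$ above.
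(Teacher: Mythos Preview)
Your proof is correct and follows essentially the same route as the paper: for (a) both invoke Claim \ref{c:weak-double-star} via Observation \ref{s-w-gen}; for (b) both compare $|Y_\ma|$ and $|Y_\mb|$, restrict the larger side so that the $Y$'s match, and apply \ref{d:strong-csp}(3); for (c) both embed into larger $\ma', \mb'$ with $d = \max X$, apply (b), then use hereditary closure. Your version is in fact more careful than the paper's terse argument --- your explicit invocation of Claim \ref{c2} to convert the raw injection into an order-isomorphism onto an initial segment, and your separate handling of the equality case, patch points the paper glosses over (e.g.\ the paper writes ``$d = h(d_\ma)$'' though $d_\ma \notin \dom(h)$). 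On your stated obstacle: smallness is nearly free here, since whenever $d_\mb < \max X_\mb$ no $\eta \in \mct_\mb$ can have domain all of $X_\mb$, so condition (a) of Definition \ref{d:smaller} holds vacuously; the remaining case $d_\mb = \max X_\mb$ is already handled by the truncation maneuver you use for the equality case.
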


\begin{proof}
%By Claim \ref{c:weak-double-star}, it suffices to check the Cartesian products, condition \ref{d:estt}(5). 

(1) By Claim \ref{c:weak-double-star} and Observation \ref{s-w-gen}. 

(2) Fix nontrivial $\ma, \mb \in \ord(\cs)$. 
By condition \ref{d:strong-csp}(1), without loss of generality, $|Y_\ma| \leq |Y_\mb|$ 
witnessed by an internal partial isomorphism $h$. Since Condition \ref{d:strong-csp}(3) requires $h$ to be surjective, 
let $d = h(d_\ma) \in Y_\mb$. Since $\cs$ is hereditarily closed and $d \leq d_\mb$, apply Observation \ref{m11a} to find 
$\mb^{\prime\prime} \in \ord(\cs)$ such that $X_{\mb^{\prime\prime}} = Y_\mb$ and $d_{\mb^{\prime\prime}} = d$. 
Now $h: Y_\ma \rightarrow Y_{\mb^{\prime\prime}}$ is onto,  
so by condition \ref{a2x}(c) there is $\bc = \ma \times \mb^{\prime\prime}$ with 
$X_\bc = X_\ma \times X_{\mb^{\prime\prime}} = X_\ma \times Y_{\mb}$ 
and $d_\bc$ is nonstandard, thus $\bc$ is nontrivial. 

(3) As $d_{\ma^\prime} = \max(X_{\ma^\prime})$, $d_{\mb^\prime} = \max(X_{\mb^\prime})$, we have that $Y_{\ma^\prime} = X_{\ma^\prime}$, $Y_{\mb^\prime} = X_{\mb^\prime}$, and  
the previous condition (2) shows that their Cartesian product exists. 
The hypothesis of (3) allows us to find Cartesian products for any two elements of $\ord(\cs)$ by 
first isomorphically embedding them in suitable larger elements $\ma^\prime, \mb^\prime$, finding 
$\bc^\prime = \ma^\prime \times \mb^\prime$ and then applying hereditary closure. 
\end{proof}

\section{Towards a structure theory for $NSOP_2$} \label{s:1198}

In Section \ref{s:tlf}, we gave the first real evidence that the strong tree property $SOP_2$ is a dividing line. 
Motivated by this result, we now look for the beginnings of a structure 
theory for $NSOP_2$. The key objects are so-called higher formulas, defined using ultrafilters.  
The main results are first, Theorem \ref{a13}, which characterizes $NSOP_2$ in terms of few higher formulas; 
second, the Symmetry Lemma \ref{sym-lemma}, which characterizes $NSOP_3$ in terms of symmetric inconsistency for higher formulas; 
and third, Theorem \ref{ess-sop2}, which proves that $SOP_2$ is sufficient for a certain kind of exact saturation to fail. 
%and third, Theorem \ref{b2}, which characterizes $NSOP_2$ by counting in a way reminiscent of stability. 
%, which 
%deals with an analogue of nonforking extensions. %xxxx CHECK. 

\begin{conv}
Throughout this section, $T$ is a complete first order theory and $\mathfrak{C} = \mathfrak{C}_T$ is a monster model for $T$. 
\end{conv}

$SOP_2$ was defined in \ref{d:sop2} above. $SOP_3$ was first defined in Shelah \cite{Sh:500} as a weakening of the strict order property; note that 
in Definition \ref{d:sop3}, the case where $\psi = \neg \vp$ is the strict order property. 

\begin{defn}[ \cite{Sh:500} 2.20, \cite{ShUs:844} 1.3] \label{d:sop3}
$T$ has $SOP_3$ if there is an indiscernible sequence $\langle \overline{a}_i : i < \omega \rangle$ and formulas $\vp(\overline{x}, \overline{y})$, 
$\psi(\overline{x}, \overline{y})$ such that:
\begin{enumerate}
\item $\{ \vp(\overline{x}, \overline{y}), \psi(\overline{x}, \overline{y}) \}$ is contradictory.
\item for each $k<\omega$, the following is a consistent partial type:
\[  \{ \psi(\overline{x}, \overline{a}_j) : j \leq k \} \cup \{ \vp(\overline{x}, \overline{a}_i) : i > k \} \]
\item for $j<i$, the set $\{ \vp(\overline{x}, \overline{a}_i), \psi(\overline{x}, \overline{a}_j) \}$ is contradictory.
\end{enumerate}
\end{defn}

It is known that $SOP_3$ implies $SOP_2$ but it is open whether, on the level of theories, the converse is true; so it is possible that the 
Symmetry Lemma below will also characterize $SOP_2$. (Still, for pairs $(T, \Delta)$ the converse fails since $SOP_2$, $SOP_3$ are known to be 
distinct at the level of formulas.)

We first look for a useful way to capture the asymmetry of $SOP_3$. This approach relates to the 
idea of ``semi-definability'' from \cite{Sh:a} VII.4.

\begin{defn} %\emph{(c.f \cite{Sh:c} VII.4)}
If $\de$ is an ultrafilter on $A \subseteq \mathfrak{C}^m$, then for any set $B \subseteq \mathfrak{C}$ we define:  % note: not $^m\mathfrak{C}$! 
\[  \Av(\de, B) = \{ \psi(\overline{x};\overline{b}) : \overline{b} \in B, \psi \in [\ml], \{ \overline{a} \in A : \models \psi(\overline{a},\overline{b}) \} \in \de \} \]
so this is an element of $\st^m(B)$. %Write $m(\de)$ for the arity $m$. 
\end{defn}

%\begin{expl} In \cite{Sh:c} VII.4 there were related definitions:
%\begin{enumerate}
%\item We say that the type $p \in \st(B)$ is \emph{semi-definable over $A$} when there is an ultrafilter $\de$ on $A$ such that 
%\[ p \subseteq \Av(\de, \dom(p))  \]
%\item We say that the $m$-type $p \in \st_m(B)$ is semi-definable over $A$ when it is semi-definable over ${A}^m$.  
%\end{enumerate}
%Every type over a model $M$ is semi-definable over $M$, because it is finitely realized in $M$.  
%\end{expl}
%%(note: notation changes)

\begin{defn} \label{d:24}
Let $\de$ be an ultrafilter on ${^mA}$. $($We sometimes write $m = \mathbf{m}(\de)$ for this arity.$)$  
\begin{enumerate}
\item We say that the infinite indiscernible sequence 
$\overline{b} = \langle \overline{b}_s : s \in I \rangle$ is \emph{based on $\de$} when 
\[ \tp(\bar{b_s},A + \bar{\mathbf b}_{>s}) = \Av(D,A + \bar{\mathbf b}_{>s})) \]
where $\bar{\mathbf b}_{>s} = \bigcup \{ {\bar b}_t ~:~ t \in I, ~ s <_I t \}$.  
\item For each $\de$, let $\ob(\de) = \ob(\de, A)$ be the set of such $\overline{\mathbf b}$, i.e. the set of all infinite indiscernible sequences based on $\de$ 
$($assuming the monster model $\mathfrak{C}$ is well defined$)$. 
\end{enumerate}
\end{defn}

In Definition $\ref{d:24}$, the elements approach $A$; of course we could have inverted the order.  
Given such an ultrafilter $\de$ and an infinte indiscernible sequence built from it, we may 
naturally ask when a given formula instantiated along this sequence is consistent. 
%We will be interested in consistency and inconsistency of formulas instantiated along these special indiscernible sequences. 

\begin{defn} \label{higher-k}
Let $A \subseteq \mathfrak{C}$ and $\vp = \vp(\overline{x}, \overline{y}) = \vp(\overline{x}, \overline{y}, \overline{c})$ for $\overline{c} \in \mathfrak{C}$. 
\footnote{Usually, $\overline{c}$ is empty, and in any case, we can just incorporate it into the parameters $\overline{y}$.}
\begin{enumerate}
\item  Let $\uf_\vp(A)$ be the set of ultrafilters $\de$ on ${^{\lgn(\bar{y})}A}$ such that \\ if $\overline{\mathbf b} = \langle \overline{b}_s : s \in I \rangle \in \ob(\de)$, then
\[ \{ \vp(\overline{x},\overline{b}_s, \overline{c}) : s \in I \}      \]
is a consistent partial type.
\item For each $k<\omega$, let $\uf_{\vp, k}(A)$ be the set of ultrafilters $\de$ on ${^{\lgn(y)}A}$ such that 
%if $\overline{\mathbf b} = \langle \overline{b}_s : s \in I \rangle$ is an 
%infinite indiscernible sequence based on $\de$ 
if $\overline{\mathbf b} = \langle \overline{b}_s : s \in I \rangle \in \ob(\de)$
and $s_0 <_I \cdots <_I s_{k-1}$, then 
\[ \{ \vp(\overline{x},\overline{b}_{s_\ell}, \overline{c}) : \ell < k  \}      \]
is a consistent partial type. So when $k=\infty$, we may omit it. 
\end{enumerate} 
\end{defn}

%We will aim to compare the behavior of formulas along sequences determined by ultrafilters. Note that given a formula $\vp(\bar{x})$, 
%$\ell(\bar{x}) = m$,  
%in order to apply \ref{higher-k} we need to choose a partition of the variables. 

We arrive at a key definition of the section: higher formulas $(\vp, A, \de)$, triples such that $\vp$ is indeed consistent when instantiated along 
any $\bar{b} \in \ob(\de, A)$. Two subsequent theorems of the section will 
characterize $SOP_2$ and $SOP_3$ in terms of the interaction of these higher formulas. 

\begin{defn}[Higher formulas] Let $\bar{m} = (m_0, m_1)$. Writing $m$ instead of $\bar{m}$ means $m_0 = 1$ and $m = m_1$, 
or that $m_0$ is clear from the context. %, see \ref{d:811}. 
\label{a7}
Let $\HF^{\bar{m}}_k$ be the set of triples $\rho = (\varphi,A,\de)$
   where $\varphi = \varphi(\bar x,\bar y)$ with $\ell(\bar{x}) = m_0$, $\ell(\bar{y}) = m_1$, and no more parameters, 
   $\de$ is an ultrafilter on ${}^{m_1} A$ and
   \[ \de \in \uf_{\varphi,k}(A).\] 
\end{defn}

\begin{defn} In the context of $\ref{a7}$,   
\begin{enumerate}
\item Let $\HF^{\bar{m}} = \HF^{\bar{m}}_\infty = \bigcap_{k} \HF^{\bar{m}}_k$. 

\item We may wish to consider higher formulas over a fixed set $A$, or a using a fixed formula $\vp$, in which case 
our notation will be: 
\begin{enumerate}
\item Given $A$, we may write $\HF^{\bar{m}}_k(A)$, or ``$(\vp, \de) \in \HF^{\bar{m}}_k(A)$''.

\item Let $\HF^{\bar{m}}(A) = \HF^{\bar{m}}_\infty(A) = \bigcap_{k} \HF^{\bar{m}}_k(A)$.

\item Given $\varphi = \vp(\bar{x}, \bar{y})$, with $\ell(\bar{x}) = m_0$ and $\ell(\bar{y}) = m_1$, 
we may write $\HF^\varphi = \HF^{\bar{m}}_\varphi$, $\HF^{\bar{m}}_{\varphi,k}(A)$, etc, where the subscript $\vp$ 
means we restrict to triples whose first element is $\vp$ with the given partition of variables. 

\item Call the elements of $\HF_\vp(A)$ ``higher $\vp$-formulas over $A$.''
\end{enumerate}
\end{enumerate}
\end{defn}

\begin{conv}
In Defintion \ref{a7}$(1)$, we may say ``$\de$ is an ultrafilter \emph{over} $A$'' without mentioning $m_1$ when it is clear from context. 
\end{conv}

We would like to study pairwise consistency or inconsistency of higher formulas as follows. Suppose we are given $(\vp_0, A_0, \de_0)$, $(\vp_1, A_1, \de_1)$, 
$\langle b_{0,s} : s \in I_0 \rangle \in \ob(A_0, \de_0)$ and $\langle b_{0,t} : t \in I_1 \rangle \in \ob(A_1, \de_1)$. If we choose $s \in I_0$ and $t \in I_1$, 
will $\vp_0(\bar{x},\bar{b}_{0,s})$ and $\vp_1(\bar{x},\bar{b}_{1,t})$ be consistent?  What if we choose finitely many instances from each list? 
The specter of $SOP_3$ suggests that we should first fix an interpolation of $I_0$ and $I_1$ into a single linear order $I$
and pay attention to the relative position of the indices $s$ and $t$. 
The notation we now introduce in $\ref{d:k-p-sigma}$-$\ref{d:mixed}$ is one way to handle this (most of the time we use $\sigma = 2$).

\begin{defn} \emph{(Partitions of linear orders, [Sh:950] Definition 1.39 )} \label{d:k-p-sigma}
\\ Let $K_{p,\sigma}$ be the class of triples 
\[ (I, <_I, (P^I_i)_{i<\sigma} ) \]
where $I$ is linearly ordered by $<_I$ and $\langle P^I_i : i < \sigma \rangle$ is a partition of $I$. 
\end{defn}

%With a partition in hand, there is a natural notion of mixed objects; introducing this notation will simplify the 
%definition of inconsistent higher formulas later on. 

\newpage

\begin{defn} %\emph{(Mixed objects, $\ob(\overline{\de}, \overline{A}$))} 
\label{d:mixed} 
Fix $\sigma$ and suppose $\langle (\vp_i, A_i, D_i) : i < \sigma \rangle$ is a sequence of higher formulas. 
Let $\bar{A} = \langle A_i : i < \sigma \rangle$ and let $\bar{D} = \langle D_i : i < \sigma \rangle$.  
Define
\[ \ob(\bar D) = \ob(\bar D,\bar A) \] 
to be the set of $\bar{\mathbf b} = \langle \bar b_s:s \in I\rangle$ such that: 
\begin{enumerate}
\item $I \in K_{p,\sigma}$ 
\item if $s \in P^I_i$ then:
\begin{itemize}
\item $\ell g(\bar b_s) = \mathbf m(D_i)$
\item $\tp(\bar b_s,\bigcup\limits_{j<\sigma} A_j \cup
\bar{\mathbf b}_{>s}) \subseteq \Av(D_i) := \Av(D_i,\mathfrak{C}_\tau)$. 
\end{itemize} 
\end{enumerate} 
We may write $A$ instead of $\bar{A}$ when all the $A_i$ are the same. 
We may write $i = \mathbf{i}(s) = \mathbf{i}(s, I)$ and we may write $I_{\bar{\mathbf b}} = I[\bar{\mathbf b}]$ for $I$. 
\end{defn}

Note that in Definition \ref{d:mixed}, we do not require that $\bar{\mathbf{b}}$ be indiscernible; in fact, 
it may consist of sequences of differing lengths, if the $\mathbf{m}(D_i)$ differ.

\begin{defn}[$n$-inconsistent higher formulas] \label{d:inc}
Assume that for $\ell = 0, 1$, 
\[ \rho_\ell = (\varphi_\ell,A_\ell,D_\ell) \in \HF^{\bar{m_\ell}}_{k_\ell}. \]  
\begin{enumerate}
\item We say $(\varphi_0,A_0,D_0)$ is
   $n$-contradictory to $(\varphi_1,A_1,D_1)$ when: \\ for every $\bar{\mathbf b}
   \in \ob(\langle D_0,D_1\rangle,\langle A_0,A_1\rangle)$ and every $s_0 <
\ldots < s_{2n-1}$ with $s_\ell \in P^I_0{[\bar{\mathbf b}]}$ for
$\ell < n$ and $s_\ell \in P^I_1[\bar{\mathbf b]}$ for $\ell \in
   [n,2n)$,  we have that 
   \[\{\varphi_0(\bar x,\bar b_{s_\ell}):\ell <
   n\} \cup \{\varphi_1(\bar x,\bar b_{s_\ell}):\ell \in [n,2n)\} \]
   is contradictory.

\item In ``$n$-contradictory,'' if $n=1$ we may omit it and writing $n = \infty$ means ``for some $n$".
Of course, ``$n$-consistent'' is the negation. 
\item We say that $(\varphi_0,A_0,D_0)$ and $(\varphi_1,A_1,D_1)$ are
\emph{mutually $n$-contradictory}
when $(\varphi_\ell,A_\ell,D_\ell)$ is $n$-contradictory 
to $(\varphi_{1-\ell},A_{1-\ell},D_{1-\ell})$ for $\ell=0,1$.  On the symmetry of this notion, 
see Lemma \ref{sym-lemma} below. 

\item ``The set of $\{ \rho_i = (\varphi_i,A_i,D_i) : i \in S \}$ are pairwise $n$-contradictory'' will mean that each pair is mutually $n$-contradictory. 
\end{enumerate}
\end{defn}

\begin{disc}
\emph{ }
\begin{enumerate}
\item 
In $\ref{d:inc}(1)$ we could have allowed the choice of elements from the 
two partitions to alternate. 
However, we will see this is immaterial for $NSOP_3$, and moreover is a little less natural when replacing $n$-contradictory higher formulas 
by $1$-contradictory derived formulas arising as $n$-fold conjunctions. 

\item 
This definition enforces an order between the elements $s_i$ in different partitions; 
one could give a different definition requiring only that $s_0 < \dots < s_{n-1}$ and $s_n < \dots < s_{2n-1}$. 
%This would be equivalent to the current definition if the partitions alternate infinitely often.
%
%\item  
%We know by the definition of $\ob$ that instantiating $\vp_\ell$
%along the sub-sequence indexed by $s \in P_\ell$ is consistent, for $\ell = 0, 1$. The definition of contradictory points out when 
%merging these two lists of instances becomes uniformly inconsistent.  
%The point is that for $NSOP_3$ or $NSOP_2$, the picture is nicer. 
\end{enumerate}
\end{disc}

\begin{defn} \label{d:811}
Say that $T$ has \emph{symmetric inconsistency} if, fixing $m = \ell(\bar{x})$, for any\footnote{The notation $\vp_\ell(\bar{x}_{[m]}, \bar{y}_{[m_\ell]})$ 
means that $\ell(\bar{x}) = m$ and $\ell(\bar{y}) = m_\ell$.} $m_1, m_2 <\omega$, 
$\vp_\ell = \vp_\ell(\bar{x}_{[m]}, \bar{y}_{[m_\ell]})$ and $(\vp_\ell, A_\ell, \de_\ell) \in \HF^{m_\ell}$ for $\ell = 1,2$,
we have that $(\vp_1, A_1, \de_1)$ is $\infty$-contradictory to $(\vp_2, A_2, \de_2)$ iff 
 $(\vp_2, A_2, \de_2)$ is $\infty$-contradictory to  $(\vp_1, A_1, \de_1)$.  
\end{defn}

As desired, this definition picks up on the asymmetry of $SOP_3$:

\begin{claim} \label{o:sym}
If $T$ has $SOP_3$, then $1$-inconsistency is not symmetric, i.e. $T$ has symmetric inconsistency in the sense of $\ref{d:811}$.  
\end{claim}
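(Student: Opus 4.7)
The plan is to exhibit, from the $SOP_3$ witnesses, a single set $A$ and ultrafilter $D$ for which both $(\vp, A, D)$ and $(\psi, A, D)$ are higher formulas, yet $(\psi, A, D)$ is $1$-contradictory to $(\vp, A, D)$ while $(\vp, A, D)$ fails to be $\infty$-contradictory to $(\psi, A, D)$. This pair witnesses the failure of symmetric inconsistency in the sense of Definition \ref{d:811}.

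First I would extend the given $SOP_3$-witness $\langle \bar{a}_i : i < \omega \rangle$ by compactness to a long indiscernible sequence $\langle \bar{a}_\alpha : \alpha < \kappa \rangle$ inside $\mathfrak{C}$. Let $A \subseteq \mathfrak{C}$ contain $\{ \bar{a}_\alpha : \alpha < \kappa \}$, and let $D$ be an ultrafilter on ${}^{\ell(\bar{y})}A$ extending the filter generated by the tails $\{ \bar{a}_\alpha : \alpha > \beta \}$ for $\beta < \kappa$. By indiscernibility, every formula $\theta(\bar{y}, \bar{c})$ with $\bar{c} \in A$ is eventually constant along the sequence, so $\Av(D, A)$ is exactly the ``tail'' type of the sequence over $A$, and the required based sequences exist in the monster by saturation.

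Second I would establish the orientation lemma on which everything turns: for any $\bar{\mathbf{b}} \in \ob(\langle D, D \rangle, \langle A, A \rangle)$ indexed by $I \in K_{p,2}$, and any finite increasing $s_1 <_I \cdots <_I s_n$ in $I$, the type $\tp(\bar{b}_{s_1}, \ldots, \bar{b}_{s_n}/A)$ equals $\tp(\bar{a}_{\gamma_1}, \ldots, \bar{a}_{\gamma_n}/A)$ for any choice of $\gamma_n < \cdots < \gamma_1 < \kappa$ with each $\gamma_j$ sufficiently late. The crucial point, proved by induction on $n$ using the based-on condition $\tp(\bar{b}_s/A + \bar{\mathbf{b}}_{>s}) \subseteq \Av(D)$, is that the based-on construction \emph{reverses} direction: a smaller $s \in I$ is realized later and therefore corresponds to a larger index $\gamma < \kappa$. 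Together with clause $(2)$ of $SOP_3$, which supplies finite consistency of $\{\vp(x, \bar{a}_{\gamma_j}) : j \leq n\}$ and of $\{\psi(x, \bar{a}_{\gamma_j}) : j \leq n\}$, this lemma places $(\vp, A, D)$ and $(\psi, A, D)$ in $\HF$.

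With the orientation lemma both directions are immediate. For $s_0 <_I s_1$ with $s_0 \in P_0^I$, $s_1 \in P_1^I$, the lemma identifies $(\bar{b}_{s_1}, \bar{b}_{s_0})$ with some $(\bar{a}_\beta, \bar{a}_\gamma)$ where $\beta < \gamma$; clause $(3)$ of $SOP_3$ then forces $\{\psi(x, \bar{b}_{s_0}), \vp(x, \bar{b}_{s_1})\}$ to be inconsistent, so $(\psi, A, D)$ is $1$-contradictory, hence $\infty$-contradictory, to $(\vp, A, D)$. Conversely, for any $n$, pick $s_0 <_I \cdots <_I s_{2n-1}$ with the first $n$ indices in $P_0^I$ and the last $n$ in $P_1^I$; the lemma identifies this tuple with $(\bar{a}_{\gamma_0}, \ldots, \bar{a}_{\gamma_{2n-1}})$ for some $\gamma_0 > \cdots > \gamma_{2n-1}$, and the prescribed conjunction $\{\vp(x, \bar{b}_{s_\ell}) : \ell < n\} \cup \{\psi(x, \bar{b}_{s_\ell}) : \ell \in [n, 2n)\}$ becomes $\vp$ at the $n$ largest $\gamma$'s together with $\psi$ at the $n$ smallest, which is consistent by clause $(2)$. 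Hence $(\vp, A, D)$ is not $n$-contradictory to $(\psi, A, D)$ for any $n$. The main obstacle is the orientation lemma itself: one must carefully track that the based-on construction realizes elements of $\bar{\mathbf{b}}$ in reverse $I$-order, so that indices along $\bar{\mathbf{b}}$ appear in opposite order to indices along the underlying $SOP_3$-sequence; once that bookkeeping is clean, both conclusions fall out of clauses $(2)$ and $(3)$ of the definition of $SOP_3$.
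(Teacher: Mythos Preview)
Your approach is essentially the same as the paper's: take the tail ultrafilter $D$ on the $SOP_3$-sequence and use the pair $(\vp,A,D)$, $(\psi,A,D)$ as the witnessing higher formulas. The paper's proof is a three-line sketch that simply writes down $A$, $D$, $\rho_1$, $\rho_2$ and points to $I=\omega+\omega^*$; it does not spell out your ``orientation lemma'' (that the based-on construction reverses the $I$-order relative to the underlying sequence), which is exactly the bookkeeping the paper leaves to the reader. Your extension to a $\kappa$-indexed sequence is harmless but unnecessary, since the countable $A=\{\bar a_n:n<\omega\}$ already works.

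One remark: the paper's Definition~\ref{d:sop3} has a typo, as clauses~(2) and~(3) contradict each other as written. You have (correctly) worked with the coherent reading of~(3), namely that the \emph{reversed} pattern $\{\vp(x,\bar a_i),\psi(x,\bar a_j)\}$ with $i<j$ is inconsistent. This is why your assignment of which of $\rho_1,\rho_2$ is $1$-contradictory to the other differs from the paper's sentence; the asymmetry (and hence the claim) holds either way, so this does not affect correctness.
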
 

\begin{proof}
Let the sequence $\langle \bar{a}_n : n < \omega \rangle$ and the formulas $\vp(\bar{x}, \bar{y})$, $\psi(\bar{x}, \bar{y})$ witness $SOP_3$, 
see \ref{d:sop3}.  
Let $A = \{ \bar{a}_n : n < \omega \}$. Let $\de$ be an ultrafilter on ${^{\lgn(\bar{y})}A}$ such that $\{ \bar{a}_n : n > k \} \in \de$ for every $k<\omega$. 
Let $\rho_1 = (\vp, A, \de)$, $\rho_2 = (\psi, A, \de)$. To see that $\rho_2$ is $1$-consistent with $\rho_1$ but $\rho_1$ is not $1$-consistent with 
$\rho_2$, let $I$ be the linear order $\omega + \omega^*$. Let $(P_0, P_1)$ be any partition of $I$ into 
two infinite sets, and let $\langle \bar{b}_s : s \in I \rangle$ be as in \ref{d:mixed} for $((\de, \de), (A, A))$. 
\end{proof}

%
%First, since $T$ has $SOP_3$, we can find an indiscernible sequence $\langle \overline{a}_i : i < \omega \rangle$ and formulas $\vp(\overline{x}, \overline{y})$, 
%$\psi(\overline{x}, \overline{y})$ satisfying \ref{d:sop3}. 
%
% such that:
%\begin{enumerate}
%\item $\{ \vp(\overline{x}, \overline{y}), \psi(\overline{x}, \overline{y}) \}$ is contradictory.
%\item for each $k<\omega$, the following is a consistent partial type:
%\[  \{ \psi(\overline{x}, \overline{a}_j) : j \leq k \} \cup \{ \psi(\overline{x}, \overline{a}_i) : i > k \} \]
%\item for $j<i$, the set $\{ \vp(\overline{x}, \overline{a}_i), \psi(\overline{x}, \overline{a}_j) \}$ is contradictory.

\begin{lemma}[Symmetry lemma] \label{sym-lemma}
\emph{ }
%\begin{enumerate}
%\item 
For $T$ complete the following are equivalent.
\begin{enumerate}
\item $T$ is $SOP_3$.
\item $T$ has symmetric inconsistency. 
\end{enumerate}
%\end{enumerate}
\end{lemma}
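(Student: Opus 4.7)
My plan is to prove the equivalence in two directions by analyzing the ultrafilter-based configurations of higher formulas and their sequences in $\ob(\bar D, \bar A)$ from Definition \ref{d:mixed}, together with the combinatorial content of Definitions \ref{d:inc} and \ref{d:811}.

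For (1) $\Rightarrow$ (2), I assume $T$ has $SOP_3$, witnessed by formulas $\varphi_*, \psi_*$ and an indiscernible sequence $\langle \bar a_n : n < \omega \rangle$ satisfying Definition \ref{d:sop3}. I first convert this concrete witness into higher formulas by producing ultrafilters on the parameter set via Definition \ref{d:24} and passing to the associated elements of $\HF$. The bulk of the argument is then to show that the $\infty$-contradictory relation on \emph{arbitrary} pairs of higher formulas $\rho_1 = (\varphi_1, A_1, D_1), \rho_2 = (\varphi_2, A_2, D_2)$ is forced to be symmetric: given any witness of $\infty$-contradictoriness of $\rho_1$ to $\rho_2$, I use compactness together with the $SOP_3$-configuration to build a reversing realization in $\ob(\langle D_1, D_2\rangle, \langle A_1, A_2\rangle)$ along an order $I \in K_{p,2}$ that swaps the partition classes $P_0^I$ and $P_1^I$, and then transport the contradictory type to obtain the reverse $\infty$-contradictoriness of $\rho_2$ to $\rho_1$.

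For (2) $\Rightarrow$ (1), I assume $T$ has symmetric inconsistency and produce an $SOP_3$-witness. Starting from a nontrivial higher formula $(\varphi, A, D) \in \HF$ and a sequence $\bar{\mathbf b} \in \ob(D, A)$, I apply symmetric inconsistency to the pair $((\varphi, A, D), (\varphi', A', D'))$ for appropriately chosen $(\varphi', A', D')$ in order to force both clauses of Definition \ref{d:sop3}: consistency of the type $\{\psi(\bar x, \bar a_j) : j \leq k\} \cup \{\varphi(\bar x, \bar a_i) : i > k\}$ for each $k$, and pairwise $1$-contradictoriness of $\{\varphi(\bar x, \bar a_i), \psi(\bar x, \bar a_j)\}$ for $j < i$. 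The required second formula $\psi$ is to be extracted as a suitable Boolean combination or definable relativization of $\varphi$ emerging from the partition-of-$I$ structure in the $\infty$-contradictory pairs, and the sequence $\langle \bar a_n : n < \omega \rangle$ is obtained by thinning a sequence in $\ob(\bar D, \bar A)$ to indiscernibility via Ramsey plus compactness.

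The main obstacle is the second direction: distilling a genuine $SOP_3$-witness with a distinct second formula $\psi$ from the abstract symmetric-inconsistency hypothesis requires identifying $\psi$ inside the definable closure of $\varphi$ and ensuring that the consistency/inconsistency pattern along the extracted indiscernible sequence matches exactly the asymmetric left-right pattern of Definition \ref{d:sop3}. I expect this to rely on a careful compactness argument on $\ob(\bar D, \bar A)$, combined with a precise analysis of how each partition class $P_i^I$ contributes to the two clauses (2) and (3) of Definition \ref{d:sop3}.
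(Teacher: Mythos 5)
There is a genuine gap, and it starts with which statement each direction is actually supposed to prove. Despite the lemma's wording, the content of ``symmetric inconsistency'' here (compare Claim \ref{o:sym} and the introduction, which says $NSOP_3$ is characterized by symmetry of inconsistency) is that $SOP_3$ corresponds to the \emph{failure} of symmetry: from an $SOP_3$ witness one manufactures a concrete asymmetric pair, and conversely any asymmetric pair yields $SOP_3$. Your plan for (1)$\Rightarrow$(2) --- showing that under $SOP_3$ every witness of $\infty$-contradictoriness of $\rho_1$ to $\rho_2$ can be ``reversed'' by swapping the partition classes of $I$ --- is aimed at a statement that is false and is in any case not what is needed: Claim \ref{o:sym} builds, from the $SOP_3$ data $\vp,\psi,\langle\bar a_n\rangle$ and a tail ultrafilter $\de$ on the parameter set, a pair $(\vp,A,\de)$, $(\psi,A,\de)$ for which contradictoriness holds in one order and fails in the other. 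The swap cannot work in general because Definition \ref{d:inc} is order-sensitive (the $\vp_0$-instances must occupy the \emph{earlier} indices and the $\vp_1$-instances the \emph{later} ones, and the elements of a sequence in $\ob(\bar\de,\bar A)$ realize average types over the later elements), so transporting a contradictory type across the swap is exactly what $SOP_3$ obstructs. The correct easy direction is just the explicit construction of one asymmetric pair.

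For the substantive direction (asymmetry yields $SOP_3$), your proposal is missing the key mechanism and you yourself flag the crux as unresolved. The paper's argument: given $(\vp_0,A_0,\de_0)$ and $(\vp_1,A_1,\de_1)$, form the product ultrafilter $\de=\de_0\times\de_1$ on ${}^{m_0+m_1}(A_0\cup A_1)$ and take a single indiscernible sequence $\langle \bar a_{0,s}{}^\smallfrown\bar a_{1,s} : s\in I\rangle$ based on $\de$ (no Ramsey-plus-compactness thinning is needed; sequences based on an ultrafilter are indiscernible by construction). Pad the variables, setting $\vp^+_0(x,\bar y_0,\bar y_1)=\vp_0(x,\bar y_0)$ and $\vp^+_1(x,\bar y_0,\bar y_1)=\vp_1(x,\bar y_1)$, so both formulas are read off the same tuples; then $n$-contradictoriness of each ordered pair is equivalent to inconsistency of the corresponding ``first $n$ versus last $n$'' pattern along this one sequence. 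If the two orders disagree for some $n$, the $n$-fold conjunctions of $\vp^+_0$ and of $\vp^+_1$, instantiated along the indiscernible sequence, give precisely the two-formula configuration of Definition \ref{d:sop3}. In particular the second $SOP_3$ formula is built from the \emph{other} member of the asymmetric pair by padding and conjunction, not ``extracted as a Boolean combination or definable relativization of $\vp$'' from a single formula; as written, your sketch offers no route to the required pair $(\vp,\psi)$ and its consistency/inconsistency pattern.
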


\begin{proof}
The direction (1) implies (2) is given by Claim \ref{o:sym}. 

For the other direction, we will show how an instance of symmetric inconsistency gives rise to $SOP_3$.  
For notational simplicity, we use $m=\ell(\bar{x}) = 1$, since for $SOP_3$ the arity does not matter. 
%For simplicity, let $\vp_\ell = \vp_\ell(x,\overline{y}_\ell)$, i.e. for transparency we let $\bar{x} = x$.  
Suppose we are given $m_0, m_1<\omega$ and $(\vp_\ell, A_\ell, \de_\ell) \in \HF^{m_\ell}$ for $\ell = 0, 1$. 
  
Let %\footnote{Recall that $A_0, A_1$ are sets, and the ultrafilters $D_0$, $D_1$ are ultrafilters on tuples of elements of $A_0$, $A_1$ respectively.} 
$A = A_0 \cup A_1$, $m = m_0 + m_1$ and 
\[ D = D_0 \times D_1 = \{X \subseteq {^m A} ~:~ \{\bar a_0 \in {}^{m_0}A_0:\{\bar a_1 \in 
{}^{m_1}(A_1): {\bar a_0}{~ ^\smallfrown} \bar a_1 \in X\} \in D_1\} \in
D_0\}. \] 
By construction, $D$ is an ultrafilter on ${}^m A$. Suppose $I$ is a linear order and
%\begin{equation}
\[ \bar{\mathbf{a}} = \langle \bar{a}_s = {{{\overline{a}_{0,s} }}} ^\smallfrown\bar a_{1,s}:s \in I\rangle \]
%\end{equation}
is an indiscernible sequence based on $D$, with $\ell(a_{\ell,s}) = m_\ell$,  
thus $\langle \overline{a}_{\ell,s}:s \in I\rangle$ is also an indiscernible sequence based on $D_\ell$ for $\ell = 0, 1$.
For any $h: I \rightarrow \{ 0, 1 \}$, we may consider the partition given by 
%\begin{equation}
\[ I_h = (I, <_I, (P^h_i)_{i < 2}) \mbox{ where } P^h_i =\{s:h(s)=i\} \]
%\end{equation}
Then $I_h \in K_{p, \{ 0, 1\}}$ and $\bar{\mathbf a}_h := \langle a_{h(s),s}:s \in I\rangle$ is $I_h$-indiscernible based on $(D_0, D_1)$. 

\begin{figure}[ht!]
  \centering
    \includegraphics[width=0.5\textwidth]{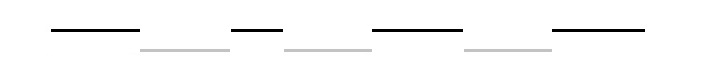}
\caption{\footnotesize{A sample partition of $I$. $I$ indexes a sequence of $m_0+m_1$-tuples which is indiscernible based on $D$. 
Thus, restricting to the first $m_0$ elements 
of tuples with indices in the black regions gives an indiscernible sequence based on $D_0$, whereas restricting to the last $m_1$ elements 
of tuples with indices in the grey regions gives an indiscernible sequence based on $D_1$.}}
\end{figure}
Moreover, if $\langle s_\alpha:\alpha < \omega + \omega\rangle$ is $<_I$-increasing, then:
\begin{enumerate}
\item[(1)$_n$] the following are equivalent:
\begin{enumerate}
\item[(a)] $\{\varphi_0(x,\bar{a}_{0,s_\alpha}) : \alpha < n\} \cup  \{\varphi_1(x,\bar{a}_{1,s_{\omega + \alpha}}) : \alpha < n\}$ is contradictory
\item[(b)] $(\varphi_0,A_0,D_0)$ is $n$-contradictory to $(\varphi_1,A_1,D_1)$ 
\end{enumerate}

\begin{figure}[ht!]
  \centering
    \includegraphics[width=0.5\textwidth]{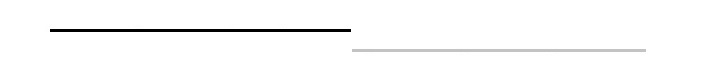}
\caption{\footnotesize{Let the image indicate that we instantiate $\vp_0$ $n$ times along the $D_0$-indiscernible sequence indexed by the 
black region and $\vp_1$ $n$ times along the $D_1$-indiscernible sequence indexed by the grey region.}}
\end{figure}

\item[(2)$_n$] the following are equivalent:
\begin{enumerate}
\item[(d)] $\{\varphi_1(x, \bar{a}_{1,s_\alpha}) : \alpha < n\} \cup  \{\varphi_0(x, \bar{a}_{0,s_{\omega + \alpha}}) : \alpha < n\}$ is contradictory
\item[(e)] $(\varphi_1,A_1,D_1)$ is $n$-contradictory to $(\varphi_0,A_0,D_0)$ 
\end{enumerate}

\begin{figure}[ht!]
  \centering
    \includegraphics[width=0.5\textwidth]{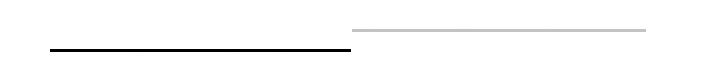}
\caption{\footnotesize{Let the image indicate that we instantiate $\vp_1$ $n$ times along the $D_1$-indiscernible sequence indexed by the 
grey region and $\vp_0$ $n$ times along the $D_0$-indiscernible sequence indexed by the black region.   Then for any given $n$, 
a disparity in consistency between Figures 2 and 3 leads, by taking conjunctions, to an instance of $SOP_3$.}}
\end{figure}

\end{enumerate}

Let $\bar{y} = {\bar{y}_0}^\smallfrown{\bar{y}_1}$, $\varphi^+_0 = \varphi^+_0(x,\bar{y}) = \varphi^+_0(x, \bar{y}_0, \bar{y}_1) =
\varphi_0(x, \bar{y}_0)$, and $\varphi^+_1 = \varphi^+_1(x,\bar{y}) = \varphi^+_1(x, \bar{y}_0, \bar{y}_1) =
\varphi_1(x, \bar{y}_1)$, i.e. these are the given formulas formally considered as having more variables. 
Then for each $n$, to (1)$_n$ above, we may add the equivalent condition: 
\begin{enumerate}
\item[(c)$_n$]$\{\varphi^+_0(x,\bar a_{s_\alpha}) : \alpha < n\} \cup \{\varphi^+_1(x,\bar a_{s_{\omega +\alpha}}) : \alpha < n\}$ is contradictory 
\end{enumerate}
Likewise, for each $n$, to (2)$_n$ above, we may add the equivalent condition: 
\begin{enumerate}
\item[(f)$_n$]$\{\varphi^+_1(x,\bar a_{s_\alpha}) : \alpha < n\} \cup \{\varphi^+_0(x,\bar a_{s_{\omega +\alpha}}) : \alpha < n\}$ is contradictory 
\end{enumerate}

Now if it is not the case that for all $n$ (c)$_n$ iff for all $n$ (f)$_n$, we have a witness to $SOP_3$ for $T$ (given by the conjunctions of 
$n$ copies of $\vp_0$ and of $\vp_1$, respectively). This completes the proof. 
\end{proof}

As a corollary of the proof of Lemma \ref{sym-lemma}, we have: 

\begin{cor}
Assume $T$ is $NSOP_3$. If $p_\ell = (\vp_\ell, A_\ell, \de_\ell)$ for $\ell = 0, 1$ and $p_0$ is $n$-consistent with $p_1$ for every $n$ $($i.e., not $\infty$-contradictory$)$, 
then %all orders are ok, i.e 
if $I \in K^P_\sigma$, $\sigma = 2$ and we have $\langle \bar{b}_s : s \in I \rangle$ as usual, then 
$\{ \vp_\ell(x,\bar{b}_s) : s \in P^I_\ell, \ell < 2 \}$ is consistent. 
\end{cor}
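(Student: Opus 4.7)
The plan is to reduce to finite consistency via compactness and then transfer the question to the product-sequence machinery of the proof of Lemma \ref{sym-lemma}. By compactness, it suffices to prove that every finite subset of $\{\vp_\ell(x,\bar b_s) : s \in P^I_\ell,\ \ell < 2\}$ is consistent. Any such subset is determined by a finite tuple $s_1 <_I \cdots <_I s_k$ together with its pattern $\vec\epsilon = (\ii(s_1), \ldots, \ii(s_k)) \in \{0,1\}^k$. Since $\bar{\mathbf b} \in \ob(\bar D, \bar A)$, the averaging conditions of Definition \ref{d:mixed} determine the type of $(\bar b_{s_1}, \ldots, \bar b_{s_k})$ over $A_0 \cup A_1$ purely from $\vec\epsilon$ and the ultrafilters $D_0, D_1$. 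Forming as in the proof of Lemma \ref{sym-lemma} the product ultrafilter $D = D_0 \times D_1$ and a $D$-indiscernible sequence $\langle \bar a_s = \bar a_{0,s}^\smallfrown \bar a_{1,s} : s \in J\rangle$ with the auxiliary formulas $\vp_\ell^+$, the consistency of the finite subset becomes equivalent to the consistency of $\{\vp^+_{\epsilon_l}(x,\bar a_{t_l}) : l \leq k\}$ for some (equivalently any) $t_1 <_J \cdots <_J t_k$; and by the $D$-indiscernibility of $\langle \bar a_s\rangle$, this depends only on $\vec\epsilon$.

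Next I would read off the known consistencies. The hypothesis that $p_0$ is not $\infty$-contradictory to $p_1$ is, via the equivalence $(1)_n$ from the proof of Lemma \ref{sym-lemma}, precisely the statement that every monotone pattern $(0^{n_0} 1^{n_1})$ is consistent. Invoking Lemma \ref{sym-lemma} itself (using the $NSOP_3$ assumption), $p_1$ is also not $\infty$-contradictory to $p_0$, so by $(2)_n$ every monotone pattern $(1^{n_1} 0^{n_0})$ is consistent. The pure patterns $0^k$ and $1^k$ are consistent because $(\vp_\ell, A_\ell, D_\ell) \in \HF^{m_\ell}$ by hypothesis.

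The main obstacle, and the last step, is to extend these consistencies to arbitrary non-monotone interleaved patterns. The plan is a contradiction argument in the spirit of the proof of the symmetry lemma: suppose some $\vec\epsilon$ were inconsistent and choose such $\vec\epsilon$ of minimal length, so that every proper sub-pattern is consistent and $\vec\epsilon$ is non-monotone. Paralleling the construction of the conjunctions $\Psi_n, \Phi_n$ in the proof of Lemma \ref{sym-lemma}, I would build formulas $\vp^\star, \psi^\star$ as conjunctions of $\vp_0^+$ and $\vp_1^+$ evaluated at selected coordinates of consecutive blocks of $\langle \bar a_s\rangle$, and take $\langle \bar c_n\rangle$ to be the resulting indiscernible sequence of blocks, in such a way that: the consistency of the monotone patterns furnishes clause $(2)$ of $SOP_3$; and the inconsistency of $\vec\epsilon$, transported across the blocks by indiscernibility, furnishes clauses $(1)$ and $(3)$. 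This contradicts $NSOP_3$, completing the proof. The principal difficulty is to choose the block structure and the formulas $\vp^\star, \psi^\star$ so that the specific asymmetry produced by the bad mixed pattern is aligned with the asymmetric consistency/contradictoriness schema required by $SOP_3$.
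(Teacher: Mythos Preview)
The paper provides no proof of this corollary at all; it is simply stated as ``a corollary of the proof of Lemma \ref{sym-lemma}'' and left to the reader. Your reductions --- compactness to finite patterns, the observation that the type of a finite tuple from $\bar{\mathbf b}\in\ob(\bar D,\bar A)$ is determined by its colour pattern, the transfer to the product ultrafilter $D=D_0\times D_1$ and a $D$-indiscernible sequence, and the consistency of monotone blocks $0^{n_0}1^{n_1}$ and $1^{n_1}0^{n_0}$ via the hypothesis together with Lemma \ref{sym-lemma} --- are exactly the ingredients the paper's proof of \ref{sym-lemma} makes available. So through that point your write-up is a faithful and somewhat more detailed unpacking of what the paper intends.

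The genuine issue is your last step. You correctly isolate the gap: nothing in the proof of \ref{sym-lemma} directly handles an \emph{arbitrary} interleaved pattern $\vec\epsilon$, only the two monotone blocks. Your plan --- take a minimal inconsistent $\vec\epsilon$, group the $D$-indiscernible sequence into blocks, and manufacture $\vp^\star,\psi^\star$ as conjunctions so that the inconsistency of $\vec\epsilon$ supplies clauses (1) and (3) of $SOP_3$ while the monotone consistencies supply clause (2) --- is the natural thing to try, but as stated it is a hope rather than an argument. The difficulty you flag is real: for a pattern with two or more colour changes it is not obvious how to choose the block length and the assignment of $\vp_0^+,\vp_1^+$-conjuncts so that the three $SOP_3$ clauses line up simultaneously. (For instance, a naive attempt with $\theta=\vp_0^+\wedge\vp_1^+$ fails clause (1) because $\theta$ is itself satisfiable along the sequence.) The paper does not fill this gap either, so your proposal is no less complete than the paper's own treatment; but if you want a self-contained proof you will need to actually carry out the combinatorics of this last step rather than gesture at it.
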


We now work towards Theorem \ref{a13}, using higher formulas to characterize $NSOP_2$. 
First, we show that having $SOP_2$ means many pairwise $1$-contradictory higher formulas. Recall that: 
%$\Ded^+(\lambda)$ is the smallest cardinal $\kappa$ such that no tree with $\lambda$ nodes has 
%$\geq \kappa$ branches. 

\begin{defn}
\label{a23} 
$\Ded^+(\lambda) =  \sup\{|\lim(\cT)|^+:\cT \subseteq {}^{\lambda >}2$ is nonempty, closed under initial segments and has no
$\triangleleft$-maximal members and has cardinality $\le \lambda \}$. 
\end{defn}

\begin{claim} \label{c:converse}
Assume $\varphi(\bar x,\bar y)$ has $\SOP_2$ in $T$ and $\lambda < \mu
< \Ded(\lambda)$.  Then there is $A \subseteq \gC_T$ of cardinality
$\lambda$ and $D_\alpha \in \uf_\varphi(A)$ for $\alpha < \mu$ such
that $\langle (\varphi, D_\alpha):\alpha < \mu)$ are pairwise 1-contradictory.
\end{claim}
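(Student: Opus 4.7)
The plan is to construct ultrafilters on a set of $SOP_2$-parameters indexed by a rich tree, one ultrafilter per branch. Since $\lambda<\mu<\Ded(\lambda)$, Definition~\ref{a23} lets us fix a tree $\cT\subseteq{}^{\lambda>}2$ that is closed under initial segments, has no $\tlf$-maximal member, satisfies $|\cT|\le\lambda$, and has $|\lim(\cT)|\ge\mu$. By compactness applied to the $SOP_2$ property of $\vp$, we find parameters $\langle \bar a_\eta:\eta\in\cT\rangle$ in $\gC_T$ such that for every $\rho\in\lim(\cT)$ the type $\{\vp(\bar x,\bar a_{\rho\rstr\alpha}):\rho\rstr\alpha\in\cT\}$ is consistent, while for every pair of $\tlf$-incomparable $\eta,\nu\in\cT$ the pair $\{\vp(\bar x,\bar a_\eta),\vp(\bar x,\bar a_\nu)\}$ is contradictory. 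Let $A=\{\bar a_\eta:\eta\in\cT\}$, so $|A|\le\lambda$. For each $\rho\in\lim(\cT)$ the sets $B^\alpha_\rho:=\{\bar a_{\rho\rstr\beta}:\rho\rstr\beta\in\cT,\ \beta\ge\alpha\}$ are nonempty and decrease with $\alpha$; extend the filter they generate to an ultrafilter $D_\rho$ on ${}^{\ell(\bar y)}A$.

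To verify $D_\rho\in\uf_\vp(A)$, take $\bar{\mathbf b}=\langle \bar b_s:s\in I\rangle\in\ob(D_\rho)$. The consistency of $\{\vp(\bar x,\bar b_s):s\in I\}$ reduces to checking, for each $k$ and each $s_0<_I\cdots<_I s_{k-1}$, that $\{\vp(\bar x,\bar b_{s_j}):j<k\}$ is consistent. Writing $\chi_k(\bar y_0,\ldots,\bar y_{k-1}):=\exists\bar x\bigwedge_{j<k}\vp(\bar x,\bar y_j)$ and iterating the average-type characterization over the reverse-$<_I$ ordering, $\models\chi_k(\bar b_{s_0},\ldots,\bar b_{s_{k-1}})$ reduces to the $k$-fold product of $D_\rho$ concentrating on the set of $k$-tuples in $A^k$ on which $\chi_k$ holds. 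Since that product concentrates on $(B^\alpha_\rho)^k$ and on any such tuple $\chi_k$ is exactly the finite-consistency statement of $k$ instances of $\vp$ along the branch $\rho$, we are done.

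The heart of the proof is pairwise $1$-contradictoriness. Fix distinct $\rho,\rho'\in\lim(\cT)$ and let $\alpha_0$ be their branching level, so $\rho\rstr\alpha$ and $\rho'\rstr\alpha'$ are $\tlf$-incomparable in $\cT$ whenever $\alpha,\alpha'\ge\alpha_0$. Take $\bar{\mathbf b}\in\ob(\langle D_\rho,D_{\rho'}\rangle,\langle A,A\rangle)$ indexed by some $I\in K_{p,2}$, pick $s_0<_I s_1$ with $s_\ell\in P^I_\ell$, and set $\theta(\bar y_0,\bar y_1):=\neg\exists\bar x(\vp(\bar x,\bar y_0)\wedge\vp(\bar x,\bar y_1))$. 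Since $\bar b_{s_0}$ realizes $\Av(D_\rho)$ over $A\cup\{\bar b_{s_1}\}$ (note $\bar b_{s_1}\in\bar{\mathbf b}_{>s_0}$), $\models\theta(\bar b_{s_0},\bar b_{s_1})$ is equivalent to $X:=\{\bar a\in A:\models\theta(\bar a,\bar b_{s_1})\}\in D_\rho$; it suffices to show $X\supseteq B^{\alpha_0}_\rho$. Fixing $\alpha\ge\alpha_0$ and applying the average property once more to $\bar b_{s_1}$, which realizes $\Av(D_{\rho'})$ over $A$ and hence over $\bar a_{\rho\rstr\alpha}\in A$, we get $\models\theta(\bar a_{\rho\rstr\alpha},\bar b_{s_1})$ iff $\{\bar a'\in A:\{\vp(\bar x,\bar a_{\rho\rstr\alpha}),\vp(\bar x,\bar a')\}\text{ is inconsistent}\}\in D_{\rho'}$, and this last set contains $B^{\alpha_0}_{\rho'}$ by the $SOP_2$ inconsistency clause combined with the choice of $\alpha_0$. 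The reverse contradiction (swapping the roles of $\rho$ and $\rho'$) is identical.

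The one delicate point is the direction convention in Definition~\ref{d:24}: elements $\bar b_s$ with smaller $<_I$ are constructed \emph{later} in the average-type induction, so $s_0<_I s_1$ means $\bar b_{s_1}$ is already available as a parameter when $\bar b_{s_0}$ is chosen. Tracking this is what lets the nested application of the average property in the last paragraph go through cleanly.
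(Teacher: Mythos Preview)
Your proof is correct and follows essentially the same construction as the paper: pick a tree $\cT$ witnessing $\mu<\Ded(\lambda)$, index $SOP_2$-parameters by its nodes, and for each branch take an ultrafilter concentrating on the tails of that branch. The paper's proof is terser---after defining the ultrafilters it simply asserts ``Clearly these ultrafilters are as desired''---whereas you spell out the verifications that $D_\rho\in\uf_\vp(A)$ and that distinct branches give $1$-contradictory higher formulas by iteratively unwinding the average-type definition; this added detail is sound and your attention to the direction convention in Definition~\ref{d:24} is exactly what makes the nested unwinding work.
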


\begin{proof}{\label{a21}}
Let the tree $\cT$ witness that $\mu < \Ded(\lambda)$ and let $\nu_\alpha \in
\lim(\cT)$ (i.e. the ``leaves'') for $\alpha < \mu$ be pairwise distinct.  Let $\langle \bar
a_\eta:\eta \in \cT\rangle$ be such that:
   
\begin{enumerate}
\item[$(a)$] $(\varphi(\bar x,\bar a_\eta),\varphi(\bar x,\bar
  a_\nu)$ are contradictory when $\eta \perp \nu$ are from $\cT$
 
\item[$(b)$] $\{\varphi(x,\bar a_{\eta \rstr \alpha}):\alpha \le \ell
  g(\eta)\}$ is a consistent partial type for $\eta \in \cT$.
\end{enumerate}
   
Let $A = \cup\{\bar a_\eta:\eta \in \cT\}$ and for $\alpha <\mu$ let
$D_\alpha$ be an ultrafilter on ${}^{\ell g(\bar y)}A$ concentrating on the  
branch $\nu_\alpha$, i.e. such that:
\[ \mbox{if } \beta < \ell g(\nu_\alpha),\alpha < \mu \mbox{ then }
  \{\bar a_\rho:\nu_\alpha \rstr \beta \trianglelefteq \rho
  \triangleleft \nu_\alpha\} \in D_\alpha \] 
Clearly these ultrafilters are as desired. 
\end{proof}

We will need notation for finitary approximations to $SOP_2$-trees. 

\begin{defn} \label{nsop2n}
We say that $(T,\varphi)$ has $\NSOP_{2,n}$ when
there are no $\bar b_\eta \in {}^{\ell g(\bar y)}\gC$ for $\eta \in {}^{n>}2$ such that
\begin{enumerate}
\item $\eta \perp \nu \Rightarrow \varphi(\bar x,\bar
  b_\eta),\varphi(\bar x,\bar b_\nu)$ are incompatible
\item  for $\eta \in {}^n 2,\{\varphi(\bar x,\bar
  b_{\eta \rstr \ell}):\ell < n\}$ is a type.
\end{enumerate}
\end{defn}

\begin{fact} \label{approx}
$(T, \vp) \models NSOP_2$ iff $\bigvee_n \left( ~(T,\vp) \models NSOP_{2,n}  \right) $.
\end{fact}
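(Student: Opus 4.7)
The plan is to prove the equivalent form obtained by contraposition, namely that $\vp$ has $\SOP_2$ in $T$ if and only if $\vp$ has $\SOP_{2,n}$ for every $n < \omega$. This is a standard compactness equivalence, matching the usual passage between an infinite combinatorial configuration and its finite approximations.

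The forward direction is immediate: if $\langle \bar a_\eta : \eta \in {}^{\omega >} 2 \rangle$ witnesses $\SOP_2$ for $\vp$ in $\gC_T$, then for any $n < \omega$ the restriction $\langle \bar a_\eta : \eta \in {}^{n >} 2 \rangle$ witnesses $\SOP_{2,n}$, since the contradictory and consistent requirements from Definition \ref{nsop2n} are inherited from Definition \ref{d:sop2}.

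For the backward direction, I would argue by compactness. Introduce a set of fresh constant symbols $\{ \bar c_\eta : \eta \in {}^{\omega >} 2 \}$ with $\ell(\bar c_\eta) = \ell(\bar y)$, and let $\Sigma$ be the theory in this expanded language consisting of $T$ together with:
\begin{itemize}
\item[(i)] $\forall \bar x \, \neg\bigl( \vp(\bar x, \bar c_\eta) \land \vp(\bar x, \bar c_\nu)\bigr)$ for all $\eta \perp \nu$ in ${}^{\omega >} 2$, and
\item[(ii)] $\exists \bar x \bigwedge_{\ell \leq \lh(\eta)} \vp(\bar x, \bar c_{\eta \rstr \ell})$ for each $\eta \in {}^{\omega >} 2$.
\end{itemize}
Any finite fragment $\Sigma_0 \subseteq \Sigma$ mentions only finitely many constants $\bar c_\eta$, and so all these $\eta$ lie in ${}^{n >} 2$ for some $n$ large enough. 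By the assumption that $\vp$ has $\SOP_{2,n}$, witnesses $\langle \bar b_\eta : \eta \in {}^{n >} 2 \rangle$ exist in $\gC_T$; interpreting $\bar c_\eta$ as $\bar b_\eta$ (and choosing arbitrary interpretations for the remaining constants) gives a model of $\Sigma_0$ expanding $\gC_T$. Hence $\Sigma$ is finitely satisfiable and, by compactness, has a model $N$.

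Finally, the reduct $N \rstr \tau(T)$ is a model of $T$, so embeds elementarily in $\gC_T$; let $\bar a_\eta$ be the image of $\bar c_\eta^N$ under this embedding. Clauses (i) and (ii) guarantee the incompatibility condition and the consistency of every finite initial segment along each branch of ${}^{\omega} 2$. A further application of compactness in $\gC_T$ promotes the latter to consistency of each full branch $\{ \vp(\bar x, \bar a_{\eta \rstr \ell}) : \ell < \omega \}$, yielding $\SOP_2$. There is no genuine obstacle here; the argument is entirely routine once the auxiliary theory $\Sigma$ is written down correctly.
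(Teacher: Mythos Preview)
Your argument is correct; this is the standard compactness proof of the equivalence. The paper itself offers no proof at all --- the statement is recorded as a \emph{Fact} and immediately used in the next theorem --- so there is nothing to compare against beyond noting that you have supplied the routine verification the authors chose to omit.
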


We now arrive at the second theorem of the section, which shows how from many pairwise contradictory higher formulas 
we may build an $SOP_2$-tree, complementing \ref{c:converse}. %, i.e. an inverse to \ref{c:converse}. 
Recall that by the Symmetry Lemma \ref{sym-lemma} above, as $NSOP_2$ implies $NSOP_3$, 
being contradictory is a symmetric notion.  

\begin{theorem}  
\label{a13}
For a theory $T$ the following are equivalent:
\begin{enumerate}
\item For every infinite $A$ and formula $\vp$, there are no more than $|A|$ pairwise 1-contradictory higher $\vp$-formulas over $A$.  
%For any infinite $A$ and $\varphi =\varphi(\bar x,\bar y_m) \in \HF^m_\varphi(A)$ we cannot find $|A|^+$ pairwise 1-contradictory members.
\item $T$ has $NSOP_2$. 
\end{enumerate}
\end{theorem}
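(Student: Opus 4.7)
The proof divides into two directions of quite different character.

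\textbf{Direction $(1) \Rightarrow (2)$.} I would argue by contrapositive using Claim \ref{c:converse}. Suppose $T$ has $SOP_2$, witnessed by a formula $\vp$. Fix any infinite cardinal $\lambda$ and set $\mu = \lambda^+$. Since $\lambda < \lambda^+ \leq \Ded(\lambda)$, Claim \ref{c:converse} produces a set $A$ of cardinality $\lambda$ and a sequence $\langle (\vp, A, D_\alpha) : \alpha < \mu \rangle$ of pairwise $1$-contradictory higher $\vp$-formulas over $A$. Since $\mu > |A|$, this violates (1); hence (1) implies $T$ is $NSOP_2$.

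\textbf{Direction $(2) \Rightarrow (1)$.} I would again argue the contrapositive: given an infinite $A$, a formula $\vp$, and a family $\{(\vp, A, D_i) : i < |A|^+\}$ of pairwise $1$-contradictory higher $\vp$-formulas over $A$, produce a witness that $(T, \vp)$ has $SOP_2$. By Fact \ref{approx}, it suffices to construct, for each $n < \omega$, parameters $\langle \bar a_\eta : \eta \in {}^{n>}2 \rangle$ witnessing the failure of $NSOP_{2,n+1}$ for $\vp$; a straightforward compactness argument then assembles these finitary trees into a full $\omega$-depth $SOP_2$ witness.

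The core construction is inductive. At each node $\eta \in {}^{n>}2$ I would choose an ultrafilter $D_{i(\eta)}$ from the given family and extract a parameter $\bar a_\eta$ from an indiscernible sequence based on an averaging extension of $D_{i(\eta)}$ over the parameters chosen along the path to $\eta$. Two properties must be maintained. Branch consistency---that $\{\vp(\bar x, \bar a_{\nu \rstr k}) : k \leq n\}$ is consistent for each $\nu \in {}^n 2$---is built in by the definition of $\uf_\vp$: successive elements of an indiscernible sequence based on a higher $\vp$-formula yield a consistent type. Sibling inconsistency between $\bar a_{\eta^\smallfrown 0}$ and $\bar a_{\eta^\smallfrown 1}$ follows from the pairwise $1$-contradiction of the corresponding ultrafilters, once one observes that the interleaved $\ob$-sequences of Definition \ref{d:mixed} are realizable and that $1$-contradiction is preserved under averaging extensions.

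The main obstacle will be the bookkeeping at each inductive step: one must pick the children of $\eta$ so that each inherits the consistency of the entire path while still being pairwise $1$-contradictory with its sibling. The hypothesis $|A|^+$ is essential here---at each finite stage only finitely many path-parameters have been added, so by a simple cardinality count there remain more than $|A|$ ultrafilters whose averaging extensions over the path are still available and pairwise $1$-contradictory. The key technical check, and the heart of the argument, is that both properties---consistency of $\vp$ along a branch and $1$-contradiction between siblings---are preserved under the averaging extensions used to absorb path-parameters into the base set, so that the recursion does not stall.
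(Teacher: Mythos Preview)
Your direction $(1)\Rightarrow(2)$ via Claim \ref{c:converse} is fine and matches the paper.

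The other direction has a real gap. Your plan assigns to each node $\eta$ an ultrafilter $D_{i(\eta)}$ and takes $\bar a_\eta$ from (an average of) that ultrafilter. You then claim branch consistency ``is built in by the definition of $\uf_\vp$: successive elements of an indiscernible sequence based on a higher $\vp$-formula yield a consistent type.'' But the definition of $\uf_\vp(A)$ only guarantees consistency for a sequence based on a \emph{single} ultrafilter. Along a branch $\langle\rangle, \langle 0\rangle, \langle 00\rangle,\dots$ your scheme uses \emph{different} ultrafilters $D_{i(\langle\rangle)}, D_{i(\langle 0\rangle)},\dots$, and since these are pairwise $1$-contradictory, the very inconsistency you want between siblings also forces $\{\vp(\bar x,\bar a_{\langle\rangle}),\vp(\bar x,\bar a_{\langle 0\rangle})\}$ to be inconsistent. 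Any assignment of ultrafilters to nodes that is constant along branches (for consistency) but differs on siblings (for inconsistency) is impossible once the tree has depth $\geq 2$. So the mechanism you describe cannot produce an $SOP_2$-tree; the ``key technical check'' you defer is not a check but the entire problem.

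The paper's argument is organized around a different invariant. It does \emph{not} assign one ultrafilter per node. Instead, at each frontier node $\eta$ of a partially built tree it maintains a reservoir $\cU_\eta \subseteq \lambda$ of $\lambda = |A|^+$ many ultrafilter indices, all of which are simultaneously consistent with the path to $\eta$ in the precise sense of condition (1.2)(d) (the path parameters together with further parameters from $\bar{\mathbf b}_\alpha$, $\alpha\in\cU_\eta$, form a type), and inconsistent with the other branches in the sense of (1.2)(e). The parameters $\bar a_\eta$ themselves are chosen from $A$, not from indiscernible sequences: at each step one intersects finitely many $D_{\alpha_\varrho}$-large sets (built from $1$-contradiction and from the path-consistency hypothesis), picks a witness in $A$, and then uses pigeonhole ($|A|<\lambda=\cf(\lambda)$) to find a single $\bar b_*\in A$ that works for $\lambda$-many indices. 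These $\lambda$-many indices become (subsets of) the reservoirs at the new frontier. Your cardinality remark about ``more than $|A|$ ultrafilters\dots still available'' is a gesture toward this reservoir idea, but it is the reservoirs and the pigeonhole step on $A$, not averaging extensions, that carry the induction.
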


\begin{proof}
In Claim \ref{c:converse}, it was shown that $SOP_2$ implies many pairwise $1$-contradictory higher formulas. 
So it remains to prove the other direction: many pairwise $1$-contradictory higher formulas imply $SOP_2$. 

\step{Step 0: Setup.}
Let $\lambda = |A|^+$ and by Fact \ref{approx}, let $n$ be such that $(T, \vp)$ has $NSOP_{2,n}$. 

Assume for a contradiction that: 
\begin{enumerate}
\item $(\varphi,D_\alpha) \in \HF^m_\varphi(A)$ for $\alpha < \lambda$ are pairwise 1-contradictory
\footnote{Here ${\HF}^m_{\varphi,n}(A)$ suffices.} and
\item fixing some infinite linear order $I$, let $\bar{\mathbf b}_\alpha = \langle b_{\alpha,s}:s \in I\rangle \in \ob(D_\alpha)$ for $\alpha < \lambda$. 
\end{enumerate}

(Recall that the definition of $1$-contradictory is for any such $\bar{\mathbf b}$.) We will use just that $\lambda = \cf(\lambda) > |A| \geq \aleph_0$. 

\br
\step{Step 1: Approximations.} We define the set $\AP$ of approximations (to a full $SOP_{2,n}$-tree) 
to be the set of $\bx$ consisting of: 
\footnote{Alternately, we could consider: in $\Lambda_x, \frt(\mathbf x)$ is a
  set of pairwise incomparable elements, e.g. the $\tlf$-maximal $\eta \in \Lambda$ of length $< n-1$. 
  This is simpler here, but then the induction step would require two steps: 
  add $\eta^\smallfrown\langle 0 \rangle$ or $\eta^\smallfrown\langle 1
  \rangle$ for some $\eta \in \frt(\mathbf x)$.}

\begin{enumerate}
\item[(1.1)] $\Lambda \subseteq {}^{n>}2$, $\bar a$ where:
\begin{enumerate}
\item $\Lambda$ is non-empty downward closed
\item $\bar a = \langle \bar a_\eta:\eta \in  \Lambda\rangle$ with each $\bar a_\eta \in {^{\lgn(\bar y)}A}$
\item if $\eta \perp \nu$ are from $\Lambda$ then $\varphi(\bar x,\bar a_\eta),\varphi(\bar x, \bar a_\nu)$ are incompatible
\item if $\eta \in \Lambda$ then $\{\varphi(\bar x,\bar a_{\eta \rstr \ell}):\ell \le \ell g(\eta)\}$ is a consistent partial type 
\item if $\nu^\smallfrown\langle 1 \rangle \in \Lambda$ then $\nu^\smallfrown\langle 0 \rangle \in \Lambda$. 
\end{enumerate}
\br
\item[(1.2)] $\bar{\cU} = \langle \cU_\eta:\eta \in \frt(\Lambda)\rangle$, where:
\begin{enumerate}
\item $\frt(\Lambda) := \{\eta \in \Lambda:\ell g(\eta) < n-1$ and $\eta \char 94 \langle 0 \rangle \notin \Lambda$ or
$\eta \char 94 \langle 1 \rangle \notin \Lambda\}$ 
\\ (\emph{the ``frontier'' for our inductive construction of a tree, i.e., the nodes without two immediate successors})
\item each $\cU_\eta \subseteq \lambda$ has %\footnote{When using this proof to check Claim \ref{b18}, add here: $\rk(\cU_\eta) = \infty$} 
 cardinality $\lambda$  %%%xxx
\\ (the intention is a set of indices for the $\bar{\mathbf{b}}_\alpha$ from Step 0)
\item $\cU_\eta \cap \cU_\nu = \emptyset$ for $\eta \ne \nu$ 
\item if $\eta \trianglelefteq \nu$ are from $\frt(\Lambda)$, $k < \omega$, 
$s_0,\dotsc,s_{k-1} \in I,k + \ell g(\eta) \le n$ and $\alpha \in \cU_\nu$ then 
\[ \{\varphi(x,\bar a_{\eta \rstr \ell}):\ell \le \ell g(\eta)\} \cup\{\varphi(x,\bar b_{\alpha,s_\ell}):\ell < k\} \]
is a consistent partial type. 
\item if $\eta \in \frt(\mathbf x),s \in I,\alpha \in \cU_\eta$ and $\nu \in \Lambda,\neg(\nu \trianglelefteq \eta)$, 
\\ then $\varphi(\bar x,\bar b_{\alpha,s}),\varphi(\bar x,a_\nu)$ are incompatible.
\end{enumerate}
\end{enumerate}

\br

Note the role of the two kinds of parameters: the ${\bar b}_{\alpha, s}$ from Step 0, and the parameters ${\bar a}_\eta$ for the tree. 
Informally, the $\uu_\eta$ tell us in which sequences we can expect to continue our consistent partial type while maintaining inconsistency 
elsewhere. 

\br

We define a two-place relation $\le_{\AP}$ on $\AP$ in the natural way: 
$\mathbf x \le_{\AP} \mathbf y$ iff
\begin{enumerate}
\item[$(a)$]  $\mathbf x,\mathbf y \in \AP$
\item[$(b)$]  $\Lambda_{\mathbf x} \subseteq \Lambda_{\mathbf y}$
\item[$(c)$]  $\bar a_{\mathbf x,\eta} = \bar a_{\mathbf y,\eta}$ for
  $\eta \in \Lambda_{\mathbf x}$
\item[$(d)$]  if $\eta \in \frt(\Lambda_{\mathbf x}) \cap
  \frt(\Lambda_{\mathbf y})$ then $\cU_{\mathbf y,\eta} \subseteq
  \cU_{\mathbf x,\eta}$
\item[$(e)$]   if $\eta \in \frt(\Lambda_{\mathbf x}),\nu \in
  \frt(\Lambda_{\mathbf y})$ and $\eta \triangleleft \nu$ but $\nu \rstr
  (\ell g(\eta) +1) \notin \Lambda_{\mathbf x}$,  
  \\ then $\cU_{\mathbf y,\nu} \subseteq \cU_{\mathbf x,\eta}$ .
\end{enumerate}

\br
\step{Step 2: Strategy.} Clearly $\le_{\AP}$ is a partial order on $\AP$.
By choice of $n$ in Step 0, if $\mathbf x \in \AP$ then $\Lambda \subsetneq {}^{n>}2$.
Thus, to obtain a contradiction (and complete the proof) it will suffice to show that: 

\begin{enumerate}
\item[(2.1)] there is $\mathbf z \in \AP$ with $\Lambda_{\mathbf z} = \{<>\}$, i.e. $\AP \neq \emptyset$. 
\item[(2.2)] if $\mathbf x \in \AP$ then there is $\mathbf y \in
  \AP$ such that $|\Lambda_{\mathbf x}| < |\Lambda_{\mathbf y}|$, in fact,
  $\mathbf x <_{\AP} \mathbf y$.
\end{enumerate}

\br
\step{Step 3: Verifying Condition \emph{(2.1)}.}
Let $s_0 <_I \ldots <_I s_n$.  For each $\alpha < \lambda$
  clearly $\gC \models (\exists \bar x) \bigwedge\limits_{\ell \le n}
  \varphi(\bar x,\bar b_{\alpha,s_\ell})$.  So by (1) and (2) of Step 0, 
\[
\cx_\alpha = \{\bar a \in {}^{\ell g(\bar y)}A:\gC \models (\exists
\bar x)[\varphi(\bar x,\bar a) \wedge \bigwedge\limits^{n}_{\ell=1}
  \varphi(x,\bar b_{\alpha,s_\ell})]\} \in D_\alpha.
\]
For each $\alpha < \lambda$, choose $\bar b_\alpha \in \cx_\alpha$.  [So $\bar b_\alpha$ is ``canonically consistent'' with the partial $\vp$-type 
given along the sequence $\bar{\mathbf{b}}_\alpha$ by any $n$ members.] As $|A| < \lambda$, the set
${^{\ell g(\bar y)}A}$ has cardinality $< \lambda = \cf(\lambda)$ so for some $\bar b$ the set 
\[ \cU = \{\alpha < \lambda ~:~ \bar b_\alpha = \bar b\} \] 
has cardinality $\lambda$. 

Now define $\mathbf z$ by:
\begin{enumerate}
\item[$\bullet$]  $\Lambda_{\mathbf z} = \{<>\}$
\item[$\bullet$]  $\bar a_{<>} = \bar b$
\item[$\bullet$]  $\cU_{<>} = \cU$.
\end{enumerate}
and $\mathbf z$ is as required.

\br
\step{Step 4: Proving Condition \emph{(2.2)}.} Let $\Lambda = \Lambda_{\mathbf x}$.

The situation at the inductive step is essentially as follows. We would like the tree to become full, 
so we choose $\varrho$ which is minimal for the property of not having two successors; say, $\varrho^\smallfrown\ii$ is missing (there are minor adjustments at the end 
of this step depending on whether it has a successor at 0 or 1). We try to find a corresponding $\overline{a}_{\varrho^\smallfrown \ii}$ for the missing successor of this node,  
and its set of compatible indices $\uu_{\varrho^\smallfrown \ii}$, subject to the following constraints: 

\begin{enumerate}
\item[(i)] consistency of $\bar{a}_{\varrho^\smallfrown \ii}$ with comparable nodes, (1.1)(d)
\item[(ii)] inconsistency of $\bar{a}_{\varrho^\smallfrown \ii}$ with incomparable nodes, (1.1)(c)
\item[(iii)] consistency of $\bar{a}_{\varrho^\smallfrown \ii}$ with large subsets of $\uu_\nu$, for compatible $\nu$ for (1.2)(d)
\item[(iv)] inconsistency of $\bar{a}_{\varrho^\smallfrown \ii}$ with $\uu_\eta$, for incompatible $\eta$, for (1.2)(e) 
\item[(v)] disjointness of $\uu_{\varrho^\smallfrown \ii}$ from large subsets of $\uu_\nu$, for incompatible $\nu$, for (1.2)(c)
\end{enumerate}

In this informal explanation, ``large'' stands in for the fact that we will also have to refine the other $\uu_\eta$ 
to get actual inconsistency or an actual empty intersection. This completes the description of intent. 

\br

Now to begin, choose $\varrho \in \frt(\Lambda)$ be of minimal %\footnote{was: maximal} 
length, hence necessarily $\ell g(\varrho) < n-1$ and we can choose $\iota < 2$ such
that $\varrho \char 94 \langle \iota \rangle \notin \Lambda$.  Let $k = n - \lgn(\varrho)$. 

Fix for a while a choice of a distinguished element of $\uu_\eta$ for each $\eta \in \frt(\Lambda)$, that is, 
$\bar \alpha = \langle \alpha_\eta:\eta \in \frt(\Lambda)\rangle$ where  
$\alpha_\eta \in \cU_\eta$, and let $s_0 <_I \ldots <_I s_{k-1}$.

By our choice of $\varrho$ and (1.2)(c), for every\footnote{If $\rho$ has precisely one successor in the tree,  
this successor may be from $\frt(\Lambda_{\mathbf{x}})$.}
$\eta \in \frt(\Lambda_{\mathbf x}) \backslash \{\varrho\}$ 
%which is not 
%$\tlf$-comparable to $\varrho$ 
we have that $\alpha_\eta \neq \alpha_\rho$, hence 
the following set belongs to
$D_{\alpha_\varrho}:$
\begin{align*}
 \cx_{\bar\alpha,\eta} =& \cx^1_{\bar\alpha,\eta} \cup \cx^2_{\bar\alpha,\eta} \mbox {where } \\
 \cx^1_{\bar\alpha,\eta} =& \{\bar b \in {}^{\lgn(\bar y)}A \colon: \vp(\bar{x}, \bar{b}), \vp(\bar{x}, \bar{a}_\nu) \mbox{ are incompatible, where }
 \nu \in \Lambda \land \neg(\nu \trianglelefteq \rho) \} \\
 \cx^2_{\bar\alpha,\eta} =& \{\bar b \in {}^{\ell g(\bar y)}A:\varphi(\bar x,\bar b),\varphi(\bar x,\bar b_{\alpha_\eta,s_0}) \mbox{ are incompatible}\} 
\end{align*}
since the $(\vp, \de_\alpha)$ are pairwise $1$-contradictory and as $\langle
\alpha_\ell:\ell \in \frt(\mathbf x)\rangle$ is without repetitions by the assumption (1.2)(c) for $\cx^2_{\bar\alpha,\eta}$ 
and (1.2)(e) for $\cx^1_{\bar\alpha,\eta}$. 

Furthermore, as (1.2)(d) holds for $\mathbf x$, the following set belongs to $D_{\alpha_\varrho}$:
\begin{align*} 
\cZ_{\bar\alpha,\varrho} := \{\bar b \in {}^{\ell g(\bar y)}A : &\{\varphi(\bar x,\bar a_{\eta \rstr \ell}):\ell \le \ell g(\varrho)\} \\
& \cup \{\varphi(\bar x,\bar b)\}  \\
& \cup \{\varphi(\bar x,\bar b_{\alpha_\varrho,s_\ell}):\ell < n-\ell g(\varrho)-1\} \mbox{ is a type} \} 
\end{align*}

Let $\cX_{\bar\alpha} = \cap\{\cX_{\bar\alpha,\eta}:\eta \in
\frt(\Lambda) \backslash \{\varrho\} \mbox{ and not comparable to $\varrho$ }\} \cap \cZ_{\bar\alpha,\varrho}$. Since 
$\Lambda$ is finite and $D_{\alpha_\varrho}$ is a filter, $\cX_{\bar\alpha} \in D_{\alpha_\varrho}$.

By the choice of our sequence of ultrafilters in Step 0, for every $\beta \in \cU_{\mathbf x,\varrho} \backslash
\{\alpha_\varrho\}$, we know that
$(\varphi,D_{\alpha_\varrho}),(\varphi,D_\beta)$ are 1-contradictory. So 
\[ \cY_{\bar\alpha,\beta} = \{\bar b \in {}^{\ell g(\bar y)}A : \varphi(x,\bar b),\varphi(x,\bar b_{\beta,s_0}) \mbox{ are incompatible} \} \] 
belongs to $D_{\alpha_\varrho}$. So we may choose
$\bar b_{\bar\alpha,\beta} \in {}^{\ell g(\bar y)}A$ which belongs to
$\cx_{\bar\alpha} \cap \cY_{\bar\alpha,\beta}$.  As $|\cU_{\mathbf x,\varrho}| =
\lambda = \cf(\lambda) > |A|$, there is $\bar b_{\bar\alpha} \in 
{}^{\ell g(\bar y)}A$ such that 
\[ \cW_{\bar\alpha} = \{\beta \in \cU_{\mathbf x,\varrho}:\bar b_{\bar\alpha,\beta} = \bar b_{\bar\alpha}\} \] has cardinality $\lambda$.  

Recall that all of this is for a fixed $\bar\alpha$, $\langle s_0, \dots, s_{k-1} \rangle$. 

Now continue to fix $s_0 <_I \dots <_I s_{k-1}$. For every $\gamma < \lambda$ we let 
$\bar\alpha_\gamma = \langle \alpha_{\gamma,\eta}:\eta \in \frt(\Lambda)\rangle$
be defined by
\[ \alpha_{\gamma,\eta} = \min(\cU_{\mathbf x,\eta} \backslash \gamma) \]
which is well defined as $\cU_{\mathbf x,\eta}$ is an unbounded subset of
$\lambda$.  So for every $\gamma < \lambda$, we have that $\bar\alpha_\gamma,
\bar b_{\bar\alpha_\gamma}, \cW_{\bar\alpha_\gamma}$ are well defined. 
So for some $\bar b_*$ we have 
\[ \cW_{\bar b_*} = \{\gamma < \lambda:\bar b_{\bar\alpha_\gamma} = \bar b_*\} \]
is an unbounded subset of $\lambda$.
\medskip

\noindent
We can now define the necessary objects, in two cases. For Case 1 below, 
the final definitions depend on whether the node we are dealing with has a sibling or not.  

\underline{Case 1}:   $1 = |\Lambda_\bx \cap\{\varrho \char 94 \langle 0 \rangle,\varrho \char 94 \langle 1 \rangle\}|$

Let $\iota$ be such that $\varrho \char 94 \langle \iota\rangle \notin \Lambda_{\mathbf x}$. 
We define $\mathbf y$ as follows:
\begin{enumerate}
\item[$\oplus$]  $(a) \quad \Lambda_{\mathbf y} = \Lambda_{\mathbf x}
  \cup \{\varrho \char 94 \langle \iota \rangle\}$, hence
$\frt(\Lambda_{\mathbf y}) = \frt(\Lambda) \cup\{\varrho \char 94 \langle
  \iota \rangle\} \backslash \{\varrho\}$
\item[${{}}$]  $(b) \quad \bar b_{\mathbf y,\eta}$ is $\bar b_{\mathbf
  x,\eta}$ if $\eta \in \Lambda_{\mathbf x}$
\item[${{}}$]  $(c) \quad \bar b_{\mathbf y,\eta}$ is $\bar b_*$ if 
$\eta \in \varrho \char 94 \langle \iota\rangle$
\item[${{}}$]  $(d) \quad$ if $\eta \in \frt(\Lambda) \backslash
  \{\varrho\}$ then $\cU_{\mathbf y,\eta}$ is $\{\alpha \in \cU_{\mathbf
  x,\eta}$: for some $\gamma < \lambda,\bar b_{\bar\alpha_\gamma} =$

\hskip25pt  $\bar b_*$ and $\alpha = \alpha_{\gamma,\eta}\}$
\item[${{}}$]  $(e) \quad$ if $\eta = \varrho \char 94 \langle
  \iota\rangle$ then $\cU_{\mathbf y,\eta} = \{\alpha \in 
\cU_{\mathbf x,\varrho}$: for some $\gamma < 
\lambda,b_{\bar\alpha_\gamma} = \bar b_*$ 

\hskip25pt and $\alpha = \alpha_{\gamma,\varrho}\}$.
\end{enumerate}
For clause (e) recall $\bar b_* \in \cx_{\bar\alpha,\varrho} \subseteq \cZ_{\bar\alpha_\gamma,\varrho}$.

As $\varrho \in \frt(\mathbf x)$, we are left with Case 2:

\underline{Case 2}:   $0 = |\Lambda_{\mathbf x} \cap \{\varrho \char
94 \langle 0 \rangle,\varrho \char 94 \langle 1 \rangle\}|$ and let
$\iota=0$.   
The only difference is that 
\begin{enumerate}
\item[$\bullet$]  in clause (a), $\frt(\Lambda_{\mathbf y}) = \Lambda
  \cup \{\varrho \char 94 \langle \iota \rangle\}$
\end{enumerate}
so we have to add
\begin{enumerate}
\item[$(f)$]  if $\eta = \varrho$ then $\cU_{\mathbf y,\eta} =
\cW_{\bar\alpha_\gamma}$ for some $\gamma < \lambda$ such that $\bar
  b_{\bar\alpha_\gamma} = \bar b_*$.
\end{enumerate}

\br
\step{Step 5: Finish.} 
Recalling Step 2, we have shown that the assumption of $|A|^+$-pairwise 1-contradictory higher formulas contradicts $NSOP_2$. This completes the proof. 
\end{proof}

\br
In this theorem, 
replacing 1-contradictory by $n$-contradictory would be straightforward as we
can replace $\varphi$ by a conjunction, defined as follows. 

%$\varphi_{[n]}$

\begin{defn}  \label{b4}
%\emph{ }
%\begin{enumerate}
%\item For $\varphi = \varphi(\bar x,\bar y) \in \bbL(\tau_T)$, let 
%$\mathbf n_{\SOP_2}(\varphi) = \mathbf n_{\SOP_2}(\varphi,T)$ be the minimal
%   $n \le \omega$ such that $n = \omega$ or there are no $\bar a_\eta
%   \in {}^{\ell g(\bar y)}\gC$ for $\eta \in {}^{n>}2$ such that
%   
%\begin{enumerate}
%\item[$(a)$]  $\varphi(\bar x,\bar a_\eta),\varphi(\bar x,\bar a_\nu)$
%  are contradictory when $\eta \perp \nu$ are from ${}^{n>}2$
% 
%\item[$(b)$]  $\gC \models (\exists \bar x) 
%\bigwedge\limits_{\ell \le \ell g(\eta)}
%(\bar x,\bar a_{\eta \rstr \ell})$ for $\eta \in {}^{n>}2$.
%\end{enumerate}
%
%\item  To say that $\varphi = \varphi(\bar x,\bar y)$ has NSOP$_2$ means that 
%$\mathbf n_{\SOP_2}(\varphi)$ is finite.
%\item 
Given $\vp$ and $n$, let 
\begin{align*}
\varphi_{[n]} = &  \varphi_{[n]}(x,\bar{y}_{[n]}) \\ 
 = & \varphi_{[n]}(\bar x,\langle y_i: i < \ell g(\bar y) \cdot n\rangle) \\
  =  & \bigwedge\limits_{\ell < n} \varphi(\bar x,
\langle y_{\ell g(\eta)\cdot \ell +j}:j < \ell g(\bar y)\rangle).
\end{align*}

%\item 
We say $\varphi = \varphi(\bar x,\bar y)$ has NSOP$_2$ \emph{robustly} when 
%the number 
%$\mathbf n_{\SOP_2}(\varphi_{[n]})$ is finite for every $n$, i.e. 
no $\vp_{[n]}$ has $SOP_2$. 
%\end{enumerate}
\end{defn}

\begin{obs}
If $T$ is $NSOP_2$ then every $\vp(\bar{x}, \bar{y}) \in \ml(\tau_T)$ has $NSOP_2$ robustly in $T$. 
\end{obs}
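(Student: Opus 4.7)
The proof will follow essentially immediately from the definitions, and my plan is to make this precise by unpacking them carefully. Given $\vp(\bar{x}, \bar{y}) \in \ml(\tau_T)$ and any $n < \omega$, I would first observe that the formula $\vp_{[n]}(\bar{x}, \bar{y}_{[n]})$, as defined in Definition \ref{b4}, is itself a first-order formula in $\ml(\tau_T)$, being the conjunction of $n$ instances of $\vp$ applied to disjoint sub-tuples of a single new parameter variable $\bar{y}_{[n]}$ of length $n \cdot \ell g(\bar{y})$. It carries with it the natural partition of its free variables into the object variables $\bar{x}$ and the parameter variables $\bar{y}_{[n]}$.

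Next, I would invoke the hypothesis that $T$ is $NSOP_2$. By Definition \ref{d:sop2}, this means there is no \emph{partitioned} formula $\psi(\bar{u}, \bar{v}) \in \ml(\tau_T)$ together with a tree of parameters $\langle \bar{a}_\eta : \eta \in {^{\omega >} 2}\rangle$ witnessing $SOP_2$ for $\psi$. Specializing this to the partitioned formula $\psi = \vp_{[n]}$ with $\bar{u} = \bar{x}$ and $\bar{v} = \bar{y}_{[n]}$ gives immediately that $\vp_{[n]}$ does not have $SOP_2$. Since $n$ was arbitrary, $\vp$ has $NSOP_2$ robustly in the sense of Definition \ref{b4}, which is what was to be shown.

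There is no genuine obstacle here: the observation is a tautological consequence of the fact that the ``robust'' variant of $NSOP_2$ for a formula $\vp$ quantifies only over formulas $\vp_{[n]}$ which are themselves members of $\ml(\tau_T)$, so the hypothesis of $T$-level $NSOP_2$ applies verbatim. Its purpose, as I read it, is expository: at the level of a single partitioned formula $\vp$, the property ``$\vp$ has $NSOP_2$'' is a priori weaker than ``$\vp$ has $NSOP_2$ robustly,'' since $\vp_{[n]}$ could in principle exhibit $SOP_2$ behavior that $\vp$ itself does not. The observation records that this potential gap between a single $\vp$ and its conjunctive closure collapses at the level of an entire theory, which is exactly the setting relevant for Theorem \ref{a13}.
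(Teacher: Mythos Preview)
Your proposal is correct. The paper states this observation without proof, and your unpacking of the definitions is exactly the (trivial) argument intended: each $\vp_{[n]}$ is a formula of $\ml(\tau_T)$, so $NSOP_2$ at the level of the theory applies to it directly.
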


%\newpage

\vspace{5mm} 

In the last main result of this section, we apply Theorem \ref{tstar-sop2-max} characterizing the maximal class in $\trianglelefteq^*$ 
to prove another property of $SOP_2$, related to so-called exact saturation (meaning for which singular $\kappa$ 
a given theory $T$ has a model which is $\kappa$-saturated but not $\kappa^+$-saturated).
Exact saturation was studied in Shelah \cite{Sh:900} \S 2 (pps. 31-37) and 
in a manuscript of Kaplan and Shelah \cite{KpSh:F1473} in preparation, which deal with elementary classes and
$(\mathbf{D}, \kappa)$-sequence homogeneity. 

\begin{defn}[ see \cite{Sh:900}, \cite{KpSh:F1473} ] For any theory $T$, define 
\begin{align*} \truespec(T) =  & \{ \kappa \colon \kappa \geq |T|, \mbox{ $\kappa$ singular } \\ 
&\mbox{and there exists a $\kappa$-saturated not $\kappa^+$-saturated model of $T$} \}. \end{align*}
\end{defn}
The papers \cite{Sh:900} and \cite{KpSh:F1473} find sufficient conditions but not necessary and 
sufficient conditions for exact saturation. 
In the present paper, we define a pseudo-elementary version\footnote{Compare the results of \cite{Sh:a} VI.5 connecting the minimum 
class in Keisler's order to saturation properties of a $PC$-class.} of this 
spectrum, which we will connect to $SOP_2$ via $\trianglelefteq^*$. 
\begin{defn} The exact saturation spectrum for $T$ is defined to be the set:
\begin{align*}
\spec(T) = & \{ (\kappa, \mu)  \colon \kappa \geq \mu \geq |T|, \mbox{ $\kappa$ singular} \mbox{ and for any  
$T_1 \supseteq T$, $|T_1| \leq \mu$, } \\ & \mbox{ there is a $\kappa$-saturated not $\kappa^+$-saturated member of $PC(T_1, T)$} \}. 
\end{align*}
\end{defn} 
In future work, we hope to be able to get necessary and sufficient conditions for $\spec(T)$ to be empty 
(at least restricting ourselves to $\kappa$ strong limit of large enough cofinality), and it seems plausible 
that this may be $SOP_2$.  
Here, using the methods of Section \ref{s:tlf}, we will prove one direction: if $T$ has $SOP_2$ then $\spec(T) = \emptyset$, 
and discuss several open questions.  
First we recall two known examples. 

\begin{fact}[\cite{Sh:900}, \cite{KpSh:F1473}] \label{dlo-fact}
Let $T$ be the theory of dense linear order without endpoints. Then for any singular $\kappa \geq |T|$, if $M \models T$ is $\kappa$ saturated then it is $\kappa^+$-saturated.  
Thus $\truespec(T) = \emptyset$. %, and a fortiori $\spec(T) = \emptyset$.  
\end{fact}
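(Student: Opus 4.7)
The plan is to reduce $\kappa^+$-saturation to the filling of Dedekind cuts, and then exploit the fact that the cofinality and coinitiality of any cut are regular cardinals, so that for singular $\kappa$ they must both be strictly less than $\kappa$.

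First, by quantifier elimination for DLO, every nonalgebraic $1$-type $p(x)$ over $A \subseteq M$ is determined by the partition $(L_p, U_p)$ of $A$ with $L_p = \{a \in A : (a < x) \in p\}$ and $U_p = \{a \in A : (x < a) \in p\}$. Thus realizing $p$ in $M$ amounts to finding $b \in M$ strictly between $L_p$ and $U_p$, i.e., filling this Dedekind cut in $M$; the degenerate cases where $L_p$ or $U_p$ is empty are handled using that $M$ has no endpoints, and algebraic types are realized trivially.

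Next I would show that any nonalgebraic type $p$ over $A$ with $|A| \leq \kappa$ is realized. Let $\lambda_1 = \cf(L_p, <)$ and $\lambda_2 = \cf(U_p, >)$; both are regular cardinals and both are at most $|A| \leq \kappa$. Since $\kappa$ is singular, no regular cardinal equals $\kappa$, so $\lambda_1, \lambda_2 < \kappa$. Choose a cofinal $L' \subseteq L_p$ of size $\lambda_1$ and a coinitial $U' \subseteq U_p$ of size $\lambda_2$, and set $A' = L' \cup U'$, so $|A'| < \kappa$. The cut of $A'$ induced by $(L', U')$ corresponds to a type $p'$ over $A'$, realized in $M$ by $\kappa$-saturation; any realization $b$ of $p'$ sits strictly between $L_p$ and $U_p$, by cofinality of $L'$ in $L_p$ and coinitiality of $U'$ in $U_p$, hence realizes $p$. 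This yields $\kappa^+$-saturation.

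There is no substantive obstacle: the argument reduces entirely to the definitional fact that cofinalities of cuts are regular, so no cut over a set of size $\leq \kappa$ can have cofinality or coinitiality equal to a singular $\kappa$. In the terminology of the present paper, this reflects that DLO is maximally tame with respect to its cofinality spectrum, so no cut of total cofinality $\kappa$ can arise at the critical singular cardinal $\kappa$; this is in sharp contrast with the behavior at $SOP_2$-theories investigated in \S \ref{s:tlf}.
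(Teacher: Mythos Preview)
Your proof is correct and follows essentially the same approach as the paper: reduce to cuts via quantifier elimination, then use that the cofinalities involved are regular and hence strictly below the singular $\kappa$. The paper compresses this into two sentences, while you spell out the passage to a cofinal/coinitial subset of size $<\kappa$; one small point is that your handling of the degenerate cases ($L_p$ or $U_p$ empty) really uses the same regularity-of-cofinality argument applied to $M$ itself, not merely the absence of endpoints, but your main argument already covers this once you note that the reduced type over $L'$ is finitely satisfiable.
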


\begin{proof}
By quantifier elimination, it suffices to show that the cofinality and coinitiality of $M$ are at least $\kappa$ and for any regular $\kappa_1, \kappa_2$, 
every $(\kappa_1, \kappa_2)$-pre-cut is filled. Since cofinality and coinitiality of the model and of pre-cuts are necessarily regular cardinals, the result is immediate. 
\end{proof}

\begin{fact}[\cite{Sh:900} Example 2.23] \label{chang-fact}
There is a theory $T$ with the independence property such that: if $T$ has an exactly $\kappa$-saturated model then 
$\kappa$ is regular. (In fact it is necessary and sufficient that $\kappa$ be regular.) 
Thus $\truespec(T) = \emptyset$. %, and a fortiori $\spec(T) = \emptyset$.  
\end{fact}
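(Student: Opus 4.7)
The plan is to produce a specific theory $T$ with the independence property for which, for every singular $\kappa \geq |T|$, every $\kappa$-saturated $M \models T$ is already $\kappa^+$-saturated. Since $\truespec(T)$ by definition contains only singular cardinals, this forces $\truespec(T) = \emptyset$, which is the assertion.

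The guiding intuition is Fact \ref{dlo-fact}: for DLO, a $\kappa$-saturated model for singular $\kappa$ is automatically $\kappa^+$-saturated, because every $1$-type is a cut of regular cofinality pair $(\theta_1, \theta_2)$, and singularity of $\kappa$ forces $\theta_1 + \theta_2 < \kappa$ so that $\kappa$-saturation already fills such cuts. I would aim to build $T$ whose type-space inherits this structural feature, while adding enough ``random'' structure to witness IP. The principal tension is that naïve random predicates give rise to $2^{|A|}$-many types over $A$, and realizing all of them requires $|M| \geq 2^\kappa$, so IP and the singular-collapse condition pull in opposite directions.

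My candidate is a model companion $T$ in a language with a binary $<$ (a dense linear order without endpoints) together with an additional relation $R$ whose extension axioms are set up so that (i) inside every open $<$-interval, every finite consistent $R$-pattern over finitely many parameters is realized (this will produce IP via $\varphi(x,y) = R(x,y)$), but (ii) the dependence of an $R$-type on its parameters is organized ``locally along the order,'' for instance via a tree structure whose branches carry the $R$-behavior and whose cofinal paths correspond to complete $R$-types. The intended consequence of quantifier elimination is that every complete $1$-type $p$ over a parameter set $A$ decomposes into a cut in $(A,<)$ plus a cofinal-branch datum whose complexity is controlled by the cofinality of the cut, rather than by $|A|$ itself. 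Then for $\kappa$ singular and $p \in S(A)$ with $|A| = \kappa$, the cut has regular cofinality pair $(\theta_1,\theta_2)$ with $\theta_1 + \theta_2 < \kappa$, so the cut-part of $p$ is realized by $\kappa$-saturation; and writing $A = \bigcup_{i < \cf \kappa} A_i$ with $|A_i| < \kappa$, the remaining branch-datum can be realized by applying $\kappa$-saturation to each $p\restriction A_i$ and stitching along the cut.

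The main obstacle is precisely the stitching step, and the design of $T$ hinges on it. I need that realizations $b_i \models p\restriction A_i$ can be coordinated into a single $b \models p$, and that this coordination does not smuggle in a requirement for $R$-patterns over the full set $A$ to be independently realized (which would cost $2^\kappa$-cardinality). The tree-organized local genericity is meant to do this: inside the cut filled by step (i), one invokes the extension axioms finitely many times relative to each $A_i$ and passes to a limit along the cofinal sequence. Verifying this carefully is the heart of the proof and, I expect, the step where the precise axiomatization (for which I would follow the pattern of Shelah \cite{Sh:900}, Example 2.23) must be chosen with some care — the rest is routine model-companion bookkeeping, together with the easy verification that any saturated model of $T$ of regular cardinality $\kappa$ (whose existence is standard under the usual cardinal-arithmetic assumption) is $\kappa$-saturated but not $\kappa^+$-saturated, giving the sufficiency direction.
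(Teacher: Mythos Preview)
The paper does not prove this statement: it is recorded as a Fact with a bare citation to \cite{Sh:900} Example~2.23 and no argument is given. So there is no ``paper's own proof'' to compare against.

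Your proposal is not a proof but an outline of how one might try to design such a theory, and it has a genuine gap at exactly the point you flag. You correctly identify the tension: the independence property forces $|S_\varphi(A)| = 2^{|A|}$ for suitable $A$, while the singular-collapse phenomenon you want requires that every $1$-type over a set of size $\kappa$ be determined (in an effective, realizable way) by its restrictions to subsets of size $<\kappa$. Your proposed resolution --- ``tree-organized local genericity'' tying $R$-behaviour to cofinal data along the order --- is not specified, and the ``stitching step'' (producing a single $b \models p$ from realizations $b_i \models p\restriction A_i$) is exactly where a naive IP witness will break the argument: if the $a$'s witnessing IP for $R$ are spread across the $A_i$, the $R$-pattern of $b$ over the full $A$ is not recoverable from the individual $p\restriction A_i$. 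You acknowledge this and then defer the actual axiomatization to \cite{Sh:900}, which is the same move the paper makes. As written, then, your proposal does not advance beyond the paper's own treatment (a citation), and the substantive construction --- whatever theory \cite{Sh:900} actually uses --- remains to be supplied and verified.
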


%Finally, we analyze trees of types, working towards Theorem \ref{t:trees} which 
%say essentially that if $T$ is $NSOP_2$ we can always amalgamate suitably rich trees of copies 
%of a given type.  

Recall the order $\tlf^*$ from Definition \ref{tstar-defn} above.  
%[Question: The order $\tlf^*$ is stated only for regular cardinals. I will continue assuming that it makes sense for any cardinal.]
In the following, countability is not essential.  

\begin{obs} \label{combining}
Suppose $T_0$, $T_1$ are countable theories which are equivalent under the $\trianglelefteq^*$ order. Then there 
exists a theory $T_*$ which interprets both $T_0$ and $T_1$, say via 
$\overline{\vp_0}$ and $\overline{\vp_1}$ respectively, such that: for any model $M \models T_*$, and any uncountable $\kappa$, 
\[ {N}^{[\overline{\vp_0}]} \mbox{ is $\kappa$-saturated if and only if } {N}^{[\overline{\vp_1}]} \mbox{ is $\kappa$-saturated.} \]
\end{obs}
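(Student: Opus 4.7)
The plan is to glue the two witnesses of the $\trianglelefteq^*$-equivalence into a single theory $T_*$ that carries both interpretations, linking the interpreted $T_0$- and $T_1$-reducts by explicit isomorphisms. First, unpack the hypothesis: since $T_0 \trianglelefteq^* T_1$, there is a theory $\hat T$ with interpretations $\hat\vp_0, \hat\vp_1$ of $T_0, T_1$ such that, in every $M \models \hat T$ and every sufficiently large regular $\lambda$, $\lambda$-saturation of $M^{[\hat\vp_0]}$ implies $\lambda$-saturation of $M^{[\hat\vp_1]}$. Symmetrically, from $T_1 \trianglelefteq^* T_0$ one obtains $\check T$ with interpretations $\check\vp_0, \check\vp_1$ such that $\lambda$-saturation of $M^{[\check\vp_1]}$ implies $\lambda$-saturation of $M^{[\check\vp_0]}$. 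I assume that $\tau(\hat T)$ and $\tau(\check T)$ are disjoint and (after Morleyization) purely relational.

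Next I would define $T_*$ in the language $\tau(\hat T) \cup \tau(\check T) \cup \{P_1, P_2, g_0, g_1\}$, where $P_1, P_2$ are unary predicates and $g_0, g_1$ are unary function symbols, with axioms asserting: $P_1, P_2$ partition the universe; the $\tau(\hat T)$-reduct induced on $P_1$ is a model of $\hat T$ (via the relativization of each axiom to $P_1$); symmetrically the $\tau(\check T)$-reduct on $P_2$ is a model of $\check T$; $g_0$ restricts to a $T_0$-isomorphism from $(M\rstr P_1)^{[\hat\vp_0]}$ onto $(M\rstr P_2)^{[\check\vp_0]}$; and $g_1$ similarly induces a $T_1$-isomorphism between the corresponding $T_1$-reducts (outside the relevant domains, $g_0, g_1$ are interpreted arbitrarily). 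Let $\overline{\vp_0}, \overline{\vp_1}$ be $\hat\vp_0, \hat\vp_1$ relativized to $P_1$, so $M^{[\overline{\vp_\ell}]} = (M\rstr P_1)^{[\hat\vp_\ell]}$. The bi-implication is then an immediate diagram chase: if $M^{[\overline{\vp_0}]}$ is $\kappa$-saturated for sufficiently large regular $\kappa$, the defining property of $\hat T$ gives $\kappa$-saturation of $M^{[\overline{\vp_1}]}$; conversely, if $M^{[\overline{\vp_1}]}$ is $\kappa$-saturated, transport along the isomorphism $g_1$ yields $\kappa$-saturation of $(M\rstr P_2)^{[\check\vp_1]}$, whence the defining property of $\check T$ gives $\kappa$-saturation of $(M\rstr P_2)^{[\check\vp_0]}$, and transporting back along $g_0^{-1}$ recovers $\kappa$-saturation of $M^{[\overline{\vp_0}]}$.

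The main obstacle is exhibiting a model of $T_*$, which I would do as follows. Pick $\kappa$ sufficiently large that the relevant complete countable theories admit saturated models of cardinality $\kappa$ (e.g., any regular $\kappa$ with $\kappa^{\aleph_0} = \kappa$), and choose $M^1 \models \hat T$, $M^2 \models \check T$ each saturated of cardinality $\kappa$. Each interpreted reduct $(M^i)^{[\cdot]}$ is $\kappa$-saturated (as a reduct of a $\kappa$-saturated structure), has cardinality $\leq \kappa$ (as a definable subset of $M^i$), and has cardinality $\geq \kappa$ (a $\kappa$-saturated model of a theory with infinite models has size at least $\kappa$), so is saturated of cardinality exactly $\kappa$. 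By uniqueness of saturated models of a countable complete theory of a given cardinality, the two $T_0$-reducts of $M^1, M^2$ are isomorphic and likewise the two $T_1$-reducts; interpret $g_0, g_1$ as any such isomorphisms (extended arbitrarily off the interpreted domains), take the disjoint union with $P_1 = \dom(M^1)$ and $P_2 = \dom(M^2)$, and the resulting structure is a model of $T_*$.
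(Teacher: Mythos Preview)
Your argument is correct and follows the same overall architecture as the paper's: take the two witnessing theories $T_a$ (your $\hat T$) and $T_b$ (your $\check T$), place them side by side on disjoint sorts, and add bijections identifying the two copies of the interpreted $T_0$ and the two copies of the interpreted $T_1$. The verification that saturation transfers back and forth is exactly the diagram chase you describe.

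The point of difference is in how the identifying isomorphisms are obtained. You align all four interpreted reducts at once by choosing $M^1, M^2$ saturated of the same cardinality and invoking uniqueness of saturated models. The paper instead builds the model in two stages using the Keisler--Shelah isomorphism theorem: first take an ultrapower making the two $T_0$-reducts isomorphic, name that isomorphism, then take a further ultrapower of the combined structure making the two $T_1$-reducts isomorphic (the first isomorphism survives because it is now part of the language). The paper then lets $T_*$ be the complete theory of the resulting model, whereas you write $T_*$ down axiomatically and separately check consistency.

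Your route is a bit cleaner conceptually, but note one small point: nothing in the definition of $\trianglelefteq^*$ bounds the size of the witness theories, so $\hat T, \check T$ need not be countable, and your phrase ``the relevant complete countable theories'' and the choice $\kappa^{\aleph_0}=\kappa$ do not quite suffice in general. This is harmless --- take any regular $\kappa > |\hat T|+|\check T|$ with $\kappa^{<\kappa}=\kappa$ (or use special models, or simply note that consistency of the axiomatic $T_*$ is absolute so one may assume GCH) --- but it is exactly the nuisance the paper's iterated-ultrapower construction sidesteps, since Keisler--Shelah needs no cardinal-arithmetic hypothesis.
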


\begin{proof} 
We may assume $T_0$ and $T_1$ have no finite models. 
Let $\mu$ be such that we have $T_a$ witnessing $T_0 \trianglelefteq^* T_1$ 
and $T_b$ witnessing $T_1 \trianglelefteq^* T_0$, 
and we may assume $|T_a|, |T_b| < \mu$.  
Let $\overline{\vp}^a_\ell$ interpret $T_\ell$ in $T_a$ for $\ell = 0,1$, and let
$\overline{\vp}^b_\ell$ interpret $T_\ell$ in $T_b$ for $\ell = 0, 1$.  
Without loss of generality:
\begin{itemize}
\item $\tau(T_0)$ and $\tau(T_1)$ are disjoint. 
\item $\overline{\vp}^a_\ell$ says that for any model $M \models T_a$ the universe of 
${M}^{[\overline{\vp}^a_\ell]}$ is $(P^a_\ell)^M$, for some unary predicate $P^a_\ell$, and $T_a$ implies that 
$P^a_{0}$ and $P^a_1$ are disjoint;
\item similarly for $T_b$. 
\item $\overline{\vp}^a_\ell$ is the identity, where this means: 
\begin{itemize}
\item $\tau(T_a) \supseteq (\tau(T_0) \cup \tau(T_1))$
\item $P \in \tau(T_\ell)$ implies $\vp^a_{\ell, P} = P$, and $T_a \vdash (\forall{\bar{x}})(P(\bar{x}) \implies \forall_{\ell <\lgn(\bar{x})} P_{\ell,a}(x))$
\item similarly for function symbols $F \in \tau(T_\ell)$, adding that they are interpreted as partial functions with 
domain the predicate $P^a_\ell$ $($alternately, we could have assumed without loss of generality that $\tau(T_\ell)$ has only predicates$)$.
\end{itemize}
\item each $\overline{\vp}^b_\ell$ is likewise the identity.
\item $\tau(T_a) \cap \tau(T_b) = (\tau(T_0) \cup \tau(T_1))$. 
\end{itemize}

What remains is to find models $M_a \models T_a$ and $M_b \models T_b$ such that the interpretation of $T_\ell$ in $M_a$ and $M_b$ 
are isomorphic.  Given such models, we may finish by adding additional symbols giving 
a bijection between the respective interpretations of $T_0$ and $T_1$. 

One way to construct such models is to recall that ultrapowers commute with reducts, and that any two elementarily equivalent 
models have isomorphic ultrapowers. Begin with $M_{0,a} \models T_a$ and $M_{0,b} \models T_b$. Choose the ultrafilter $\de_0$ 
so that the $\de_0$-ultrapowers of ${M}^{[\overline{\vp}^a_0]}_{0,a}$ and ${M}^{[\overline{\vp}^a_{0}]}_{0,b}$ are isomorphic.  
Call these ultrapowers $M_{1,a}$ and $M_{1,b}$ respectively. 
(As ultrapowers commute with reducts, we may consider $M_{1,a}$ and $M_{1,b}$ as models of the full $T_a$ and $T_b$ respectively.) 
Consider the model $M_c$ which is the disjoint union of $M_{1,a}$ and $M_{1,b}$ and expand $M_c$ by adding symbols giving the isomorphism between 
${M}^{[\overline{\vp}^a_0]}_{1,a}$ and ${M}^{[\overline{\vp}^a_{0}]}_{1,b}$.  
Let ${M}^{[\overline{\vp}^a_1]}_{c}$ have the obvious meaning of ${M}^{[\overline{\vp}^a_1]}_{1,a}$ considered within this model $M_c$.  
Next, choose the ultrafilter $\de_1$ so that ${M}^{[\overline{\vp}^a_1]}_{c}$ and ${M}^{[\overline{\vp}^a_{1}]}_{c}$ are isomorphic. 
Let $M_*$ be the $\de_1$-ultrapower of $M_c$. In this model we may expand $T_c$ so as to make the interpretations of $T_1$ 
isomorphic. Let $T_*$ be the theory of this expanded model. This completes our construction.

We had fixed an infinite $\mu$ so that $|T_a| + |T_b| < \mu$, and $|T_0| + |T_1| = \aleph_0$ by assumption. The construction gives that also $|T_*| < \mu$.   
\end{proof}

\begin{cor} \label{putting-together} \emph{ }
\begin{enumerate}
\item Let $T_0$ be any theory for which $\truespec(T_0) = \emptyset$. Let $T_1$ be any theory such that 
$T_0$ and $T_1$ are equivalent in the order $\trianglelefteq^*$. Then $\spec(T_1) = \emptyset$. 
\item Suppose $T_0$, $T_1$ are $\tlf^*$-equivalent, as witnessed by $T_*$ with $|T_*| \leq \mu$.  
Suppose $\kappa \geq \mu$ is singular and $(\kappa, \mu) \notin \spec(T_0)$. Then $(\kappa, \mu) \notin \spec(T_1)$. 
\end{enumerate}
\end{cor}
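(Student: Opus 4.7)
The plan is to prove part (2) first by contraposition, and then to deduce part (1) as a corollary via the trivial witness $T_0^\dagger = T_0$. For (2) we will assume $(\kappa, \mu) \notin \spec(T_0)$, witnessed by some $T_0^\dagger \supseteq T_0$ with $|T_0^\dagger| \leq \mu$ in whose $PC$-class no $\kappa$-saturated-not-$\kappa^+$-saturated model occurs, and construct a parallel witness $T_1^\dagger$ for $(\kappa, \mu) \notin \spec(T_1)$.

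The key construction will be to let $T_1^\dagger$ be the union of $T_*$ (the coupling theory from Observation \ref{combining}, which by hypothesis satisfies $|T_*| \leq \mu$) together with the axioms of $T_0^\dagger$ relativized to the $\overline{\vp_0}$-interpretation of $T_0$ inside $T_*$: each symbol of $\tau(T_0^\dagger) \setminus \tau(T_0)$ is declared to live on the predicate $P_0$ defining this interpretation, and each axiom of $T_0^\dagger$ is replaced by its $P_0$-relativization. To uniformize the $PC$-class formulation with the interpretation-based Observation \ref{combining}, we arrange matters so that $\overline{\vp_1}$ is essentially the identity (its domain predicate picks out the whole universe) while $\overline{\vp_0}$ relativizes to a proper subset; then $\tau(T_1) \subseteq \tau(T_*)$ and $T_1 \subseteq T_*$, hence $T_1^\dagger \supseteq T_1$, and $|T_1^\dagger| \leq |T_*| + |T_0^\dagger| \leq \mu$.

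To verify the construction, we take any $N \in PC(T_1^\dagger, T_1)$, written as $N = M \rstr \tau(T_1)$ for some $M \models T_1^\dagger$. Since $M \models T_*$ and $\overline{\vp_1}$ is identity-like, $N$ coincides with $M^{[\overline{\vp_1}]}$, so by Observation \ref{combining} the structure $M^{[\overline{\vp_0}]}$ has the same saturation profile as $N$. By the relativization axioms in $T_1^\dagger$, the induced $\tau(T_0^\dagger)$-structure on $P_0^M$ is a model of $T_0^\dagger$ whose $\tau(T_0)$-reduct is $M^{[\overline{\vp_0}]}$, so $M^{[\overline{\vp_0}]} \in PC(T_0^\dagger, T_0)$; if $N$ were $\kappa$-saturated but not $\kappa^+$-saturated, so would $M^{[\overline{\vp_0}]}$ be, contradicting the choice of $T_0^\dagger$.

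For (1), we apply (2) taking $T_0^\dagger = T_0$, which witnesses $(\kappa, \mu) \notin \spec(T_0)$ for every admissible pair as long as $\truespec(T_0) = \emptyset$; under the paper's convention that $T_0, T_1$ are countable, the witness $T_*$ from Observation \ref{combining} can be taken with $|T_*| \leq \aleph_0 \leq \mu$, so the hypothesis of (2) is met for every $(\kappa, \mu)$ that could belong to $\spec(T_1)$, yielding $\spec(T_1) = \emptyset$. The main obstacle is the delicate alignment between the $PC$-class formulation of $\spec$ and the interpretation-based setup of Observation \ref{combining}: after harmless modifications of that construction, the $\overline{\vp_1}$-interpretation must literally coincide with the $\tau(T_1)$-reduct so that $T_1 \subseteq T_*$ in the sense required, which is routine bookkeeping but should be recorded carefully.
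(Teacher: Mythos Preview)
Your proof is correct and follows essentially the same idea as the paper's: use the coupling theory $T_*$ from Observation \ref{combining}, suitably augmented by the extra structure coming from the witness $T_0^\dagger$ (the paper calls it $T_{00}$), as the theory witnessing $(\kappa,\mu)\notin\spec(T_1)$. The paper proves (1) directly first and then sketches (2) by ``modifying the construction of $T_*$'' to absorb $T_{00}$; you do the reverse, proving (2) in full and deducing (1) by taking $T_0^\dagger=T_0$. Both routes amount to the same construction.

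Your organization is arguably cleaner, and you are more explicit about a point the paper glosses over: the need to align the interpretation-based framework of Observation \ref{combining} (where $N^{[\overline{\vp_1}]}$ lives on a predicate $P_1$) with the $PC$-class formulation of $\spec$ (which requires $T_1\subseteq T_1^\dagger$ and $N^{[\overline{\vp_1}]}=N\rstr\tau(T_1)$). Your fix, arranging $\overline{\vp_1}$ to be identity-like on the whole universe while $\overline{\vp_0}$ relativizes to a predicate, is a legitimate way to handle this. One small caution in your deduction of (1): you need $|T_*|\le\mu$ for every $\mu\ge|T_1|$, i.e.\ $|T_*|\le\aleph_0$ in the countable case; this is fine provided the witnesses $T_a,T_b$ to $\trianglelefteq^*$-equivalence can be taken countable, which is standard but worth saying explicitly.
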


\begin{proof} 

(1)
Let $T_*$ be a theory witnessing their equivalence, such as that given by Observation \ref{combining}. Let 
Let $\overline{\vp_0}$ and $\overline{\vp_1}$ witness the interpretations for $T_0$ and of $T_1$, 
respectively, and let ${N}^{[\overline{\vp}]}$ and ${N}^{[\overline{\vp_*}]}$ denote the respective interpretations 
in a given model $N \models T_*$.  
Now let $\kappa$ be a singular cardinal, $\kappa > |T_1|$ and 
suppose $M \in PC(T_*, T_1)$ is $\kappa$-saturated.  Let $N \models T_*$ with $M = N \rstr \tau(T_1)$ witness that $M \in PC(T_*, T_1)$. 
The fact that $M$ is $\kappa$-saturated says precisely that ${N}^{[\overline{\vp_1}]}$ is $\kappa$-saturated.  
By hypothesis of equivalence, ${N}^{[\overline{\vp_0}]}$ is $\kappa$-saturated as well. By choice of $T_0$, 
${N}^{[\overline{\vp_1}]}$ is $\kappa^+$-saturated. Applying the hypothesis of equivalence in the other direction, 
${N}^{[\overline{\vp_0}]}$ is $\kappa^+$-saturated.  This completes the proof. 

(2) 
As $(\kappa, \mu) \notin \spec(T_0)$ there is a theory $T_{00} \supseteq T_0$, $|T_{00}| \leq \mu$, and a 
$\kappa$-saturated not $\kappa^+$-saturated member of $PC(T_{00}, T_0)$.  In order to make use of this, we 
would first modify the construction of the theory $T_*$ (by means of a better choice of $T_a$, $T_b$) 
so that for any $M_* \models T_*$ we have that 
${M_*}^{[\overline{\vp_0}]} \in PC(T_{00}, T_0)$. Since $|T_{00}| \leq \mu$, we can do this while preserving $|T_*| \leq \mu$. 
Then the proof continues as in part (1). 
\end{proof}

\begin{theorem} \label{ess-sop2}
If $T$ has $SOP_2$ then $\spec(T) = \emptyset$. That is,  
if $T$ has $SOP_2$ then for some $T_1 \supseteq T$ of cardinality $|T|$, for every singular $\kappa > |T|$, 
if $M \in PC(T_1, T)$ is $\kappa$-saturated then it is $\kappa^+$-saturated.
\end{theorem}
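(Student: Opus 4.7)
The plan is to deduce the theorem from Corollary \ref{putting-together}(2) by showing that $T$ is $\tlf^*$-equivalent to the theory $T_0 = Th(\omega+\omega^*)$ used in Lemma \ref{x5}, together with the observation that $T_0$ itself has trivial exact saturation spectrum.

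First I would establish the $\tlf^*$-equivalence of $T$ and $T_0$. The theory $T_0$ has the strict order property, hence is $\tlf^*$-maximal by the Shelah fact recalled just above Claim \ref{x4}. Our $T$ has $SOP_2$ and hence is also $\tlf^*$-maximal by Lemma \ref{x5}. Any two $\tlf^*$-maximal theories are $\tlf^*$-equivalent. Examining the construction in Observation \ref{combining}, the $T_a$ and $T_b$ witnessing the two directions of equivalence come from Lemma \ref{x5} and have cardinality $|T|$, and the subsequent ultrapower construction adds only countably many extra symbols; thus the equivalence is witnessed by a single theory $T_*$ with $|T_*| \leq |T|$.

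Second, I would verify that $(\kappa, |T|) \notin \spec(T_0)$ for every singular $\kappa > |T|$ by taking $T_{00} := T_0$ itself. By quantifier elimination for $T_0$ (in a natural expansion of $\tau(T_0)$ by successor/predecessor), a model $M \models T_0$ fails $\lambda$-saturation exactly when it has a $(\kappa_1,\kappa_2)$-cut in a parameter set of size ${<}\lambda$ with $\kappa_1,\kappa_2$ regular and $\kappa_1 + \kappa_2 < \lambda$. For $\kappa$ singular, any such cut with $\kappa_1 + \kappa_2 \leq \kappa$ must actually satisfy $\kappa_1 + \kappa_2 < \kappa$, because $\max\{\kappa_1,\kappa_2\}$ is regular and $\kappa$ is not. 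Hence every $\kappa$-saturated $M \in PC(T_0, T_0)$ is automatically $\kappa^+$-saturated, so $T_{00} = T_0$ is the required extension witnessing $(\kappa, |T|) \notin \spec(T_0)$.

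Third, I would apply Corollary \ref{putting-together}(2) with $T_0$ as above, $T_1 := T$, $\mu := |T|$: the $\tlf^*$-equivalence is witnessed by $T_*$ with $|T_*| \leq \mu$, and $(\kappa, \mu) \notin \spec(T_0)$ for every singular $\kappa > \mu$, so the corollary yields $(\kappa, \mu) \notin \spec(T)$ as well. Inspecting the proof of that corollary, the theory $T_1 \supseteq T$ it produces depends only on $T_*$ and the choice of $T_{00}$, not on $\kappa$, so a single $T_1$ of cardinality $|T|$ works uniformly for every singular $\kappa > |T|$, matching the reformulation given in the theorem statement. The main technical care point is ensuring $|T_*| \leq |T|$ throughout the constructions of Lemma \ref{x5} and Observation \ref{combining}; no new conceptual obstacle arises beyond that bookkeeping, as the heart of the argument is simply that $\tlf^*$-equivalence transports the triviality of the exact-saturation spectrum from $T_0$ to $T$.
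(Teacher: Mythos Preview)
Your proposal is correct and follows essentially the same route as the paper, which simply cites Theorem \ref{tstar-sop2-max}, Fact \ref{dlo-fact}, and Corollary \ref{putting-together}(1). The only cosmetic differences are your choice of $T_0 = Th(\omega+\omega^*)$ rather than DLO, your use of part (2) rather than part (1) of Corollary \ref{putting-together}, and your more explicit cardinality bookkeeping; note also that one direction of the $\tlf^*$-equivalence comes from the Shelah fact on the strict order property rather than from Lemma \ref{x5}, though this does not affect the cardinality bound.
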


\begin{proof}
By Theorem \ref{tstar-sop2-max}, Fact \ref{dlo-fact} and Corollary \ref{putting-together}(1). 
\end{proof}

\br

\begin{qst} 
Does Theorem $\ref{ess-sop2}$ hold for the theory $\trg$ of the random graph?
\end{qst}

In the case of $\truespec(\trg)$ rather than $\spec(\trg)$, note that: 

\begin{claim}
If $\kappa > \cf(\kappa) + |T|$ then $\kappa \in \truespec(\trg)$. 
\end{claim}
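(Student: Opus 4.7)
The idea is to build a model $M \models \trg$ which is $\kappa$-saturated but omits a consistent $1$-type $p$ of cardinality $\kappa$. Since every $1$-type of the random graph over a set $A$ amounts to a partition of $A$ into prescribed neighbors and non-neighbors, the task reduces to exhibiting disjoint $B_0, B_1 \subseteq M$ of size $\kappa$ such that no vertex is connected to every element of $B_0$ and disconnected from every element of $B_1$, while every analogous partition of a set of size $<\kappa$ has a realizing vertex.

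To carry this out I would set $\lambda = 2^\kappa$, so that $\lambda^{<\kappa} = \lambda$ and $\cf(\lambda) > \kappa$ (by K\"onig), take the vertex set of $M$ to be $\lambda$, and designate $B_0 = \kappa$ and $B_1 = \{\kappa + \alpha : \alpha < \kappa\}$. By standard bookkeeping, fix an enumeration $\langle (X_\alpha, Y_\alpha) : 2\kappa \leq \alpha < \lambda \rangle$ of all disjoint pairs of subsets of $\lambda$ with $|X| + |Y| < \kappa$, arranged so that $X_\alpha \cup Y_\alpha \subseteq \alpha$ and every such pair appears at least once; this is permitted exactly because $\cf(\lambda) > \kappa$ and $\lambda^{<\kappa} = \lambda$. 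Then define the symmetric, irreflexive edge relation on $M$ stage by stage. For $\alpha < 2\kappa$, set all edges from $\alpha$ to $\beta < \alpha$ to non-edges (pure scaffolding on $B_0 \cup B_1$). For $\alpha \geq 2\kappa$, declare $\alpha E x$ for each $x \in X_\alpha$ and $\neg(\alpha E y)$ for each $y \in Y_\alpha$; choose some $\beta_\alpha \in B_0 \setminus (X_\alpha \cup Y_\alpha)$ (possible since $|B_0| = \kappa > |X_\alpha \cup Y_\alpha|$) and set $\neg(\alpha E \beta_\alpha)$; let the remaining edges from $\alpha$ to $\beta < \alpha$ be non-edges.

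Verification is then immediate. The occurrence of every small disjoint pair in the enumeration gives the random graph extension axiom, so $M \models \trg$, and every $1$-type over a subset of $M$ of size $<\kappa$ is realized, so $M$ is $\kappa$-saturated. The type $p(x) := \{xEb : b \in B_0\} \cup \{\neg xEb : b \in B_1\}$ is finitely consistent by the same axiom, hence a consistent $1$-type of cardinality $\kappa$. It is omitted in $M$: for $\alpha \in B_0 \cup B_1$, the edges among $B_0 \cup B_1$ were all set to non-edges, so $\alpha$ is non-adjacent to every element of $B_0$ and therefore fails $p$; for $\alpha \geq 2\kappa$, we explicitly arranged $\neg(\alpha E \beta_\alpha)$ with $\beta_\alpha \in B_0$, which again fails $p$. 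Hence $M$ is $\kappa$-saturated but not $\kappa^+$-saturated, showing $\kappa \in \truespec(\trg)$.

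The essentially only obstacle is the bookkeeping in the second step: one must verify that $\lambda = 2^\kappa$ admits such an enumeration with the ``parameters-appear-first'' property, which rests on $\cf(2^\kappa) > \kappa$ (K\"onig) and $(2^\kappa)^{<\kappa} = 2^\kappa$. There is no genuine model-theoretic obstruction, since the random graph's extreme flexibility allows each edge demand to be met independently of the others; this independence of edge choices is precisely what distinguishes the random graph from the $SOP_2$ theories of Theorem \ref{ess-sop2}, for which the spectrum is empty.
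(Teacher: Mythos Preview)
Your proof is correct, but the paper takes a much shorter top-down route. Rather than building the model from scratch by transfinite bookkeeping on $\lambda = 2^\kappa$, the paper starts with any $\kappa^+$-saturated $M \models \trg$, fixes an independent set $\{a_\alpha : \alpha < \kappa\}$ inside $M$, and lets $N$ be the set of $b \in M$ having fewer than $\kappa$ neighbours among the $a_\alpha$. Then $N \models \trg$; it is $\kappa$-saturated because any $1$-type over $A \subseteq N$ with $|A| < \kappa$ can be extended by $\{\neg R(x,a_\alpha) : a_\alpha \notin A\}$ to a type of size $\kappa$, realized in $M$ by some $b$ which then automatically lies in $N$; and $N$ omits $\{R(x,a_\alpha) : \alpha < \kappa\}$ by definition. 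This avoids all cardinal arithmetic beyond the existence of a $\kappa^+$-saturated model, whereas your construction needs K\"onig's theorem and the bookkeeping that $\lambda^{<\kappa} = \lambda$. Your approach has the compensating virtue of being completely explicit and self-contained. Two minor simplifications: your set $B_1$ and the explicit choice of $\beta_\alpha$ are both redundant, since all unspecified edges are already declared non-edges and $|X_\alpha| < \kappa = |B_0|$ guarantees every $\alpha \geq 2\kappa$ misses some element of $B_0$; the type $\{R(x,b) : b \in B_0\}$ alone is therefore omitted.
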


\begin{proof}
Let $M$ be a $\kappa^+$-saturated model of $\trg$. Let $\langle a_\alpha : \alpha < \kappa \rangle$ be pairwise distinct members of $M$ 
which form an empty graph, i.e. $\alpha < \beta < \lambda \implies M \models \neg R(a_\alpha, a_\beta)$. Consider the submodel $N \subseteq M$
whose domain is 
\[  \{ b \in M : \left( \exists^{< \kappa} \alpha < \kappa \right) (~ R(b,a_\alpha)~ ) \}. \]
Then $N$ is a model of $\trg$ which is as required: it is $\kappa$-saturated but $\{ R(x,a_\alpha) : \alpha < \kappa \}$ is omitted so it is not 
$\kappa^+$-saturated. 
\end{proof}

%\begin{qst}
%For $T$ with Skolem functions, there is a construction of a $\kappa$-saturated model extending a given $M$ in 
%\cite{Sh:a} VII \S 4, using averages on Sk(B), $|B|<\kappa$.  Without this assumption on $T$ using $\mcp(n)$-diagrams, 
%do we have a parallel result? 
%\end{qst}

Before giving some further evidence, 
we record here that there are other natural directions these investigations could take. 

\begin{disc} 
Instead of asking whether $\kappa \in \spec(T)$ for $\kappa > \cf(\kappa) + |T|$, we may fix a specific way to construct a $\kappa$-saturated model 
and ask if this implies that the constructed model is $\kappa^+$-saturated. For example: 
\begin{itemize}
\item[(a)] we may consider $T$ dependent, $|M| = \{ a_\alpha : \alpha < \alpha_* \}$, $u_\alpha \in [\alpha]^{<\kappa}, \tp(a_\alpha, \{ a_\beta : \beta < \alpha \} )$ 
does not split over $\{ a_\beta : \beta < \alpha \}$, and $M$ is $\kappa$-saturated; see \cite{Sh:715}. 
\item[(b)] as in (a), but we may ask that $T$ has Skolem functions. 
\item[(c)] as in (b), and also $\tp(a_\alpha , \{ a_\beta : \beta < \alpha \})$ is finitely satisfiable in $\operatorname{Sk}(\{ a_\beta : \beta \in u_\alpha \})$, 
see \cite{Sh:a}, VII, section 4. 
\item[(d)] we may consider models built using $\mcp^{-}(n)$-diagrams in some explicit way. 
\end{itemize} 
\end{disc}

\noindent Returning to $\spec$, on the positive side we can settle the case of stable $T$. 

\begin{claim} \emph{ } Let $T$ be a complete theory. 
%\begin{enumerate}
%\item[(a)] If $\kappa$ is a singular cardinal, $\kappa \geq |T|$ and $T$ is $\kappa$-stable, then 
%%for any singular $\kappa \geq |T|$, with $\kappa^{|T|} = \kappa$, 
%$\kappa \in \truespec(T)$.  
%
%\item[(b)] In particular, if $T$ is stable, $\kappa = \kappa^{|T|} > \operatorname{cf}(\kappa)$, then $\kappa \in \truespec(T)$. 
%%$\lambda$ is singular, $\lambda \geq 2^{|T|}$ and $\lambda =\lambda^{<\kappa(T)}$, then $\kappa \in \truespec(T)$. 
%\item[(c)] 
If $T$ is stable, $\kappa = \kappa^{|T|} > \operatorname{cf}(\kappa)$, and $\kappa \geq \mu \geq T$,
then $(\kappa, \mu) \in \spec(T)$. 
%If $T$ is $\kappa$-stable, then for any singular $\kappa$ and any $\mu$ such that $\kappa \geq \mu \geq |T|$, $(\kappa, \mu) \in \spec(T)$. 
%\end{enumerate}
\end{claim}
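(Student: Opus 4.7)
Given $T_1 \supseteq T$ with $|T_1| \le \mu$, I would construct a model $N \models T_1$ of cardinality exactly $\kappa$ whose $\tau(T)$-reduct $M := N \rstr \tau(T)$ is $\kappa$-saturated. Then $M \in PC(T_1,T)$, and because $T$ has infinite models and $|M| = \kappa$, the $1$-type $\{ x \neq a : a \in M \}$ over $M$ is consistent in a proper elementary extension of $M$ but omitted in $M$, so $M$ is not $\kappa^+$-saturated. This witnesses $(\kappa,\mu) \in \spec(T)$.

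First I observe that $\kappa = \kappa^{|T|}$ forces $\cf(\kappa) > |T|$: otherwise K\"onig's theorem gives $\kappa < \kappa^{\cf(\kappa)} \le \kappa^{|T|} = \kappa$, a contradiction. Hence $\cf(\kappa) \ge |T|^+ \ge \kappa(T)$, the usual local-rank invariant of the stable theory $T$. I then fix a strictly increasing cofinal sequence $\langle \lambda_i : i < \cf(\kappa) \rangle$ in $\kappa$ with each $\lambda_i \ge 2^\mu$ and $\lambda_i^{|T|} = \lambda_i$: replacing any cofinal sequence $\chi_i \to \kappa$ by $\lambda_i := \chi_i^{|T|}$ works, since the result is closed under $|T|$-th power and still lies strictly below $\kappa$ (otherwise $\cf(\kappa) \le |T|$ by Hausdorff's formula). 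I then build an increasing elementary chain $\langle N_i : i < \cf(\kappa) \rangle$ of $T_1$-models with $|N_i| = \lambda_i$ arranged so that each reduct $N_i \rstr \tau(T)$ is $\lambda_i$-saturated: at stage $i+1$ pass to a sufficiently saturated $T_1$-elementary extension of $N_i$ and apply downward L\"owenheim--Skolem, using stability of $T$ (only $\lambda_{i+1}^{|T|} = \lambda_{i+1}$ many $\tau(T)$-types over any subset of size $<\lambda_{i+1}$) to descend to size $\lambda_{i+1}$ while preserving $\lambda_{i+1}$-saturation of the $\tau(T)$-reduct. Taking unions at limits, set $N = \bigcup_i N_i$, so $|N| = \kappa$ and $N \models T_1$.

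The main obstacle is to show $M := N \rstr \tau(T)$ is $\kappa$-saturated, which for singular $\kappa$ does not follow mechanically from the saturation of the $N_i \rstr \tau(T)$. Here the stability of $T$ is essential: given $A \subseteq M$ with $|A| < \kappa$ and $p \in S(A)$, I use that in a stable theory every type is based on, and stationary over, some $A_0 \subseteq A$ with $|A_0| < \kappa(T) \le \cf(\kappa)$, so $A_0 \subseteq N_j$ for some $j < \cf(\kappa)$, and $p$ is the unique non-forking extension of $p \rstr A_0$ to $A$. Shelah's theorem that for stable $T$ the union of an increasing elementary chain of $\lambda_i$-saturated models, with cofinality of the chain at least $\kappa(T)$, realizes every type whose base is contained in some member of the chain (\cite{Sh:a} III) then produces a realization of $p$ in $M$. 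This completes the construction and hence the proof that $(\kappa,\mu) \in \spec(T)$.
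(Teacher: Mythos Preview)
Your overall strategy matches the paper's: build a $T_1$-model of cardinality exactly $\kappa$ whose $\tau(T)$-reduct is $\kappa$-saturated; the paper simply cites \cite{Sh:a}~III~3.12 for the existence of such a model and remarks that the same construction adapted to $T_1$ works. So the high-level plan is correct and is the intended one.

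There is, however, a genuine gap in your cardinal arithmetic. The assertion that $\lambda_i := \chi_i^{|T|}$ lies strictly below $\kappa$ is not what Hausdorff's formula gives, and it can consistently fail. For a concrete scenario, add $\aleph_{\omega_1}$ Cohen reals over a model of GCH. In the extension $\kappa := \aleph_{\omega_1}$ is singular of cofinality $\omega_1$, one has $2^{\aleph_0} = \kappa$, and a nice-name count shows $\kappa^{\aleph_0} = \kappa$; so with $|T| = \aleph_0$ the hypotheses $\kappa = \kappa^{|T|} > \cf(\kappa)$ are satisfied. But for every $2 \le \chi < \kappa$ we have $\chi^{\aleph_0} \ge 2^{\aleph_0} = \kappa$, so there is \emph{no} $\lambda < \kappa$ with $\lambda^{|T|} = \lambda$. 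In this model, for a strictly stable countable $T$ there is no saturated model in any cardinality below $\kappa$, and your tower of $\lambda_i$-saturated $T$-reducts of size $\lambda_i$ cannot be built at all.

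The existence theorem the paper invokes handles this case by a different mechanism (the point is that $\kappa = \kappa^{|T|}$ makes $T$ $\kappa$-stable, and the construction of a saturated model of a $\kappa$-stable theory in singular $\kappa$ proceeds via $\mathbf{F}^a_{\kappa(T)}$-constructions rather than a tower of smaller saturated models). The cleanest repair is therefore to follow the paper and appeal to \cite{Sh:a}~III~3.12 directly, or to reproduce that argument, rather than to rely on the chain you describe.
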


\begin{proof} 
Recall that under these cardinal hypotheses there is a model of $T$ of cardinality $\kappa$ which is $\kappa$-saturated, 
see \cite{Sh:a} Chapter III Theorem 3.12. Clearly this model is not $\kappa^+$-saturated.  
%Then (b) follows e.g. by \cite{Sh:a} Chapter III Theorem 5.15. 
Given any $T_1 \supseteq T$, $|T_1| \leq \mu$, we may build a model of $T_1$ of cardinality $\kappa$ in the same way 
whose restriction to $T$ is $\kappa$-saturated %(begin with $M \models T_1$ of cardinality $\mu$ and  
but evidently not $\kappa^+$-saturated. 
%
%
%
% For any such $\kappa$, there is a model of $T$ of cardinality $\kappa$ which is saturated. 
%
%Let such $\kappa$ and $\mu$ be given and fix $T_1 \supseteq T$, $|T_1| \leq \mu$. 
%
%%\\ & \mbox{ there is a $\kappa$-saturated not $\kappa^+$-saturated member of $PC(T_1, T)$} \}. 
\end{proof}

\begin{qst}
What about simple theories?
\end{qst}

\begin{prob}
Determine whether the following is true:  for $T$ complete and countable, 
\[ \spec(T) = \emptyset \mbox{ if and only if } T \mbox{ has $SOP_2$.} \]
\end{prob}

\br

These investigations suggest some interesting parallel questions for ultrafilters. The reader 
may recall that if $\lambda$ is a singular cardinal and the ultrafilter is $\lambda$-good then it is $\lambda^+$-good 
(since linear order is in the maximal Keisler class, \cite{Sh:a}.VI.2, a proof similar to that of 
Fact \ref{dlo-fact} goes through). 

\begin{prob}
Suppose $\lambda$ is singular. Does there exist a regular ultrafilter which $\lambda$-saturates 
but does not $\lambda^+$-saturate ultrapowers of the random graph? 
\end{prob}

%A harder question is the following 
The property of an ultrafilter being ``$(\lambda, \aleph_0)$-perfect'' was defined in \cite{MiSh:1050}.  
%If the answer to the first part of Problem \ref{p:perfect} is no, this would 

\begin{prob} \label{p:perfect}
Suppose $\lambda$ is singular. 
If $\de$ is a regular ultrafilter which is $(\kappa, \aleph_0)$-perfect for every $\kappa < \lambda$, 
is $\de$ also $(\lambda^+, \aleph_0)$-perfect?  If not, must it at least produce $\lambda^+$-saturated 
ultrapowers of the random graph? 
\end{prob}

\newpage

\newpage

\end{document}